\newtheorem{theorem}{Theorem}[section]
\newtheorem{lemma}{Lemma}[section]
\newtheorem{proposition}{Proposition}[section]
\theoremstyle{definition}
\newtheorem{definition}{Definition}[section]
\theoremstyle{remark}
\newtheorem{remark}{Remark}[section]
\numberwithin{equation}{section}
\def\f{\frac}
\def\hf1{^\f{1}{1-\xi^2}}
\def\be{\begin{equation}}
\def\en{\end{equation}}
\def\bs{\begin{split}}
\def\es{\end{split}}
\def\ba{\begin{align}}
\def\ea{\end{align}}
\title[Martingale weak solutions to stochastic CH-NS equations]
{Stochastic Cahn-Hilliard-Navier-Stokes equations with the dynamic boundary: Martingale weak solution, Markov selection}
\author[H. Gao]{Hongjun Gao}
\address{School of Mathematics, Southeast University, Nanjing, 211189, China.}
\email{hjgao@seu.edu.cn}
\author[Z. Qiu]{Zhaoyang Qiu}
\address{School of Applied Mathematics, Nanjing University of Finance and Economics, Nanjing, 210046, China.}
\email{zhqmath@163.com}
\author[H. Wang]{Huaqiao Wang}
\address{College of Mathematics and Statistics, Chongqing University, Chongqing, 401331, China.}
\email{wanghuaqiao@cqu.edu.cn}
\keywords{Stochastic Cahn-Hilliard-Navier-Stokes equations, martingale weak solution, the dynamic boundary, martingale problem, Markov selection.}
\subjclass[2010]{35Q35, 76D05, 35R60, 60F10}
\date{\today}
\begin{document}
\begin{abstract}
The existence of global martingale weak solution for the 2D and 3D stochastic Cahn-Hilliard-Navier-Stokes equations driven by multiplicative noise in a smooth bounded domain is established. In particular, the system is supplied with the dynamic boundary condition which accounts for the interaction between the fluid components and the rigid walls. The proof is completed by a three-level approximate scheme combining a fixed point argument and the stochastic compactness argument, overcoming challenges from strong nonlinearity, dynamic boundary and random effect. Then, we prove the existence of an almost surely Markov selection to the associated martingale problem following the abstract framework established by F. Flandoli and M. Romito \cite{F2}.
\end{abstract}

\maketitle
\section{Introduction}

Cahn-Hilliard-Navier-Stokes (CH-NS) equations describe the motion of two immiscible mixture fluid, which consist of the Navier-Stokes equations governing the fluid velocity and a convective Cahn-Hilliard equation for the relative density of atoms of one of fluids, for further backgrounds see \cite{GG,Gurtin, Heida, Blesgen} and references therein. The motion of fluid could be easily affected by external random factor which accounts for numerical and physical uncertainties in applications ranging from climatology to turbulence theory. Therefore, the uncertainty and randomness must be taken into account for better understanding the dynamical phenomenon. Here we consider the stochastic CH-NS system with multiplicative noise in a smooth bounded domain $\mathcal{D}$:
\begin{eqnarray}\label{Equ1.1}
\left\{\begin{array}{ll}
\!\!du-({\rm div}(2\nu_0D(u))-(u\cdot \nabla)u-\nabla p+\mu\nabla \phi)dt=h(u,\nabla\phi)d\mathcal{W},\\
\!\!d\phi+(u\cdot\nabla)\phi dt-\Delta \mu dt=0,\\
\!\!\mu=-\nu_1\Delta \phi+ \alpha f(\phi),\\
\!\!\nabla\cdot u=0,\\
\end{array}\right.
\end{eqnarray}
where $u, p$ and $\phi$ denote velocity, pressure and phase parameter, respectively. $D(u)=\frac{\nabla u+\nabla u^{\bot}}{2}$ is the viscous stress tensor, and $\nu_0$ is the viscosity of the fluid. Here we assume that $\nu_0\equiv1$. $\mathcal{W}$ is a cylindrical Wiener process introduced later. $\mu$ stands for the chemical potential of binary mixtures which is derived from the variational derivative of the quantity (free energy functional):
\begin{eqnarray*}
E^f(\phi)=\int_{\mathcal{D}}\frac{\nu_{1}}{2}|\nabla\phi|^{2}+\alpha F(\phi)dx,
\end{eqnarray*}
where $F(\xi)=\int_{0}^{\xi}f(r)dr$, constant $\nu_{1}$ is related to the thickness of the interface between two fluids and parameter $\alpha=\frac{1}{\nu_1}$ describes the interaction between two phases, we also assume that $\alpha=\nu_1\equiv 1$. In physical background, a representative example of $F$ is logarithmic type, that is,
$$F(s)=c_0[(1+s)\ln (1+s)+(1-s)\ln(1-s)]-c_1s^2,\; c_1>c_0>0,$$
where $s\in (-1,1)$. Usually, we use a polynomial approximation of type $F(r)=r^4-r^2$ taking place the type of logarithmic.

For the case of the Neumann boundary condition of $\phi$, abundant results of system (\ref{Equ1.1}) were published in both deterministic and stochastic areas, see \cite{Mejdo, Mejdo2,Mejdo3, FY,Feireisl,FGG,FGK,FRS,GG,You,gio,yang} and references therein. Nevertheless, the Neumann boundary condition is incompatible with the moving contact line which describes the intersection of the incompressible two-phase flow interface with the solid wall. Therefore, a new boundary condition was proposed by \cite{qian} (see also \cite{qian2}) accounting for the physical phenomenon. Specifically,  the velocity equations are equipped with the following generalized Navier boundary conditions which was introduced by Maxwell \cite{max} in the kinetic theory of gases:
\begin{align}
&u\cdot n=0,\label{1.2}\\
&2(D(u)\cdot n)_{\tau}+ u_{\tau}=\mathcal{K}(\phi)\nabla_\tau \phi,\label{13}
\end{align}
where $n$ is the unit outward normal vector to the boundary $\partial\mathcal{D}$, while the Cahn-Hilliard equation is equipped with a dynamic boundary and the Neumann boundary conditions:
\begin{align}
&\partial_n\mu=0, \label{1.4}\\
&\partial_t\phi+u_\tau\cdot \nabla_\tau \phi=-\mathcal{K}(\phi),\label{1.5}
\end{align}
on $\partial\mathcal{D}\times (0,\infty)$ and
$$\mathcal{K}(\phi)=-\Delta_{\tau}\phi+\partial_n\phi+\phi+g(\phi).$$
Such a boundary condition (\ref{1.5}) could be considered as a parabolic equation on boundary $\partial\mathcal{D}$. The notation $\nabla_\tau$ represents the tangential gradient operator along tangential direction $\tau=(\tau_1,\cdots, \tau_{N-1})$ on the boundary $\partial\mathcal{D}$ and we denote by $\Delta_\tau$ the Laplace-Beltrami operator on $\partial\mathcal{D}$, and $u_\tau$ the tangential component of $u$ on $\partial\mathcal{D}$. $g$ corresponds to the interfacial energy at the wall interface. For more descriptions of this kind of boundary, we refer to \cite{qian,qian2}.

Note that, from the physical point of view it is significant to find the boundary conditions (\ref{1.2}) and (\ref{1.4}) guarantee the conservation form of $\phi$, that is $$\langle\phi(t)\rangle=\langle\phi(0)\rangle,~ {\rm for~ all}~ t\geq 0,$$ where $\langle\cdot\rangle$ denotes the mean value over $\mathcal{D}$, thus, $\langle\phi(t)\rangle=|\mathcal{D}|^{-1}\int_{\mathcal{D}}\phi(t,x) dx$.

We supply the following initial conditions:
\begin{align}\label{1.6}
u(0,x)=u_0, ~\phi(0,x)=\phi_0.
\end{align}

When $h\equiv 0$, system \eqref{Equ1.1}-\eqref{1.6} reduces to the deterministic one. Gal, Grasselli and Miranville \cite{GMA} proved the existence of global energy weak solution, and also established the convergence of such solution to a single equilibrium. Moreover, Cherfils and Petcu \cite{LM} obtained the existence, uniqueness, and regularity of the solutions, in which the Navier boundary condition was replaced by non-slip boundary condition. For more researches, see \cite{wu,gal2} and references therein.

According to what we know, the relative stochastic issue to system (\ref{Equ1.1})-(\ref{1.6}) has not been considered yet. We are the first to consider the stochastic version with dynamic boundary, and devoted to proving the global existence of martingale weak solution to 2D and 3D system (\ref{Equ1.1})-(\ref{1.6}) and an almost surely Markov selection to associated martingale problem. Note that even for 2D case, we can not establish the existence of global pathwise solution which is not a drawback of our approach but an intrinsic difficulty, mainly arising from following reasons: firstly, the boundary term $\mathcal{K}(\phi)\nabla_\tau \phi$ and the boundary advection term involve strong coupling of the system which fails the proof of uniqueness. Consequently, we can not use the Gy\"{o}ngy-Krylov theorem to recover the convergence in original fixed probability space; Secondly, it is not allowed to apply the Galerkin scheme to establish the existence of approximate solutions due to the fact that in process of obtaining the a priori estimates, the test functions are not compatible any more, see Remark \ref{rem3.3}. As a result, it is impossible to show the Galerkin-type approximations converge in mean square for passing the limit.

In view of these difficulties, we will adopt a mixed method which combines a fixed point theorem, the stochastic compactness argument and an extension technique to establish the existence of global martingale weak solution. The specific strategy is as follows.

$\bullet$ By the linearization and mollification technique, we transform the original system into a linear approximate modified system driven by additive noise with dynamic boundary. In this step, we add cut-off function in nonlinear terms to control strong nonlinearity construction.

$\bullet$ With linear approximate modified system established, we first show that the global well-posedness for the auxiliary system holds.

$\bullet$ Then, we could define a mapping $\mathcal{M}$ such that the well-posedness of approximate solutions of original system is equivalent to searching for a fixed point of this mapping.

$\bullet$ Applying the truncated skill and stopping time technique to show the contraction property of mapping $\mathcal{M}$. In this procedure, we will design a two-level stopping time technique and an extra level approximate scheme to surmount  difficulties caused by random effect and cut-off function.

$\bullet$ Finally, we establish the uniform a priori estimates of the regularized solution by the elliptic regularity theory, the stochastic compactness, and identify the limit.

How to establish the approximate solutions is more complex and challenging compared with the deterministic case \cite{GMA}. In order to close the contraction estimate, here we have to consider an extra level approximation and develop the cut-off skill. However, the cut-off operators bring further difficulty in establishing the contraction estimate, we will introduce stopping time technique to overcome it. Moreover, we have to choose the same cut-off operators in front of all nonlinear terms, making sure that the a priori estimates could be achieved.

A remarkable feature of dynamical system with uniqueness is the memoryless property, thus, the Markov property. As mentioned, even for 2D case, uniqueness for the system is unknown, therefore it is natural to consider if we could choose one law satisfying the Markov property. Existence of Markov selection is originally introduced by  Krylov \cite{kry} for the stochastic differential equations in finite-dimensional setting. Then,  Flandoli and Romito \cite{F2} generalized the result to infinite-dimensional setting, establishing the Markov selection for the 3D incompressible Navier–Stokes equations with additive noise. Furthermore, Breit, Feireisl and  Hofmanov\'{a} \cite{brei} proved the existence of the Markov selection for the 3D stochastic compressible Navier–Stokes equations with a general multiplicative stochastic forcing. Dong and Zhai \cite{dong} considered the 3D stochastic incompressible Navier–Stokes equations with jump noise, obtaining the existence of martingale solution and an almost surely Markov selection. For more researches of Markov selection to related stochastic dynamical systems, we refer to \cite{zhang, dong2}.

A special point of the coupled system is that we only have regularity $\mu\in L^2_tH^1, \mathcal{K}\in L^2_tL^2(\Gamma)$ a.s. due to the involved structure. Therefore, quantities $\mu, \mathcal{K}$ are not processes in the classical sense and we have to understand them as $\mathcal{F}_t$-adapted random distribution. Following the idea of \cite{brei}, we overcome the difficulty by introducing new variables $\mathcal{V}_1, \mathcal{V}_2$ satisfying equations $d\mathcal{V}_1=\mu dt$ and $d\mathcal{V}_2=\mathcal{K}dt$. Note that $\mathcal{V}_1$ and $\mathcal{V}_2$ are two continuous stochastic processes with trajectories in $W^{1,2}_tH^1$ and $W^{1,2}_tL^2(\Gamma)$, a.s. respectively and $\mu, \mathcal{K}$ are measurable functions of $\mathcal{V}_1, \mathcal{V}_2$. As a result, we could  establish the existence of an almost surely Markov selection for $(u, \phi, \psi, \mathcal{V}_1, \mathcal{V}_2)$.

The rest of this paper is structured as follows. In section 2, we will give some deterministic and stochastic preliminaries needed throughout the paper. Our focus in section 3 will be to prove the existence of global martingale weak solution by four parts. We prove the existence of Markov selection to associated martingale problem in section 4. Appendix is included afterwards to state some known results that are frequently used in this paper. Finally, two auxiliary results are given in section 6.

\section{Preliminaries}

Throughout this paper, we use short notations $L_t^pX:=L^p(0,T; X)$, $L_\omega^pX:=L^p(\Omega;X)$ and $W^{\alpha,p}_tX:=W^{\alpha,p}(0,T;X)$, $C^\alpha_t X:=C^\alpha([0,T];X)$  for $\alpha\in (0,1],~ p\geq 1$, where $X$ is a Banach space and $\Omega$ is a sample space. Here, we use the fractional order Sobolev space with respect to $t$, since the evolution equations forced by noise only have $\alpha$-order H\"{o}lder continuous of $t$ for $\alpha\in (0,\frac{1}{2})$, see \cite{ANR,FD} for more details of these spaces. Denote
$$(L_t^pX)_w:={\rm the~ space~} L^p(0,T; X)~{\rm with~ weak~ topology}.$$
To simplify the notation, let $\Gamma: =\partial\mathcal{D}$, denote $H^s(\mathcal{D})$ and $H^s(\Gamma)$ norm by $\|\cdot\|_{H^s}$ and $\|\cdot\|_{H^s(\Gamma)}$ for any $s>0$, when $s=0$, the notation $L_x^2:=L^2(\mathcal{D})$. Denote by $X'$ the dual of Banach space $X$ and by $\mathbb{E}$ the mathematical expectation. Denote by ${\rm Tr}_{\mathcal{D}}$ the trace operator from $H^s(\mathcal{D})$ into $H^{s-\frac{1}{2}}(\Gamma)$.

Recall that the Laplace-Beltrami operator $A_{\tau}:=-\Delta_{\tau}$ is a nonnegative self-adjoint operator in $L^2(\Gamma)$. Therefore, we could define $$H^s(\Gamma)=\left\{\psi\in L^2(\Gamma):\psi\in D(A_{\tau}^\frac{s}{2})\right\},$$
with norm
$$\|\psi\|_{H^s(\Gamma)}=\|A_{\tau}^\frac{s}{2} \psi\|_{L^2(\Gamma)}+\|\psi\|_{L^2(\Gamma)}.$$

Introducing the Banach space $V^s$ for $s\geq 0$ corresponding to the parabolic equation on boundary $\Gamma$
$$V^s=\left\{(\phi, \psi)\in H^s\times H^{s-\frac{1}{2}}(\Gamma):\psi={\rm Tr}_{\mathcal{D}}(\phi)\in H^s(\Gamma)\right\},$$
with norm
$$\|(\phi,\psi)\|_{V^s}^2=\|\phi\|_{H^s}^2+\|\psi\|_{H^s(\Gamma)}^2,$$
when $s=0$, we write
$$\|(\phi,\psi)\|_{V^0}^2=\|\phi\|_{L^2_x}^2+\|\psi\|_{L^2(\Gamma)}^2.$$

Also we introduce the operator $A_1\phi=-\Delta \phi$ for any
$$\phi\in D(A_1)=\left\{\phi\in H^2(\mathcal{D}):\partial_n\phi|_{\Gamma}=0\right\}.$$
The Poincar\'{e} inequality implies $H^2$-norm is equivalent to norm $\|A_1(\cdot)\|_{L^2_x}+|\langle\cdot\rangle|$. Then, we introduce the operator
$$A_2\phi=-\Delta \phi,~ {\rm for}~\phi \in D(A_2)=D(A_1)\cap L_0^2,$$
where $L_0^2$ is the space of $L^2$-functions with $0$-mean valued. Note that $A_2^{-1}$ is a compact operator in $L_0^2$, and if $\phi\in D(A_2)$, the operator $A_2$ is equivalent to $A_1$. For any $\phi\in D(A_1)$, we have $\phi-\langle\phi\rangle\in D(A_2)$, and then $H^s$-norm is equivalent to $\|A_2^\frac{s}{2}(\phi-\langle\phi\rangle)\|_{L_x^2}+\|\langle\phi\rangle\|_{L_x^2}$. The Poincar\'{e} inequality again implies $H^s$-norm is equivalent to the norm $\|A^\frac{s}{2}_2(\cdot)\|_{L^2_x}$ for all $\phi\in D(A_2^\frac{s}{2})$.

 If $\mathrm{L}_x^p, \mathrm{H}^{s,p}$ represent the solution spaces of the fluid velocity, we define them by the closure of
\begin{equation*}
\mathcal{V}=\left\{u\in \mathcal{C}_{c}^{\infty}(\mathcal{D}):~{\rm div} u=0~ {\rm in} ~\mathcal{D}, u\cdot n=0,{\rm on }~\Gamma\right\},
\end{equation*}
with respect to $L^p(\mathcal{D}), H^{s,p}(\mathcal{D})$-norms. Considering the boundary condition, we introduce the bilinear form
$$a_0(u,v):=2(D(u), D(v))+(u_\tau,v_\tau)_{\Gamma},~ {\rm for}~u, v\in \mathrm{H}^1.$$
Note that, the bilinear form $a_0$ is coercive, continuous, symmetric and $\sqrt{a_0(u,u)}$ is equivalent to $\mathrm{H}^1$-norm. Corresponding to $a_0$, we could define the Stokes operator by $A_0u=-P{\rm div} (2D(u))$ such that $(A_0u, v)=a_0(u,v)$ for all
$$u\in D(A_0):=\left\{u\in H^2\cap\mathrm{H}^1: 2(D(u)\cdot n)_\tau+ u_\tau=0~ {\rm on} ~\Gamma\right\},$$
where $P$ is the Helmholtz-Leray projection. By the definition, we could have that the operator $A_0$ is positive and self-adjoint on $\mathrm{L}_x^2$, and $A_0^{-1}$ is compact.

To simplify the notation, we next define $(b_{0}, b_{1}, b_{2}): (L_x^p\times H^{1,q})^3 \rightarrow (L_x^{r'})^3$ with $1-\frac{1}{r}=\frac{1}{p}+\frac{1}{q}=\frac{1}{r'}$ as the bilinear operators such that
\begin{eqnarray*}
&&\langle b_{0}(u,v),\eta\rangle=\int_{\mathcal{D}}(u\cdot \nabla v)\cdot \eta dx=B_{0}(u,v,\eta),\\
&&\langle b_{1}(\mu,\phi),\eta\rangle=\int_{\mathcal{D}}\mu(\nabla \phi\cdot \eta) dx=B_{1}(\mu,\phi, \eta),\\
&&\langle b_{2}(u,\phi),\rho\rangle=\int_{\mathcal{D}}(u\cdot \nabla \phi)\cdot \rho dx=B_{2}(u,\phi,\rho),
\end{eqnarray*}
and also define the bilinear operator by $b_\Gamma: L^p(\Gamma)\times H^{1,q}(\Gamma) \rightarrow L^{r'}(\Gamma)$ such that
$$\langle b_{\Gamma}(u,\psi),\eta\rangle=\int_{\Gamma}(u_{\tau}\cdot \nabla_{\tau}\psi)\cdot \eta dS= B_{\Gamma}(u, \psi, \eta).$$
Recall that $B_{0}(u,v,v)=0$ for $u,v\in \mathrm{H}^1$.

Let $\mathcal{S}=(\Omega,\mathcal{F},\{\mathcal{F}_{t}\}_{t\geq0},\mathbb{P}, \mathcal{W})$ be a fixed stochastic basis and $(\Omega,\mathcal{F},\mathbb{P})$ be a complete probability space. $\mathcal{W}$ is a cylindrical Wiener process defined on the Hilbert space $\mathcal{H}$, which is adapted to the complete, right continuous filtration $\{\mathcal{F}_{t}\}_{t\geq 0}$. Namely, $\mathcal{W}=\sum_{k\geq 1}e_k\beta_{k}$ with $\{e_k\}_{k\geq 1}$ being a complete orthonormal basis of $\mathcal{H}$ and $\{\beta_{k}\}_{k\geq 1}$ being a sequence of independent standard one-dimensional Brownian motions. In addition, $L_{2}(\mathcal{H},X)$ denotes the collection of Hilbert-Schmidt operators, the set of all linear operators $G$ from $\mathcal{H}$ to $X$, with norm $\|G\|_{L_{2}(\mathcal{H},X)}^2=\sum_{k\geq 1}\|Ge_k\|_{X}^2$.

Considering an auxiliary space $\mathcal{H}_0\supset \mathcal{H}$,  we define $\mathcal{H}_0$ by
\begin{eqnarray*}
\mathcal{H}_0=\left\{h=\sum_{k\geq 1}\alpha_k e_k: \sum_{k\geq 1}\alpha_k^2k^{-2}<\infty\right\},
\end{eqnarray*}
 with norm $\|h\|_{\mathcal{H}_0}^2=\sum_{k\geq 1}\alpha_k^2k^{-2}$. Observe that the mapping $\Phi:\mathcal{H}\rightarrow\mathcal{H}_0$ is Hilbert-Schmidt. We also have that $\mathcal{W}\in C([0,\infty),\mathcal{H}_0)$ for almost all $\omega$, see \cite{Zabczyk}.

For an $X$-valued predictable process $\varpi\in L^{2}(\Omega;L^{2}_{loc}([0,\infty),L_{2}(\mathcal{H},X)))$  by taking $\varpi_{k}=\varpi e_{k}$, the following Burkholder-Davis-Gundy inequality holds:
\begin{eqnarray*}
\mathbb{E}\left(\sup_{t\in [0,T]}\left\|\int_{0}^{t}\varpi d\mathcal{W}\right\|_{X}^{p}\right)\leq c_{p}\mathbb{E}\left(\int_{0}^{T}\|\varpi\|_{L_{2}(\mathcal{H},X)}^{2}dt\right)^{\frac{p}{2}}
=c_{p}\mathbb{E}\left(\int_{0}^{T}\sum_{k\geq 1}\|\varpi_k\|_{X}^{2}dt\right)^{\frac{p}{2}},
\end{eqnarray*}
for any $p\geq1$.

In the next sections, since 2D case is easier, we only focus on 3D case for the details of proof.

\section{The existence of martingale weak solution }
Our goal in this section is to prove the existence of global martingale weak solution to system \eqref{Equ1.1}-\eqref{1.6}. First, we formulate the definition.
\begin{definition}\label{def2.1} Let $\mathcal{P}_0$ be a probability measure on space $\mathrm{L}_x^2\times V^1$. The $(\mathcal{S}, u,\phi, \psi)$ is a global martingale weak solution to system (\ref{Equ1.1})-(\ref{1.6}) if the followings are satisfied:\\
1. $\mathcal{S}=(\Omega, \mathcal{F}, \{\mathcal{F}_t\}_{t\geq 0}, \mathbb{P}, \mathcal{W})$ is a stochastic basis, where $\mathcal{W}$ is a Wiener process;\\
2. $(u,\phi, \psi)$ are $\{\mathcal{F}_t\}_{t\geq 0}$ progressive measurable processes such that
\begin{align*}
u\in L_w^p(L^\infty_t\mathrm{L}_x^2\cap L^2_t\mathrm{H}^1), ~(\phi, \psi)\in L_w^p(L^\infty_tV^1\cap L^2_tV^2),
\end{align*}
where $\psi={\rm Tr}_{\mathcal{D}}(\phi)$, $\mathbb{P}${\rm-a.s.};\\
3. $\mathcal{P}_0=\mathbb{P}\circ \{(u_0, \phi_0, \psi_0)\}^{-1}$;\\
4. for any $v_0\in \mathrm{H}^1, v_1\in H^1, v_2\in L^2(\Gamma)$, $t\in [0,T]$, $(u,\phi, \psi)$ satisfy $\mathbb{P}${\rm-a.s.}
\begin{align}\label{2.1}
\left\{\begin{array}{ll}
\!\!(u,v_0)+\int_{0}^{t}a_0(u,v_0)+B_0(u,u,v_0)-B_1(\mu, \phi, v_0)ds=\int_{0}^{t}(\mathcal{K}(\psi)\nabla_{\tau}\psi, v_{0,\tau})_\Gamma ds\\ \qquad\qquad\qquad\qquad\qquad\qquad\qquad+\int_{0}^{t}(h(u, \nabla\phi),v_0)d\mathcal{W}+(u_0, v_0),\\
\!\!(\phi, v_1)+\int_{0}^{t} (\nabla \mu, \nabla v_1)+ B_2(u,\phi,v_1)ds=(\phi_0, v_1),\\
\!\!(\psi, v_2)+\int_{0}^{t} (\mathcal{K}(\psi), v_2)+ B_\Gamma(u,\psi,v_2)ds=(\psi_0, v_2),
\end{array}\right.
\end{align}
with
\begin{align*}
\left\{\begin{array}{ll}
\!\!\mu=-\Delta\phi+f(\phi),~\mathbb{P}\mbox{-a.s.}~ {\rm in} ~\mathcal{D}\times (0,T),\\
\!\!\mathcal{K}(\psi)=A_{\tau}\psi+\partial_n\phi+\psi+g(\psi),~ \mathbb{P}\mbox{-a.s.}~{\rm on} ~\Gamma\times (0,T),
\end{array}\right.
\end{align*}
and $\mu, \mathcal{K}(\psi)$ are random distributions (for more details of definition see \cite[Section 3]{brei}) adapted to $\mathcal{F}_t$ with regularity
$$\mu\in L_w^p(L^2_tH^1),~ \mathcal{K}(\psi)\in L_w^p(L^2_tL^2(\Gamma));$$
5. the energy inequality
\begin{align*}
&\frac{1}{2}\left[\mathbb{E}\left[1_{\mathcal{A}}E^2\right]\right]_{s}^t+\mathbb{E}\left[1_{\mathcal{A}}\int_{s}^{t}E\left(\|\nabla u,\nabla\mu\|_{L_x^2}^2+\|(\phi, \psi)\|^2_{V^2}+\|\mathcal{K}(\psi)\|^2_{L^2(\Gamma)}\right)dr\right]\nonumber\\
&\leq \mathbb{E}\left[1_{\mathcal{A}}\int_{s}^{t}E\|h(u,\nabla\phi)\|^2_{L_2(\mathcal{H};L_x^2)}dr\right]
+\mathbb{E}\left[1_{\mathcal{A}}\int_{s}^{t}(h(u,\nabla\phi),u)^2dr\right]
\end{align*}
holds for any $t\geq 0$ and a.e. $s$, $0\leq s\leq t$ and $\mathcal{A}\in \mathcal{F}_s$, where
$$E:=\frac{1}{2}\left(\|u\|_{\mathrm{L}_x^2}^2+\|\nabla \phi\|_{L_x^2}^2+\|\psi\|_{L^2(\Gamma)}^2+\|\nabla_{\tau}\psi\|_{L^2(\Gamma)}^2\right)+\int_{\mathcal{D}}F(\phi)dx+\int_{\Gamma}G(\psi)dS,$$
and $1_{\cdot}$ is the indicator function.
\end{definition}

We next introduce assumptions imposed on the noise intensity operator $h$: there exists constant $C$ such that\\
${\bf A}_1$. $\|h(u, \nabla\phi)\|^2_{L_2(\mathcal{H}; L_x^2)}\leq C(1+\|(u,\nabla\phi)\|_{L_x^2}^2)$, for $u \in \mathrm{L}_x^2, \phi\in H^1$,\\
${\bf A}_2$. $\|h(u_1, \nabla\phi_1)-h(u_2, \nabla\phi_2)\|^2_{L_2(\mathcal{H}; L_x^2)}\leq C\|(u_1-u_2, \nabla(\phi_1-\phi_2))\|^2_{L_x^2}$ for $u\in \mathrm{L}_x^2,\phi\in H^1$.\\
Furthermore, assumptions on $f,g$ are as follows: there exist positive constants $C_1, C_2$ such that\\
${\bf A}_3$. $|f'(r)|\leq C_1(1+|r|^2)$ for $f\in C^1(\mathbb{R})$, $f(0)=0$ and $F''(s)\geq -C_2, F(s)\geq -C_2$, \\
${\bf A}_4$. $|g'(r)|\leq C_1(1+|r|^m)$ for $g\in C^1(\mathbb{R})$, any $m\geq 1$, $g(0)=0$ and $G''(s)\geq -C_2, G(s)\geq -C_2$, where $G(s)=\int_{0}^{s}g(r)dr$.

Here the norm $\|(u,v)\|_{L_x^2}:=\|u\|_{\mathrm{L}_x^2}+\|v\|_{L_x^2}$, for $u\in \mathrm{L}_x^2, v\in L_x^2$.
\begin{remark}
These assumptions imposed on $f$ include the meaningful physical case $f(r)=r^3-r$.
\end{remark}

Now, we state our existence result.
\begin{theorem}\label{thm2.1} Suppose that initial data $(u_0, \psi_0,\psi_0)\in L^p_\omega(\mathrm{L}^2_x\times V^1)$ are $\mathcal{F}_0$-measurable random variables for any $p>2$, and functions $f,g\in C^1(\mathbb{R})$ satisfy assumptions ${\bf A}_3$ and ${\bf A}_4$, operator $h$ satisfies assumptions ${\bf A}_1$ and ${\bf A}_2$. Then, there exists a global martingale weak solution to system (\ref{Equ1.1})-(\ref{1.6}) in the sense of Definition \ref{def2.1}.
\end{theorem}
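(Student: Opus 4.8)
The plan is to prove Theorem~\ref{thm2.1} via the three-level approximation scheme announced in the introduction, constructing a martingale weak solution through a combination of a fixed point argument and the stochastic compactness method. First I would set up the \emph{approximate system}: linearize and mollify the nonlinearities, replacing the strongly nonlinear terms $B_0(u,u,\cd)$, $B_1(\mu,\phi,\cd)$, $B_2(u,\phi,\cd)$, $B_\Gamma(u,\psi,\cd)$ and the boundary forcing $\mathcal{K}(\psi)\nabla_\tau\psi$ by versions that incorporate a smooth cut-off function $\theta_R(\norm{\cd})$ (the same cut-off in front of \emph{all} nonlinear terms, as stressed in the introduction) together with a mollification of the noise so that the forcing becomes effectively additive on the linearized problem. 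The cut-off keeps the nonlinear growth controllable, so that the resulting linear equation with dynamic boundary is globally well-posed on the space $\mathrm{L}_x^2\times V^1$ using the operators $A_0, A_1, A_\tau$ and the coercive bilinear form $a_0$ introduced in Section~2.

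Next I would define the solution map $\mathcal{M}$ on a suitable space of $\mathcal{F}_t$-adapted processes: given an input triple $(\bar u,\bar\phi,\bar\psi)$, one solves the linearized/mollified system to produce an output $(u,\phi,\psi)$, and a fixed point of $\mathcal{M}$ yields an approximate solution of the original system. To run a contraction argument I would estimate $\norm{\mathcal{M}(\bar u_1,\bar\phi_1,\bar\psi_1)-\mathcal{M}(\bar u_2,\bar\phi_2,\bar\psi_2)}$ in the natural energy norm; here the two-level stopping time technique and the extra approximation level are essential, because the cut-off operators are only locally Lipschitz and the stochastic integral must be controlled up to a stopping time on which the relevant norms stay bounded. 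This produces a local-in-time contraction, which, combined with the stopping-time iteration and the cut-off (which removes blow-up), yields a global approximate solution $(u_R,\phi_R,\psi_R)$ for each truncation level $R$ and mollification parameter.

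Then I would derive \emph{uniform a priori estimates} independent of the approximation parameters. Applying It\^o's formula to the energy functional $E$ and using the coercivity of $a_0$, the cancellation $B_0(u,u,u)=0$, the structural identities linking $\mu,\mathcal{K}(\psi)$ to $\phi,\psi$, and the growth assumptions ${\bf A}_1$--${\bf A}_4$ on $h,f,g$ (in particular $F,G$ bounded below), together with the Burkholder--Davis--Gundy inequality stated in Section~2, gives bounds in $L^p_\omega(L^\infty_t\mathrm{L}_x^2\cap L^2_t\mathrm{H}^1)$ for $u$ and in $L^p_\omega(L^\infty_tV^1\cap L^2_tV^2)$ for $(\phi,\psi)$, plus control of $\mu$ in $L^2_tH^1$ and $\mathcal{K}(\psi)$ in $L^2_tL^2(\Gamma)$ via elliptic regularity. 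From these I would extract tightness of the laws of the approximating sequence on the product of the weak-topology spaces $(L^p_t X)_w$ and the relevant spaces of continuous paths, invoke the Jakubowski--Skorokhod representation theorem to obtain a.s.-convergent copies on a new probability space (with a new Wiener process), and finally identify the limit as a martingale weak solution, verifying items 1--5 of Definition~\ref{def2.1}, including the energy inequality by lower semicontinuity.

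The main obstacle I expect is the \emph{contraction estimate for} $\mathcal{M}$ in the presence of the cut-off operators and the dynamic boundary coupling: the boundary forcing $\mathcal{K}(\psi)\nabla_\tau\psi$ and the boundary advection $B_\Gamma(u,\psi,\cd)$ couple the bulk and surface equations strongly, and differences of cut-off nonlinearities do not contract on a fixed time interval without carefully designed stopping times that simultaneously control the $\mathrm{H}^1$ and $V^2$ norms of both competing solutions. Closing this estimate uniformly so that the fixed point exists and the subsequent limit passage respects the weak-in-$\omega$ topologies is the technical heart of the argument; the low regularity of $\mu$ and $\mathcal{K}(\psi)$ (only $L^2_t$ in space, hence random distributions rather than classical processes) is a secondary difficulty that I would handle, following~\cite{brei}, by introducing the primitive variables $\mathcal{V}_1,\mathcal{V}_2$ with $d\mathcal{V}_1=\mu\,dt$, $d\mathcal{V}_2=\mathcal{K}\,dt$ so that the limit identification is carried out for genuinely continuous processes.
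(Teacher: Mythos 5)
Your proposal is correct and follows essentially the same route as the paper: mollification plus cut-off (and the viscous $\delta$-regularization of the chemical potential), a fixed point argument for the linearized system closed by stopping times, uniform a priori estimates via It\^{o}'s formula and elliptic regularity, Skorokhod compactness, and identification of the limit with the energy inequality recovered by lower semicontinuity. The only cosmetic difference is that the auxiliary variables $\mathcal{V}_1,\mathcal{V}_2$ are not needed in the existence proof itself---the paper introduces them only in the Markov selection section---while in the limit passage for Theorem \ref{thm2.1} the quantities $\mu$ and $\mathcal{K}(\psi)$ are identified directly as $\mathcal{F}_t$-adapted random distributions.
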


The proof of Theorem \ref{thm2.1} is divided into four parts. The existence and uniqueness of approximate solutions are established by exploiting the fixed point argument in first part. In part 2, we are devoted to obtaining the a priori estimates by selecting suitable test functions. We pass the limit with respect to $\varepsilon,\delta$ in the regularized system by the stochastic compactness argument in Parts 3, 4.

{\bf Part 1. The existence and uniqueness of approximate solutions.}
Define by space $X=X_1\times X_2$, where
\begin{align*}
&X_1=\left\{u:u\in L^p_\omega (L_t^\infty \mathrm{L}_x^2\cap L^2_t\mathrm{H}^1\cap W_t^{1,2}(\mathrm{H}^1)'), u|_{t=0}=u_0\right\},\\
&X_2=\left\{(\phi,\psi):(\phi,\psi)\in L^p_\omega (L_t^\infty V^1\cap L^2_tV^2\cap W_t^{1,2}V^0), (\phi,\psi)|_{t=0}=(\phi_0, \psi_0)\right\},
\end{align*}
for any $p>2$.

Let $\mathcal{J}_\varepsilon, J_\varepsilon$ be the mollifiers on the corresponding spaces $H^s(\mathcal{D}), H^s(\Gamma)$ for $s\geq 0$, respectively, for more properties and the construction of such operators see Lemma \ref{lem4.4} and Remark \ref{rem4.1}. We consider the mollification system:
\begin{eqnarray}\label{equ3.1}
\left\{\begin{array}{ll}
\!\!(du_{\varepsilon, \delta}, \xi)+a(u_{\varepsilon, \delta},\xi)dt+B_0(u_{\varepsilon, \delta},\mathcal{J}_\varepsilon u_{\varepsilon, \delta}, \xi)dt-B_1(\mu_{\varepsilon, \delta}, \mathcal{J}_{\varepsilon}\phi_{\varepsilon, \delta}, \xi)dt\\
\!\!\qquad\qquad-(\mathcal{K}(\psi_{\varepsilon,\delta})\nabla_{\tau}(J_\varepsilon \psi_{\varepsilon,\delta}), \xi_\tau)_{\Gamma}dt=(h(u_{\varepsilon, \delta},\nabla\phi_{\varepsilon, \delta}), \xi)d\mathcal{W},\\
\!\!(\partial_t\phi_{\varepsilon, \delta}, \xi_1)+(\nabla\mu_{\varepsilon, \delta}, \nabla\xi_1)+B_2(u_{\varepsilon, \delta}, \mathcal{J}_{\varepsilon}\phi_{\varepsilon, \delta}, \xi_1)=0,\\
\!\!(\partial_t\psi_{\varepsilon, \delta}, \xi_2)_{\Gamma}+B_\Gamma(u_{\varepsilon, \delta},J_{\varepsilon}(\psi_{\varepsilon,\delta}), \xi_2)+(\mathcal{K}(\psi_{\varepsilon,\delta}), \xi_2)_{\Gamma}=0,
\end{array}\right.
\end{eqnarray}
for $\xi\in \mathrm{H}^1, \xi_1\in H^1, \xi_2\in  L^2(\Gamma)$, where
\begin{align}
&\mu_{\varepsilon, \delta}=-\Delta \phi_{\varepsilon, \delta}+\delta\partial_t\phi_{\varepsilon, \delta}+f(\phi_{\varepsilon, \delta}),\label{3.3}\\&
\mathcal{K}(\psi_{\varepsilon,\delta})=A_{\tau}\psi_{\varepsilon,\delta}+\partial_n\phi_{\varepsilon, \delta}+\psi_{\varepsilon,\delta}+g(\psi_{\varepsilon,\delta}).\label{3.4}
\end{align}
In order to overcome the difficulty caused by random effect, we introduce $C^{\infty}$-smooth cut-off function $\overline{\Psi}_{R}:[0,\infty)\rightarrow [0,1]$ with
\begin{eqnarray*}
 \overline{\Psi}_{R}(x) =\left\{\begin{array}{ll}
                  1,& \mbox{if} \ 0<x<R,  \\
                  0,& \mbox{if} \ x\geq 2R.  \\
                \end{array}\right.
\end{eqnarray*}
Then, define $\Psi_R=\overline{\Psi}_{R}(\|u\|_{\mathrm{L}_x^2})\cdot\overline{\Psi}_{R}(\|(\phi, \psi)\|_{ V^1})$, add the cut-off function in front of nonlinear terms of system (\ref{equ3.1}), we obtain
\begin{eqnarray}\label{equ3.1*}
\left\{\begin{array}{ll}
\!\!(du_{\varepsilon, \delta}, \xi)+a(u_{\varepsilon, \delta},\xi)dt+\Psi_RB_0(u_{\varepsilon, \delta},\mathcal{J}_\varepsilon u_{\varepsilon, \delta}, \xi)dt-\Psi_RB_1(\mu_{\varepsilon, \delta}, \mathcal{J}_{\varepsilon}\phi_{\varepsilon, \delta}, \xi)dt\\
\!\!\qquad\qquad-\Psi_R(\mathcal{K}(\psi_{\varepsilon,\delta})\nabla_{\tau}(J_\varepsilon \psi_{\varepsilon,\delta}), \xi_\tau)_{\Gamma}dt=(h(u_{\varepsilon, \delta},\nabla\phi_{\varepsilon, \delta}), \xi)d\mathcal{W},\\
\!\!(\partial_t\phi_{\varepsilon, \delta}, \xi_1)+(\nabla\mu_{\varepsilon, \delta}, \nabla\xi_1)+\Psi_RB_2(u_{\varepsilon, \delta}, \mathcal{J}_{\varepsilon}\phi_{\varepsilon, \delta}, \xi_1)=0,\\
\!\!(\partial_t\psi_{\varepsilon, \delta}, \xi_2)_{\Gamma}+\Psi_RB_\Gamma(u_{\varepsilon, \delta},J_{\varepsilon}(\psi_{\varepsilon,\delta}), \xi_2)+(\mathcal{K}(\psi_{\varepsilon,\delta}), \xi_2)_{\Gamma}=0.
\end{array}\right.
\end{eqnarray}
Considering its linearization
\begin{eqnarray}\label{equ3.2}
\left\{\begin{array}{ll}
\!\!(du_{\varepsilon, \delta}, \xi)+a(u_{\varepsilon, \delta},\xi)dt+\Psi_RB_0( v_{\varepsilon, \delta},\mathcal{J}_\varepsilon v_{\varepsilon, \delta}, \xi)dt-\Psi_RB_1(\tilde{\mu}_{\varepsilon, \delta}, \mathcal{J}_{\varepsilon}\varphi_{\varepsilon, \delta}, \xi)dt\\
\!\!\qquad\qquad-\Psi_R(\mathcal{K}(\eta_{\varepsilon,\delta})\nabla_{\tau}(J_\varepsilon \eta_{\varepsilon,\delta}), \xi_\tau)_{\Gamma}dt=(h(v_{\varepsilon, \delta},\nabla\varphi_{\varepsilon, \delta}), \xi)d\mathcal{W},\\
\!\!(\partial_t\phi_{\varepsilon, \delta}, \xi_1)+(\nabla\bar{\mu}_{\varepsilon, \delta}, \nabla\xi_1)+\Psi_RB_2(v_{\varepsilon, \delta}, \mathcal{J}_{\varepsilon}\varphi_{\varepsilon, \delta}, \xi_1)=0,\\
\!\!(\partial_t\psi_{\varepsilon, \delta}, \xi_2)_{\Gamma}+\Psi_RB_\Gamma(v_{\varepsilon, \delta},J_{\varepsilon}(\eta_{\varepsilon,\delta}), \xi_2)+(\widetilde{\mathcal{K}}(\psi_{\varepsilon,\delta}), \xi_2)_{\Gamma}=-(g(\eta_{\varepsilon,\delta}),\xi_2)_{\Gamma},
\end{array}\right.
\end{eqnarray}
where processes $(v_{\varepsilon, \delta}, \varphi_{\varepsilon, \delta}, \eta_{\varepsilon,\delta})\in X$ and
\begin{align}
&\tilde{\mu}_{\varepsilon, \delta}=-\Delta \varphi_{\varepsilon, \delta}+\delta\partial_t\varphi_{\varepsilon, \delta}+f(\varphi_{\varepsilon, \delta}),\label{3.5}\\&
\mathcal{K}(\eta_{\varepsilon,\delta})=A_{\tau}\eta_{\varepsilon,\delta}+\partial_n\varphi_{\varepsilon, \delta}+\eta_{\varepsilon,\delta}+g(\eta_{\varepsilon,\delta}),\label{3.7**}\\
&\widetilde{\mathcal{K}}(\psi_{\varepsilon,\delta})=A_{\tau}\psi_{\varepsilon,\delta}+\partial_n\phi_{\varepsilon, \delta}+\psi_{\varepsilon,\delta},\\
&\bar{\mu}_{\varepsilon, \delta}=-\Delta \phi_{\varepsilon, \delta}+\delta\partial_t\phi_{\varepsilon, \delta}+f(\varphi_{\varepsilon, \delta}).\label{3.6*}
\end{align}

Assumptions ${\bf A}_3$ and ${\bf A}_4$ are insufficient for closing the estimate, therefore we use the functions $f_\varepsilon, g_\varepsilon\in C^2$ substituting for $f, g$ with the properties:\\
(i) there exist two constants $c_{f,\varepsilon}, c_{g,\varepsilon}$ such that
$$|f_\varepsilon'(s)|\leq c_{f,\varepsilon}, ~|g_\varepsilon'(s)|\leq c_{g,\varepsilon},$$
for all $s\in \mathbb{R}$,\\
(ii) on any compact set of $\mathbb{R}$, the convergences $f_\varepsilon\rightarrow f, g_\varepsilon\rightarrow g$ are uniform, see \cite{GMA,gal2} for the construction of $f_\varepsilon, g_\varepsilon$.

Define the mapping
$$\mathcal{M}:X\longmapsto X,~ (v_{\varepsilon, \delta}, \varphi_{\varepsilon, \delta}, \eta_{\varepsilon,\delta})\longmapsto (u_{\varepsilon, \delta}, \phi_{\varepsilon, \delta}, \psi_{\varepsilon, \delta}).$$
We show that the mapping is well-defined. %Observe that, taking $\xi_1\equiv 1$, we have $\langle \partial_t\phi_{\varepsilon,\delta}\rangle=0, {\rm a.e.}~ (0,T)\times \Omega$ which allows us to apply the Poincar\'{e} inequality to get the desired estimates.
By Property (i) and the mean-value theorem, we have
\begin{align}\label{3.7*}
\|f_\varepsilon(\varphi_{\varepsilon, \delta})\|_{L_t^\infty L_x^2}&\leq \|f_\varepsilon'(\theta \varphi_{\varepsilon,\delta})\varphi_{\varepsilon, \delta}\|_{L_t^\infty L_x^2}\leq c_{f,\varepsilon}\|\varphi_{\varepsilon,\delta}\|_{L_t^\infty L_x^2},
\end{align}
where $\theta\in (0,1)$ is a constant. Similarly, we have
\begin{align}\label{3.8**}
\begin{split}
\|g_\varepsilon(\eta_{\varepsilon,\delta})\|_{L_t^\infty L^2(\Gamma)}& \leq c_{g,\varepsilon}\|\eta_{\varepsilon,\delta}\|_{L_t^\infty L^2(\Gamma)}.
\end{split}
\end{align}
We use \eqref{3.5}, \eqref{3.7*}  and the fact $(v_{\varepsilon, \delta}, \varphi_{\varepsilon, \delta}, \eta_{\varepsilon,\delta})\in X$ to find
$$\tilde{\mu}_{\varepsilon, \delta}\in L_\omega^pL^2_tL^2_x.$$
And from $\eqref{3.7**}$, $\eqref{3.8**}$ and the embedding $H^{m,p}(\mathcal{D})\hookrightarrow H^{m-\frac{1}{p},p}(\Gamma)$, we get
$$\mathcal{K}(\eta_{\varepsilon,\delta})\in L_\omega^pL_t^2L^2(\Gamma).$$
Moreover, we have
\begin{align*}
&\mathcal{J}_\varepsilon v_{\varepsilon, \delta}\in L_\omega^pL_t^\infty \mathrm{H}^2,~\mathcal{J}_{\varepsilon}\varphi_{\varepsilon, \delta}\in L_\omega^pL_t^\infty H^3, ~J_\varepsilon \eta_{\varepsilon,\delta}\in L_\omega^pL_t^\infty H^3(\Gamma),
\end{align*}
which combines with assumption ${\bf A}_1$, H\"{o}lder's inequality leading to
\begin{align*}
&h(v_{\varepsilon, \delta},\nabla\varphi_{\varepsilon, \delta})\in L_\omega^pL_t^\infty L^2_x,\\
&f_1:=\mathcal{K}(\eta_{\varepsilon,\delta})\nabla_{\tau}(J_\varepsilon \eta_{\varepsilon,\delta})\in L_\omega^pL_t^2L^2(\Gamma),\\
&f_2:=-b_0( v_{\varepsilon, \delta}, \mathcal{J}_\varepsilon v_{\varepsilon, \delta})+\tilde{\mu}_{\varepsilon, \delta}\cdot\nabla \mathcal{J}_{\varepsilon}\varphi_{\varepsilon, \delta}\in L_\omega^pL_t^\infty H^2,\\
&g_1:=-b_1(v_{\varepsilon, \delta}, \mathcal{J}_{\varepsilon}\varphi_{\varepsilon, \delta})\in L_\omega^pL_t^2L_x^2,\\
&g_2:=-b_\Gamma(v_{\varepsilon, \delta},J_{\varepsilon}(\eta_{\varepsilon,\delta}))-g_\varepsilon(\eta_{\varepsilon,\delta})\in L_\omega^pL_t^2L^2(\Gamma).
\end{align*}
Thus, we verified the required conditions of the auxiliary Propositions \ref{pro5.1} and \ref{pro5.2} which are invoked to obtain the existence and uniqueness of solution $(u_{\varepsilon, \delta}, \phi_{\varepsilon, \delta}, \psi_{\varepsilon, \delta})\in X$ to the linearization system (\ref{equ3.2}).

The fixed point theorem tells us once we show mapping $\mathcal{M}$ is contraction from $X$ into itself, we could obtain the existence and uniqueness of approximate solutions to mollification system (\ref{equ3.1*}). To this end, we first show:

{\bf Step 1}. Proving that $\mathcal{M}$ is a mapping from $B_{K_1,K_2}$ into itself, where
$$B_{K_1,K_2}:=\left\{(v,\varphi, \eta)\in X: \|v\|_{X_1}\leq K_1, \|(\varphi, \eta)\|_{X_2}\leq K_2\right\},$$
for two positive constants $K_1, K_2$ choosing later.

Using It\^{o}'s formula to the function $\frac{1}{2}\|u_{\varepsilon, \delta}\|_{\mathrm{L}_x^2}^2$, we have
\begin{align}\label{3.7}
&\frac{1}{2}d\|u_{\varepsilon,\delta}\|_{\mathrm{L}_x^2}^2+a_0(u_{\varepsilon,\delta}, u_{\varepsilon,\delta})dt\nonumber\\
&=\big(-\Psi_RB_0( v_{{\varepsilon,\delta}},\mathcal{J}_{\varepsilon,\delta} v_{{\varepsilon,\delta}}, u_{\varepsilon,\delta})+\Psi_RB_1(\tilde{\mu}_{\varepsilon,\delta}, \mathcal{J}_{{\varepsilon}}\varphi_{\varepsilon,\delta}, u_{\varepsilon,\delta})\nonumber\\
&\quad+\Psi_R(\mathcal{K}(\eta_{\varepsilon,\delta})\nabla_\tau(J_{\varepsilon} \eta_{{\varepsilon,\delta}}), u_{{\varepsilon,\delta},\tau})_\Gamma\big)dt\nonumber\\
&\quad+\frac{1}{2}\|Ph(v_{{\varepsilon,\delta}},\nabla\varphi_{{\varepsilon,\delta}})\|_{L_2(\mathcal{H};\mathrm{L}_x^2)}^2dt
+(h(v_{{\varepsilon,\delta}},\nabla\varphi_{{\varepsilon,\delta}}), u_{\varepsilon,\delta})d\mathcal{W}\nonumber\\
&=:\sum_{i=1}^{4}I_{i}dt+(h(v_{{\varepsilon,\delta}},\nabla\varphi_{{\varepsilon,\delta}}), u_{\varepsilon,\delta})d\mathcal{W}.
\end{align}
Integrating of $t$, and then taking supremum, by H\"{o}lder's inequality, Sobolev's embedding $H^2\hookrightarrow L^\infty$, Lemma \ref{lem4.2} and Lemma \ref{lem4.4}, we obtain
\begin{align}
\int_{0}^{T}|I_1|dt&\leq  C\int_{0}^{T}\Psi_R\|u_{\varepsilon,\delta}\|_{\mathrm{L}_x^2}\|v_{\varepsilon,\delta}\|_{\mathrm{L}_x^2}\|\mathcal{J}_{\varepsilon} v_{{\varepsilon,\delta}}\|_{\mathrm{H}^3}dt\nonumber\\
&\leq \int_{0}^{T}\frac{3}{16}\|u_{\varepsilon,\delta}\|_{\mathrm{L}_x^2}^2dt+\frac{C}{\varepsilon^6}\int_{0}^{T}\Psi_R^2\|v_{\varepsilon, \delta}\|_{\mathrm{L}_x^2}^4dt,\nonumber\\
&\leq \int_{0}^{T}\frac{3}{16}\|u_{\varepsilon,\delta}\|_{\mathrm{L}_x^2}^2dt+\frac{CR^4T}{\varepsilon^6},\\
\int_{0}^{T}|I_2|dt&\leq  C\int_{0}^{T}\Psi_R\|u_{\varepsilon,\delta}\|_{\mathrm{L}_x^2}\|\tilde{\mu}_{\varepsilon,\delta}\|_{L_x^2}\|\mathcal{J}_{\varepsilon} \varphi_{{\varepsilon,\delta}}\|_{H^3}dt\nonumber\\
&\leq
\int_{0}^{T}\frac{3}{16}\|u_{\varepsilon,\delta}\|_{\mathrm{L}_x^2}^2dt+\frac{C}{\varepsilon^4}\int_{0}^{T}\Psi_R^2\|\tilde{\mu}_{\varepsilon, \delta}\|_{L_x^2}^2\|\varphi_{\varepsilon, \delta}\|_{H^1}^2dt\nonumber\\
&\leq \int_{0}^{T}\frac{3}{16}\|u_{\varepsilon,\delta}\|_{\mathrm{L}_x^2}^2dt+\frac{CR^2}{\varepsilon^4}\int_{0}^{T}\|\tilde{\mu}_{\varepsilon,\delta}\|_{L_x^2}^2dt,\\
\int_{0}^{T}|I_3|dt&\leq  C\int_{0}^{T}\Psi_R\|u_{\varepsilon,\delta,\tau}\|_{\mathrm{L}^2(\Gamma)}\|\mathcal{K}(\eta_{\varepsilon,\delta})\|_{L^2(\Gamma)}\|\nabla_\tau(J_{\varepsilon} \eta_{{\varepsilon,\delta}})\|_{H^2(\Gamma)}dt\nonumber\\
&\leq  \int_{0}^{T}\frac{1}{2}\|D(u_{\varepsilon,\delta})\|_{\mathrm{L}_x^2}^2+2^\alpha\|u_{\varepsilon,\delta}\|_{\mathrm{L}_x^2}^2dt\nonumber\\
&\quad+\frac{C}{\varepsilon^4}
\int_{0}^{T}\Psi_R^2\|\mathcal{K}(\eta_{\varepsilon,\delta})\|^2_{L^2(\Gamma)}\|\eta_{{\varepsilon,\delta}}\|_{H^1(\Gamma)}^2dt\nonumber\\
&\leq \int_{0}^{T}\frac{1}{2}\|D(u_{\varepsilon,\delta})\|_{\mathrm{L}_x^2}^2+2^\alpha\|u_{\varepsilon,\delta}\|_{\mathrm{L}_x^2}^2dt
+\frac{CR^2}{\varepsilon^4}\int_{0}^{T}\|\mathcal{K}(\eta_{\varepsilon,\delta})\|^2_{L^2(\Gamma)}dt.
\end{align}
Using  Lemma \ref{lem4.2} with $t=2, r=2, r^*=6$, we have
\begin{align}
\int_{0}^{T}\Psi_R^2\|b_0(v_{{\varepsilon,\delta}},\mathcal{J}_{\varepsilon,\delta} v_{{\varepsilon,\delta}})\|_{(H^1)'}^2dt&\leq \int_{0}^{T}\Psi_R^2\|v_{{\varepsilon,\delta}}\|_{\mathrm{H}^{-1,4}}^2\|\nabla\mathcal{J}_{\varepsilon,\delta} v_{{\varepsilon,\delta}}\|_{\mathrm{H}^1}^2dt\nonumber\\ &\leq \frac{C}{\varepsilon^4}\int_{0}^{T}\Psi_R^2\|v_{\varepsilon, \delta}\|_{\mathrm{L}_x^2}^4dt\leq
\frac{CR^4T}{\varepsilon^4}.
\end{align}
Similarly,
\begin{align}
&\int_{0}^{T}\Psi_R^2\|b_1(\tilde{\mu}_{\varepsilon,\delta}, \mathcal{J}_{{\varepsilon}}\varphi_{\varepsilon,\delta}) \|_{(H^1)'}^2dt\leq \frac{CR^2}{\varepsilon^2}\int_{0}^{T}\|\tilde{\mu}_{\varepsilon,\delta}\|_{L_x^2}^2dt,\\
&\int_{0}^{T}\Psi_R^2\|\mathcal{K}(\eta_{\varepsilon,\delta})\nabla_\tau(J_{\varepsilon} \eta_{{\varepsilon,\delta}})\|_{(H^{1}(\Gamma))'}^2dt\leq \frac{CR^2}{\varepsilon^2}\int_{0}^{T}\|\mathcal{K}(\eta_{\varepsilon,\delta})\|^2_{L^2(\Gamma)}dt.
\end{align}

Furthermore, by the Burkholder-Davis-Gundy inequality, for any $p\ge2$, we get
\begin{align}\label{3.11}
&\mathbb{E}\left(\sup_{ t\in [0,T]}\left|\int_{0}^{t}(h(v_{\varepsilon,\delta},\nabla\varphi_{\varepsilon,\delta}), u_{\varepsilon,\delta})d\mathcal{W}\right|\right)^p\nonumber\\
&\leq C\mathbb{E}\left(\int_{0}^{T}\|u_{\varepsilon,\delta}\|_{\mathrm{L}_x^2}^2(1+\|(v_{\varepsilon,\delta}, \nabla\varphi_{\varepsilon,\delta})\|^2_{L_x^2})dt\right)^{\frac{p}{2}}\nonumber\\
&\leq C\mathbb{E}\left(\int_{0}^{T}\|u_{\varepsilon,\delta}\|_{\mathrm{L}_x^2}^4+1+\|(v_{\varepsilon,\delta}, \nabla\varphi_{\varepsilon,\delta})\|^4_{L_x^2}dt\right)^{\frac{p}{2}}\nonumber\\
&\leq CT^{\frac{p}{2}-1}\mathbb{E}\int_{0}^{T}\| u_{\varepsilon, \delta}\|_{\mathrm{L}_x^2}^{2p}dt+CT^{\frac{p}{2}}(1+\|(v_{\varepsilon,\delta}, \varphi_{\varepsilon,\delta})\|_{X}^{2p}).
\end{align}
Taking power $p$, expectation, we deduce from (\ref{3.7})-(\ref{3.11}) that
\begin{align}\label{3.12}
&\mathbb{E}\|u_{\varepsilon, \delta}\|_{L_t^\infty\mathrm{L}_x^2}^{2p}+\mathbb{E}\left(\int_{0}^{T}\|\nabla u_{\varepsilon, \delta}\|_{\mathrm{L}_x^2}^2+\|\partial_t u_{\varepsilon, \delta}\|_{(H^{1})'}^2dt\right)^p\nonumber\\
&\leq\mathbb{E}\|u_0\|_{\mathrm{L}_x^2}^{2p}+\left(\frac{CR^4T}{\varepsilon^6}+\frac{CR^2K_2}{\varepsilon^4}+C(K_1,K_2)T^{\frac{1}{2}}\right)^p\nonumber\\
&\quad+C(p)\left(\sqrt{2}^\alpha T^{1-\frac{2}{p}}+ T^{\frac{1}{2}-\frac{1}{p}}\right)^p\mathbb{E}\int_{0}^{T}\| u_{\varepsilon, \delta}\|_{\mathrm{L}_x^2}^{2p}dt.
\end{align}
In above, we also have used Korn's inequality $\|\nabla u \|_{\mathrm{L}_x^2}\leq \sqrt{2}\|D(u)\|_{\mathrm{L}_x^2}\leq \sqrt{2}\|\nabla u \|_{\mathrm{L}_x^2}$. It follows from \eqref{3.12} and Gronwall's inequality that
 \begin{align*}\label{3.13}
&\mathbb{E}\|u_{\varepsilon, \delta}\|_{L_t^\infty\mathrm{L}_x^2}^{2p}+\mathbb{E}\left(\int_{0}^{T}\|\nabla u_{\varepsilon, \delta}\|_{\mathrm{L}_x^2}^2+\|\partial_t u_{\varepsilon, \delta}\|_{(H^{1})'}^2dt\right)^p\nonumber\\
&\leq\left(C\mathbb{E}\|u_0\|_{\mathrm{L}_x^2}^{2}+\frac{CR^4T}{\varepsilon^6}+\frac{CR^2K_2}{\varepsilon^4}+C(K_1,K_2)T^{\frac{1}{2}}\right)^p e^{CT\left(\sqrt{2}^\alpha T^{1-\frac{2}{p}}+ T^{\frac{1}{2}-\frac{1}{p}}\right)^p}.
\end{align*}
For fixed $R, \varepsilon, \delta$, we choose $K_1$ large and $T$ small enough such that
$$\left(C\mathbb{E}\|u_0\|_{\mathrm{L}_x^2}^{2}+\frac{CR^2K_2}{\varepsilon^4}\right)^p e^{CT\left(\sqrt{2}^\alpha T^{1-\frac{2}{p}}+ T^{\frac{1}{2}-\frac{1}{p}}\right)^p}\leq \frac{K_1}{2},$$
and
$$\left(\frac{CR^4T}{\varepsilon^6} +C(K_1,K_2)T^{\frac{1}{2}}\right)^p e^{CT\left(\sqrt{2}^\alpha T^{1-\frac{2}{p}}+ T^{\frac{1}{2}-\frac{1}{p}}\right)^p}\leq \frac{K_1}{2}.$$

As (\ref{5.4}) in Appendix, taking power $p$, and then expectation, we obtain
\begin{align}
&\mathbb{E}\left(\|(\phi_{\varepsilon,\delta},\psi_{\varepsilon,\delta})\|_{L_t^\infty V^1}^{2p}
+\|\partial_t(\phi_{\varepsilon,\delta},\psi_{\varepsilon,\delta})\|_{L_t^2((H^1)'\times L^2(\Gamma))}^{2p}+\delta\|\phi_{\varepsilon,\delta}\|_{L^2_tL_x^2}^{2p}+\|(\phi_{\varepsilon,\delta},\psi_{\varepsilon,\delta})\|^{2p}_{L_t^2V^2}\right)\nonumber\\
&\quad+\mathbb{E}\left(\|\bar{\mu}_{\varepsilon,\delta}\|_{L_t^2L_x^2}^{2p}+\|\widetilde{\mathcal{K}}(\psi_{\varepsilon,\delta})\|_{L^2_tL^2_x}^{2p}\right)\nonumber\\
&\leq C_p\mathbb{E}\Big(\|(\phi_0,\psi_0)\|_{V^1}^2+\Psi^2_R\|b_2(v_{\varepsilon, \delta}, \mathcal{J}_{\varepsilon}\varphi_{\varepsilon, \delta})\|_{L^2_tL^2_x}^2+\Psi^2_R\|b_\Gamma(v_{\varepsilon, \delta},J_{\varepsilon}\eta_{\varepsilon,\delta})\|_{L_t^2L^2(\Gamma)}^2\nonumber\\&\quad+\|f_\varepsilon(\varphi_{\varepsilon, \delta})\|_{L_t^2L_x^2}^2
+\|g_\varepsilon(\eta_{\varepsilon, \delta})\|_{L_t^2L^2(\Gamma)}^2\Big)^p=:C_p\mathbb{E}\left(\|(\phi_0,\psi_0)\|_{V^1}^2+\sum_{i=1}^4 J_i\right)^p.
\end{align}
We estimate terms $J_1, J_2$ using H\"{o}lder's inequality, Lemma \ref{lem4.2} and Lemma \ref{lem4.4} to find
\begin{align}
\int_{0}^{T}|J_1|dt&\leq \int_{0}^{T}\Psi^2_R\|v_{\varepsilon, \delta}\|_{\mathrm{L}_x^2}^{2}\|\mathcal{J}_{\varepsilon}\varphi_{\varepsilon, \delta}\|_{H^3}^{2}dt\leq \frac{C}{\varepsilon^4}\int_{0}^{T}\Psi^2_R\|v_{\varepsilon, \delta}\|_{\mathrm{L}_x^2}^{2}\|\mathcal{J}_{\varepsilon}\varphi_{\varepsilon, \delta}\|_{H^1}^{2}dt\nonumber\\
&\leq \frac{CR^4T}{\varepsilon^4},\label{3.15}\\
\int_{0}^{T}|J_2|dt&\leq \int_{0}^{T}\Psi^2_R\|v_{\varepsilon, \delta}\|_{\mathrm{L}^2(\Gamma)}^2\|J_{\varepsilon}\eta_{\varepsilon, \delta}\|_{H^3(\Gamma)}^2dt\leq \frac{C}{\varepsilon^4}\int_{0}^{T}\Psi^2_R\|v_{\varepsilon, \delta}\|_{\mathrm{L}^2(\Gamma)}^2\|J_{\varepsilon}\eta_{\varepsilon, \delta}\|_{H^1(\Gamma)}^2dt\nonumber\\
&\leq \frac{C}{\varepsilon^4}\int_{0}^{T}\Psi^2_R(\bar{\varepsilon}\|D(v_{\varepsilon, \delta})\|_{\mathrm{L}_x^2}^2+\bar{\varepsilon}^{-\nu}\|v_{\varepsilon, \delta}\|_{\mathrm{L}_x^2}^2)\|J_{\varepsilon}\eta_{\varepsilon, \delta}\|_{H^1(\Gamma)}^2dt\\
&\leq \frac{CR^4T}{\varepsilon^4\bar{\varepsilon}^{\nu}}+\frac{CR^2\bar{\varepsilon}}{\varepsilon^4}\int_{0}^{T}\|D(v_{\varepsilon, \delta})\|_{\mathrm{L}_x^2}^2dt.\notag
\end{align}
By \eqref{3.7*} and \eqref{3.8**}, we have
\begin{align}\label{3.17}
\int_{0}^{T}|J_3+J_4|dt&\leq C(K_2)T.
\end{align}

Combining estimates \eqref{3.13}-\eqref{3.17}, we find
\begin{align}\label{3.18}
&\mathbb{E}\bigg(\|(\phi_{\varepsilon,\delta},\psi_{\varepsilon,\delta})\|_{L_t^\infty V^1}^{2p}
+\|\partial_t(\phi_{\varepsilon,\delta},\psi_{\varepsilon,\delta})\|_{L_t^2((H^1)'\times L^2(\Gamma))}^{2p}\nonumber\\ &\qquad+\delta\|\phi_{\varepsilon,\delta}\|_{L^2_tL_x^2}^{2p}+\|(\phi_{\varepsilon,\delta},\psi_{\varepsilon,\delta})\|^{2p}_{L_t^2V^2}\bigg)\nonumber\\
&\leq C_p\mathbb{E}\|(\phi_0,\psi_0)\|_{V^1}^{2p}+C(K_1,K_2,p)\left(\frac{R^4T}{\varepsilon^4}+\frac{R^4T}{\varepsilon^4\bar{\varepsilon}^{\nu}}
+\frac{R^2\bar{\varepsilon}}{\varepsilon^4}+T\right)^p.
\end{align}
Choosing $K_2, \bar{\varepsilon}$ such that
\begin{align*}
3C_p\mathbb{E}\|(\phi_0,\psi_0)\|_{V^1}^{2p}\leq K_2, ~ \bar{\varepsilon}<\frac{\varepsilon^4K_2^\frac{1}{p}}{3^\frac{1}{p}C(K_1,K_2,p)R^2},
\end{align*}
and then a small time $T$ such that
$$C(K_1,K_2,p)\left(\frac{R^4T}{\varepsilon^4}+\frac{R^4T}{\varepsilon^4\bar{\varepsilon}^{\nu}}
+T\right)^p\leq \frac{K_2}{3}.$$
We conclude that $\mathcal{M}$ is a mapping from $B_{K_1,K_2}$ into itself from \eqref{3.12}, \eqref{3.18}.

{\bf Step 2}. Mapping $\mathcal{M}$ is contraction.

Let $(v_{\varepsilon, \delta}^i,\varphi_{\varepsilon, \delta}^i, \eta_{\varepsilon, \delta}^i)\in B_{K_1,K_2}, i=1,2$ and $(u_{\varepsilon, \delta}^i,\phi_{\varepsilon, \delta}^i, \psi_{\varepsilon, \delta}^i)=\mathcal{M}(v_{\varepsilon, \delta}^i,\varphi_{\varepsilon, \delta}^i, \eta_{\varepsilon, \delta}^i)$. Denote by $(\bar{u}_{\varepsilon, \delta},\bar{\phi}_{\varepsilon, \delta}, \bar{\psi}_{\varepsilon, \delta})$, $(\bar{v}_{\varepsilon, \delta},\bar{\varphi}_{\varepsilon, \delta}, \bar{\eta}_{\varepsilon, \delta})$ the differences of $(u_{\varepsilon, \delta}^i,\phi_{\varepsilon, \delta}^i, \psi_{\varepsilon, \delta}^i)$, $(v_{\varepsilon, \delta}^i,\varphi_{\varepsilon, \delta}^i, \eta_{\varepsilon, \delta}^i)$ satisfying
\begin{eqnarray}\label{diff}
\left\{\begin{array}{ll}
\!\!d\bar{u}_{\varepsilon, \delta}-{\rm div}(2 D(\bar{u}_{\varepsilon, \delta}))dt+\Psi_R^1(b_0( v^1_{\varepsilon, \delta},\mathcal{J}_\varepsilon \bar{v}_{\varepsilon, \delta})+b_0(\bar{v}_{\varepsilon, \delta},\mathcal{J}_\varepsilon v^2_{\varepsilon, \delta} ))dt\\ \qquad\quad+\tilde{\Phi}_R b_0(v^2_{\varepsilon, \delta},\mathcal{J}_\varepsilon v^2_{\varepsilon, \delta} )dt-(\Psi_R^1\tilde{\mu}^1_{\varepsilon, \delta}\nabla \mathcal{J}_{\varepsilon}\varphi^1_{\varepsilon, \delta}-\Psi_R^2\tilde{\mu}^2_{\varepsilon, \delta}\nabla \mathcal{J}_{\varepsilon}\varphi^2_{\varepsilon, \delta})dt\\
\qquad\quad=(h(v^1_{\varepsilon, \delta},\nabla\varphi^1_{\varepsilon, \delta})-h(v^2_{\varepsilon, \delta},\nabla\varphi^2_{\varepsilon, \delta}))d\mathcal{W},\\
\!\!(D(\bar{u}_{\varepsilon, \delta})\cdot n)_{\tau}+\bar{u}
_{\varepsilon, \delta,\tau}\\ \qquad\quad=\Psi_R^1(\overline{\mathcal{K}}(\bar{\eta}_\varepsilon)\nabla_\tau(J_\varepsilon \eta^1_{\varepsilon, \delta})+\mathcal{K}(\eta^2_{\varepsilon, \delta}) \nabla_\tau(J_\varepsilon \bar{\eta}
_{\varepsilon, \delta}))+\tilde{\Phi}_R\mathcal{K}(\eta^2_{\varepsilon, \delta}) \nabla_\tau(J_\varepsilon \eta^2
_{\varepsilon, \delta}),\\
\!\!\partial_t\bar{\phi}_{\varepsilon, \delta}+A_1\bar{\mu}_{\varepsilon, \delta}+\Psi_R^1(b_2(\bar{v}_{\varepsilon, \delta}, \mathcal{J}_{\varepsilon}\phi^1_{\varepsilon, \delta})+b_2(v^2_{\varepsilon, \delta}, \mathcal{J}_{\varepsilon}\bar{\phi}_{\varepsilon, \delta}))+\tilde{\Phi}_Rb_2(v^2_{\varepsilon, \delta}, \mathcal{J}_{\varepsilon}\phi^2_{\varepsilon, \delta})=0,\\
\!\!\partial_t\bar{\psi}_{\varepsilon, \delta}+\Psi_R^1(b_\Gamma(\bar{v}_{\varepsilon, \delta},J_{\varepsilon}(\eta^1_{\varepsilon, \delta}))+b_\Gamma(v^2_{\varepsilon, \delta},J_{\varepsilon}(\bar{\eta}_{\varepsilon, \delta})))+\tilde{\Phi}_Rb_\Gamma(v^2_{\varepsilon, \delta},J_{\varepsilon}(\eta^2_{\varepsilon, \delta}))
+\widetilde{\mathcal{K}}(\bar{\psi}_{\varepsilon, \delta})\\ \qquad\quad=-(g(\eta^1_{\varepsilon, \delta})-g(\eta^2_{\varepsilon, \delta})),\\
\!\! (\bar{u}_{\varepsilon, \delta}(0), \bar{\phi}_{\varepsilon, \delta}(0), \bar{\psi}_{\varepsilon, \delta}(0))=(0,0,0),
\end{array}\right.
\end{eqnarray}
with
\begin{eqnarray*}
&&\bar{\mu}_{\varepsilon, \delta}\!=\!-\Delta\bar{\phi}_{\varepsilon, \delta}+\delta\partial_t\bar{\phi}_{\varepsilon, \delta}+f_\varepsilon(\varphi^1_{\varepsilon, \delta})-f_\varepsilon(\varphi^2_{\varepsilon, \delta}),\\
&&\overline{\mathcal{K}}(\bar{\eta}_{\varepsilon, \delta})\!=\!A_{\tau}\bar{\eta}_{\varepsilon, \delta}+\partial_n\bar{\varphi}_{\varepsilon, \delta}+\bar{\eta}_{\varepsilon, \delta}
+g_\varepsilon(\eta^1_{\varepsilon, \delta})-g_\varepsilon(\eta^2_{\varepsilon, \delta}),
\end{eqnarray*}
and
\begin{eqnarray*}
&& \tilde{\Phi}_R:=\Psi_R^1-\Psi_R^2,\\
&&  \Psi_R^1=\overline{\Psi}_{R}(\|v_{\varepsilon, \delta}^1\|_{\mathrm{L}_x^2})\cdot\overline{\Psi}_{R}(\|(\varphi_{\varepsilon, \delta}^1, \eta_{\varepsilon, \delta}^1)\|_{ V^1}),\\  &&\Psi_R^2=\overline{\Psi}_{R}(\|v_{\varepsilon, \delta}^2\|_{\mathrm{L}_x^2})\cdot\overline{\Psi}_{R}(\|(\varphi_{\varepsilon, \delta}^2, \eta_{\varepsilon, \delta}^2)\|_{ V^1}).
\end{eqnarray*}

Using It\^{o}'s formula to the function $\frac{1}{2}\|\bar{u}_{\varepsilon, \delta}\|_{\mathrm{L}_x^2}^2$, we have
\begin{align}\label{3.21}
&\frac{1}{2}d\|\bar{u}_{\varepsilon, \delta}\|_{\mathrm{L}_x^2}^2+a_0(\bar{u}_{\varepsilon, \delta}, \bar{u}_{\varepsilon, \delta})dt\nonumber\\
&=\big(-\Psi_R^1B_0(v^1_{\varepsilon, \delta},\mathcal{J}_\varepsilon \bar{v}_{\varepsilon, \delta},  \bar{u}_{\varepsilon, \delta})-\Psi_R^1B_0( \bar{v}_{\varepsilon, \delta},\mathcal{J}_\varepsilon v^2_{\varepsilon, \delta},  \bar{u}_{\varepsilon, \delta})-\tilde{\Phi}_RB_0(v^2_{\varepsilon, \delta},\mathcal{J}_\varepsilon v^2_{\varepsilon, \delta}, \bar{u}_{\varepsilon, \delta})\nonumber\\
&\quad+\Psi_R^1(\Delta \bar{\varphi}_{\varepsilon, \delta} \nabla(\mathcal{J}_{\varepsilon}\varphi^1_{\varepsilon, \delta}),\bar{u}_{\varepsilon, \delta})
+\Psi_R^1(\Delta \varphi^2_{\varepsilon, \delta} \nabla(\mathcal{J}_{\varepsilon}\bar{\varphi}_{\varepsilon, \delta}),\bar{u}_{\varepsilon, \delta})
-\delta\Psi_R^1(\partial_t\bar{\varphi}_{\varepsilon, \delta}\nabla(\mathcal{J}_{\varepsilon}\varphi^1_{\varepsilon, \delta}),\bar{u}_{\varepsilon, \delta})\nonumber\\
&\quad-\delta\Psi_R^1(\partial_t\varphi^2_{\varepsilon, \delta}\nabla(\mathcal{J}_{\varepsilon}\bar{\varphi}_{\varepsilon, \delta}),\bar{u}_{\varepsilon, \delta})
-\Psi_R^1((f_\varepsilon(\varphi^1_{\varepsilon, \delta})-f_\varepsilon(\varphi^2_{\varepsilon, \delta}))\nabla(\mathcal{J}_{\varepsilon}\varphi^1_{\varepsilon, \delta}),\bar{u}_{\varepsilon, \delta})\nonumber\\
&\quad-\Psi_R^1(f_\varepsilon(\varphi^2_{\varepsilon, \delta})\nabla(\mathcal{J}_{\varepsilon}\bar{\varphi}_{\varepsilon, \delta}),\bar{u}_{\varepsilon, \delta})
-\tilde{\Phi}_R(\mu^2_{\varepsilon, \delta}\nabla \mathcal{J}_{\varepsilon}\varphi^2_{\varepsilon, \delta}, \bar{u}_{\varepsilon, \delta})+\Psi_R^1(\overline{\mathcal{K}}(\bar{\eta}_{\varepsilon, \delta})\nabla_\tau(J_\varepsilon \eta^1_{\varepsilon, \delta}), \bar{u}_{\varepsilon, \delta,\tau})_\Gamma\nonumber\\
&\quad+\Psi_R^1(\mathcal{K}(\eta^2_{\varepsilon, \delta}) \nabla_\tau(J_\varepsilon \bar{\eta}_{\varepsilon, \delta}), \bar{u}_{\varepsilon, \delta,\tau})_\Gamma+\tilde{\Phi}_R(\mathcal{K}(\eta^2_{\varepsilon, \delta}) \nabla_\tau(J_\varepsilon \eta^2
_{\varepsilon, \delta}),  \bar{u}_{\varepsilon, \delta,\tau})\big)dt\nonumber\\
&\quad+\frac{1}{2}\|h(v^1_{\varepsilon, \delta},\nabla\varphi^1_{\varepsilon, \delta})-h(v^2_{\varepsilon, \delta},\nabla\varphi^2_{\varepsilon, \delta})\|_{L_2(\mathcal{H},\mathrm{L}_x^2)}^2dt\nonumber\\
&\quad+(h(v^1_{\varepsilon, \delta},\nabla\varphi^1_{\varepsilon, \delta})-h(v^2_{\varepsilon, \delta},\nabla\varphi^2_{\varepsilon, \delta}), \bar{u}_{\varepsilon, \delta})d\mathcal{W}\nonumber\\
&=:\sum_{k=1}^{14}H_{k}dt+(h(v^1_{\varepsilon, \delta},\nabla\varphi^1_{\varepsilon, \delta})-h(v^2_{\varepsilon, \delta},\nabla\varphi^2_{\varepsilon, \delta}), \bar{u}_{\varepsilon, \delta})d\mathcal{W}.
\end{align}
We proceed to estimate all terms $H_k, k=1,\cdots, 14$. By H\"{o}lder's inequality, the embedding $H^2\hookrightarrow L^\infty$ and Lemma \ref{lem4.4}, we deduce
\begin{align}
|H_1|&\leq C\Psi_R^1\|\bar{u}_{\varepsilon, \delta}\|_{\mathrm{H}^1}\|v^1_{\varepsilon, \delta}\|_{\mathrm{L}_x^2}\|\nabla\mathcal{J}_\varepsilon \bar{v}_{\varepsilon, \delta}\|_{\mathrm{H}^2}\nonumber\\ &\leq \varepsilon_1a_0(\bar{u}_{\varepsilon, \delta}, \bar{u}_{\varepsilon, \delta})+\frac{C}{\varepsilon_1\varepsilon^6}\Psi_R^1\| \bar{v}_{\varepsilon, \delta}\|_{\mathrm{L}_x^2}^2\| v^1_{\varepsilon, \delta}\|_{\mathrm{L}_x^2}^2,\label{3.22}\\
|H_2|&\leq C\Psi_R^1\|\bar{u}_{\varepsilon, \delta}\|_{\mathrm{H}^1}\|\bar{v}_{\varepsilon, \delta}\|_{\mathrm{L}_x^2}\|\nabla\mathcal{J}_\varepsilon v^2_{\varepsilon, \delta}\|_{\mathrm{H}^2}\nonumber\\ &\leq \varepsilon_1a_0(\bar{u}_{\varepsilon, \delta}, \bar{u}_{\varepsilon, \delta})+\frac{C}{\varepsilon_1\varepsilon^6}\Psi_R^1\|\bar{ v}_{\varepsilon, \delta}\|_{\mathrm{L}_x^2}^2\|v^2_{\varepsilon, \delta}\|_{\mathrm{L}_x^2}^2,\\
|H_4|&\leq C\Psi_R^1\|\bar{u}_{\varepsilon, \delta}\|_{\mathrm{L}_x^2}\|\Delta\bar{\varphi}_{{\varepsilon, \delta}}\|_{L_x^2}\|\nabla\mathcal{J}_{{\varepsilon}}\varphi^1_{{\varepsilon, \delta}}\|_{H^2}\nonumber\\ &\leq \frac{C}{{\varepsilon}_1{\varepsilon}^4}\|\bar{u}_{\varepsilon, \delta}\|_{\mathrm{L}_x^2}^2+{\varepsilon}_1\Psi_R^1\|\bar{\varphi}_{\varepsilon, \delta}\|_{V^2}^2\|\varphi_{{\varepsilon, \delta}}^1\|_{H^1}^2,\\
|H_5|&\leq \frac{C}{{\varepsilon}^2}\Psi_R^1\|\bar{u}_{\varepsilon, \delta}\|_{\mathrm{L}_x^2}\|\varphi^2_{\varepsilon, \delta}\|_{V^2}\|\bar{\varphi}_{{\varepsilon, \delta}}\|_{H^1}\nonumber\\ &\leq \frac{C}{{\varepsilon}_1{\varepsilon}^4}\|\bar{u}_{\varepsilon, \delta}\|_{\mathrm{L}_x^2}^2
+{\varepsilon}_1\Psi_R^1\|\varphi^2_{\varepsilon, \delta}\|_{V^2}^2\|\bar{\varphi}_{{\varepsilon, \delta}}\|_{H^1}^2,
\end{align}
and
\begin{align}
&|H_6+H_7|\\
&\leq \frac{C\delta}{{\varepsilon}^2}\Psi_R^1\|\bar{u}_{\varepsilon, \delta}\|_{\mathrm{L}_x^2}\|\partial_t\bar{\varphi}_{\varepsilon, \delta}\|_{L_x^2}\|\varphi_{\varepsilon, \delta}^1 \|_{H^1}+\frac{C\delta}{{\varepsilon}^2}\Psi_R^1\|\bar{u}_{\varepsilon, \delta}\|_{\mathrm{L}_x^2}\|\partial_t\varphi_{\varepsilon, \delta}^2\|_{L_x^2}\|\bar{\varphi}_{\varepsilon, \delta} \|_{H^1}\nonumber\\
&\leq \frac{C}{{\varepsilon}_1{\varepsilon}^4}\|\bar{u}_{\varepsilon, \delta}\|_{\mathrm{L}_x^2}^2
+{\varepsilon}_1(\delta\Psi_R^1)^2\|\partial_t\bar{\varphi}_{\varepsilon, \delta}\|_{L_x^2}^2\|\varphi_{\varepsilon, \delta}^1 \|_{H^1}^2+{\varepsilon}_1(\delta\Psi_R^1)^2\|\partial_t\varphi_{\varepsilon, \delta}^2\|_{L_x^2}^2\|\bar{\varphi}_{\varepsilon, \delta} \|_{H^1}^2,
\end{align}
where $\varepsilon_1$ is a constant selected later.
 H\"{o}lder's inequality, Property (i), the mean value theorem, and Lemma \ref{lem4.4} yield
\begin{align}
|H_8|&\leq C\Psi_R^1\|\bar{u}_{\varepsilon, \delta}\|_{\mathrm{L}_x^2}\|f_\varepsilon(\varphi^1_{\varepsilon, \delta})-f_\varepsilon(\varphi^2_{\varepsilon, \delta})\|_{L_x^2}\|\nabla \mathcal{J}_\varepsilon v^2_{\varepsilon, \delta}\|_{\mathrm{H}^2}\nonumber\\
&\leq C\|\bar{u}_{\varepsilon, \delta}\|_{\mathrm{L}_x^2}\|\bar{\varphi}_{\varepsilon,\delta}\|_{L_x^2}\|\nabla \mathcal{J}_\varepsilon v^2_{\varepsilon, \delta}\|_{\mathrm{H}^2}\nonumber\\
&\leq \frac{C}{\varepsilon^3}\|\bar{u}_{\varepsilon, \delta}\|_{\mathrm{L}_x^2}\|\bar{\varphi}_{\varepsilon,\delta}\|_{L_x^2}
\| v^2_{\varepsilon, \delta}\|_{\mathrm{L}_x^2}\nonumber\\
&\leq \frac{1}{4}\|\bar{u}_{\varepsilon, \delta}\|_{\mathrm{L}_x^2}^2+\frac{C}{\varepsilon^6}\|\bar{\varphi}_{\varepsilon, \delta}\|_{L_x^2}^2\|v^2_{\varepsilon, \delta}\|_{\mathrm{L}_x^2}^2.
\end{align}
The term $H_9$ could be bounded easily. For term $H_{12}$, by H\"{o}lder's inequality, Lemma \ref{lem4.2}, and Lemma \ref{lem4.4}, we deduce
\begin{align}
|H_{12}|&\leq C\Psi_R^1\|\bar{u}_{{\varepsilon, \delta}}\|_{\mathrm{L}^2(\Gamma)}\|\mathcal{K}(\eta^2_{{\varepsilon, \delta}})\|_{L^2(\Gamma)}\|\nabla_\tau(J_{\varepsilon} \bar{\eta}_{{\varepsilon, \delta}})\|_{H^2(\Gamma)}\nonumber\\
&\leq \frac{C}{{\varepsilon}^3}\|\eta
_{\varepsilon, \delta}^2\|_{V^2}\|\bar{u}_{{\varepsilon, \delta}}\|_{\mathrm{L}^2(\Gamma)}\|\bar{\eta}_{\varepsilon, \delta}\|_{L^2(\Gamma)}\nonumber\\
&\leq\frac{C}{{\varepsilon}^3}\|\eta_{\varepsilon, \delta}^2\|_{V^2}\|\bar{\eta}_{\varepsilon, \delta}\|_{L^2(\Gamma)}\left(\tilde{{\varepsilon}}\|D(\bar{u}_{\varepsilon, \delta})\|_{\mathrm{L}_x^2}
+\tilde{{\varepsilon}}^{-\nu}\|\bar{u}_{\varepsilon, \delta}\|_{\mathrm{L}_x^2}\right)\\
&\leq\frac{C}{{\varepsilon}_1{\varepsilon}^6\tilde{{\varepsilon}}^{2\nu}}\|\bar{u}_{\varepsilon, \delta}\|_{\mathrm{L}_x^2}^2+\frac{C\tilde{{\varepsilon}}^2}{{\varepsilon}^6{\varepsilon}_1}a_0(\bar{u}_{\varepsilon, \delta}, \bar{u}_{\varepsilon, \delta})+{\varepsilon}_1\|\eta_{\varepsilon, \delta}^2\|_{V^2}^2\|\bar{\eta}_{{\varepsilon, \delta}}\|_{L^2(\Gamma)}^2,\nonumber
\end{align}
we choose $\tilde{\varepsilon}=\frac{\varepsilon^3\varepsilon_1^2}{\sqrt{C}}$. For term $H_{11}$, using \eqref{3.8**}, Lemma \ref{lem4.2} and H\"{o}lder's inequality, we obtain
\begin{align}
|H_{11}|&\leq C\Psi_R^1\|\bar{u}_{{\varepsilon, \delta}}\|_{\mathrm{L}^2(\Gamma)}(\|\widetilde{\mathcal{K}}(\bar{\eta}_{{\varepsilon, \delta}})\|_{L^2(\Gamma)}
+\|g_\varepsilon(\eta^1_{\varepsilon, \delta})-g_\varepsilon(\eta^2_{\varepsilon, \delta})\|_{L^2(\Gamma)})\|\nabla_\tau(J_{\varepsilon} \eta^1_{{\varepsilon, \delta}})\|_{H^2(\Gamma)}\nonumber\\
&\leq\frac{C}{{\varepsilon}^3}\|\eta^1_{\varepsilon, \delta}\|_{L^2(\Gamma)}\left(\tilde{{\varepsilon}}\|D(\bar{u}_{\varepsilon, \delta})\|_{\mathrm{L}_x^2}
+\tilde{{\varepsilon}}^{-\nu}\|\bar{u}_{\varepsilon, \delta}\|_{\mathrm{L}_x^2}\right)\|(\bar{\varphi}_{\varepsilon, \delta}, \bar{\eta}_{\varepsilon, \delta})\|_{V^2}\nonumber\\
&\quad+\frac{C}{{\varepsilon}^3}\|\eta^1_{\varepsilon, \delta}\|_{L^2(\Gamma)}\left(\frac{1}{2}\|D(\bar{u}_{\varepsilon, \delta})\|_{\mathrm{L}_x^2}
+C_1\|\bar{u}_{\varepsilon, \delta}\|_{\mathrm{L}_x^2}\right)\|\bar{\eta}_{{\varepsilon, \delta}}\|_{L^2(\Gamma)}\nonumber\\
&\leq\frac{C}{{\varepsilon}_1{\varepsilon}^6}\left(\tilde{{\varepsilon}}^2\|D(\bar{u}_{\varepsilon, \delta})\|^2_{\mathrm{L}_x^2}
+\tilde{{\varepsilon}}^{-2\nu}\|\bar{u}_{\varepsilon, \delta}\|^2_{\mathrm{L}_x^2}\right)+{\varepsilon}_1\|(\bar{\varphi}_{\varepsilon, \delta}, \bar{\eta}_{\varepsilon, \delta})\|_{V^2}^2\|\eta^1_{{\varepsilon, \delta}}\|_{L^2(\Gamma)}^2\nonumber\\
&\quad+\frac{1}{4}\|\bar{u}_{\varepsilon, \delta}\|_{\mathrm{L}_x^2}^2+\frac{1}{2}a_0(\bar{u}_{\varepsilon, \delta}, \bar{u}_{\varepsilon, \delta})+\frac{C}{{\varepsilon}^6}\|\eta_{\varepsilon, \delta}^1\|_{L^2(\Gamma)}^2\|\bar{\eta}_{{\varepsilon, \delta}}\|_{L^2(\Gamma)}^2.
\end{align}
For term $H_{14}$, by assumption ${\bf A}_2$, we have
$$|H_{14}|\leq C\|(v_{\varepsilon, \delta}, \nabla\varphi_{\varepsilon, \delta})\|_{L_x^2}^2.$$

To surmount the difficulty caused by cut-off function, we further define stopping time $$\tau_R^i:=\inf\left\{t\geq 0; \|(v^i_{\varepsilon, \delta}(t), \varphi_{\varepsilon, \delta}^i(t), \eta_{\varepsilon, \delta}^i(t))\|_{L_x^2\times V^1}\geq 2R\right\}$$ for $i=1,2$. If the right-hand side set is empty, taking $\tau_R^i=T$. Indeed, the definition of stopping time $\tau_R^i$ is well-defined, since $(v^i_{\varepsilon, \delta}, \varphi_{\varepsilon, \delta}^i, \eta_{\varepsilon, \delta}^i)$ belongs to space $X$, we have the continuity of time in space $L_x^2\times V^1$. Without loss of generality, we could assume that $\tau_R^1\leq \tau_R^2$. Consequently, if $t\geq \tau_R^2$, we have
$$\overline{\Psi}_{R}(\|(v^i_{\varepsilon, \delta}, \varphi_{\varepsilon, \delta}^i, \eta_{\varepsilon, \delta}^i)\|_{L_x^2\times V^1})=0,\; i=1,2.$$
For term $H_3$, by the mean value theorem, Lemma \ref{lem4.4} and the Schwartz inequality, we have
\begin{align}
&\mathbb{E}\left(\int_{0}^{T}|H_3|dt\right)^p\nonumber\\&\leq \mathbb{E}\Bigg(\int_{0}^{T}\!\!\left|\overline{\Psi}_{R}(\|v_{\varepsilon, \delta}^1\|_{\mathrm{L}_x^2})-\overline{\Psi}_{R}(\|v_{\varepsilon, \delta}^2\|_{\mathrm{L}_x^2})\right|\overline{\Psi}_{R}(\|(\varphi_{\varepsilon, \delta}^1, \eta_{\varepsilon, \delta}^1)\|_{ V^1})B_0(v^2_{\varepsilon, \delta},\mathcal{J}_\varepsilon v^2_{\varepsilon, \delta}, \bar{u}_{\varepsilon, \delta})\nonumber\\
&\qquad+\overline{\Psi}_{R}(\|v_{\varepsilon, \delta}^2\|_{\mathrm{L}_x^2})\left|\overline{\Psi}_{R}(\|(\varphi_{\varepsilon, \delta}^1, \eta_{\varepsilon, \delta}^1)\|_{ V^1})-\overline{\Psi}_{R}(\|(\varphi_{\varepsilon, \delta}^1, \eta_{\varepsilon, \delta}^1)\|_{ V^1})\right|\nonumber\\
&\qquad \times B_0(v^2_{\varepsilon, \delta},\mathcal{J}_\varepsilon v^2_{\varepsilon, \delta}, \bar{u}_{\varepsilon, \delta})dt\Bigg)^p\nonumber\\
&\leq C\mathbb{E}\Bigg(\int_{0}^{\tau_R^2}\|\mathcal{J}_\varepsilon v^2_{\varepsilon, \delta}\|_{\mathrm{H}^3}\|\bar{u}_{\varepsilon, \delta}\|_{\mathrm{L}_x^2}\|v^2_{\varepsilon, \delta}\|_{\mathrm{L}_x^2}\nonumber\\ &\qquad\times(\|\bar{v}_{{\varepsilon},\delta}\|_{L_t^\infty\mathrm{L}_x^2}+\overline{\Psi}_{R}(\|v_{\varepsilon, \delta}^2\|_{\mathrm{L}_x^2})\|(\bar{\varphi}_{\varepsilon, \delta}, \bar{\eta}_{\varepsilon, \delta})\|_{L_t^\infty V^1})dt\Bigg)^p\nonumber\\
&\leq C\mathbb{E}\left(\int_{0}^{\tau_R^2}\|\bar{u}_{\varepsilon, \delta}\|_{L_t^\infty\mathrm{L}_x^2}^2+\|(\bar{\varphi}_{\varepsilon, \delta}, \bar{\eta}_{\varepsilon, \delta})\|_{L_t^\infty V^1}^2+\frac{1}{\varepsilon^6}\|\bar{v}_{{\varepsilon},\delta}\|_{\mathrm{L}_x^2}^2\|v^2_{\varepsilon, \delta}\|_{\mathrm{L}_x^2}^4+\frac{R^4}{\varepsilon^6}\|\bar{v}_{{\varepsilon},\delta}\|_{\mathrm{L}_x^2}^2dt\right)^p\nonumber\\
&\leq C(\tau_R^2)^p\mathbb{E}\left(\|\bar{u}_{\varepsilon, \delta}\|_{L^\infty_t\mathrm{L}_x^2}^{2p}+\|(\bar{\varphi}_{\varepsilon, \delta}, \bar{\eta}_{\varepsilon, \delta})\|_{L^\infty_tV^1}^{2p}\right)+\frac{C(\tau_R^2)^pR^{4p}}{\varepsilon^{6p}}\mathbb{E}\|\bar{v}_{\varepsilon, \delta}\|_{L^\infty_t\mathrm{L}_x^2}^{2p}\nonumber\\
&\leq CT^p\mathbb{E}\left(\|\bar{u}_{\varepsilon, \delta}\|_{L^\infty_t\mathrm{L}_x^2}^{2p}+\|(\bar{\varphi}_{\varepsilon, \delta}, \bar{\eta}_{\varepsilon, \delta})\|_{L^\infty_tV^1}^{2p}\right)+\frac{CT^pR^{4p}}{\varepsilon^{6p}}\mathbb{E}\|\bar{v}_{\varepsilon, \delta}\|_{L^\infty_t\mathrm{L}_x^2}^{2p}.
\end{align}
Similarly, we have
\begin{align}
&\mathbb{E}\left(\int_{0}^{T}|H_{10}|dt\right)^p\nonumber\\&\leq \mathbb{E}\Bigg(\int_{0}^{T}\!\!\left|\overline{\Psi}_{R}(\|v_{\varepsilon, \delta}^1\|_{\mathrm{L}_x^2})-\overline{\Psi}_{R}(\|v_{\varepsilon, \delta}^2\|_{\mathrm{L}_x^2})\right|\overline{\Psi}_{R}(\|(\varphi_{\varepsilon, \delta}^1, \eta_{\varepsilon, \delta}^1)\|_{V^1})(\mu^2_{\varepsilon, \delta}\nabla \mathcal{J}_{\varepsilon}\varphi^2_{\varepsilon, \delta}, \bar{u}_{\varepsilon, \delta})\nonumber\\
&\qquad+\overline{\Psi}_{R}(\|v_{\varepsilon, \delta}^2\|_{\mathrm{L}_x^2})\left|\overline{\Psi}_{R}(\|(\varphi_{\varepsilon, \delta}^1, \eta_{\varepsilon, \delta}^1)\|_{ V^1})-\overline{\Psi}_{R}(\|(\varphi_{\varepsilon, \delta}^1, \eta_{\varepsilon, \delta}^1)\|_{ V^1})\right|\nonumber\\
&\qquad \times(\mu^2_{\varepsilon, \delta}\nabla \mathcal{J}_{\varepsilon}\varphi^2_{\varepsilon, \delta}, \bar{u}_{\varepsilon, \delta})dt\Bigg)^p\nonumber\\
&\leq C\mathbb{E}\left(\int_{0}^{\tau_R^2}(\|\bar{v}_{{\varepsilon},\delta}\|_{L_t^\infty\mathrm{L}_x^2}+\|(\bar{\varphi}_{\varepsilon, \delta}, \bar{\eta}_{\varepsilon, \delta})\|_{L_t^\infty V^1})\|\bar{u}_{\varepsilon, \delta}\|_{\mathrm{L}_x^2}\|\mu^2_{\varepsilon, \delta}\|_{L_x^2}\|\mathcal{J}_\varepsilon \varphi^2_{\varepsilon, \delta}\|_{H^3}dt\right)^p\nonumber\\
&\leq \frac{C(\tau_R^2)^p}{\varepsilon_1}\mathbb{E}\left(\|\bar{u}_{\varepsilon, \delta}\|_{L^\infty_t\mathrm{L}_x^2}^{2p}+\|(\bar{\varphi}_{\varepsilon, \delta}, \bar{\eta}_{\varepsilon, \delta})\|_{L^\infty_tV^1}^{2p}\right)+\frac{\varepsilon_1R^{4p}}{\varepsilon^{4p}}\mathbb{E}\|\bar{v}_{\varepsilon, \delta}\|_{L^\infty_t\mathrm{L}_x^2}^{2p}\nonumber\\
&\leq \frac{CT^p}{\varepsilon_1}\mathbb{E}\left(\|\bar{u}_{\varepsilon, \delta}\|_{L^\infty_t\mathrm{L}_x^2}^{2p}+\|(\bar{\varphi}_{\varepsilon, \delta}, \bar{\eta}_{\varepsilon, \delta})\|_{L^\infty_tV^1}^{2p}\right)+\frac{\varepsilon_1R^{4p}}{\varepsilon^{4p}}\mathbb{E}\|\bar{v}_{\varepsilon, \delta}\|_{L^\infty_t\mathrm{L}_x^2}^{2p},
\end{align}
and
\begin{align}
&\mathbb{E}\left(\int_{0}^{T}|H_{13}|dt\right)^p\nonumber \\ &\leq \mathbb{E}\Bigg(\int_{0}^{T}\left|\overline{\Psi}_{R}(\|v_{\varepsilon, \delta}^1\|_{\mathrm{L}_x^2})-\overline{\Psi}_{R}(\|v_{\varepsilon, \delta}^2\|_{\mathrm{L}_x^2})\right|\overline{\Psi}_{R}(\|(\varphi_{\varepsilon, \delta}^1, \eta_{\varepsilon, \delta}^1)\|_{ V^1})\nonumber\\
&\qquad\times(\mathcal{K}(\eta^2_{\varepsilon, \delta}) \nabla_\tau(J_\varepsilon \eta^2
_{\varepsilon, \delta}),  \bar{u}_{\varepsilon, \delta,\tau})\nonumber\\
&\qquad+\overline{\Psi}_{R}(\|v_{\varepsilon, \delta}^2\|_{\mathrm{L}_x^2})\left|\overline{\Psi}_{R}(\|(\varphi_{\varepsilon, \delta}^1, \eta_{\varepsilon, \delta}^1)\|_{ V^1})-\overline{\Psi}_{R}(\|(\varphi_{\varepsilon, \delta}^1, \eta_{\varepsilon, \delta}^1)\|_{V^1})\right|\nonumber\\
&\qquad\times(\mathcal{K}(\eta^2_{\varepsilon, \delta}) \nabla_\tau(J_\varepsilon \eta^2
_{\varepsilon, \delta}),  \bar{u}_{\varepsilon, \delta,\tau})dt\Bigg)^p\nonumber\\
&\leq C\mathbb{E}\Bigg(\int_{0}^{\tau_R^2}\|\bar{u}_{\varepsilon, \delta}\|_{\mathrm{L}^2(\Gamma)}\|\mathcal{K}(\eta^2_{{\varepsilon, \delta}})\|_{L^2(\Gamma)}\|\nabla_\tau(J_{\varepsilon} \eta^2_{{\varepsilon, \delta}})\|_{H^2(\Gamma)}\nonumber\\
&\qquad\times(\|\bar{v}_{{\varepsilon},\delta}\|_{L_t^\infty\mathrm{L}_x^2}+\|(\bar{\varphi}_{\varepsilon, \delta}, \bar{\eta}_{\varepsilon, \delta})\|_{L_t^\infty V^1})dt\Bigg)^p\nonumber\\
&\leq  \mathbb{E}\Bigg(\int_{0}^{\tau_R^2}\frac{C}{{\varepsilon}^3}(\|\bar{v}_{{\varepsilon},\delta}\|_{L_t^\infty\mathrm{L}_x^2}+\|(\bar{\varphi}_{\varepsilon, \delta}, \bar{\eta}_{\varepsilon, \delta})\|_{L_t^\infty V^1})\|\eta
_{\varepsilon, \delta}^2\|_{V^2}\|\bar{u}_{{\varepsilon, \delta}}\|_{\mathrm{L}^2(\Gamma)}\|\eta^2_{\varepsilon, \delta}\|_{L^2(\Gamma)}dt\Bigg)^p\nonumber\\
&\leq  \mathbb{E}\Bigg(\int_{0}^{\tau_R^2}\frac{C}{{\varepsilon}^3}(\|\bar{v}_{{\varepsilon},\delta}\|_{L_t^\infty\mathrm{L}_x^2}+\|(\bar{\varphi}_{\varepsilon, \delta}, \bar{\eta}_{\varepsilon, \delta})\|_{L_t^\infty V^1})\nonumber\\
&\qquad \times(\tilde{{\varepsilon}}\|D(\bar{u}_{\varepsilon, \delta})\|_{\mathrm{L}_x^2}
+\tilde{{\varepsilon}}^{-\nu}\|\bar{u}_{\varepsilon, \delta}\|_{\mathrm{L}_x^2})\|\eta
_{\varepsilon, \delta}^2\|_{V^2}\|\eta^2_{\varepsilon, \delta}\|_{L^2(\Gamma)}dt\Bigg)^p\nonumber\\
&\leq \frac{(\tau_R^2)^p}{{\varepsilon}_1\tilde{{\varepsilon}}^{2p\nu}}\mathbb{E}\|\bar{u}_{\varepsilon, \delta}\|_{L_t^\infty\mathrm{L}_x^2}^{2p}\!+\!\frac{\tilde{\varepsilon}^{2p}}{\varepsilon^{6p}}\mathbb{E}\left(\int_{0}^{\tau_R^2}\!\!a_0(\bar{u}_{\varepsilon, \delta}, \bar{u}_{\varepsilon,\delta})dt\right)^p\nonumber\\
&\quad+\!\frac{C\varepsilon_1R^{4p}}{\varepsilon^{6p}}\mathbb{E}\left(\|\bar{v}_{{\varepsilon},\delta}\|_{L_t^\infty\mathrm{L}_x^2}^{2p}\!+\!\|(\bar{\varphi}_{\varepsilon, \delta}, \bar{\eta}_{\varepsilon, \delta})\|_{L^\infty_tV^1}^{2p}\right).
\end{align}
For the last term, the Burkholder-Davis-Gundy inequality and assumption ${\bf A}_2$ give
\begin{align}\label{3.31}
&\mathbb{E}\left(\sup_{t\in [0,T]}\left|\int_{0}^{t}(h(v^1_{{\varepsilon, \delta}},\nabla\varphi^1_{{\varepsilon, \delta}})-h(v^2_{{\varepsilon, \delta}},\nabla\varphi^2_{{\varepsilon, \delta}}), \bar{u}_{\varepsilon, \delta})d\mathcal{W}\right|\right)^p\nonumber\\
&\leq C\mathbb{E}\left(\int_{0}^{T}\|\bar{u}_{\varepsilon, \delta}\|_{\mathrm{L}_x^2}^2\|(\bar{v}_{\varepsilon, \delta}, \nabla\bar{\varphi}_{\varepsilon, \delta})\|^2_{L_x^2}dt\right)^{\frac{p}{2}}\nonumber\\
&\leq CT^{\frac{p}{2}-1}\mathbb{E}\int_{0}^{T}\| \bar{u}_{{\varepsilon}, \delta}\|_{\mathrm{L}_x^2}^{2p}dt+CT^{\frac{p}{2}}\|(\bar{v}_{{\varepsilon},\delta}, \bar{\varphi}_{{\varepsilon},\delta})\|_{X}^{2p}\nonumber\\
&\leq CT^{\frac{p}{2}}\mathbb{E}\| \bar{u}_{{\varepsilon}, \delta}\|_{L_t^\infty \mathrm{L}_x^2}^{2p}+CT^{\frac{p}{2}}\|(\bar{v}_{{\varepsilon},\delta}, \bar{\varphi}_{{\varepsilon,},\delta})\|_{X}^{2p}.
\end{align}

Also, we could control $\mathbb{E}\left(\int_{0}^{T}\|\partial_t\bar{u}_{\varepsilon, \delta}\|_{(H^1)'}^2dt\right)^p$ by a same way as above. Here we omit the tedious details.

Integrating of $t$, then taking supremum over $[0,T]$, power $p$ and expectation in \eqref{3.21}, combining all estimates \eqref{3.22}-\eqref{3.31}, we have
\begin{align}\label{3.32}
&\frac{1}{2^p}\mathbb{E}\|\bar{u}_{\varepsilon, \delta}\|^{2p}_{L_t^\infty\mathrm{L}_x^2}+\left(\frac{1}{2}-4\varepsilon_1\right)^p\mathbb{E}\left(\int_{0}^{T}a_0(\bar{u}_{\varepsilon, \delta}, \bar{u}_{\varepsilon, \delta})dt\right)^p+\mathbb{E}\|\partial_t\bar{u}_{\varepsilon, \delta}\|_{L_t^2(H^1)'}^{2p}\notag\\
& \leq T^p\left[CT^{-\frac{p}{2}}+\left(\frac{CR^2+R^4+C}{\varepsilon_1\varepsilon^6}\right)^p
+\left(\frac{C}{\varepsilon^6}\right)^p\right]
\mathbb{E}\left(\|\bar{v}_{\varepsilon, \delta}\|_{L_t^\infty \mathrm{L}_x^2}^{2p}+\|(\bar{\varphi}_{\varepsilon, \delta},\bar{\eta}_{\varepsilon, \delta})\|_{L_t^\infty V^1}^{2p}\right)\nonumber\\
&\quad+C(\varepsilon_1 R^2+\varepsilon_1)^p\mathbb{E}\left(\|(\bar{\varphi}_{\varepsilon, \delta},\bar{\eta}_{\varepsilon, \delta})\|_{L_t^2 V^2}^{2p}+\delta^{2p}\|\partial_t\bar{\varphi}_{\varepsilon, \delta}\|^{2p}_{L^2_tL^2_x}\right)+C\varepsilon_1^p\mathbb{E}\|(\bar{\varphi}_{\varepsilon, \delta},\bar{\eta}_{\varepsilon, \delta})\|_{L_t^\infty V^1}^{2p}\nonumber\\
&\quad+T^p\left(\frac{1}{2}+\frac{C}{\varepsilon_1}+\frac{C}{\varepsilon_1\varepsilon^4}
+\frac{C}{\varepsilon_1\tilde{\varepsilon}^{2\nu}}+\frac{C}{\varepsilon_1\varepsilon^6\tilde{\varepsilon}^{2\nu}}\right)^p\mathbb{E}\|\bar{u}_{\varepsilon, \delta}\|^{2p}_{L_t^\infty \mathrm{L}_x^2}
+CT^{\frac{p}{2}}\mathbb{E}\| \bar{u}_{\varepsilon, \delta}\|_{L_t^\infty \mathrm{L}_x^2}^{2p}\nonumber\\&\quad+\frac{\varepsilon_1R^{4p}}{\varepsilon^{4p}}\mathbb{E}\|\bar{v}_{\varepsilon, \delta}\|_{L^\infty_t\mathrm{L}_x^2}^{2p}.
\end{align}
We first choose $\varepsilon_1$ small and then take $T$ small enough such that
\begin{align*}
&\left[\frac{1}{2}+\frac{C}{\varepsilon_1}+\frac{C}{\varepsilon_1\varepsilon^4}
+\frac{C}{\varepsilon_1\tilde{\varepsilon}^{2\nu}}+\frac{C}{\varepsilon_1\varepsilon^6\tilde{\varepsilon}^{2\nu}}\right]^p\cdot(2T)^p+C(4T)^{\frac{p}{2}}\leq \frac{1}{4},\;~ C2^p(\varepsilon_1 R^2+\varepsilon_1)^p\leq \frac{1}{8},\nonumber\\
& \left[CT^{-\frac{p}{2}}+\left(\frac{CR^2+C}{\varepsilon_1\varepsilon^6}\right)^p
+\left(\frac{C}{\varepsilon^6}\right)^p\right]
\cdot (2T)^p+\frac{\varepsilon_1R^{4p}}{\varepsilon^{4p}}\leq \frac{1}{8}.
\end{align*}
From (\ref{3.32}), for the small time $T$, we conclude that
$$\|u_{\varepsilon, \delta}^1-u_{\varepsilon, \delta}^2\|_{X_1}^{2p}\leq \frac{1}{2}\|(\bar{v}_{\varepsilon, \delta}, \bar{\varphi}_{\varepsilon, \delta}, \bar{\eta}_{\varepsilon, \delta})\|_{X}^{2p}.$$

Next, taking inner product with $A_1^{-1}(\partial_t\bar{\phi}_{\varepsilon, \delta})$ and $\partial_t\bar{\psi}_{\varepsilon, \delta}$ in $\eqref{diff}_3, \eqref{diff}_4$, and also $\bar{\mu}_{\varepsilon, \delta}-\langle\bar{\mu}_{\varepsilon, \delta}\rangle$ in $\eqref{diff}_3$, and then taking expectation, we obtain
\begin{align}\label{3.33}
&\frac{1}{2}\mathbb{E}\|(\bar{\phi}_{\varepsilon, \delta}, \bar{\psi}_{\varepsilon, \delta})\|_{L_t^\infty V^1}^{2p}\!+\!\mathbb{E}\left(\delta\|\partial_t\bar{\phi}_{{\varepsilon, \delta}}\|_{L^2_tL^2(\Gamma)}^{2p}\!+\!\|(\bar{\phi}_{\varepsilon, \delta}, \bar{\psi}_{\varepsilon, \delta})\|^{2p}_{W^{1,2}_t((H^1)'\times L^2(\Gamma))}\!+\!\|\nabla \bar{\mu}_{\varepsilon, \delta}\|_{L^2_tL^2_x}^{2p}\right)\notag\\
&\leq\mathbb{E}\left(\int_{0}^{T}-\Psi^1_R(b_2(\bar{v}_{{\varepsilon, \delta}}, \mathcal{J}_{{\varepsilon}}\varphi^1_{{\varepsilon, \delta}})+b_2(v^2_{{\varepsilon, \delta}}, \mathcal{J}_{{\varepsilon}}\bar{\varphi}_{{\varepsilon, \delta}}),A_1^{-1}(\partial_t\bar{\phi}_{\varepsilon, \delta}))dt\right)^p\notag\\
&\quad+\mathbb{E}\left(\int_{0}^{T}-\Psi^1_R(b_2(\bar{v}_{{\varepsilon, \delta}}, \mathcal{J}_{{\varepsilon}}\varphi^1_{{\varepsilon, \delta}})+b_2(v^2_{{\varepsilon,\delta}}, \mathcal{J}_{{\varepsilon}}\bar{\varphi}_{{\varepsilon, \delta}}),\bar{\mu}_{\varepsilon, \delta}-\langle\bar{\mu}_{\varepsilon, \delta}\rangle)dt\right)^p\notag\\
&\quad+\mathbb{E}\left(\int_{0}^{T}-\Psi^1_R(b_\Gamma(\bar{v}_{{\varepsilon, \delta}},J_{{\varepsilon}}(\eta^1_{{\varepsilon, \delta}}))+b_\Gamma(v^2_{{\varepsilon,\delta}},J_{{\varepsilon}}(\bar{\eta}_{{\varepsilon, \delta}})), \partial_t\bar{\psi}_{\varepsilon, \delta})dt\right)^p\notag\\
&\quad+\mathbb{E}\left(\int_{0}^{T}(f_\varepsilon(\varphi^1_{\varepsilon, \delta})-f_\varepsilon(\varphi^2_{\varepsilon, \delta}), \partial_t \bar{\phi}_{\varepsilon, \delta})-(g_\varepsilon(\eta^1_{\varepsilon, \delta})-g_\varepsilon(\eta^2_{\varepsilon, \delta}), \partial_t\bar{\psi}_{\varepsilon, \delta})_\Gamma dt\right)^p\notag\\
&\quad+\mathbb{E}\left(\int_{0}^{T}-\tilde{\Psi}_R(b_2(v^2_{{\varepsilon, \delta}}, \mathcal{J}_{{\varepsilon}}\varphi^2_{{\varepsilon, \delta}}),A_1^{-1}(\partial_t\bar{\phi}_{\varepsilon, \delta}))dt\right)^p\notag\\
&\quad+\mathbb{E}\left(\int_{0}^{T}-\tilde{\Psi}_R(b_2(v^2_{{\varepsilon, \delta}}, \mathcal{J}_{{\varepsilon}}\varphi^2_{{\varepsilon, \delta}}),\bar{\mu}_{\varepsilon, \delta}-\langle\bar{\mu}_{\varepsilon, \delta}\rangle)dt\right)^p\notag\\
&\quad+\mathbb{E}\left(\int_{0}^{T}-\tilde{\Psi}_R(b_\Gamma(v^2_{{\varepsilon, \delta}}, \mathcal{J}_{{\varepsilon}}\eta^2_{{\varepsilon, \delta}}),\partial_t\bar{\psi}_{\varepsilon, \delta})dt\right)^p=:\mathbb{E}\left(\int_{0}^{T}\sum_{i=1}^7L_idt\right)^p.
\end{align}
According to the conservation property, we know $\langle \partial_t\psi^i_{\varepsilon, \delta}\rangle=0,\; \mathbb{P}\mbox{-a.s.}, i=1,2$, so $\langle \partial_t\bar{\psi}_{\varepsilon, \delta}\rangle=0,\; \mathbb{P} \mbox{-a.s.}$. Simple calculation yields
$$\langle\bar{\mu}_{\varepsilon, \delta}\rangle=\langle f_\varepsilon(\varphi^1_{\varepsilon, \delta})-f_\varepsilon(\varphi^2_{\varepsilon, \delta})\rangle-\frac{|\Gamma|}{|\mathcal{D}|}\langle\widetilde{\mathcal{K}}(\bar{\psi}_{\varepsilon, \delta})\rangle_\Gamma
+\frac{|\Gamma|}{|\mathcal{D}|}\langle\bar{\psi}_{\varepsilon, \delta}\rangle_\Gamma, ~\mathbb{P}\mbox{-a.s.}, $$
%also see \cite[5.40]{GMA},
where $\langle\Lambda\rangle_\Gamma= |\Gamma|^{-1}\int_\Gamma \Lambda dS$. From $\eqref{diff}_4$, we get
\begin{align}\label{mu*}
\begin{split}
\langle\bar{\mu}_{\varepsilon, \delta}\rangle^2\leq & ~C\Big(\|\partial_t\bar{\psi}_{\varepsilon, \delta}\|^2_{L^2(\Gamma)}+\|f_\varepsilon(\varphi^1_{\varepsilon, \delta})-f_\varepsilon(\varphi^2_{\varepsilon, \delta})\|^2_{L_x^2}
+\|g_\varepsilon(\eta^1_{\varepsilon, \delta})-g_\varepsilon(\eta^2_{\varepsilon, \delta})\|^2_{L^2(\Gamma)}\\
&+\|b_\Gamma(\bar{v}_{{\varepsilon, \delta}},J_{{\varepsilon}}(\eta^1_{{\varepsilon, \delta}}))+b_\Gamma(v^2_{{\varepsilon, \delta}},J_{{\varepsilon}}(\bar{\eta}_{{\varepsilon, \delta}}))\|^2_{L^2(\Gamma)}
+\|\bar{\psi}_{{\varepsilon, \delta}}\|^2_{L^2(\Gamma)}\Big), ~\mathbb{P}\mbox{-a.s.},
\end{split}
\end{align}
where $C$ depends on $|\Gamma|, |\mathcal{D}|$ but independence of $\delta$. Combining Lemma \ref{4.4*}, (\ref{3.33}), (\ref{mu*}), and the Poincar\'{e} inequality, we arrive at
\begin{align}\label{3.35}
&\frac{1}{2}\mathbb{E}\|(\bar{\phi}_{\varepsilon, \delta}, \bar{\psi}_{\varepsilon, \delta})\|_{L_t^\infty V^1}^{2p}+\mathbb{E}\left(\delta\|\partial_t\bar{\psi}_{\varepsilon, \delta}\|_{L^2_tL^2(\Gamma)}^{2p}+\|(\bar{\phi}_{{\varepsilon, \delta}}, \bar{\psi}_{{\varepsilon, \delta}})\|_{L_t^2V^2}^{2p}+\|\nabla \bar{\mu}_{\varepsilon, \delta}\|_{L^2_tL^2_x}^{2p}\right)\nonumber\\
&\quad+\mathbb{E}\|(\bar{\phi}_{\varepsilon, \delta}, \bar{\psi}_{\varepsilon, \delta})\|^{2p}_{W^{1,2}_t((H^1)'\times L^2(\Gamma))}\nonumber\\
&\leq C\mathbb{E}\left(\int_{0}^{T}\sum_{i=1}^7L_idt\right)^p+C\mathbb{E}\bigg(\int_{0}^{T}\|f_\varepsilon(\varphi^1_{\varepsilon, \delta})-f_\varepsilon(\varphi^2_{\varepsilon, \delta})\|^2_{L_x^2}
+\|g_\varepsilon(\eta^1_{\varepsilon, \delta})-g_\varepsilon(\eta^2_{\varepsilon, \delta})\|^2_{L^2(\Gamma)}\nonumber\\
&\quad+\|b_\Gamma(\bar{v}_{{\varepsilon, \delta}},J_{{\varepsilon}}(\eta^1_{{\varepsilon, \delta}}))+b_\Gamma(v^2_{{\varepsilon, \delta}},J_{{\varepsilon}}(\bar{\eta}_{{\varepsilon, \delta}}))\|^2_{L^2(\Gamma)}dt\bigg)^p,
\end{align}
where $C$ depends on $|\Gamma|, |\mathcal{D}|$, but independence of $\delta$.

We are going to estimate all terms on the right-hand side of \eqref{3.35}. The estimate of $L_i, i=1,\cdots, 4$ are given in \cite{GMA}. For reader's convenience, we still formulate them below. By H\"{o}lder's inequality,  Lemma \ref{lem4.2} and Lemma \ref{lem4.3}, we get
\begin{align}\label{3.36}
|L_1|&\leq \|\partial_t\bar{\phi}_{\varepsilon, \delta}\|_{(H^1)'}\|b_2(\bar{v}_{{\varepsilon, \delta}}, \mathcal{J}_{{\varepsilon}}\varphi^1_{{\varepsilon \delta}})+b_2(v^2_{{\varepsilon,\delta}}, \mathcal{J}_{{\varepsilon}}\bar{\varphi}_{{\varepsilon, \delta}})\|_{(H^1)'}\nonumber\\
&\leq {\varepsilon}_2\|\partial_t\bar{\phi}_{\varepsilon, \delta}\|_{(H^1)'}^2+\frac{C}{{\varepsilon}_2}(\|\bar{v}_{\varepsilon, \delta}\|_{\mathrm{L}_x^2}^2\|\nabla\mathcal{J}_{{\varepsilon}}\varphi^1_{{\varepsilon, \delta}}\|_{H^2}^2
+\|v^2_{\varepsilon, \delta}\|_{\mathrm{L}_x^2}^2\|\nabla\mathcal{J}_{{\varepsilon}}\bar{\varphi}_{{\varepsilon, \delta}}\|_{H^2}^2)\\
&\leq {\varepsilon}_2\|\partial_t\bar{\phi}_{\varepsilon, \delta}\|_{(H^1)'}^2+\frac{C}{{\varepsilon}_2{\varepsilon}^4}(\|\bar{v}_{\varepsilon, \delta}\|_{\mathrm{L}_x^2}^2\|\varphi^1_{{\varepsilon, \delta}}\|_{H^1}^2
+\|v^2_{\varepsilon}\|_{\mathrm{L}_x^2}^2\|\bar{\varphi}_{{\varepsilon, \delta}}\|_{H^1}^2),\nonumber
\end{align}
and
\begin{align}
|L_2|&\leq {\varepsilon}_2\|\nabla \bar{\mu}_{\varepsilon, \delta}\|^2_{L_x^2}+\frac{C}{{\varepsilon}_2 {\varepsilon}^4}(\|v_{\varepsilon, \delta}^2\|^2_{\mathrm{L}_x^2}\|\bar{\varphi}_{\varepsilon, \delta}\|^2_{H^1}+\|\bar{v}_{\varepsilon, \delta}\|^2_{\mathrm{L}_x^2}\|\varphi^1_{\varepsilon, \delta}\|^2_{H^1}),
\end{align}
and
\begin{align}\label{3.38}
\!\!\!\!|L_3|&\leq \|\partial_t\bar{\psi}_{\varepsilon, \delta}\|_{L^2(\Gamma)}(\|\bar{v}_{{\varepsilon, \delta}}\|_{\mathrm{L}^2(\Gamma)}\|J_{{\varepsilon}}(\eta^1_{{\varepsilon, \delta}})\|_{H^3(\Gamma)}
+\|v^2_{{\varepsilon, \delta}}\|_{\mathrm{L}^2(\Gamma)}\|J_{{\varepsilon}}(\bar{\eta}_{{\varepsilon, \delta}})\|_{H^3(\Gamma)})\nonumber\\
&\leq \frac{C}{{\varepsilon}^3}\|\partial_t\bar{\psi}_{\varepsilon, \delta}\|_{L^2(\Gamma)}(\bar{{\varepsilon}}\|D(\bar{v}_{{\varepsilon, \delta}})\|_{\mathrm{L}_x^2}
+\bar{{\varepsilon}}^{-\nu}\|\bar{v}_{{\varepsilon, \delta}}\|_{\mathrm{L}_x^2})\|\eta^1_{{\varepsilon, \delta}}\|_{L^2(\Gamma)}\nonumber\\
&\quad+\frac{C}{{\varepsilon}^3}\|\partial_t\bar{\psi}_{\varepsilon, \delta}\|_{L^2(\Gamma)}(\bar{{\varepsilon}}\|D(v^2_{{\varepsilon, \delta}})\|_{\mathrm{L}_x^2}
+\bar{{\varepsilon}}^{-\nu}\|v^2_{{\varepsilon, \delta}}\|_{\mathrm{L}_x^2})\|\bar{\eta}_{{\varepsilon, \delta}}\|_{L^2(\Gamma)}\nonumber\\
&\leq {\varepsilon}_2\|\partial_t\bar{\psi}_{\varepsilon, \delta}\|_{L^2(\Gamma)}^2+\frac{C\bar{{\varepsilon}}}{{\varepsilon}_2{\varepsilon}^6}\|D(\bar{v}_{{\varepsilon, \delta}})\|^2_{\mathrm{L}_x^2}\|\eta^1_{{\varepsilon, \delta}}\|^2_{L^2(\Gamma)}+
\frac{C}{{\varepsilon}_2{\varepsilon}^6\bar{{\varepsilon}}^{\nu}}\|\bar{v}_{{\varepsilon, \delta}}\|^2_{\mathrm{L}_x^2}\|\eta^1_{{\varepsilon, \delta}}\|^2_{L^2(\Gamma)}\nonumber\\
&\quad+{\varepsilon}_2\|\partial_t\bar{\psi}_{\varepsilon, \delta}\|_{L^2(\Gamma)}^2+\frac{C\bar{{\varepsilon}}}{{\varepsilon}_2{\varepsilon}^6}
\|D(v^2_{{\varepsilon, \delta}})\|^2_{\mathrm{L}_x^2}\|\bar{\eta}_{{\varepsilon, \delta}}\|^2_{L^2(\Gamma)}+
\frac{C}{{\varepsilon}_2{\varepsilon}^6\bar{{\varepsilon}}^{\nu}}\|v^2_{{\varepsilon, \delta}}\|^2_{\mathrm{L}_x^2}\|\bar{\eta}_{{\varepsilon, \delta}}\|^2_{L^2(\Gamma)}.
\end{align}
In addition, by Lemma \ref{lem4.3}, we obtain
\begin{align}\label{3.39*}
&\|b_\Gamma(\bar{v}_{{\varepsilon, \delta}},J_{{\varepsilon}}(\eta^1_{{\varepsilon, \delta}}))+b_\Gamma(v^2_{{\varepsilon, \delta}},J_{{\varepsilon}}(\bar{\eta}_{{\varepsilon, \delta}}))\|^2_{L^2(\Gamma)}\nonumber\\
&\leq\frac{C}{{\varepsilon}^3}(\|\bar{v}_{\varepsilon, \delta}\|_{\mathrm{L}^2(\Gamma)}^2\|\eta_{\varepsilon, \delta}^1\|_{L^2(\Gamma)}^2+\| v_{\varepsilon, \delta}^2\|_{\mathrm{L}^2(\Gamma)}^2\|\bar{\eta}_{\varepsilon, \delta}\|_{L^2(\Gamma)}^2)\nonumber\\
&\leq\frac{C}{{\varepsilon}^3}(\bar{{\varepsilon}}\|D(\bar{v}_{\varepsilon, \delta})\|_{\mathrm{L}_x^2}^2
+\bar{{\varepsilon}}^{-\nu}\|\bar{v}_{\varepsilon, \delta}\|_{\mathrm{L}^2_x}^2)\|\eta_{\varepsilon, \delta}^1\|_{L^2(\Gamma)}^2\nonumber\\
&\quad+\frac{C}{{\varepsilon}^3}(\bar{{\varepsilon}}\|D(v^2_{\varepsilon, \delta})\|_{\mathrm{L}_x^2}^2+\bar{{\varepsilon}}^{-\nu}\| v_{\varepsilon, \delta}^2\|_{\mathrm{L}^2_x}^2)\|\bar{\eta}_{\varepsilon, \delta}\|_{L^2(\Gamma)}^2.
\end{align}
For term $L_4$, using \eqref{3.7*}, \eqref{3.8**}, we have
\begin{align}
|L_4|&\leq  \|\partial_t \bar{\phi}_{\varepsilon, \delta}\|_{L_x^2}\|f_\varepsilon(\varphi^1_{\varepsilon, \delta})-f_\varepsilon(\varphi^2_{\varepsilon, \delta})\|_{L_x^2}+\|\partial_t \bar{\psi}_{\varepsilon, \delta}\|_{L^2(\Gamma)}\|g_\varepsilon(\eta^1_{\varepsilon, \delta})-g_\varepsilon(\eta^2_{\varepsilon, \delta})\|_{L^2(\Gamma)}\nonumber\\
&\leq \frac{\varepsilon_2}{2}\|\partial_t \bar{\phi}_{\varepsilon, \delta}\|^2_{L_x^2}+\frac{C}{\varepsilon_2}\|f_\varepsilon(\varphi^1_{\varepsilon, \delta})\!-\!f_\varepsilon(\varphi^2_{\varepsilon, \delta})\|_{L_x^2}^2\nonumber\\&\quad+\frac{1}{2}\|\partial_t\bar{ \psi}_{\varepsilon, \delta}\|^2_{L^2(\Gamma)}+2\|g_\varepsilon(\eta^1_{\varepsilon, \delta})\!-\!g_\varepsilon(\eta^2_{\varepsilon, \delta})\|_{L^2(\Gamma)}^2\nonumber\\
&\leq \frac{\varepsilon_2}{2}\|\partial_t \bar{\phi}_{\varepsilon, \delta}\|^2_{L_x^2}+\frac{C}{\varepsilon_2}\|\bar{\varphi}_{{\varepsilon},\delta}\|^2_{ L_x^2}
+\frac{1}{2}\|\partial_t \bar{\psi}_{\varepsilon, \delta}\|^2_{L^2(\Gamma)}+C\|\bar{\eta}_{{\varepsilon},\delta}\|^2_{ L^2(\Gamma)}.
\end{align}
Next, we give the crucial estimate of terms $L_i, i=5,6,7$ arising from the cut-off function. For term $L_5$, by H\"{o}lder's inequality,  Lemma \ref{lem4.2} and Lemma \ref{lem4.3}, we get
\begin{align}
&\mathbb{E}\left(\int_{0}^{T}L_5dt\right)^p\nonumber\\ &\leq  C\mathbb{E}\Bigg(\int_{0}^{T}\left|\overline{\Psi}_{R}(\|v_{\varepsilon, \delta}^1\|_{\mathrm{L}_x^2})-\overline{\Psi}_{R}(\|v_{\varepsilon, \delta}^2\|_{\mathrm{L}_x^2})\right|\overline{\Psi}_{R}(\|(\varphi_{\varepsilon, \delta}^1, \eta_{\varepsilon, \delta}^1)\|_{ V^1})\nonumber\\
&\qquad \times(b_2(v^2_{{\varepsilon, \delta}}, \mathcal{J}_{{\varepsilon}}\varphi^2_{{\varepsilon, \delta}}),A_1^{-1}(\partial_t\bar{\phi}_{\varepsilon, \delta}))\nonumber\\
&\quad+\overline{\Psi}_{R}(\|v_{\varepsilon, \delta}^2\|_{\mathrm{L}_x^2})\left|\overline{\Psi}_{R}(\|(\varphi_{\varepsilon, \delta}^1, \eta_{\varepsilon, \delta}^1)\|_{ V^1})-\overline{\Psi}_{R}(\|(\varphi_{\varepsilon, \delta}^2, \eta_{\varepsilon, \delta}^2)\|_{ V^1})\right|\nonumber\\
&\qquad\times(b_2(v^2_{{\varepsilon, \delta}}, \mathcal{J}_{{\varepsilon}}\varphi^2_{{\varepsilon, \delta}}),A_1^{-1}(\partial_t\bar{\phi}_{\varepsilon, \delta}))dt\Bigg)^p\nonumber\\
&\leq C\mathbb{E}\Bigg(\int_{0}^{\tau_R^2}(\|\bar{v}_{\varepsilon, \delta}\|_{L_t^\infty\mathrm{L}_x^2}+\|(\bar{\varphi}_{\varepsilon, \delta}, \bar{\eta}_{\varepsilon, \delta})\|_{L_t^\infty V^1})\|\partial_t\bar{\phi}_{\varepsilon, \delta}\|_{(H^1)'}\|v^2_{\varepsilon, \delta}\|_{\mathrm{L}_x^2}\|\nabla\mathcal{J}_{{\varepsilon}}\varphi^2_{{\varepsilon, \delta}}\|_{H^2}dt\Bigg)^p\nonumber\\
&\leq C\mathbb{E}\Bigg(\int_{0}^{\tau_R^2}\frac{1}{\varepsilon_1}(\|\bar{v}_{\varepsilon, \delta}\|^2_{L_t^\infty\mathrm{L}_x^2}+\|(\bar{\varphi}_{\varepsilon, \delta}, \bar{\eta}_{\varepsilon, \delta})\|^2_{L_t^\infty V^1})
+\frac{\varepsilon_1}{\varepsilon^4}\|\partial_t\bar{\phi}_{\varepsilon, \delta}\|_{(H^1)'}^2\|v^2_{\varepsilon, \delta}\|_{\mathrm{L}_x^2}^2\|\varphi^2_{{\varepsilon, \delta}}\|_{H^1}^2dt\Bigg)^p\nonumber\\
&\leq \frac{C}{\varepsilon_1^p}(\tau_R^2)^p\mathbb{E}\left(\|\bar{v}_{\varepsilon, \delta}\|_{L_t^\infty\mathrm{L}_x^2}^{2p}+\|(\bar{\varphi}_{\varepsilon, \delta}, \bar{\eta}_{\varepsilon, \delta})\|_{L_t^\infty V^1}^{2p}\right)+\frac{C\varepsilon^p_1R^{4p}}{\varepsilon^{4p}}\mathbb{E}\|\partial_t\bar{\phi}_{\varepsilon, \delta}\|_{L_t^2(H^1)'}^{2p}\nonumber\\
&\leq \frac{C}{\varepsilon^p_1}T^p\mathbb{E}\left(\|\bar{v}_{\varepsilon, \delta}\|_{L_t^\infty\mathrm{L}_x^2}^{2p}+\|(\bar{\varphi}_{\varepsilon, \delta}, \bar{\eta}_{\varepsilon, \delta})\|_{L_t^\infty V^1}^{2p}\right)+\frac{C\varepsilon^p_1R^{4p}}{\varepsilon^{4p}}\mathbb{E}\|\partial_t\bar{\phi}_{\varepsilon, \delta}\|_{L_t^2(H^1)'}^{2p}.
\end{align}
Similarly,
\begin{align}
&\mathbb{E}\left(\int_{0}^{T}L_6dt\right)^p\nonumber\\ &\leq C\mathbb{E}\Bigg(\int_{0}^{\tau_R^2}(\|\bar{v}_{\varepsilon, \delta}\|_{L_t^\infty\mathrm{L}_x^2}+\|(\bar{\varphi}_{\varepsilon, \delta}, \bar{\eta}_{\varepsilon, \delta})\|_{L_t^\infty V^1})\|\nabla\bar{\mu}_{\varepsilon, \delta}\|_{L_x^2}\|v^2_{\varepsilon, \delta}\|_{\mathrm{L}_x^2}\|\nabla\mathcal{J}_{{\varepsilon}}\varphi^2_{{\varepsilon, \delta}}\|_{H^2}dt\Bigg)^p\nonumber\\
&\leq C\mathbb{E}\Bigg(\int_{0}^{\tau_R^2}(\|\bar{v}_{\varepsilon, \delta}\|^2_{L_t^\infty\mathrm{L}_x^2}+\|(\bar{\varphi}_{\varepsilon, \delta}, \bar{\eta}_{\varepsilon, \delta})\|^2_{L_t^\infty V^1})
+\frac{1}{\varepsilon^4}\|\nabla\bar{\mu}_{\varepsilon, \delta}\|_{L_x^2}^2\|v^2_{\varepsilon, \delta}\|_{\mathrm{L}_x^2}^2\|\varphi^2_{{\varepsilon, \delta}}\|_{H^1}^2dt\Bigg)^p\nonumber\\
&\leq \frac{C}{\varepsilon^p_1}(\tau_R^2)^p\mathbb{E}\left(\|\bar{v}_{\varepsilon, \delta}\|_{L_t^\infty\mathrm{L}_x^2}^{2p}+\|(\bar{\varphi}_{\varepsilon, \delta}, \bar{\eta}_{\varepsilon, \delta})\|_{L_t^\infty V^1}^{2p}\right)+\frac{C\varepsilon^p_1R^{4p}}{\varepsilon^{4p}}\mathbb{E}\|\nabla\bar{\mu}_{\varepsilon, \delta}\|_{L^2_tL_x^2}^{2p}\nonumber\\
&\leq \frac{C}{\varepsilon^p_1}T^p\mathbb{E}\left(\|\bar{v}_{\varepsilon, \delta}\|_{L_t^\infty\mathrm{L}_x^2}^{2p}+\|(\bar{\varphi}_{\varepsilon, \delta}, \bar{\eta}_{\varepsilon, \delta})\|_{L_t^\infty V^1}^{2p}\right)+\frac{C\varepsilon^p_1R^{4p}}{\varepsilon^{4p}}\mathbb{E}\|\nabla\bar{\mu}_{\varepsilon, \delta}\|_{L^2_tL_x^2}^{2p},
\end{align}
and
\begin{align}\label{3.50*}
&\mathbb{E}\left(\int_{0}^{T}L_7dt\right)^p\nonumber\\
&\leq C\mathbb{E}\Bigg(\int_{0}^{\tau_R^2}(\|\bar{v}_{\varepsilon, \delta}\|_{L_t^\infty\mathrm{L}_x^2}+\|(\bar{\varphi}_{\varepsilon, \delta}, \bar{\eta}_{\varepsilon, \delta})\|_{L_t^\infty V^1})\|\partial_t\bar{\psi}_{\varepsilon, \delta}\|_{L^2(\Gamma)}
\|v^2_{{\varepsilon, \delta}}\|_{\mathrm{L}^2(\Gamma)}\|J_{{\varepsilon}}\eta^2_{{\varepsilon, \delta}}\|_{H^3(\Gamma)}dt\Bigg)^p\nonumber\\
&\leq \mathbb{E}\Bigg(\frac{C}{\varepsilon^2}\int_{0}^{\tau_R^2}(\|\bar{v}_{\varepsilon, \delta}\|_{L_t^\infty\mathrm{L}_x^2}\!+\!\|(\bar{\varphi}_{\varepsilon, \delta}, \bar{\eta}_{\varepsilon, \delta})\|_{L_t^\infty V^1})\nonumber\\
&\qquad\times\|\partial_t\bar{\psi}_{\varepsilon, \delta}\|_{L^2(\Gamma)}(\bar{{\varepsilon}}\|D(v^2_{{\varepsilon, \delta}})\|_{\mathrm{L}_x^2}
\!+\!\bar{{\varepsilon}}^{-\nu}\|v^2_{{\varepsilon, \delta}}\|_{\mathrm{L}_x^2})\|\eta^2_{{\varepsilon, \delta}}\|_{H^1(\Gamma)}dt\Bigg)^p\nonumber\\
&\leq \mathbb{E}\Bigg(\int_{0}^{\tau_R^2}\frac{\bar{{\varepsilon}}}{\varepsilon_1}(\|\bar{v}_{\varepsilon, \delta}\|_{L_t^\infty\mathrm{L}_x^2}^2+\|(\bar{\varphi}_{\varepsilon, \delta}, \bar{\eta}_{\varepsilon, \delta})\|_{L_t^\infty V^1}^2)
\|D(v^2_{{\varepsilon, \delta}})\|_{\mathrm{L}_x^2}^2\nonumber\\&\qquad+\frac{\varepsilon_1}{\varepsilon^4}\|\partial_t\bar{\psi}_{\varepsilon, \delta}\|_{L^2(\Gamma)}^2\|\eta^2_{{\varepsilon, \delta}}\|_{H^1(\Gamma)}^2dt\Bigg)^p\nonumber\\
&\quad+\mathbb{E}\Bigg(\int_{0}^{\tau_R^2}\frac{1}{\bar{\varepsilon}^\nu\varepsilon_1}(\|\bar{v}_{\varepsilon, \delta}\|_{L_t^\infty\mathrm{L}_x^2}^2+\|(\bar{\varphi}_{\varepsilon, \delta}, \bar{\eta}_{\varepsilon, \delta})\|_{L_t^\infty V^1}^2)
\|v^2_{{\varepsilon, \delta}}\|_{\mathrm{L}_x^2}^2\nonumber\\&\qquad+\frac{\varepsilon_1}{\varepsilon^4}\|\partial_t\bar{\psi}_{\varepsilon, \delta}\|_{L^2(\Gamma)}^2\|\eta^2_{{\varepsilon, \delta}}\|_{H^1(\Gamma)}^2dt\Bigg)^p\nonumber\\
&\leq \left(\frac{C}{\bar{\varepsilon}^{p\nu}\varepsilon^p_1}(\tau_R^2)^p+\frac{R^{2p}\bar{{\varepsilon}}^p}{\varepsilon^p_1}\right)\mathbb{E}\left(\|\bar{v}_{\varepsilon, \delta}\|_{L_t^\infty\mathrm{L}_x^2}^{2p}+\|(\bar{\varphi}_{\varepsilon, \delta}, \bar{\eta}_{\varepsilon, \delta})\|_{L_t^\infty V^1}^{2p}\right)\nonumber\\&\quad+\frac{C\varepsilon^p_1R^{2p}}{\varepsilon^{4p}}\mathbb{E}\|\partial_t\bar{\psi}_{\varepsilon, \delta}\|_{L_t^2L^2(\Gamma)}^{2p}\nonumber\\
&\leq \left(\frac{C}{\bar{\varepsilon}^{p\nu}\varepsilon^p_1}T^p+\frac{R^{2p}\bar{\varepsilon}^p}{\varepsilon^p_1}\right)\mathbb{E}\left(\|\bar{v}_{\varepsilon, \delta}\|_{L_t^\infty\mathrm{L}_x^2}^{2p}+\|(\bar{\varphi}_{\varepsilon, \delta}, \bar{\eta}_{\varepsilon, \delta})\|_{L_t^\infty V^1}^{2p}\right)\nonumber\\
&\quad+\frac{C\varepsilon^p_1R^{2p}}{\varepsilon^{4p}}\mathbb{E}\|\partial_t\bar{\psi}_{\varepsilon, \delta}\|_{L_t^2L^2(\Gamma)}^{2p}.
\end{align}

Choosing first $\varepsilon_1, \varepsilon_2$ small enough, such that
\begin{align}
\frac{C\varepsilon^p_1R^{4p}}{\varepsilon^{4p}}+\varepsilon_2^p\leq \frac{1}{4},
\end{align}
then, for fixed $\varepsilon_1, \varepsilon_2$, choosing $\bar{\varepsilon}$ small enough, such that
\begin{align}
\frac{R^{2p}\bar{\varepsilon}^p}{\varepsilon^p_1}+\frac{C\bar{{\varepsilon}}^pR^{2p}}{{\varepsilon}^p_2{\varepsilon}^{6p}}
+\frac{C\bar{\varepsilon}^pR^{2p}}{{\varepsilon}^{3p}}\leq \frac{1}{4},
\end{align}
finally, for the small $\varepsilon_1, \varepsilon_2$ and $\bar{\varepsilon}$, we choose time $T$ small enough such that
\begin{align}\label{3.53}
\left(C+\frac{C(1+R^2)}{\delta}+CR^2+\frac{CR^2}{\varepsilon_2\varepsilon^4}+\frac{CR^2}{\varepsilon_2\varepsilon^6\bar{\varepsilon}^{\nu}}
+\frac{CR^2}{\varepsilon^3\bar{\varepsilon}^{\nu}}+\frac{C}{\bar{\varepsilon}^{\nu}\varepsilon_1}+\frac{C}{\varepsilon_1}\right)^pT^p\leq \frac{1}{4}.
\end{align}
Collecting all estimates \eqref{3.35}-\eqref{3.53}, we conclude that for this small time $T$,
$$\|(\bar{\phi}_{\varepsilon, \delta}, \bar{\psi}_{\varepsilon, \delta})\|_{X_2}^{2p}\leq \frac{1}{2}\|(\bar{v}_{\varepsilon, \delta}, \bar{\varphi}_{\varepsilon, \delta}, \bar{\eta}_{\varepsilon, \delta})\|_{X}^{2p}.$$
 We obtain mapping $\mathcal{M}$ is contraction in $[0,T]$.

Hence, we have mapping $\mathcal{M}$ is contraction from $X$ into itself for the fixed constant $R$ and small time $T$. Thus, for every $\varepsilon,\delta, R$, there exists unique approximate solutions sequence $(u_{\varepsilon, \delta}^R, \phi_{\varepsilon, \delta}^R, \psi_{\varepsilon, \delta}^R)$ to system \eqref{equ3.1*} in time interval $[0,T]$.  The following estimates uniformly in parameters $\varepsilon,\delta, R$ allow us to extend the local existence time to $[0,T]$ for any $T>0$. Thus, we obtain the global existence and uniqueness of solutions $(u_{\varepsilon, \delta}^R, \phi_{\varepsilon, \delta}^R, \psi_{\varepsilon, \delta}^R)$ for any fixed $\varepsilon,\delta, R$.

{\bf Part 2.  The a priori estimates and pass the limit as $R\rightarrow\infty$.}

In this part, we establish the a priori estimates for the approximate solutions uniformly in $\varepsilon,\delta, R$, and then prove the global existence of unique approximate solutions $(u_{\varepsilon, \delta}, \phi_{\varepsilon, \delta}, \psi_{\varepsilon, \delta})$ of system \eqref{equ3.1} for any fixed $\varepsilon,\delta$ by passing the limit as $R\rightarrow\infty$.

We first show uniform estimates. Test $(\ref{equ3.1*})_2$ and $(\ref{equ3.1*})_3$ by $\xi_2=\mu^R_{\varepsilon, \delta}\in H^1$ and $\xi_3=\mathcal{K}(\psi^R_{\varepsilon, \delta})\in L^2(\Gamma)$, and also test (\ref{3.3}) and (\ref{3.4}) by $\partial_t\phi^R_{\varepsilon, \delta}\in L_x^2$ and $\partial_t\psi^R_{\varepsilon, \delta}\in L^2(\Gamma)$. Applying It\^{o}'s formula to the function $\frac{1}{2}\|u^R_{\varepsilon, \delta}\|_{\mathrm{L}_x^2}^2$, by the cancellation properties $$B_1(\mu,\phi, u)=B_2(u,\phi,\mu)$$ and $$B_\Gamma(u,\psi, \mathcal{K}(\psi))=(\mathcal{K}(\psi)\cdot \nabla_{\tau}\psi,u_\tau)_\Gamma,$$ we conclude
\begin{align}\label{3.52}
&\frac{1}{2}d\left(\|u^R_{\varepsilon,\delta}\|_{\mathrm{L}_x^2}^2+\|\nabla \phi^R_{\varepsilon, \delta}\|_{L_x^2}^2+\|\psi^R_{\varepsilon,\delta}\|_{L^2(\Gamma)}^2+\|\nabla_{\tau}\psi^R_{\varepsilon, \delta}\|_{L^2(\Gamma)}^2+2\int_{\mathcal{D}}F(\phi^R_{\varepsilon, \delta})dx\right)\nonumber\\
&\quad+d\int_{\Gamma}G(\psi^R_{\varepsilon, \delta})dS+(\|(\nabla u^R_{\varepsilon,\delta}, \nabla \mu^R_{\varepsilon,\delta})\|_{L_x^2}^2+\|\mathcal{K}(\psi^R_{\varepsilon, \delta})\|_{L^2(\Gamma)}^2+\delta\|\partial_t\phi^R_{\varepsilon,\delta}\|_{L_x^2}^2)dt\\
&=(h(u^R_{\varepsilon, \delta},\nabla\phi^R_{\varepsilon, \delta}), u^R_{\varepsilon,\delta})d\mathcal{W}+\frac{1}{2}\|h(u^R_{\varepsilon, \delta},\nabla\phi^R_{\varepsilon, \delta})\|^2_{L_2(\mathcal{H};\mathrm{L}_x^2)}dt.\nonumber
\end{align}

\begin{remark}\label{rem3.3} Note that, to achieve the a priori estimates we have to choose the functions $\xi_2=\mu$ and $\xi_3=\mathcal{K}(\psi)$ to test equations (\ref{3.3}) and (\ref{3.4}). In process of Galerkin approximation, we can not implement such choice since this procedure requires ${\rm Tr}_{\mathcal{D}}\mu=\mathcal{K}(\psi)$ at the level of finite-dimension. For more explanations, see \cite{GMA}.
\end{remark}

Define by the stopping time
$$\rho_N=\inf\left\{t\geq 0;\sup_{t\geq 0}\|u^R_{\varepsilon,\delta}\|^2_{\mathrm{L}_x^2}\geq N\right\},$$
and choose $\rho_N=T$, if the set is empty. Note that $\rho_N$ is non-decreasing with $\lim_{N\rightarrow\infty}\rho_N=T$.
Assumption ${\bf A}_1$ and the Burkholder-Davis-Gundy inequality give
\begin{align}\label{3.37}
&\mathbb{E}\left(\sup_{s\in [0,t\wedge \rho_N]}\left|\int_{0}^{s}(h(u^R_{\varepsilon, \delta},\nabla\phi^R_{\varepsilon, \delta}), u^R_{\varepsilon,\delta})d\mathcal{W}\right|\right)^p\nonumber\\
&\leq C\mathbb{E}\left(\int_{0}^{t\wedge \rho_N}\|h(u^R_{\varepsilon, \delta},\nabla\phi^R_{\varepsilon, \delta})\|^2_{L_2(\mathcal{H};L_x^2)}\| u^R_{\varepsilon,\delta}\|_{\mathrm{L}_x^2}^2ds\right)^\frac{p}{2}\nonumber\\
&\leq \frac{1}{4}\mathbb{E}\left(\sup_{s\in [0,t\wedge \rho_N]}\| u^R_{\varepsilon,\delta}\|_{\mathrm{L}_x^2}^{2p}\right)+C\mathbb{E}\int_{0}^{t\wedge \rho_N}1+\|(u^R_{\varepsilon,\delta},\nabla\phi^R_{\varepsilon,\delta})\|_{L_x^2}^{2p}ds.
\end{align}
Again, assumption ${\bf A}_1$ gives
\begin{align}\label{3.54}
\frac{1}{2}\|h(u^R_{\varepsilon, \delta},\nabla\phi^R_{\varepsilon, \delta})\|^2_{L_2(\mathcal{H};L_x^2)}\leq C(1+\|(u^R_{\varepsilon, \delta},\nabla\phi^R_{\varepsilon, \delta})\|_{L_x^2}^2).
\end{align}
By assumptions ${\bf A}_3$ and ${\bf A}_4$, the initial data $(\phi_0,\psi_0)\in L_\omega^pV^1$, we have $F(\phi_0)\in L_\omega^pL_x^1$, $G(\psi_0)\in L_\omega^pL^1(\Gamma)$.
Integrating of $t$ in \eqref{3.52}, taking supremum over $[0, T\wedge \rho_N]$ and then power $p$, expectation, \eqref{3.37}, \eqref{3.54} and Gronwall's inequality yield
\begin{align*}
&\mathbb{E}\left(\|u^R_{\varepsilon,\delta}\|_{L_{t\wedge \rho_N}^\infty \mathrm{L}_x^2}^2+\|\nabla \phi^R_{\varepsilon, \delta}\|_{L_{t\wedge \rho_N}^\infty L_x^2}^2+\|\psi^R_{\varepsilon,\delta}\|_{L_{t\wedge \rho_N}^\infty L^2(\Gamma)}^2+\|\nabla_{\tau}\psi^R_{\varepsilon, \delta}\|_{L_{t\wedge \rho_N}^\infty L^2(\Gamma)}^2\right)^p\nonumber\\
&+\mathbb{E}\left(\sup_{t\in [0,T\wedge \rho_N]}\int_{\Gamma}G(\psi^R_{\varepsilon, \delta})dS+\sup_{t\in [0,T\wedge \rho_N]}\int_{\mathcal{D}}F(\phi^R_{\varepsilon, \delta})dx\right)^p\nonumber\\
&+\mathbb{E}\left(\|(\nabla u^R_{\varepsilon,\delta}, \nabla \mu^R_{\varepsilon,\delta})\|_{L_{t\wedge \rho_N}^2L_x^2}^2+\|\mathcal{K}(\psi^R_{\varepsilon, \delta})\|_{L_{t\wedge \rho_N}^2L^2(\Gamma)}^2+\delta\|\partial_t\phi^R_{\varepsilon,\delta}\|_{L_{t\wedge \rho_N}^2L_x^2}^2\right)^p\leq C,
\end{align*}
where $C=C(p, T, \mathcal{D}, \Gamma)$ is a constant independence of $\varepsilon, \delta, R, N$. Finally, the monotone convergence theorem yields as $N\rightarrow\infty$
\begin{align}\label{main1*}
&\mathbb{E}\left(\|u^R_{\varepsilon,\delta}\|_{L_{t}^\infty \mathrm{L}_x^2}^2+\|\nabla \phi^R_{\varepsilon, \delta}\|_{L_{t}^\infty L_x^2}^2+\|\psi^R_{\varepsilon,\delta}\|_{L_{t}^\infty L^2(\Gamma)}^2+\|\nabla_{\tau}\psi^R_{\varepsilon, \delta}\|_{L_{t}^\infty L^2(\Gamma)}^2\right)^p\nonumber\\
&+\mathbb{E}\left(\sup_{t\in [0,T]}\int_{\Gamma}G(\psi^R_{\varepsilon, \delta})dS+\sup_{t\in [0,T]}\int_{\mathcal{D}}F(\phi^R_{\varepsilon, \delta})dx\right)^p\nonumber\\
&+\mathbb{E}\left(\|(\nabla u^R_{\varepsilon,\delta}, \nabla \mu^R_{\varepsilon,\delta})\|_{L_{t}^2L_x^2}^2+\|\mathcal{K}(\psi^R_{\varepsilon, \delta})\|_{L_{t}^2L^2(\Gamma)}^2+\delta\|\partial_t\phi^R_{\varepsilon,\delta}\|_{L_{t}^2L_x^2}^2\right)^p\leq C.
\end{align}
Using assumptions ${\bf A}_3$, ${\bf A}_4$  and \eqref{main1*}, we get
\begin{align}\label{main1}
&\mathbb{E}\left(\|u^R_{\varepsilon,\delta}\|_{L_{t}^\infty \mathrm{L}_x^2}^2+\|\nabla \phi^R_{\varepsilon, \delta}\|_{L_{t}^\infty L_x^2}^2+\|\psi^R_{\varepsilon,\delta}\|_{L_{t}^\infty L^2(\Gamma)}^2+\|\nabla_{\tau}\psi^R_{\varepsilon, \delta}\|_{L_{t}^\infty L^2(\Gamma)}^2\right)^p\nonumber\\
&+\mathbb{E}\left(\|(\nabla u^R_{\varepsilon,\delta}, \nabla \mu^R_{\varepsilon,\delta})\|_{L_{t}^2L_x^2}^2+\|\mathcal{K}(\psi^R_{\varepsilon, \delta})\|_{L_{t}^2L^2(\Gamma)}^2+\delta\|\partial_t\phi^R_{\varepsilon,\delta}\|_{L_{t}^2L_x^2}^2\right)^p\leq C.
\end{align}

Since $\langle \partial_t\phi^R_{\varepsilon,\delta}\rangle=0$, a.e. on $[0,T]\times \Omega$, we also have
\begin{align*}
\langle\mu^R_{\varepsilon,\delta}\rangle&=\langle-\Delta \phi^R_{\varepsilon, \delta}\rangle+\langle f_\varepsilon(\phi^R_{\varepsilon, \delta})\rangle\nonumber\\
&=-|\mathcal{D}|^{-1}|\Gamma|(\langle\mathcal{K}(\psi^R_{\varepsilon,\delta})\rangle_\Gamma-\langle\psi^R_{\varepsilon,\delta}\rangle+\langle g_\varepsilon(\psi^R_{\varepsilon,\delta})\rangle)+\langle f_\varepsilon(\phi^R_{\varepsilon, \delta})\rangle,
\end{align*}
as well as \eqref{main1}, assumption ${\bf A}_3$, and \eqref{3.7*}, \eqref{3.8**}, we infer that $\langle \mu^R_{\varepsilon,\delta}\rangle\in L^2_t$, $\mathbb{P}$-a.s. By (\ref{main1}) again, the Poincar\'{e} inequality yields
$$\mu^R_{\varepsilon,\delta}\in L_\omega^pL_t^2H^1,~ {\rm uniformly ~in} ~\varepsilon, \delta, R.$$
Moreover, $(\phi^R_{\varepsilon, \delta}, \psi^R_{\varepsilon, \delta})$ is the solution of the following elliptic boundary value problem:
\begin{align*}
-\Delta \phi^R_{\varepsilon,\delta}=\Psi^R_{\varepsilon,\delta}, ~\mathbb{P}\mbox{-a.s.} ~{\rm in}~\mathcal{D}\times (0,T),\\
A_\tau  \psi^R_{\varepsilon, \delta}+\partial_n \phi^R_{\varepsilon,\delta}+\psi^R_{\varepsilon,\delta}=\widetilde{\Psi}^R_{\varepsilon,\delta}, ~\mathbb{P}\mbox{-a.s.} ~{\rm on}~\Gamma\times (0,T),
\end{align*}
where
$$\Psi^R_{\varepsilon,\delta}= \mu^R_{\varepsilon,\delta}-\delta\partial_t\phi^R_{\varepsilon,\delta}-f_\varepsilon(\phi^R_{\varepsilon,\delta})\in L_\omega^p L^2_tL^2_x,$$
$$\widetilde{\Psi}^R_{\varepsilon,\delta}=\mathcal{K}(\psi^R_{\varepsilon,\delta})-g_\varepsilon(\psi^R_{\varepsilon,\delta})\in L^p_\omega L^2_tL^2(\Gamma).$$
By the elliptic regularity theory and Lemma \ref{4.4*}, we have
\begin{align*}
\mathbb{E}\|(\phi^R_{\varepsilon, \delta}, \psi^R_{\varepsilon, \delta})\|_{L_t^2V^2}^p\leq C,
\end{align*}
where constant $C$ is independent of $\varepsilon, \delta, R$.

In conclusion, we have
\begin{eqnarray}\label{3.39}
\left\{\begin{array}{ll}
\!\!u^R_{\varepsilon,\delta}\in L_\omega^p(L_t^\infty \mathrm{L}^2_x\cap L^2_t\mathrm{H}^1),\\  \mu^R_{\varepsilon,\delta}\in L_\omega^pL_t^2H^1,\\
\!\!(\phi^R_{\varepsilon,\delta}, \psi^R_{\varepsilon,\delta})\in L_w^p(L^\infty_tV^1\cap L^2_xV^2),\\ \mathcal{K}(\psi^R_{\varepsilon,\delta})\in L_w^pL^2_tL^2(\Gamma),\\
\!\!\delta^\frac{1}{2}\partial_t\phi^R_{\varepsilon,\delta}\in L_w^pL^2_tL_x^2,
\end{array}\right.
\end{eqnarray}
for all $p\geq 2$, uniformly in $\varepsilon,\delta, R$.

With these estimates in hand, we could build the existence and uniqueness of approximate solutions $(u_{\varepsilon, \delta}, \phi_{\varepsilon, \delta}, \psi_{\varepsilon, \delta})$ for any fixed $\varepsilon,\delta$ by passing $R\rightarrow\infty$ with the spirit of \cite{bre}. To this end, we define
\begin{align*}
\mathcal{T}_R=&\inf\left\{t\in [0,T];\|u^R_{\varepsilon, \delta}(t)\|^2_{\mathrm{L}_x^2}\geq R\right\}\\
&\qquad\qquad\wedge\inf\left\{t\in [0,T];\|(\phi^R_{\varepsilon,\delta}(t),\psi^R_{\varepsilon,\delta}(t))\|^2_{ V^1}\geq R\right\},
\end{align*}
if the set is empty, we take $\mathcal{T}_R=T$.  We infer from above argument that $(u_{\varepsilon, \delta}^R, \phi_{\varepsilon, \delta}^R, \psi_{\varepsilon, \delta}^R)$ is a unique solution on $[0,\mathcal{T}_R)$. Note that $\mathcal{T}_R$ is an increasing sequence, thus, $\mathcal{T}_{R'}\geq \mathcal{T}_R$ when $R'>R$. Then, the uniqueness implies $(u_{\varepsilon, \delta}^{R'}, \phi_{\varepsilon, \delta}^{R'}, \psi_{\varepsilon, \delta}^{R'})=(u_{\varepsilon, \delta}^R, \phi_{\varepsilon, \delta}^R, \psi_{\varepsilon, \delta}^R)$ on $[0, \mathcal{T}_R)$. As a result, we can define $(u_{\varepsilon, \delta}, \phi_{\varepsilon, \delta}, \psi_{\varepsilon, \delta})=(u_{\varepsilon, \delta}^R, \phi_{\varepsilon, \delta}^R, \psi_{\varepsilon, \delta}^R)$ in $[0, \mathcal{T}_R)$. Finally, we need to show that $$\mathbb{P}\left\{\sup_{R\in \mathbb{N}}\mathcal{T}_R=T\right\}=1,$$
that is, the blow-up does not occur within any finite time.

Indeed, using the Chebyshev inequality and \eqref{3.39}
\begin{align*}
\mathbb{P}\left\{\sup_{R\in \mathbb{N}}\mathcal{T}_{R}=T\right\}&=1-\mathbb{P}\left\{\left(\sup_{R\in \mathbb{N}}\mathcal{T}_{R}=T\right)^c\right\}=1-\mathbb{P}\left\{\sup_{R\in \mathbb{N}}\mathcal{T}_{R}<T\right\}\\
&\geq 1-\mathbb{P}\left\{\mathcal{T}_{R}<T\right\}\\
&\geq 1-\frac{C}{R}\mathbb{E}\left(\|u^R_{\varepsilon, \delta}\|^2_{L_t^\infty \mathrm{L}_x^2\cap L_t^2\mathrm{H}^1}+\|(\phi^R_{\varepsilon,\delta},\psi^R_{\varepsilon,\delta})\|^2_{L_t^\infty V^1\cap L_t^2V^2 }\right).
\end{align*}
Passing $R\rightarrow\infty$ on both sides, the result follows, making sure that we could define $(u_{\varepsilon, \delta}, \phi_{\varepsilon, \delta}, \psi_{\varepsilon, \delta})$ in the whole time interval $[0,T]$ for any $T>0$.

In order to show the energy inequality 5 in Definition \ref{def2.1}, let
\begin{align*}
&E_{\varepsilon, \delta}(t):=\frac{1}{2}\left(\|u^R_{\varepsilon,\delta}\|_{\mathrm{L}_x^2}^2+\|\nabla \phi^R_{\varepsilon, \delta}\|_{L_x^2}^2+\|\psi^R_{\varepsilon,\delta}\|_{L^2(\Gamma)}^2+\|\nabla_{\tau}\psi^R_{\varepsilon, \delta}\|_{L^2(\Gamma)}^2+2\int_{\mathcal{D}}F(\phi^R_{\varepsilon, \delta})dx\right)\nonumber\\
&\qquad\qquad+\int_{\Gamma}G(\psi^R_{\varepsilon, \delta})dS.
\end{align*}
Then after an applications of It\^{o}'s formula of $E^2_{\varepsilon, \delta}(t)$, from \eqref{3.52} we have
\begin{align}\label{3.59}
&\frac{1}{2}E^2_{\varepsilon, \delta}(t)+\int_{0}^{t}E_{\varepsilon, \delta}(\|(\nabla u^R_{\varepsilon,\delta}, \nabla \mu^R_{\varepsilon,\delta})\|_{L_x^2}^2+\|\mathcal{K}(\psi^R_{\varepsilon, \delta})\|_{L^2(\Gamma)}^2+\delta\|\partial_t\phi^R_{\varepsilon,\delta}\|_{L_x^2}^2)ds\nonumber\\
&=\frac{1}{2}E^2_{\varepsilon, \delta}(0)+\int^t_0E_{\varepsilon, \delta}(h(u^R_{\varepsilon, \delta},\nabla\phi^R_{\varepsilon, \delta}), u^R_{\varepsilon,\delta})d\mathcal{W}+\frac{1}{2}\int^t_0E_{\varepsilon, \delta}\|h(u^R_{\varepsilon, \delta},\nabla\phi^R_{\varepsilon, \delta})\|^2_{L_2(\mathcal{H};\mathrm{L}_x^2)}ds\nonumber\\
&\quad+\frac{1}{2}\int^t_0(h(u^R_{\varepsilon, \delta},\nabla\phi^R_{\varepsilon, \delta}), u^R_{\varepsilon,\delta})^2ds.
\end{align}
By the elliptic regularity theory and Lemma \ref{4.4*} again, from \eqref{3.59} we have
\begin{align}\label{3.60}
&\frac{1}{2}E^2_{\varepsilon, \delta}(t)\nonumber\\&+\int_{0}^{t}E_{\varepsilon, \delta}(\|(\nabla u^R_{\varepsilon,\delta}, \nabla \mu^R_{\varepsilon,\delta})\|_{L_x^2}^2+\|\mathcal{K}(\psi^R_{\varepsilon, \delta})\|_{L^2(\Gamma)}^2+\delta\|\partial_t\phi^R_{\varepsilon,\delta}\|_{L_x^2}^2+\|(\phi^R_{\varepsilon, \delta}, \psi^R_{\varepsilon, \delta})\|^2_{V^2})ds\nonumber\\
&\leq E^2_{\varepsilon, \delta}(0)+2\int^t_0E_{\varepsilon, \delta}(h(u^R_{\varepsilon, \delta},\nabla\phi^R_{\varepsilon, \delta}), u^R_{\varepsilon,\delta})d\mathcal{W}+\int^t_0E_{\varepsilon, \delta}\|h(u^R_{\varepsilon, \delta},\nabla\phi^R_{\varepsilon, \delta})\|^2_{L_2(\mathcal{H};L_x^2)}ds\nonumber\\
&\quad+\int^t_0(h(u^R_{\varepsilon, \delta},\nabla\phi^R_{\varepsilon, \delta}), u^R_{\varepsilon,\delta})^2ds.
\end{align}
Inequality \eqref{3.60} will be used later.

{\bf Part 3. Conduct the compactness argument}

In the stochastic setting, embedding $L^{2}(\Omega;X)\hookrightarrow L^{2}(\Omega;Y)$ might not be compact, even if the embedding $X$ into $Y$ is compact. As a result, the usual compactness criteria, such as the Aubin or Arzel\`{a}-Ascoli type theorems, can not be used directly. We turn to invoke the Skorokhod representative theorem to obtain the compactness, which require us to show the tightness of the probability set induced by the law of sequence $\{u_{\varepsilon,\delta}, \phi_{\varepsilon,\delta}, \psi_{\varepsilon,\delta}\}$. Toward the goal, define
$$\mathcal{P}_{\varepsilon, \delta}(B)=\mathcal{P}^1_{\varepsilon, \delta}\times \mathcal{P}^2_{\varepsilon, \delta}\times \mathcal{P}_\mathcal{W}(B)$$
for any $B\in \mathcal{B}(Y)$, where the Banach space $Y=Y_1\times Y_2\times Y_3$ is defined as
$$Y_1=(L_t^\infty \mathrm{L}_x^2)_{w^*}\cap L_t^2\mathrm{L}_x^2\cap (L_t^2\mathrm{H}^1)_{w}, ~Y_2=(L_t^\infty V^1)_{w^*}\cap L_t^2V^{2-\sigma}\cap (L^2_tV^2)_w,~ Y_3=C_t\mathcal{H}_0,$$
with $\sigma\in (0,\frac{1}{2})$.
It is enough to show each set $\mathcal{P}_{\varepsilon,\delta}^i$ is tight on the corresponding space $Y_i$.

{\bf Claim} 1. The probability set $\mathcal{P}^1_{\varepsilon, \delta}$ is tight on $Y_1$.
\begin{proof}
We first show that there exists a constant $\beta>0$ such that,
\begin{align}\label{3.40}
 \mathbb{E}\|u_{\varepsilon,\delta}\|^p_{C_t^\beta(\mathrm{H}^{1,3})'}\leq C,
\end{align}
where constant $C$ is independent of $\varepsilon, \delta$.

For any $v\in \mathrm{H}^{1,3}$, we have
\begin{align*}
(u_{\varepsilon,\delta},v)&+\int_{0}^{t}a_0(u_{\varepsilon,\delta}, v)ds+\int_{0}^{t}B_0( u_{\varepsilon, \delta},\mathcal{J}_\varepsilon u_{\varepsilon, \delta}, v)ds-\int_{0}^{t}B_1(\mu_{\varepsilon, \delta}, \mathcal{J}_{\varepsilon}\phi_{\varepsilon, \delta}, v)ds\nonumber\\
&-\int_{0}^{t}(\mathcal{K}(\psi_{\varepsilon,\delta})\nabla_{\tau}(J_\varepsilon \psi_{\varepsilon,\delta}), v_\tau)_{\Gamma}ds=(u_0,v)+\int_{0}^{t}(h(u_{\varepsilon, \delta},\nabla\phi_{\varepsilon, \delta}), v)d\mathcal{W}.
\end{align*}
For a.s. $\omega$, and for any $\epsilon>0$, there exist $t_1,t_2\in [0,T]$ such that

$$\sup_{t_0\neq t'_0}\frac{\left|\int_{t_0}^{t'_0}(h(u_{\varepsilon, \delta},\nabla\phi_{\varepsilon, \delta}), v)d\mathcal{W}\right|}{|t_0-t'_0|^\beta}\leq \frac{\left|\int_{t_1}^{t_2}(h(u_{\varepsilon, \delta},\nabla\phi_{\varepsilon, \delta}), v)d\mathcal{W}\right|}{|t_1-t_2|^\beta}+\epsilon,$$
together with assumption ${\bf A}_1$ and the Burkholder-Davis-Gundy inequality, we see
\begin{align}\label{3.41}
&\mathbb{E}\left(\sup_{t_0\neq t'_0}\frac{\left|\int_{t_0}^{t'_0}(h(u_{\varepsilon, \delta},\nabla\phi_{\varepsilon, \delta}), v)d\mathcal{W}\right|}{|t_0-t'_0|^\beta}\right)^p\nonumber\\
&\leq\frac{\mathbb{E}\left(\int_{t_1}^{t_2}\|h(u_{\varepsilon, \delta},\nabla\phi_{\varepsilon, \delta})\|_{L_2(\mathcal{H};L_x^2)}^2 \|v\|_{\mathrm{L}_x^2}^2dt\right)^\frac{p}{2}}{|t_1-t_2|^{\beta p}}+\epsilon^p\nonumber\\
&\leq\frac{C(t_2-t_1)^\frac{p}{2}\mathbb{E}(1+\|(u_{\varepsilon, \delta},\nabla\phi_{\varepsilon, \delta})\|_{L_t^\infty L_x^2})^p}{|t_1-t_2|^{\beta p}}+\epsilon^p\nonumber\\
&\leq C(t_2-t_1)^{\left(\frac{1}{2}-\beta\right)p}+\epsilon^p\leq C,
\end{align}
where $\beta< \frac{1}{2}$.
Applying Lemma \ref{lem4.3}, Lemma \ref{lem4.4}  and H\"{o}lder's inequality, we obtain
\begin{align}
\|B_0( u_{\varepsilon, \delta},\mathcal{J}_\varepsilon u_{\varepsilon, \delta}, v)\|_{L_t^2}&\leq \big\|\|u_{\varepsilon,\delta}\|_{\mathrm{H}^1}\|\nabla \mathcal{J}_\varepsilon u_{\varepsilon, \delta}\|_{\mathrm{H}^{-1}}\big\|_{L_t^2}\|v\|_{\mathrm{H}^{1,3}}\nonumber\\ &\leq C\|u_{\varepsilon, \delta}\|_{L_t^2\mathrm{H}^1}\|u_{\varepsilon, \delta}\|_{L_t^\infty \mathrm{L}_x^2},\label{5.42}\\
\|B_1(\mu_{\varepsilon, \delta}, \mathcal{J}_{\varepsilon}\phi_{\varepsilon, \delta}, v)\|_{L_t^2} &\leq\left\|\|\mu_{\varepsilon,\delta}\|_{L_x^6}\|\nabla \mathcal{J}_\varepsilon \phi_{\varepsilon, \delta}\|_{L_x^{2}}\right\|_{L_t^2}\|v\|_{\mathrm{L}_x^{3}}\nonumber\\&\leq C\|\mu_{\varepsilon, \delta}\|_{L_t^2H^1}\|\phi_{\varepsilon, \delta}\|_{L_t^\infty H^1}.
\end{align}
Choosing $s=\frac{1}{2}$ in Lemma \ref{lem4.1}, and using (\ref{3.39}), we infer that
\begin{align}
\mathbb{E}\|\nabla_{\tau}(J_\varepsilon \psi_{\varepsilon,\delta})\|_{L_t^3L^4(\Gamma)}^p\leq C,
\end{align}
where $C$ is independent of $\varepsilon,\delta$. By H\"{o}lder's inequality, obtaining
\begin{align}\label{5.45}
\begin{split}
\|(\mathcal{K}(\psi_{\varepsilon,\delta})\nabla_{\tau}(J_\varepsilon \psi_{\varepsilon,\delta}), v_\tau)_{\Gamma}\|_{L_t^\frac{6}{5}}&\leq \|v_\tau\|_{\mathrm{L}^4(\Gamma)}\big\|\|\mathcal{K}(\psi_{\varepsilon,\delta})\|_{L^2(\Gamma)}\|\nabla_{\tau}(J_\varepsilon \psi_{\varepsilon,\delta})\|_{L^4(\Gamma)}\big\|_{L_t^\frac{6}{5}}\\
&\leq C\|\mathcal{K}(\psi_{\varepsilon,\delta})\|_{L^2_tL^2(\Gamma)}\|\nabla_{\tau}(J_\varepsilon \psi_{\varepsilon,\delta})\|_{L_t^3L^4(\Gamma)}.
\end{split}
\end{align}
Combining (\ref{5.42})-(\ref{5.45}), we have
\begin{align}\label{3.46}
\mathbb{E}\left\|u_{\varepsilon,\delta}-\int_{0}^{t}h(u_{\varepsilon, \delta},\nabla\phi_{\varepsilon, \delta})d\mathcal{W}\right\|^p_{W^{1,\frac{6}{5}}_t(\mathrm{H}^{1,3})'}\leq C.
\end{align}
The Sobolev embedding
\begin{align*}
W^{\alpha,p}_tX\hookrightarrow C^\beta_tX,~{\rm for}~ \beta<\alpha-\frac{1}{p},
\end{align*}
and (\ref{3.46}) give
\begin{align}\label{3.66}
\mathbb{E}\left\|u_{\varepsilon,\delta}-\int_{0}^{t}h(u_{\varepsilon, \delta},\nabla\phi_{\varepsilon, \delta})d\mathcal{W}\right\|^p_{C^\beta_t(\mathrm{H}^{1,3})'}\leq C, ~{\rm for}~ \beta<\frac{1}{6}.
\end{align}
Estimates (\ref{3.41}) and \eqref{3.66} yield the desired result \eqref{3.40}.

For positive constant $K$, define set
\begin{align*}
B_{1,K}=\left\{u_{\varepsilon,\delta}\in L_t^2\mathrm{H}^1\cap C_t^\alpha(\mathrm{H}^{1,3})':\|u_{\varepsilon,\delta}\|_{ L_t^2\mathrm{H}^1}+\|u_{\varepsilon,\delta}\|_{C_t^\alpha(\mathrm{H}^{1,3})'}\leq K\right\}.
\end{align*}
The Aubin-Lions compact embedding (see Lemma \ref{lem4.5})
$$L_t^2H^1\cap C_t^\alpha(H^{1,3})'\hookrightarrow L^2_tL^2_x,$$
implies that the set $B_{1,K}$ is relatively compact in $L^2_tL^2_x$. Considering (\ref{3.39}), (\ref{3.40}), using the Chebyshev inequality, we have
\begin{align*}
\begin{split}
\mathcal{P}^1_{\varepsilon, \delta}(B_{1,K}^c)&\leq \mathbb{P}\left(\|u_{\varepsilon,\delta}\|_{ L_t^2\mathrm{H}^1}>\frac{K}{2}\right)+\mathbb{P}\left(\|u_{\varepsilon,\delta}\|_{ {C_t^\alpha(\mathrm{H}^{1,3})'}}>\frac{K}{2}\right)\\
&\leq \frac{2}{K}\mathbb{E}(\|u_{\varepsilon,\delta}\|_{ L_t^2\mathrm{H}^1}+\|u_{\varepsilon,\delta}\|_{C_t^\alpha(\mathrm{H}^{1,3})'})\leq \frac{C}{K}.
\end{split}
\end{align*}
Owing to the Banach-Alaoglu theorem, two sets
\begin{eqnarray*}
&&B_{2,K}=\left\{u_{\varepsilon,\delta}\in L_t^{2} \mathrm{H}^1: \|u_{\varepsilon,\delta}\|_{L_t^{2} \mathrm{H}^1}\leq K\right\},\\
&&B_{3,K}=\left\{u_{\varepsilon,\delta}\in L_t^\infty \mathrm{L}_x^2: \|u_{\varepsilon,\delta}\|_{L_t^\infty \mathrm{L}_x^2}\leq K\right\},
\end{eqnarray*}
are relatively compact on path spaces $(L_t^{2} \mathrm{H}^1)_w, (L_t^\infty \mathrm{L}_x^2)_{w^*}$, respectively. $(\ref{3.39})_1$ and the Chebyshev inequality yield that
$$\mathcal{P}^1_{\varepsilon, \delta}((B_{1,K}\cap B_{2,K})^c)\leq \mathcal{P}^1_{\varepsilon, \delta}(B_{1,K}^c)+\mathcal{P}^1_{\varepsilon, \delta}(B_{2,K}^c)\leq \frac{C}{K}.$$
Then, we obtain the tightness of set $\mathcal{P}^1_{\varepsilon, \delta}$.
\end{proof}

{\bf Claim} 2. The probability set $\mathcal{P}^2_{\varepsilon, \delta}$ is tight on $Y_2$.
\begin{proof}
To this end, we first give the time regularity estimates of $(\phi_{\varepsilon, \delta}, \psi_{\varepsilon,\delta})$. For any $v\in H^1$, H\"{o}lder's inequality, Lemmas \ref{lem4.3} and \ref{lem4.4} imply that
\begin{align}\label{3.49}
\begin{split}
\|B_2(u_{\varepsilon, \delta}, \mathcal{J}_{\varepsilon}\phi_{\varepsilon, \delta}, v)\|_{L^2_t}&\leq \left\|\|v\|_{H^1}\|u_{\varepsilon, \delta}\|_{\mathrm{L}_x^2}\| \mathcal{J}_{\varepsilon}\phi_{\varepsilon, \delta}\|_{H^2}\right\|_{L^2_t}\\
&\leq \|v\|_{H^1}\|u_{\varepsilon, \delta}\|_{L_t^\infty \mathrm{L}_x^2}\| \phi_{\varepsilon, \delta}\|_{L^2_tH^2}.
\end{split}
\end{align}
For any $v\in L^2(\Gamma)$, using the trace embedding $H^1(\mathcal{D})\hookrightarrow H^\frac{1}{2}(\Gamma)$ and the embedding $ H^\frac{1}{2}(\Gamma)\hookrightarrow L^4(\Gamma)$, Lemma \ref{lem4.4}, and the Gagliardo-Nirenberg inequality $$\|u\|^2_{L^4(\Gamma)}\leq \|u\|_{L^2(\Gamma)}\|\nabla u\|_{L^2(\Gamma)},$$ we have
\begin{align}\label{3.50}
\|B_\Gamma(u_{\varepsilon, \delta},J_{\varepsilon}(\psi_{\varepsilon,\delta}), v)\|_{L_t^\frac{4}{3}}&\leq C\left\|\|v\|_{L^2(\Gamma)}\|u_{\varepsilon, \delta}\|_{\mathrm{L}^4(\Gamma)}\|\nabla J_{\varepsilon}(\psi_{\varepsilon,\delta})\|_{L^4(\Gamma)}\right\|_{L_t^\frac{4}{3}}\nonumber\\
&\leq C\|v\|_{L^2(\Gamma)}\|u_{\varepsilon, \delta}\|_{L_t^2\mathrm{H}^{\frac{1}{2}}(\Gamma)}\|\nabla J_{\varepsilon}(\psi_{\varepsilon,\delta})\|_{L_t^4L^4(\Gamma)}\nonumber\\
&\leq C\|v\|_{L^2(\Gamma)}\|u_{\varepsilon, \delta}\|_{L_t^2\mathrm{H}^1}\left\|\|\nabla J_{\varepsilon}(\psi_{\varepsilon,\delta})\|^\frac{1}{2}_{L^2(\Gamma)}\|\nabla J_{\varepsilon}(\psi_{\varepsilon,\delta})\|^\frac{1}{2}_{H^1}\right\|_{L_t^4}\nonumber\\ &\leq C\|u_{\varepsilon, \delta}\|_{L_t^2\mathrm{H}^1}\|\nabla J_{\varepsilon}(\psi_{\varepsilon,\delta})\|^\frac{1}{2}_{L^\infty_tL^2(\Gamma)}\|\nabla J_{\varepsilon}(\psi_{\varepsilon,\delta})\|^\frac{1}{2}_{L_t^2H^1}\nonumber\\
&\leq C\|u_{\varepsilon, \delta}\|_{L_t^2\mathrm{H}^1}\|\nabla \psi_{\varepsilon,\delta}\|_{L^\infty_tL^2(\Gamma)}^\frac{1}{2}\|\nabla \psi_{\varepsilon,\delta}\|^\frac{1}{2}_{L_t^2H^1}.
\end{align}
By (\ref{3.49}), (\ref{3.50}), (\ref{3.39}), we infer from equations $\eqref{equ3.1}_2, \eqref{equ3.1}_3$ that
$$\mathbb{E}\|\partial_t(\phi_{\varepsilon, \delta}, \psi_{\varepsilon,\delta})\|_{L^\frac{4}{3}_t((H^1)'\times L^2(\Gamma))}^p\leq C,$$
where $C$ is independent of $\varepsilon, \delta$.
Then, using the Aubin-Lions compact embedding,
$$L_t^2V^2\cap W_t^{1,\frac{4}{3}}(V^1)'\hookrightarrow L_t^2V^{2-\sigma},$$
and  $(\ref{3.39})_3$, we can deduce that the probability set $\mathcal{P}^2_{\varepsilon, \delta}$ is tight by the same argument as that of in Claim 1.
\end{proof}
Since the sequence $\mathcal{W}$ is only one element, the set $\{\mathcal{P}_\mathcal{W}\}$ is weakly compact.

Consequently, we have the following result by the Skorokhod representative theorem (see Theorem \ref{thm4.2}).
\begin{proposition}\label{pro3.1} There exist a new sequence $(\tilde{u}_{\varepsilon,\delta}, \tilde{\phi}_{\varepsilon,\delta}, \tilde{\psi}_{\varepsilon,\delta}, \widetilde{\mathcal{W}}_{\varepsilon,\delta})$, a new probability space $(\widetilde{\Omega}, \widetilde{\mathcal{F}}, \widetilde{\mathbb{P}})$ and $Y$-valued processes $(\tilde{u}, \tilde{\phi}, \tilde{\psi}, \widetilde{\mathcal{W}})$ such that\\
{\rm 1}. the joint law of $(\tilde{u}_{\varepsilon,\delta}, \tilde{\phi}_{\varepsilon,\delta}, \tilde{\psi}_{\varepsilon,\delta}, \widetilde{\mathcal{W}}_{\varepsilon,\delta})$ is $\mathcal{P}_{\varepsilon, \delta}$,\\
{\rm 2}. the sequence $(\tilde{u}_{\varepsilon,\delta}, \tilde{\phi}_{\varepsilon,\delta}, \tilde{\psi}_{\varepsilon,\delta}, \widetilde{\mathcal{W}}_{\varepsilon,\delta} )$ converges to $(\tilde{u}, \tilde{\phi}, \tilde{\psi}, \widetilde{\mathcal{W}})$ in the topology of $Y$,~ $\widetilde{\mathbb{P}}${\mbox-a.s.},\\
{\rm 3}. the measure set $\mathcal{P}_{\varepsilon, \delta}$ converges to $\mathcal{P}$ where $\mathcal{P}$ is a Radon measure with $\widetilde{\mathbb{P}}(\tilde{u}, \tilde{\phi}, \tilde{\psi}\in \cdot)=\mathcal{P}(\cdot)$.
\end{proposition}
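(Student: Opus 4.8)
The plan is to obtain Proposition \ref{pro3.1} as an immediate consequence of the tightness established in Claims 1 and 2, combined with the Jakubowski-type Skorokhod representation theorem recorded as Theorem \ref{thm4.2}. The content of the proposition is essentially bookkeeping once tightness is in hand, so the work is to assemble the factor-wise tightness into tightness on the product $Y=Y_1\times Y_2\times Y_3$ and then quote the representation theorem.

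First I would upgrade the component tightness to tightness of the full family $\{\mathcal{P}_{\varepsilon,\delta}\}$ on $Y$. Claim 1 gives tightness of $\mathcal{P}^1_{\varepsilon,\delta}$ on $Y_1$, Claim 2 gives tightness of $\mathcal{P}^2_{\varepsilon,\delta}$ on $Y_2$, and since the law $\mathcal{P}_{\mathcal{W}}$ of the single driving process $\mathcal{W}$ is a fixed Radon measure on the Polish space $Y_3=C_t\mathcal{H}_0$, the singleton $\{\mathcal{P}_{\mathcal{W}}\}$ is trivially tight. A finite product of tight families is tight: given $\kappa>0$ one selects compact sets $\mathcal{K}_i\subset Y_i$ with $\mathcal{P}^i_{\varepsilon,\delta}(\mathcal{K}_i^c)<\kappa/3$ and takes $\mathcal{K}_1\times\mathcal{K}_2\times\mathcal{K}_3$, which is compact in $Y$. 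Hence $\{\mathcal{P}_{\varepsilon,\delta}\}$ is tight on $Y$. By Prokhorov's theorem this family is relatively compact, so along a subsequence (not relabelled, sending $\varepsilon,\delta\to 0$) the laws $\mathcal{P}_{\varepsilon,\delta}$ converge weakly to a Radon probability measure $\mathcal{P}$, which is conclusion 3; I would then apply Theorem \ref{thm4.2} to produce the new stochastic basis $(\widetilde{\Omega},\widetilde{\mathcal{F}},\widetilde{\mathbb{P}})$ and the representations $(\tilde{u}_{\varepsilon,\delta},\tilde{\phi}_{\varepsilon,\delta},\tilde{\psi}_{\varepsilon,\delta},\widetilde{\mathcal{W}}_{\varepsilon,\delta})$ with the prescribed joint law $\mathcal{P}_{\varepsilon,\delta}$ (conclusion 1) converging $\widetilde{\mathbb{P}}$-almost surely in $Y$ to a limit $(\tilde{u},\tilde{\phi},\tilde{\psi},\widetilde{\mathcal{W}})$ (conclusion 2). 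Almost sure convergence forces convergence in law, so the law of the limit triple coincides with the weak limit $\mathcal{P}$, completing the identification in conclusion 3.

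The main obstacle is that $Y$ is not a Polish space: the factors carry the nonmetrizable weak and weak-$*$ topologies $(L^\infty_t\mathrm{L}_x^2)_{w^*}$, $(L^2_t\mathrm{H}^1)_w$, $(L^\infty_tV^1)_{w^*}$ and $(L^2_tV^2)_w$, so the classical Skorokhod theorem does not apply verbatim. The resolution, which is precisely why Theorem \ref{thm4.2} is stated in its Jakubowski form, is to note that each $Y_i$ is sub-Polish, i.e. admits a countable family of continuous functionals separating its points: for the strong factors $L^2_t\mathrm{L}_x^2$ and $L^2_tV^{2-\sigma}$ and for $C_t\mathcal{H}_0$ the metric itself works, while for the weak and weak-$*$ factors one uses the dual pairings against a countable dense set in the relevant (pre)dual. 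A finite intersection of such spaces again satisfies Jakubowski's separation hypothesis, and the uniform moment bounds \eqref{3.39} ensure that the carrying mass sits on bounded sets, on which these weak topologies are metrizable. Once this structural point is recorded, conclusions 1–3 follow directly from Theorem \ref{thm4.2}; the only further care needed is to work along the Prokhorov subsequence and to observe that $\mathcal{P}$, being a weak limit of Radon measures on a sub-Polish space, is itself Radon.
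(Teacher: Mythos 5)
Your proposal follows essentially the same route as the paper: factor-wise tightness from Claims 1 and 2, trivial compactness of the single Wiener law on $Y_3$, and then the Skorokhod representation theorem (Theorem \ref{thm4.2}) applied on the product space $Y$ to produce the new probability space, the equal-in-law representatives, and the almost sure convergence identifying the Radon limit $\mathcal{P}$. Your additional care about the non-metrizable weak/weak-$*$ factors and the Jakubowski form of the representation theorem makes explicit a point the paper leaves implicit in its statement of Theorem \ref{thm4.2} for general topological spaces, but it is an elaboration of the same argument, not a different one.
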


By Proposition \ref{pro3.1} and the same argument with \cite{WW,ww}, we have the following result.
\begin{proposition}\label{pro3.2} The sequence $(\tilde{u}_{\varepsilon,\delta}, \tilde{\phi}_{\varepsilon,\delta}, \tilde{\psi}_{\varepsilon,\delta}, \widetilde{\mathcal{W}}_{\varepsilon,\delta})$ still satisfies system (\ref{equ3.1}) relative to the stochastic basis $\widetilde{S}^{\varepsilon,\delta}:=(\widetilde{\Omega},\widetilde{\mathcal{F}},\widetilde{\mathbb{P}}, \{\widetilde{\mathcal{F}}_t^{\varepsilon,\delta}\}_{t\geq 0}, \widetilde{\mathcal{W}}_{\varepsilon,\delta} )$, where $\widetilde{\mathcal{F}}_t^{\varepsilon,\delta}$ is a canonical filtration defined by $$\sigma\left(\sigma\left((\tilde{u}_{\varepsilon,\delta}(s), \tilde{\phi}_{\varepsilon,\delta}(s), \tilde{\psi}_{\varepsilon,\delta}(s), \widetilde{\mathcal{W}}_{\varepsilon,\delta}(s)):s\leq t\right)\cup \left\{\Sigma\in \widetilde{\mathcal{F}}; \widetilde{\mathbb{P}}(\Sigma)=0\right\}\right),$$ and also the sequence $(\tilde{u}_{\varepsilon,\delta}, \tilde{\phi}_{\varepsilon,\delta}, \tilde{\psi}_{\varepsilon,\delta}, \widetilde{\mathcal{W}}_{\varepsilon,\delta})$ shares the uniform estimates (\ref{3.39}).
\end{proposition}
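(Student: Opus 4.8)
The proof rests on two facts from Proposition \ref{pro3.1}: the equality of the joint law of $(\tilde{u}_{\varepsilon,\delta}, \tilde{\phi}_{\varepsilon,\delta}, \tilde{\psi}_{\varepsilon,\delta}, \widetilde{\mathcal{W}}_{\varepsilon,\delta})$ with that of $(u_{\varepsilon,\delta}, \phi_{\varepsilon,\delta}, \psi_{\varepsilon,\delta}, \mathcal{W})$, and the almost sure convergence in $Y$. The uniform estimates \eqref{3.39} transfer immediately: every norm occurring there, e.g. $\|\cdot\|_{L_t^\infty \mathrm{L}_x^2}$, $\|\cdot\|_{L_t^2\mathrm{H}^1}$, $\|\cdot\|_{L_t^2V^2}$, is a Borel-measurable (in fact lower semicontinuous) functional on the path space $Y$, so its $p$-th moment depends only on the law. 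Since the two laws agree, the right-hand bounds in \eqref{3.39} hold verbatim for the tilde variables on $(\widetilde{\Omega},\widetilde{\mathcal{F}},\widetilde{\mathbb{P}})$. The genuine content of the proposition is that the tilde variables solve \eqref{equ3.1} driven by $\widetilde{\mathcal{W}}_{\varepsilon,\delta}$, which I will establish by the martingale identification procedure of \cite{WW,ww}.

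Fix test functions $\xi\in \mathrm{H}^1$, $\xi_1\in H^1$, $\xi_2\in L^2(\Gamma)$. For the original process set
$$M^{\varepsilon,\delta}(t):=(u_{\varepsilon,\delta}(t),\xi)-(u_0,\xi)+\int_0^t\Big[a_0(u_{\varepsilon,\delta},\xi)+B_0(u_{\varepsilon,\delta},\mathcal{J}_\varepsilon u_{\varepsilon,\delta},\xi)-B_1(\mu_{\varepsilon,\delta},\mathcal{J}_\varepsilon\phi_{\varepsilon,\delta},\xi)-(\mathcal{K}(\psi_{\varepsilon,\delta})\nabla_\tau(J_\varepsilon\psi_{\varepsilon,\delta}),\xi_\tau)_\Gamma\Big]ds,$$
so that the first line of \eqref{equ3.1} reads $M^{\varepsilon,\delta}(t)=\int_0^t(h(u_{\varepsilon,\delta},\nabla\phi_{\varepsilon,\delta}),\xi)\,d\mathcal{W}$. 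Consequently $M^{\varepsilon,\delta}$ is a continuous square-integrable $\{\mathcal{F}_t\}$-martingale whose quadratic variation is $\int_0^t\sum_{k\geq1}(h(u_{\varepsilon,\delta},\nabla\phi_{\varepsilon,\delta})e_k,\xi)^2\,ds$ and whose cross variation with $\beta_k=(\mathcal{W},e_k)_{\mathcal{H}}$ is $\int_0^t(h(u_{\varepsilon,\delta},\nabla\phi_{\varepsilon,\delta})e_k,\xi)\,ds$. The key observation is that the martingale property, expressed as $\mathbb{E}\big[(M^{\varepsilon,\delta}(t)-M^{\varepsilon,\delta}(s))\,\gamma\big]=0$ for every bounded continuous $\gamma$ depending only on the trajectory up to time $s$, together with the analogous identities for the quadratic and cross variations, are assertions about the joint law alone.

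Because Proposition \ref{pro3.1}(1) gives equality of that law, the same identities hold for the tilde variables: one first checks that $\widetilde{\mathcal{W}}_{\varepsilon,\delta}$ is a cylindrical Wiener process adapted to the canonical filtration $\{\widetilde{\mathcal{F}}_t^{\varepsilon,\delta}\}$ (it is continuous, adapted by construction, and has the correct covariance since its law equals that of $\mathcal{W}$), and then that $\widetilde{M}^{\varepsilon,\delta}$ is a continuous square-integrable $\{\widetilde{\mathcal{F}}_t^{\varepsilon,\delta}\}$-martingale with the corresponding quadratic and cross variations. Matching these variations and invoking the martingale representation theorem yields
$$\widetilde{M}^{\varepsilon,\delta}(t)=\int_0^t(h(\tilde{u}_{\varepsilon,\delta},\nabla\tilde{\phi}_{\varepsilon,\delta}),\xi)\,d\widetilde{\mathcal{W}}_{\varepsilon,\delta},$$
which is exactly the first equation of \eqref{equ3.1} for the tilde variables relative to $\widetilde{S}^{\varepsilon,\delta}$. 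The second and third equations of \eqref{equ3.1} involve no stochastic integral, so they pass to the tilde variables directly by applying the same equality-of-laws argument to the corresponding (formally deterministic) functionals and using the $\widetilde{\mathbb{P}}$-a.s. convergence in $Y$.

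The main obstacle is the identification of the stochastic integral: the It\^o integral cannot be transported through the change of probability space as a continuous functional of the path, so the argument must proceed through the martingale characterization and the careful bookkeeping of quadratic and cross variations. Once it is verified that $\widetilde{\mathcal{W}}_{\varepsilon,\delta}$ is a bona fide cylindrical Wiener process for $\{\widetilde{\mathcal{F}}_t^{\varepsilon,\delta}\}$ and that $\widetilde{M}^{\varepsilon,\delta}$ is adapted, square-integrable, and carries the asserted variations, the representation theorem closes the proof; the mollification renders all drift terms continuous functionals on $Y$, so no additional regularity is needed at this fixed-$(\varepsilon,\delta)$ stage.
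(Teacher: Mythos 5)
Your proposal is correct and is essentially the paper's own proof: the paper disposes of this proposition by citing ``the same argument as in [WW, ww]'', which is exactly the law-transfer plus martingale-identification procedure you spell out (uniform estimates pass because the norms are lower semicontinuous Borel functionals on $Y$, and the equation passes via the martingale characterization with matched quadratic and cross variations). One cosmetic remark: once the quadratic variation and the cross variations with $\widetilde{\mathcal{W}}_{\varepsilon,\delta}$ are matched, the identification follows by expanding $\widetilde{\mathbb{E}}\big|\widetilde{M}^{\varepsilon,\delta}(t)-\int_0^t(h(\tilde{u}_{\varepsilon,\delta},\nabla\tilde{\phi}_{\varepsilon,\delta}),\xi)\,d\widetilde{\mathcal{W}}_{\varepsilon,\delta}\big|^2=0$ directly, so the appeal to the martingale representation theorem is unnecessary.
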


{\bf Part 4. Identify the limit.}

To simplify the notation, we still use $(u_{\varepsilon, \delta}, \phi_{\varepsilon, \delta}, \psi_{\varepsilon, \delta}, \mathcal{W}_{\varepsilon,\delta})$ standing for the new sequence. Using $(\ref{3.39})_5$, it holds
$$\|\delta\partial_t\phi_{\varepsilon, \delta}\|_{L_\omega^{p}L_t^2L_x^2}=\delta^\frac{1}{2}\left(\widetilde{\mathbb{E}}\|\delta^\frac{1}{2}\partial_t\phi_{\varepsilon, \delta}\|_{L_t^2L_x^2}^{p}\right)^\frac{1}{p}\rightarrow 0,$$
as $\varepsilon,\delta\rightarrow 0$. By thinning the sequence (still denoted by $\varepsilon, \delta$), we have $\widetilde{\mathbb{P}}$-a.s.
\begin{align}\label{3.51}
\delta\partial_t\phi_{\varepsilon, \delta}\rightarrow 0,~{\rm in}~ L^2_tL^2_x.
\end{align}
Using Proposition \ref{pro3.1}, Property (ii) and $(\ref{3.39})_3$ again, we get $\widetilde{\mathbb{P}}$-a.s.
$$f_\varepsilon(\phi_{\varepsilon,\delta})\rightarrow f(\phi),~{\rm in}~ L^2_tL_x^2.$$
From (\ref{3.51}), $(\ref{3.39})_2$ and Proposition \ref{pro3.1}, we know that $\widetilde{\mathbb{P}}$-a.s.
$$\mu_{\varepsilon, \delta}\rightharpoonup \mu, ~{\rm in}~ L^2_tH^1.$$

For any $v_0\in D(A_0)$, decompose
$$B_0(u_{\varepsilon, \delta}, \mathcal{J}_\varepsilon u_{\varepsilon, \delta}, v_0)-B_0(u, u, v_0)=B_0( u,\mathcal{J}_\varepsilon u_{\varepsilon, \delta}-u, v_0)+B_0( u_{\varepsilon,\delta}-u,\mathcal{J}_\varepsilon u_{\varepsilon, \delta}, v_0).$$
By $u_{\varepsilon,\delta}\rightarrow u$ in $L_t^2\mathrm{L}_x^2, ~\widetilde{\mathbb{P}}\mbox{-a.s.}$, Lemma \ref{lem4.4}, and $(\ref{3.39})_1$, we have $\widetilde{\mathbb{P}}\mbox{-a.s.}$
$$\int_{0}^{t}|B_0( u_{\varepsilon,\delta}-u, \mathcal{J}_\varepsilon u_{\varepsilon, \delta}, v_0)|ds\leq \|v_0\|_{H^2}\|u_{\varepsilon, \delta}-u\|_{L_t^2\mathrm{L}_x^2}\|\mathcal{J}_\varepsilon u_{\varepsilon, \delta}\|_{L_t^2\mathrm{H}^1}\rightarrow 0.$$
Furthermore, $\mathcal{J}_\varepsilon u_{\varepsilon,\delta}\rightharpoonup u$ in $L_t^2\mathrm{H}^1, ~\widetilde{\mathbb{P}}\mbox{-a.s.}$ and (\ref{3.39}) imply
$$\int_{0}^{t}B_0( u, \mathcal{J}_\varepsilon u_{\varepsilon, \delta}-u, v_0)ds\rightarrow 0,  ~\widetilde{\mathbb{P}}\mbox{-a.s.}$$
And
$$B_1(\mu_{\varepsilon, \delta}, \mathcal{J}_{\varepsilon}\phi_{\varepsilon, \delta}, v_0)-B_1(\mu, \phi, v_0)=B_1(\mu_{\varepsilon, \delta}-\mu, \phi, v_0)+B_1(\mu_{\varepsilon,\delta}, \mathcal{J}_{\varepsilon}\phi_{\varepsilon, \delta}-\phi, v_0).$$
The convergence $\mathcal{J}_{\varepsilon,\delta}\phi_{\varepsilon,\delta}\rightarrow \phi$ in $L_t^2H^{2-\sigma}$,  $\mu_{\varepsilon, \delta}\rightharpoonup \mu$ in $L_t^2H^{1}$,~$\widetilde{\mathbb{P}}\mbox{-a.s.}$ and (\ref{3.39}) yield $\widetilde{\mathbb{P}}$-a.s.
\begin{align*}
&\int_{0}^{t}B_1(\mu_{\varepsilon, \delta}-\mu, \phi, v_0)ds\rightarrow 0, \\
&\int_{0}^{t}|B_1(\mu_{\varepsilon,\delta}, \mathcal{J}_{\varepsilon}\phi_{\varepsilon, \delta}-\phi, v_0)|ds\leq \|v_0\|_{\mathrm{H}^1}\|\mu_{\varepsilon,\delta}\|_{L_t^2H^1}\|\nabla (\mathcal{J}_{\varepsilon}\phi_{\varepsilon, \delta}-\phi)\|_{L_t^2L_x^2}\rightarrow 0.
\end{align*}
Using a same argument, we also have $\widetilde{\mathbb{P}}$-a.s.
$$\int_{0}^{t}B_2(u_{\varepsilon, \delta}, \mathcal{J}_{\varepsilon}\phi_{\varepsilon, \delta},v_1)-B_2(u,\phi, v_1)ds\rightarrow 0,$$
for any $v_1\in L^2$.

Next, we focus on the nonlinear boundary integral terms. For any $v_2\in L^2(\Gamma)$, decompose
$$B_\Gamma(u_{\varepsilon, \delta},J_{\varepsilon}\psi_{\varepsilon,\delta}, v_2)-B_\Gamma(u,\psi, v_2)=B_\Gamma(u_{\varepsilon, \delta}-u,\psi, v_2)+B_\Gamma(u_{\varepsilon, \delta},J_{\varepsilon}\psi_{\varepsilon,\delta}-\psi, v_2).$$
The fact that $u_{\varepsilon,\delta}\rightharpoonup u$ in $L_t^2\mathrm{H}^\frac{1}{2}(\Gamma)$, $\widetilde{\mathbb{P}}\mbox{-a.s.}$ together with $\psi\in L_t^2H^2(\Gamma)$, $\widetilde{\mathbb{P}}$-a.s. yields
$$\int_{0}^{t}B_\Gamma(u_{\varepsilon, \delta}-u,\psi, v_2)ds\rightarrow 0.$$
Moreover, by H\"{o}lder's inequality, the Sobolev embedding $H^\frac{1}{2}(\Gamma)\hookrightarrow L^4(\Gamma)$ and $J_{\varepsilon}\psi_{\varepsilon,\delta}\rightarrow \psi$ in $L^2_tH^{2-\sigma},\, \widetilde{\mathbb{P}}\mbox{-a.s.}$, we find $\widetilde{\mathbb{P}}\mbox{-a.s.}$
\begin{align*}
&\int_{0}^{t}|B_\Gamma(u_{\varepsilon, \delta},J_{\varepsilon}\psi_{\varepsilon,\delta}-\psi, v_2)|ds\nonumber\\
& \leq \|v_2\|_{L^2(\Gamma)}\|u_{\varepsilon, \delta}\|_{L_t^2\mathrm{L}^4(\Gamma)}\|J_{\varepsilon}\psi_{\varepsilon,\delta}-\psi\|_{L^2_tH^{1,4}(\Gamma)}\\
& \leq \|v_2\|_{L^2(\Gamma)}\|u_{\varepsilon, \delta}\|_{L_t^2\mathrm{H}^\frac{1}{2}(\Gamma)}\|J_{\varepsilon}\psi_{\varepsilon,\delta}-\psi\|_{L^2_tH^{2-\sigma}}\rightarrow 0.\nonumber
\end{align*}
The specific form of boundary term $\mathcal{K}(\psi_{\varepsilon,\delta})\nabla_{\tau}(J_\varepsilon \psi_{\varepsilon,\delta})$ is
$$\mathcal{K}(\psi_{\varepsilon,\delta})\nabla_{\tau}(J_\varepsilon \psi_{\varepsilon,\delta})=(A_{\tau}\psi_{\varepsilon,\delta}+\partial_n\phi_{\varepsilon, \delta}+\psi_{\varepsilon,\delta}+g_\varepsilon(\psi_{\varepsilon,\delta}))\nabla_{\tau}(J_\varepsilon \psi_{\varepsilon,\delta}).
$$
Using $\psi_{\varepsilon,\delta}\rightarrow \psi$ in $L^2_tH^{2-\sigma}, \widetilde{\mathbb{P}}\mbox{-a.s.}$ and $g_\varepsilon(\psi_{\varepsilon,\delta})\rightarrow g(\psi)~{\rm in}~ L^2_tL^2(\Gamma), \widetilde{\mathbb{P}}\mbox{-a.s.}$, we easily obtain $\widetilde{\mathbb{P}}$-a.s.
$$\int_{0}^{t}((\psi_{\varepsilon,\delta}+g_\varepsilon(\psi_{\varepsilon,\delta}))\nabla_{\tau}(J_\varepsilon \psi_{\varepsilon,\delta}),v_{0,\tau})_{\Gamma}ds\rightarrow \int_{0}^{t}((\psi+g(\psi))\nabla_{\tau}( \psi),v_{0,\tau})_{\Gamma}ds,$$
for any $v_0\in \mathrm{H}^1$.

By the trace theorem and Proposition \ref{pro3.1}, we infer that $\partial_n\phi_{\varepsilon,\delta}\rightarrow \partial_n \phi$ in $L_t^2H^{\frac{1}{2}-\sigma}(\Gamma)$ for any $\sigma\in (0,\frac{1}{2})$,  ~$\widetilde{\mathbb{P}}\mbox{-a.s.}$ as well as  $J_\varepsilon\psi_{\varepsilon,\delta}\rightarrow \psi$ in $L_t^2H^{2-\sigma}(\Gamma)$, $(\ref{3.39})_3$, the embedding $H^1(\mathcal{D})\hookrightarrow H^\frac{1}{2}(\Gamma)\hookrightarrow L^4(\Gamma)$ give $\widetilde{\mathbb{P}}$-a.s.
$$\int_{0}^{t}(\partial_n\phi_{\varepsilon, \delta}\nabla_{\tau}(J_\varepsilon \psi_{\varepsilon,\delta}),v_{0,\tau})_{\Gamma}ds\rightarrow \int_{0}^{t}(\partial_n\phi\nabla_{\tau} \psi,v_{0,\tau})_{\Gamma}ds.$$
Indeed,
\begin{align}
&\int_{0}^{t}(\partial_n\phi_{\varepsilon, \delta}\nabla_{\tau}(J_\varepsilon \psi_{\varepsilon,\delta}),v_{0,\tau})_{\Gamma}-(\partial_n\phi\nabla_{\tau}\psi,v_{0,\tau})_{\Gamma}ds\nonumber\\
&=\int_{0}^{t}((\partial_n\phi_{\varepsilon, \delta}-\partial_n\phi)\nabla_{\tau}(J_\varepsilon \psi_{\varepsilon,\delta}),v_{0,\tau} )_{\Gamma}+(\partial_n\phi(\nabla_{\tau}(J_\varepsilon \psi_{\varepsilon,\delta})-\nabla_{\tau}\psi),v_{0,\tau})_{\Gamma}ds\nonumber\\
&\leq \|\partial_n\phi_{\varepsilon, \delta}-\partial_n\phi\|_{L_t^2L^2(\Gamma)}\|\nabla_{\tau}(J_\varepsilon \psi_{\varepsilon,\delta})\|_{L^2_tL^4(\Gamma)}\|v_{0,\tau}\|_{\mathrm{L}^4(\Gamma)}\nonumber\\
&\quad+(\partial_n\phi(\nabla_{\tau}(J_\varepsilon \psi_{\varepsilon,\delta})-\nabla_{\tau}\psi),v_{0,\tau})_{\Gamma}\nonumber\\
&\leq \|\partial_n\phi_{\varepsilon, \delta}-\partial_n\phi\|_{L_t^2H^{\frac{1}{2}-\sigma}(\Gamma)}\|\nabla_{\tau}(J_\varepsilon \psi_{\varepsilon,\delta})\|_{L^2_tH^{2-\sigma}(\Gamma)}\|v_{0,\tau}\|_{\mathrm{L}^4(\Gamma)}\nonumber\\
&\quad+(\partial_n\phi(\nabla_{\tau}(J_\varepsilon \psi_{\varepsilon,\delta})-\nabla_{\tau}\psi),v_{0,\tau})_{\Gamma}
\rightarrow 0.\nonumber
\end{align}
Furthermore, by $(\ref{3.39})_3$ and $\psi_{\varepsilon,\delta}\rightarrow \psi$ in $(L_t^2H^2(\Gamma))_w$, we have $\widetilde{\mathbb{P}}$-a.s.
$$\int_{0}^{t}((A_{\tau}\psi_{\varepsilon,\delta}-A_{\tau}\psi)\nabla_\tau\psi, v_{0,\tau})_{\Gamma}ds\rightarrow 0.$$
Also, $(\ref{3.39})_3$, $J_\varepsilon\psi_{\varepsilon,\delta}\rightarrow \psi$ in $L_t^2H^{2-\sigma}(\Gamma)$ and H\"{o}lder's inequality give $\widetilde{\mathbb{P}}$-a.s.
\begin{align*}
&\int_{0}^{t}(A_{\tau}\psi_{\varepsilon,\delta}(\nabla_{\tau}(J_\varepsilon \psi_{\varepsilon,\delta})-\nabla_{\tau}\psi), v_{0,\tau})_{\Gamma}ds\nonumber\\
&\leq \|A_{\tau}\psi_{\varepsilon,\delta}\|_{L_t^2L^2(\Gamma)}\|J_\varepsilon \psi_{\varepsilon,\delta}-\psi\|_{L_t^2H^{2-\sigma}(\Gamma)}\|v_{0,\tau}\|_{\mathrm{L}^4(\Gamma)}\rightarrow 0.
\end{align*}
Combining all the convergence results, we conclude that $\widetilde{\mathbb{P}}$-a.s.
\begin{align*}
\int_{0}^{t}(\mathcal{K}(\psi_{\varepsilon,\delta})\nabla_{\tau}(J_\varepsilon \psi_{\varepsilon,\delta}),v_{0,\tau})_\Gamma-(\mathcal{K}(\psi)\nabla_{\tau}\psi, v_{0,\tau})_\Gamma ds\rightarrow 0,
\end{align*}
for any $v_0\in \mathrm{H}^1$.

Finally, we pass the limit in stochastic integral term. Using assumption ${\bf A}_2$, we obtain
$$\|h(u_{\varepsilon,\delta}, \nabla\phi_{\varepsilon,\delta})-h(u,\nabla\phi)\|_{L_2(\mathcal{H};L_x^2)}\leq C\|(u_{\varepsilon,\delta}-u, \nabla(\phi_{\varepsilon,\delta}-\phi))\|_{L^2_x},$$
together with Proposition \ref{pro3.1} yields that
$$\|h(u_{\varepsilon,\delta}, \nabla\phi_{\varepsilon,\delta})-h(u,\nabla\phi)\|_{L_2(\mathcal{H};L_x^2)}\rightarrow 0,~ \mbox{a.e.}~ (t, \omega)\in [0,T]\times \Omega.$$
Collecting this convergence, (\ref{3.39}) and the Vitali convergence theorem (see Theorem \ref{thm4.1}), we have
$$h(u_{\varepsilon,\delta}, \nabla\phi_{\varepsilon,\delta})\rightarrow h(u,\nabla\phi),~{\rm in}~ L^p_\omega L_t^2L_2(\mathcal{H};L_x^2),$$
which implies that the convergence holds in probability in $L_t^2L_2(\mathcal{H};L_x^2)$.
Applying Lemma \ref{lem4.6}, we infer that
$$\int_{0}^{t}h(u_{\varepsilon,\delta}, \nabla\phi_{\varepsilon,\delta})d \mathcal{W}_{\varepsilon,\delta}\rightarrow \int_{0}^{t}h(u, \nabla\phi)d \mathcal{W},$$
in probability in $L_t^2L_2(\mathcal{H};L_x^2)$.

Using \eqref{3.39} and H\"{o}lder's inequality, we could claim that three sets
\begin{align*}
\left\{\int_{0}^{t}-B_0(u_{\varepsilon, \delta},\mathcal{J}_\varepsilon u_{\varepsilon, \delta}, v_0)+B_1(\mu_{\varepsilon, \delta}, \mathcal{J}_{\varepsilon}\phi_{\varepsilon, \delta}, v_0)+(\mathcal{K}(\psi_{\varepsilon,\delta})\nabla_{\tau}(J_\varepsilon \psi_{\varepsilon,\delta}), v_{0,\tau})_{\Gamma}ds\right\} _{\varepsilon, \delta},
\end{align*}
and
\begin{align*}
\left\{\int_{0}^{t}-B_2(u_{\varepsilon, \delta}, \mathcal{J}_{\varepsilon}\phi_{\varepsilon, \delta}, v_1)ds\right\}_{\varepsilon,\delta},\left\{\int_{0}^{t} -B_\Gamma(u_{\varepsilon, \delta},J_{\varepsilon}(\psi_{\varepsilon,\delta}), v_2)-(\mathcal{K}(\psi_{\varepsilon,\delta}), v_2)_{\Gamma}ds\right\}_{\varepsilon,\delta},
\end{align*}
are uniformly integrable in $L^p(\widetilde{\Omega}; L^2_t)$.

We now have everything in hand to identify the limit. Define the functionals as
\begin{align*}
\mathbf{F}_1(u_{\varepsilon, \delta},\phi_{\varepsilon, \delta}, \psi_{\varepsilon,\delta}, \mathcal{W}_{\varepsilon,\delta},v_0)&=(u_{0}, v_0)-\int_{0}^{t}a(u_{\varepsilon, \delta},v_0) ds-\int_{0}^{t}B_0(u_{\varepsilon, \delta},\mathcal{J}_\varepsilon u_{\varepsilon, \delta}, v_0)ds\\
&\quad+\int_{0}^{t}B_1(\mu_{\varepsilon, \delta}, \mathcal{J}_{\varepsilon}\phi_{\varepsilon, \delta}, v_0)ds+\int_{0}^{t}(\mathcal{K}(\psi_{\varepsilon,\delta})\nabla_{\tau}(J_\varepsilon \psi_{\varepsilon,\delta}), v_{0,\tau})_{\Gamma}ds\\
&\quad+\left(\int_{0}^{t}h(u_{\varepsilon, \delta},\nabla\phi_{\varepsilon,\delta})d \mathcal{W}_{\varepsilon,\delta}, v_0\right),\\
\mathbf{F}_2(u_{\varepsilon, \delta},\phi_{\varepsilon, \delta}, v_1)&=(\phi_0, v_1)-\int_{0}^{t} (\nabla \mu_{\varepsilon,\delta}, \nabla v_1)+ B_2(u_{\varepsilon, \delta},\phi_{\varepsilon,\delta},v_1)ds,\\
\mathbf{F}_3(u_{\varepsilon, \delta},\psi_{\varepsilon, \delta}, v_2)&=(\psi_0, v_2)-\int_{0}^{t} B_\Gamma(u_{\varepsilon, \delta},J_{\varepsilon}(\psi_{\varepsilon,\delta}), v_2)+(\mathcal{K}(\psi_{\varepsilon,\delta}), v_2)_{\Gamma}ds,
\end{align*}
and
\begin{align*}
\mathbf{F}_1(u,\phi, \psi, \mathcal{W},v_0)&=(u_{0}, v_0)-\int_{0}^{t}a(u,v_0) ds-\int_{0}^{t}B_0(u, u, v_0)ds
+\int_{0}^{t}B_1(\mu, \phi, v_0)ds\\&\quad+\int_{0}^{t}(\mathcal{K}(\psi)\nabla_{\tau}\psi, v_{0,\tau})_{\Gamma}ds+\left(\int_{0}^{t}h(u,\nabla\phi)d \mathcal{W}, v_0\right),\\
\mathbf{F}_2(u,\phi, v_1)&=(\phi_0, v_1)-\int_{0}^{t} (\nabla \mu, \nabla v_1)+ B_2(u,\phi,v_1)ds,\\
\mathbf{F}_3(u,\psi, v_2)&=(\psi_0, v_2)-\int_{0}^{t} B_\Gamma(u,\psi, v_2)+(\mathcal{K}(\psi), v_2)_{\Gamma}ds.
\end{align*}

Combining all the convergence results and the uniformly integrability of sets, we could summarize from the Vitali convergence theorem that
\begin{align}
&\widetilde{\mathbb{E}}\int_{0}^{T}1_{A}(\mathbf{F}_1(u_{\varepsilon, \delta},\phi_{\varepsilon, \delta}, \psi_{\varepsilon,\delta}, \mathcal{W}_{\varepsilon,\delta},v_0)-\mathbf{F}_1(u,\phi, \psi, \mathcal{W},v_0))dt\rightarrow 0,\label{e.51}\\
&\widetilde{\mathbb{E}}\int_{0}^{T}1_{A}(\mathbf{F}_2(u_{\varepsilon, \delta},\phi_{\varepsilon, \delta}, v_1)-\mathbf{F}_2(u,\phi, v_1))dt\rightarrow 0,\label{e.52}\\
&\widetilde{\mathbb{E}}\int_{0}^{T}1_{A}(\mathbf{F}_3(u_{\varepsilon, \delta},\psi_{\varepsilon, \delta}, v_2)-\mathbf{F}_3(u,\psi, v_2))dt\rightarrow 0,\label{3.77}
\end{align}
for any set $A\subset [0,T]$, $v_0\in D(A_0), v_1\in H^1, v_2\in L^2(\Gamma)$.

Considering \eqref{e.51}-\eqref{3.77} and Proposition \ref{pro3.2}, we conclude that $(u,\phi,\psi)$ satisfy \eqref{2.1} of Definition \ref{def2.1},
$\widetilde{\mathbb{P}}\mbox{-a.s.}$ for any $v_0\in D(A_0), v_1\in H^1, v_2\in L^2(\Gamma), t\in [0,T]$. Furthermore, using a density argument, we deduce that (\ref{2.1}) holds for any $v_0\in \mathrm{H}^1, v_1\in H^1, v_2\in L^2(\Gamma)$.

Finally, we show that the energy inequality holds. By Proposition \ref{pro3.2}, the energy inequality \eqref{3.60} still holds for the new approximate sequence on the new probability space. In the limit procedures $\varepsilon, \delta\rightarrow 0$, using the lower semi-continuity on the left hand-side for any time and using the strong convergence of the approximate sequence for a.e. time, we could have for any $t>0$, there exists a nullset $\mathbb{T}_t$ such that
\begin{align}\label{3.81}\mathbb{E}\left[1_\mathcal{A}\mathcal{U}(u, \phi, \psi)(t)\right]\leq \mathbb{E}\left[1_\mathcal{A}\mathcal{U}(u, \phi, \psi)(s)\right]\end{align}
for all $s\in \mathbb{T}_t$ and $\mathcal{A}\in \mathcal{F}_s$, where
\begin{align*}
\mathcal{U}(u, \phi, \psi)(t):&=\frac{1}{2}E^2(t)+\int_{0}^{t}E\left(\|(\nabla u, \nabla \mu)\|_{L_x^2}^2+\|(\phi, \psi)\|^2_{V^2}+\|\mathcal{K}(\psi)\|_{L^2(\Gamma)}^2\right)dr\\
 &-\int_{0}^{t}E\|h(u,\nabla\phi)\|^2_{L_2(\mathcal{H};L_x^2)}dr
-\int_{0}^{t}(h(u,\nabla\phi),u)^2dr.
\end{align*}
Set $\mathrm{T}=\bigcup_{t\in\mathcal{C}}$, where the set $\mathcal{C}$ is countable and dense in $[0,\infty)$. Then for $s\notin \mathrm{T}$ and $s\leq t$, there exists a sequence $t_m\subset \mathcal{C}$ with $t_m\rightarrow t$, then letting $m\rightarrow\infty$ in \eqref{3.81}, the lower semi-continuity of mapping
$$t\mapsto E(t)$$
gives the energy inequality, for further details see \cite[Lemma A.3]{F2} and \cite{brei}. We complete the proof.

\section{Markov Selection}
\subsection{Preliminary}
Denote by $\Omega=C([0,\infty); \mathbb{X})$ where $\mathbb{X}$ is a Polish space and by $\widetilde{\mathcal{B}}$ the Borel $\sigma$-field of
$\Omega$ with Skorokhod topology and by ${\rm Pr}(\Omega)$ the set of all probability measures on $(\Omega, \widetilde{\mathcal{B}})$. For any $t>0$, let $\Omega_t=C([0,t]; \mathbb{X})$ and $\Omega^t=C([t,\infty); \mathbb{X})$, then denote by $\widetilde{\mathcal{B}}_t, \widetilde{\mathcal{B}}^t$ the Borel $\sigma$-field of $\Omega_t, \Omega^t$ with Skorokhod topology respectively. Define the canonical process $\varrho:\Omega\rightarrow \mathbb{X}$ as $\varrho_t(\omega)=\omega(t)$. Define by the shift map $\theta_t:\Omega\rightarrow \Omega^t$ as $$\theta_t(\omega)(s)=\omega(s-t), ~s\geq t,$$
establishes a  measurable isomorphism between $(\Omega, \mathcal{F}, \{\mathcal{F}_s\}_{s\geq 0})$ and $(\Omega^t, \mathcal{F}^t, \{\mathcal{F}^t_s\}_{s\geq t})$ where the filtration $\mathcal{F}^t_s:=\sigma\{\omega(r); t\leq r\leq s\}$ and $\mathcal{F}^t:=\vee_s \mathcal{F}_s^t$, we have the fact that $\mathcal{F}^t=\widetilde{\mathcal{B}}^t$, also establishes an isomorphism between ${\rm Pr}(\Omega)$ and ${\rm Pr}(\Omega^t)$, that is, if $\mathbb{P}\in {\rm Pr}(\Omega)$, then $\mathbb{P}\circ \theta_t^{-1}\in {\rm Pr}(\Omega^t)$, if $\mathbb{P}\in \Omega^t$, then $\mathbb{P}\circ\theta_t\in  {\rm Pr}(\Omega)$.

Given $\mathbb{P}\in {\rm Pr}(\Omega)$ and $t>0$, define by $\mathbb{P}(\cdot|\mathcal{F}_t)$ a regular conditional probability distribution of $\mathbb{P}$ with respect to $\mathcal{F}_t$. Also $\mathbb{P}(\cdot|\mathcal{F}_t)$ is a probability measure on $(\Omega, \mathcal{F})$ and for any bounded $\mathcal{F}$-measurable function $f$
$$\mathbb{E}^\mathbb{P}(f|\mathcal{F}_t)=\int_{\Omega}f(y)\mathbb{P}(dy|\mathcal{F}_t), \mathbb{P}\!-\!{\rm a.s.}$$
and there exists a $\mathbb{P}$-null set $\mathcal{N}\in \mathcal{F}$ such that for any $\omega\notin \mathcal{N}$
\begin{align}\label{4.1}\mathbb{P}(\theta_t=\omega(t),s\in[0,t]|\mathcal{F}_t)=1.\end{align}
Particularly, by \eqref{4.1}, we could consider $\mathbb{P}(\cdot|\mathcal{F}_t)$ as a measure on $(\Omega^t, \mathcal{F}^t)$, that is,
$$\mathbb{P}|^w_{\widetilde{\mathcal{B}}_t}:=\mathbb{P}(\cdot|\mathcal{F}_t)\in {\rm Pr}(\Omega^t).$$
And for any $A\in \widetilde{\mathcal{B}}_t$ and $B\in \widetilde{\mathcal{B}}^t$, it holds
$$\mathbb{P}(A\cap B)=\int_{A}\mathbb{P}|^w_{\widetilde{\mathcal{B}}_t}(B)\mathbb{P}(d\omega).$$

We recall the following reconstruction result which is an inverse procedure of above disintegration.

\begin{proposition}\label{pro4.1}{\rm \!\!\cite[Lemma 6.1.1]{stro}} For any $\mathbb{P}\in {\rm Pr}(\Omega)$ and $t>0$, $Q$ is a map from $\Omega\rightarrow {\rm Pr}(\Omega^t)$ such that for any $B\in \mathcal{F}^t$, $\omega\mapsto Q_\omega(B)$ is $\mathcal{F}_t$-measurable and
$$Q_\omega(\theta_t=\omega(t))=1,~ for~ all~ \omega\in \Omega.$$
Then there exists a unique probability measure $\mathbb{P}\otimes_t Q$ on $\Omega$ such that \\
{\rm (i)} $\mathbb{P}\otimes_t Q_\omega$ and $\mathbb{P}$ agree on $\mathcal{F}_t$;\\
{\rm (ii)} $Q_\omega$ is a regular conditional probability measure distribution of  $\mathbb{P}\otimes_t Q$ on $\mathcal{F}_t$.
 \end{proposition}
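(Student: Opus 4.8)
The plan is to construct $\mathbb{P}\otimes_t Q$ explicitly on a generating algebra of $\Omega$ and then invoke Carath\'eodory's extension theorem; this is the reconstruction counterpart of the regular-conditional-probability disintegration recalled just above. The starting observation is that the compatibility hypothesis $Q_\omega(\theta_t=\omega(t))=1$ guarantees that, for every $\omega$, the measure $Q_\omega$ is supported on futures in $\Omega^t$ whose value at time $t$ coincides with $\omega(t)$. Hence gluing the past path $\omega|_{[0,t]}$ with a $Q_\omega$-distributed future produces a genuine element of $\Omega=C([0,\infty);\mathbb{X})$, so the concatenation of past and future is well-defined and continuous. This is precisely what allows one to regard $\mathcal{F}=\widetilde{\mathcal{B}}$ as generated by the ``past'' $\widetilde{\mathcal{B}}_t$ and the ``future'' $\widetilde{\mathcal{B}}^t$.

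First I would define the set function on product sets: for $A\in\widetilde{\mathcal{B}}_t$ and $B\in\widetilde{\mathcal{B}}^t$, both viewed as subsets of $\Omega$ through the restriction maps, set
$$(\mathbb{P}\otimes_t Q)(A\cap B):=\int_A Q_\omega(B)\,\mathbb{P}(d\omega).$$
This integral is meaningful precisely because of the standing hypothesis that $\omega\mapsto Q_\omega(B)$ is $\mathcal{F}_t$-measurable for every $B\in\mathcal{F}^t$. Finite disjoint unions of such rectangles form an algebra generating $\mathcal{F}$, and finite additivity follows directly from the additivity of $B\mapsto Q_\omega(B)$ together with linearity of the integral. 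To obtain a genuine premeasure I must verify countable additivity on this algebra, which I would deduce from the monotone convergence theorem applied to the $\mathcal{F}_t$-measurable integrands $Q_\omega(B_n)$, using that each $Q_\omega$ is a probability measure (hence $\sigma$-additive) and that $\mathbb{P}$ is a probability measure. Carath\'eodory's theorem then yields a measure on $\mathcal{F}$, which is in fact a probability measure since $(\mathbb{P}\otimes_t Q)(\Omega)=\int_\Omega Q_\omega(\Omega^t)\,\mathbb{P}(d\omega)=1$.

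Next I would verify properties (i) and (ii). Taking $B=\Omega^t$ gives $(\mathbb{P}\otimes_t Q)(A)=\int_A Q_\omega(\Omega^t)\,\mathbb{P}(d\omega)=\mathbb{P}(A)$ for every $A\in\mathcal{F}_t$, which is (i). For (ii), fix $A\in\mathcal{F}_t$ and $B\in\mathcal{F}^t$; since $\omega\mapsto Q_\omega(B)$ is $\mathcal{F}_t$-measurable and the two measures already agree on $\mathcal{F}_t$, the defining identity reads
$$(\mathbb{P}\otimes_t Q)(A\cap B)=\int_A Q_\omega(B)\,\mathbb{P}(d\omega)=\int_A Q_\omega(B)\,(\mathbb{P}\otimes_t Q)(d\omega),$$
which is exactly the characterizing relation of a regular conditional probability distribution, so $\omega\mapsto Q_\omega$ serves as the RCPD of $\mathbb{P}\otimes_t Q$ given $\mathcal{F}_t$. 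Uniqueness is a monotone-class argument: any measure satisfying (i)--(ii) must assign the value $\int_A Q_\omega(B)\,\mathbb{P}(d\omega)$ to each rectangle $A\cap B$, and the rectangles form a $\pi$-system generating $\mathcal{F}$, so two such measures coincide.

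The step I expect to be the genuine obstacle is the countable-additivity verification required for the Carath\'eodory extension, since it must simultaneously exploit the $\sigma$-additivity of each $Q_\omega$, the $\mathcal{F}_t$-measurability of $\omega\mapsto Q_\omega(B)$, and a monotone (or dominated) convergence passage uniform enough to survive integration against $\mathbb{P}$. The support condition $Q_\omega(\theta_t=\omega(t))=1$ must be invoked carefully at this point to ensure that the glued paths genuinely lie in $\Omega$ and that no rectangle is inadvertently declared empty under the concatenation. All remaining points are routine measure theory, which is why the statement is quoted from \cite{stro} rather than reproved in detail.
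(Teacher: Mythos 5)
The paper gives no proof of this proposition at all — it is quoted directly from Stroock--Varadhan \cite[Lemma 6.1.1]{stro} — so the only meaningful comparison is with the classical argument in that reference. Your construction is correct in substance: defining the premeasure on the semi-algebra of rectangles $A\cap B$, $A\in\widetilde{\mathcal{B}}_t$, $B\in\widetilde{\mathcal{B}}^t$, extending by Carath\'eodory, and then verifying (i), (ii) and uniqueness via the $\pi$-system of rectangles is a valid product-measure-style route, and your verifications of (i), (ii) and of uniqueness are exactly right. The classical proof goes differently: it defines the candidate measure at once on every $C\in\mathcal{F}$ by
$$(\mathbb{P}\otimes_t Q)(C)=\int_\Omega Q_\omega\left(\left\{\omega'\in\Omega^t:\ \omega'(t)=\omega(t),\ \omega|_{[0,t]}\oplus\omega'\in C\right\}\right)\mathbb{P}(d\omega),$$
where $\oplus$ denotes concatenation of the past of $\omega$ with the future $\omega'$; countable additivity is then immediate from the $\sigma$-additivity of each $Q_\omega$ together with monotone convergence, and no extension theorem is needed, the price being a monotone-class argument showing the integrand is $\mathcal{F}_t$-measurable for every $C$ (the hypothesis gives this only for sets in $\mathcal{F}^t$). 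Your approach buys elementarity — only Carath\'eodory plus monotone convergence — but it shifts the real work onto a point you mention only in passing and attach to the wrong step: \emph{well-definedness} of the rectangle formula. A given subset of $\Omega$ has many representations $A\cap B$, and without the support condition $Q_\omega(\theta_t=\omega(t))=1$ the value $\int_A Q_\omega(B)\,\mathbb{P}(d\omega)$ genuinely depends on the representation: for instance $A$ may force $\omega(t)\in U$ while $B$ forces the future to start outside $U$, so $A\cap B=\emptyset$ yet the integral can be positive if $Q_\omega$ charges futures starting away from $\omega(t)$. The support condition is precisely what identifies $1_A(\omega)\,Q_\omega(B)$ with the $Q_\omega$-measure of the futures whose concatenation with $\omega$ lies in $A\cap B$, a quantity depending only on the set $A\cap B$ itself; this is needed already for the premeasure to be well defined and finitely additive on the semi-algebra, not merely in the countable-additivity step where you place it. With that point made explicit, your sketch fills in to a complete proof.
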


The following definition is a generalization of supermartingale, see \cite[Definition 3.2]{F2}.
 \begin{definition}
 Let $\mathcal{G}$ be $\widetilde{\mathcal{B}}_t$-adapted real-valued stochastic process, we call $\mathcal{G}$ is an almost surely $(\widetilde{\mathcal{B}}_t, \mathbb{P})$-supermartingale if
 \begin{align}\label{4.1*}\mathbb{E}^\mathbb{P}[1_\mathcal{A}\mathcal{G}_t]\leq \mathbb{E}^\mathbb{P}[1_\mathcal{A}\mathcal{G}_s],\end{align}
 for a.e. $s\geq 0$ and $s\leq t$, all $\mathcal{A}\in \widetilde{\mathcal{B}}_t$.
 \end{definition}
 We call time point $s$ by regular time of $\mathcal{G}$ if \eqref{4.1*} holds, and by exceptional time of $\mathcal{G}$ if \eqref{4.1*} fails.

 \begin{definition} Let $\widetilde{\mathcal{H}}$ be a Polish space, if there exists a set $B\in \mathcal{F}$ with $\mathbb{P}(B)=1$ such that $B\subset \{\omega\in \Omega: \omega(t)\in \widetilde{\mathcal{H}}, \forall t\geq 0\}$, then we say $\mathbb{P}\in {\rm Pr}_{\widetilde{\mathcal{H}}}(\Omega)\subset {\rm Pr}(\Omega)$ is concentrated on the paths with values in $\widetilde{\mathcal{H}}$.
 \end{definition}
\begin{definition}Let $\{\mathbb{P}_x\}_{x\in \widetilde{\mathcal{H}}}$ be a family of probability measures in  ${\rm Pr}_{\widetilde{\mathcal{H}}}(\Omega)$, which is $\mathcal{B}/\mathcal{B}[0,1]$-measurable, and if for each $x\in \widetilde{\mathcal{H}}$, there exists a Lebesgue null measure set $\mathcal{N}_x$ such that for all $t\notin \mathcal{N}_x$
$$\mathbb{P}_x(\cdot|\mathcal{F}_t)=\mathbb{P}_{\omega(t)}\circ \theta_t^{-1}, ~{\rm for} ~\mathbb{P}_x\!-\!{\rm a.s.}$$
We say the family of $\{\mathbb{P}_x\}_{x\in \widetilde{\mathcal{H}}}$ is an almost surely Markov family.
\end{definition}

 Denote by $Comp({\rm Pr}(\Omega))$ the space of all compact subsets of ${\rm Pr}(\Omega)$, which is a metric space after endowed with the Hausdorff metric.

\begin{definition}\label{def4.4} Consider a measurable map $\mathcal{C}: \widetilde{\mathcal{H}}\rightarrow Comp({\rm Pr}(\Omega))\cap {\rm Pr}_{\widetilde{\mathcal{H}}}(\Omega)$, the family $\{\mathcal{C}(x)\}_{x\in \widetilde{\mathcal{H}}(\Omega)}$ is almost surely pre-Markov if for each $x\in \widetilde{\mathcal{H}}$ and $\mathbb{P}\in \mathcal{C}(x)$, there exists a Lebesgue null measure set $\mathcal{N}_x$ such that for all $t\notin \mathcal{N}_x$, it holds\\
(i) (Disintegration) there exists $\mathbb{P}$-null set $\widetilde{\mathcal{N}}$ such that for all $\omega \notin\widetilde{\mathcal{N}}$
$$\omega(t)\in \widetilde{\mathcal{N}},~~\mathbb{P}(\theta_t(\cdot)|\mathcal{F}_t)\in \mathcal{C}(\omega(t));$$
(ii) (Reconstruction) for each measurable map $\omega\rightarrow Q_\omega:\Omega\rightarrow {\rm Pr}(\Omega^t)$ such that there exists a $\mathbb{P}$-null set $\widetilde{\mathcal{N}}\in \mathcal{F}_t$ such that all $\omega \notin\widetilde{\mathcal{N}}$
$$\omega(t)\in \widetilde{\mathcal{N}},~~ Q_\omega\circ \theta_t\in \mathcal{C}(\omega(t)),$$
then $\mathbb{P}\otimes_tQ\in \mathcal{C}(x)$.
\end{definition}

 The following abstract Markov selection result was given in \cite[Theorem 2.8]{F2}.
 \begin{proposition}\label{pro4.2}
 Let $\{\mathcal{C}(x)\}_{x\in \widetilde{\mathcal{H}}}$ be an almost surely pre-Markov family with non-empty convex values. Then, there exists a measurable map $x\mapsto \mathbb{P}_x\in {\rm Pr}_{\widetilde{\mathcal{H}}}(\Omega)$ such that $\mathbb{P}_x\in \mathcal{C}(x)$ for all $x\in \widetilde{\mathcal{H}}$ and $\{\mathbb{P}_x\}_{x\in \widetilde{\mathcal{H}}}$ has almost surely Markov property.
  \end{proposition}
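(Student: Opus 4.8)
The plan is to carry out the Krylov--Stroock--Varadhan selection procedure in the abstract form of \cite{F2}: starting from the given almost surely pre-Markov family $\{\mathcal{C}(x)\}_{x\in\widetilde{\mathcal{H}}}$, one repeatedly shrinks the compact convex values of $\mathcal{C}$ by maximizing a countable separating family of bounded functionals, verifying at each stage that the pre-Markov structure of Definition \ref{def4.4} together with non-emptiness, convexity, compactness and measurability survives, and concluding that the fully refined map takes single-point values measurably in $x$. The almost sure Markov property is then forced by the uniqueness of this selection.

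Fix a countable family $\{(f_k,\lambda_k)\}_{k\ge1}\subset C_b(\mathbb{X})\times(0,\infty)$ that is separating, and for each pair introduce the bounded affine functional
\begin{align*}
\Lambda_k(\mathbb{P}):=\mathbb{E}^{\mathbb{P}}\left[\int_0^\infty \lambda_k e^{-\lambda_k t} f_k(\varrho_t)\,dt\right],\qquad \mathbb{P}\in{\rm Pr}(\Omega),
\end{align*}
which is continuous for the weak topology of ${\rm Pr}(\Omega)$. Given a pre-Markov family $\{\mathcal{C}(x)\}$, define its refinement through
\begin{align*}
\mathcal{C}_k(x):=\Big\{\mathbb{P}\in\mathcal{C}(x):\ \Lambda_k(\mathbb{P})=\sup_{\mathbb{Q}\in\mathcal{C}(x)}\Lambda_k(\mathbb{Q})\Big\}.
\end{align*}
The crucial lemma is that $\{\mathcal{C}_k(x)\}$ is again almost surely pre-Markov with non-empty convex compact values. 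Non-emptiness and compactness hold because $\Lambda_k$ is continuous and $\mathcal{C}(x)$ compact, so the supremum is attained on a compact level set; convexity holds since the level set where an affine function attains its maximum over a convex set is convex; measurability of $x\mapsto\mathcal{C}_k(x)$ follows from that of $\mathcal{C}$ and of $x\mapsto\sup_{\mathbb{Q}\in\mathcal{C}(x)}\Lambda_k(\mathbb{Q})$.

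The preservation of the pre-Markov property is the delicate point. Writing $Q_\omega:=\mathbb{P}(\theta_t(\cdot)|\mathcal{F}_t)$ for the regular conditional law, the flow structure of $\Lambda_k$ under the shift $\theta_t$ gives the disintegration identity
\begin{align*}
\Lambda_k(\mathbb{P})=\mathbb{E}^{\mathbb{P}}\left[\int_0^t \lambda_k e^{-\lambda_k s} f_k(\varrho_s)\,ds+e^{-\lambda_k t}\,\Lambda_k(Q_\omega)\right].
\end{align*}
If $\mathbb{P}\in\mathcal{C}_k(x)$ then, combining this identity with property (i) of Definition \ref{def4.4} (which places $Q_\omega\in\mathcal{C}(\omega(t))$ for a.e.\ $\omega$) and with the reconstruction Proposition \ref{pro4.1}, one shows that $Q_\omega$ must itself maximize $\Lambda_k$ over $\mathcal{C}(\omega(t))$, i.e.\ $Q_\omega\in\mathcal{C}_k(\omega(t))$: otherwise, replacing $Q_\omega$ by an $\omega$-measurable maximizer would, via $\mathbb{P}\otimes_tQ\in\mathcal{C}(x)$, produce a measure in $\mathcal{C}(x)$ with strictly larger $\Lambda_k$, contradicting optimality of $\mathbb{P}$. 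The same reconstruction argument run in reverse yields property (ii) for $\mathcal{C}_k$, and all statements hold outside a Lebesgue-null set of exceptional times. Iterating the refinement over all pairs $(f_1,\lambda_1),(f_2,\lambda_2),\dots$ produces a nested family of non-empty compact convex pre-Markov sets whose intersection $\mathcal{C}_\infty(x)$ is, by the finite intersection property, non-empty and again pre-Markov. Since $\{(f_k,\lambda_k)\}$ is separating, any two elements of $\mathcal{C}_\infty(x)$ assign equal values to every $\Lambda_k$, and a standard argument using the pre-Markov structure (cf. \cite{F2,stro}) forces $\mathcal{C}_\infty(x)=\{\mathbb{P}_x\}$ to be a singleton, with $x\mapsto\mathbb{P}_x$ measurable as a limit of measurable maps. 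Finally, for a.e.\ $t$ the disintegration of $\mathbb{P}_x$ places $\mathbb{P}_x(\theta_t(\cdot)|\mathcal{F}_t)\in\mathcal{C}_\infty(\omega(t))=\{\mathbb{P}_{\omega(t)}\}$, i.e.\ $\mathbb{P}_x(\cdot|\mathcal{F}_t)=\mathbb{P}_{\omega(t)}\circ\theta_t^{-1}$ for $\mathbb{P}_x$-a.e.\ $\omega$, which is exactly the almost surely Markov family property.

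I expect the main obstacle to be precisely this preservation of the reconstruction property through each refinement: one must select a maximizer $Q_\omega\in\mathcal{C}_k(\omega(t))$ measurably in $\omega$, glue it back using Proposition \ref{pro4.1} while staying inside $\mathcal{C}(x)$, and verify that $\Lambda_k$ does not decrease, all while controlling the $x$-dependent null sets of exceptional times simultaneously for the countable family so that the final conclusion is genuinely \emph{almost sure} in the sense of the definitions above. This measurable-selection-plus-gluing bookkeeping, rather than any single estimate, is where the real work lies.
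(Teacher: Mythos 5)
The paper itself gives no proof of this proposition: it is imported verbatim from \cite[Theorem 2.8]{F2}, so the only meaningful comparison is with Flandoli--Romito's argument, and your proposal reconstructs exactly that argument --- Krylov's successive maximization of Laplace-type functionals, preservation of the almost-sure pre-Markov property under each refinement (disintegration, plus a measurable selection of maximizers glued back via Proposition \ref{pro4.1}), the nested compact intersection, and the singleton/Markov conclusion. Your route is therefore essentially the same as the cited source's, and it is correct at the level of a sketch, including your identification of the measurable-selection and null-set bookkeeping as the real work. One repair to the wording of the final step: the equality of all $\Lambda_k$-values on $\mathcal{C}_\infty(x)$ follows from the nested maximization itself, not from the separating property; separation together with the disintegration structure is what then promotes equal Laplace functionals (which by themselves only determine one-dimensional marginals) to equality of the laws, so the ``standard argument'' you defer to \cite{F2,stro} genuinely needs the pre-Markov structure and should not be read as a purely measure-theoretic separation statement.
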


\subsection{The solution to martingale problem of system \eqref{Equ1.1}-\eqref{1.6}}

In view of the result established in section 3, choosing $\mathbb{X}_1=\mathrm{H}^1\times V^2$ and $\mathbb{Y}=\mathbb{Y}_1\times \mathbb{Y}_2$, where $\mathbb{Y}_1=\mathrm{L}^2\times V^1, \mathbb{Y}_2= H^1\times L^2(\Gamma)$. Introducing by $\mathcal{V}_1$ and $\mathcal{V}_2$ two new variables
$$\mathcal{V}_1=\mathcal{V}_1(0)+\int_{0}^{t}\mu ds, ~~\mathcal{V}_2=\mathcal{V}_2(0)+\int_{0}^{t}\mathcal{K}ds,$$
for arbitrary initial data $\mathcal{V}_1(0)\in H^1, \mathcal{V}_2(0)\in L^2(\Gamma)$.
From the regularity of $\mu, \mathcal{K}$, we easily see $\mathcal{V}_1$ and $\mathcal{V}_2$ are two continuous stochastic processes with trajectories in $W^{1,2}_tH^1$ and $W^{1,2}_tL^2(\Gamma)$, $\mathbb{P}$-a.s. respectively.

Denote by $\widetilde{\Omega}= C([0,\infty); \mathbb{X})$ with $\mathbb{X}=(D(A_0)\times V^2\times H^1\times L^2(\Gamma))'$. Let $$\varrho=(\varrho^1, \varrho^2, \varrho^3, \varrho^4, \varrho^5)$$ be the canonical process with
$$\varrho_t(\omega)=(\varrho^1, \varrho^2, \varrho^3, \varrho^4, \varrho^5)(\omega)=\omega(t)\in \mathbb{X}.$$
For Markov selection, we re-define the solution to martingale problem \eqref{Equ1.1}-\eqref{1.6}.
\begin{definition}\label{def4.5}
Given $y\in \mathbb{Y}$, a probability measure $\mathcal{P}$ on $(\widetilde{\Omega}, \widetilde{\mathcal{B}})$ is a solution to martingale problem \eqref{Equ1.1}-\eqref{1.6} if \\
(M.1) $\mathcal{P}\left(L_{loc}^\infty([0,\infty);\mathbb{Y}_1)\cap L_{loc}^2([0,\infty);\mathbb{X}_1)\right)=1$, $\mathcal{P}\left(W_{loc}^{1,2}([0,\infty);\mathbb{Y}_2)\right)=1;$\\
(M.2) for every $v_0\in \mathrm{H}^1$, the process
\begin{align*}{\bf M}_{v_0}^t&=(\varrho_t^1-\varrho^1_0,v_0)+\int_{0}^{t}a_0(\varrho^1,v_0)-B_0(\varrho^1,v_0,\varrho^1)-B_1(\partial_t\varrho^4, \varrho^2, v_0)ds\nonumber\\&\quad-\int_{0}^{t}(\partial_t\varrho^5\nabla_{\tau}\varrho^3, v_{0,\tau})_\Gamma ds,\end{align*}
is a continuous square integrable martingale with respect to $\mathcal{P}$, its quadratic variation process is given by
$$\langle{\bf M}_{v_0}^t\rangle:=\frac{1}{2}\int_{0}^{t}(h(\varrho^1, \nabla\varrho^2), v_0)^2ds;$$
(M.3) for every $v_1\in H^1$ and $t\geq 0$, it holds $\mathcal{P}$-a.s.
\begin{align*}
(\varrho^2_t, v_1)+\int_{0}^{t}(\nabla\partial_t\varrho^4, \nabla v_1)-B_2(\varrho^1,v_1,\varrho^2)ds=(\varrho^2_0, v_1);
\end{align*}
(M.4) for every $v_2\in H^1$ and $t\geq 0$, it holds $\mathcal{P}$-a.s.
\begin{align*}
(\varrho^3_t, v_2)+\int_{0}^{t}(\partial_t\varrho^5, v_2)-B_\Gamma(\varrho^1,v_2,\varrho^3)ds=(\varrho^3_0, v_1);
\end{align*}
(M.5) the process
\begin{align}
\mathcal{G}: [\omega, t]\longmapsto &\frac{1}{2}\mathcal{E}_t^2+\int_{0}^{t}\mathcal{E}_t\left(\|\nabla \varrho^1, \nabla \partial_t\varrho^4\|_{L_x^2}^2+\|(\varrho^2, \varrho^3)\|^2_{V^2}+\|\partial_t\varrho^5\|^2_{L^2(\Gamma)}\right)ds-\frac{1}{2}\mathcal{E}_0^2\nonumber\\&-\int_{0}^{t}\mathcal{E}_t\|h(\varrho^1,\nabla\varrho^2)\|^2_{L_2(\mathcal{H};L_x^2)}ds\nonumber\\
&-\int_{0}^{t}(h(\varrho^1,\nabla\varrho^2),\varrho^2)^2ds,
\end{align}
is an almost surely $(\widetilde{\mathcal{B}}_t, \mathcal{P})$-supermartingale and $0$ is a regular time, where
$$\mathcal{E}_t:=\frac{1}{2}\left(\|\varrho^1\|_{\mathrm{L}_x^2}^2+\|\nabla \varrho^2\|_{L_x^2}^2+\|\varrho^3\|_{L^2(\Gamma)}^2+\|\nabla_{\tau}\varrho^3\|_{L^2(\Gamma)}^2\right)+\int_{\mathcal{D}}F(\varrho^2)dx+\int_{\Gamma}G(\varrho^3)dS;$$
(M.6) it holds $\mathcal{P}$-a.s.
$$\partial_t\varrho^4=-\Delta\varrho^2+f(\varrho^2), ~\partial_t\varrho^5=A_{\tau}\varrho^3+\partial_n\varrho^2+\varrho^3+g(\varrho^3).$$

\end{definition}

\begin{proposition}\label{pro4.3} Definition \ref{def2.1} and Definition \ref{def4.5} are equivalent in the following sense:\\
{\rm (i)} if $(\mathcal{S}, u,\phi, \psi)$ is a global martingale weak solution to system (\ref{Equ1.1})-(\ref{1.6}), then for every $H^1, L^2(\Gamma)$-valued $\mathcal{F}_0$ measurable random variables $\mathcal{V}_1(0), \mathcal{V}_2(0)$, we could have a probability measure $\mathcal{P}=\mathcal{L}( u,\phi, \psi, \mathcal{V}_1, \mathcal{V}_2)$ is a solution to the martingale problem associated to \eqref{Equ1.1}-\eqref{1.6};\\
{\rm (ii)} if a probability measure $\mathcal{P}$ is a solution to the martingale problem associated to \eqref{Equ1.1}-\eqref{1.6}, then there exists a global martingale weak solution $(\mathcal{S}, u,\phi, \psi)$ to system (\ref{Equ1.1})-(\ref{1.6}) and $H^1, L^2(\Gamma)$-valued $\mathcal{F}_0$ measurable random variables $\mathcal{V}_1(0)$ and $\mathcal{V}_2(0)$ such that $\mathcal{L}( u,\phi, \psi, \mathcal{V}_1, \mathcal{V}_2)=\mathcal{P}$.
\end{proposition}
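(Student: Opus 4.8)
The plan is to prove the two implications separately, treating direction (i) as a transcription of the already-constructed solution and direction (ii) via a martingale representation argument, following the scheme of \cite{brei}. Throughout I identify the canonical coordinates $(\varrho^1, \varrho^2, \varrho^3, \varrho^4, \varrho^5)$ with $(u, \phi, \psi, \mathcal{V}_1, \mathcal{V}_2)$ and keep in mind that, by the very definition of $\mathcal{V}_1, \mathcal{V}_2$, one has $\partial_t \varrho^4 = \mu$ and $\partial_t \varrho^5 = \mathcal{K}(\psi)$ in the sense of the $\mathcal{F}_t$-adapted random distributions of Definition \ref{def2.1}.

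For direction (i), I start from a global martingale weak solution $(\mathcal{S}, u, \phi, \psi)$, fix the arbitrary $\mathcal{F}_0$-measurable data $\mathcal{V}_1(0) \in H^1$, $\mathcal{V}_2(0) \in L^2(\Gamma)$, define $\mathcal{V}_1, \mathcal{V}_2$ by the stated time integrals, and let $\mathcal{P}$ be the law of $(u, \phi, \psi, \mathcal{V}_1, \mathcal{V}_2)$ on $\widetilde{\Omega}$. Condition (M.6) is then immediate from the structural relations for $\mu, \mathcal{K}(\psi)$ in Definition \ref{def2.1}, and the path regularity (M.1) transfers from part 2 of that definition together with the $W^{1,2}_t$-regularity of $\mathcal{V}_1, \mathcal{V}_2$ inherited from $\mu \in L^2_t H^1$ and $\mathcal{K}(\psi) \in L^2_t L^2(\Gamma)$. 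To obtain (M.2) I rearrange the first line of \eqref{2.1}: the process $\mathbf{M}_{v_0}^t$ equals, $\mathbb{P}$-a.s., the stochastic integral $\int_0^t (h(u, \nabla\phi), v_0) d\mathcal{W}$, which is a continuous square-integrable martingale whose quadratic variation is evaluated by the It\^{o} isometry to be $\frac{1}{2}\int_0^t (h(\varrho^1, \nabla\varrho^2), v_0)^2 ds$. Substituting $\mu = \partial_t \varrho^4$ and $\mathcal{K}(\psi) = \partial_t \varrho^5$ turns the remaining two lines of \eqref{2.1} into (M.3) and (M.4), while (M.5) is precisely the energy inequality of part 5, tested against $\mathcal{A} \in \mathcal{F}_s$, so that $\mathcal{G}$ is an almost surely $(\widetilde{\mathcal{B}}_t, \mathcal{P})$-supermartingale with $0$ a regular time.

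For the converse direction (ii), I am handed a measure $\mathcal{P}$ satisfying (M.1)--(M.6) and must manufacture a stochastic basis carrying a cylindrical Wiener process. The key step is the martingale representation theorem. Polarizing (M.2) over test functions $v_0 \in \mathrm{H}^1$ shows that the family $\{\mathbf{M}_{v_0}^t\}$ assembles into a single $\mathrm{L}_x^2$-valued continuous square-integrable martingale $\mathbf{M}^t$ whose quadratic variation operator is $h(\varrho^1, \nabla\varrho^2)\,h(\varrho^1, \nabla\varrho^2)^*$. Invoking the representation theorem for Hilbert-space-valued martingales (see \cite{Zabczyk}), possibly after enlarging $(\widetilde{\Omega}, \widetilde{\mathcal{B}}, \mathcal{P})$, I produce a cylindrical Wiener process $\mathcal{W}$ with $\mathbf{M}^t = \int_0^t h(\varrho^1, \nabla\varrho^2) d\mathcal{W}$, $\mathcal{P}$-a.s. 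With this $\mathcal{W}$ the identity (M.2) becomes the first equation of \eqref{2.1}; (M.3), (M.4) and (M.6) supply the other two equations and identify $\mu, \mathcal{K}(\psi)$ as the random distributions $\partial_t \varrho^4, \partial_t \varrho^5$; and (M.5) gives the energy inequality. Putting $(u, \phi, \psi) = (\varrho^1, \varrho^2, \varrho^3)$ delivers the required global martingale weak solution.

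The main obstacle I expect is this martingale representation step. Since the noise amplitude $h(u, \nabla\phi)$ need not be boundedly invertible, $\mathcal{W}$ cannot be read off directly from $\mathbf{M}^t$; one must use the degenerate version of the theorem, which enlarges the probability space by an auxiliary independent Wiener process to furnish the missing noise directions. Care is also needed to check that $\mathbf{M}^t$ is a genuine $\mathrm{L}_x^2$-valued martingale with the stated quadratic variation operator, rather than merely a test-function-indexed collection of scalar martingales, which requires a continuity and measurability argument; and the fact that $\mu$ and $\mathcal{K}(\psi)$ appear only through the time-integrated variables $\varrho^4, \varrho^5$ must be tracked so that the reconstructed $\mu = \partial_t \varrho^4$ retains the regularity $L^2_t H^1$ demanded by Definition \ref{def2.1}.
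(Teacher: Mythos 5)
Your proposal is correct and takes essentially the same route as the paper: the paper's entire proof is a citation of \cite[Proposition 3.8]{brei}, whose argument is exactly your scheme --- direction (i) by transcribing the weak formulation \eqref{2.1} and the energy inequality into ${\rm (M.1)}$--${\rm (M.6)}$ under the image law, and direction (ii) by the martingale representation theorem for Hilbert-space-valued martingales on an enlarged probability space to compensate for the non-invertible noise coefficient. The only blemish is the factor $\frac{1}{2}$ you attach to the quadratic variation ``by the It\^{o} isometry'' (the isometry gives $\int_0^t\sum_k(h e_k,v_0)^2\,ds$ without it), but this is inherited from the paper's own statement of ${\rm (M.2)}$ rather than a defect of your argument.
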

\begin{proof} The proof is totally same with that of \cite[Proposition 3.8]{brei}, we omit the details.
\end{proof}

\subsection{Markov selection}

In the following, we are going to establish the existence of almost surely Markov selection to martingale problem \eqref{Equ1.1}-\eqref{1.6}. Denote by $\mathcal{P}_y$ be the solution to the martingale problem \eqref{Equ1.1}-\eqref{1.6} starting from initial data $\delta_y$.
\begin{theorem}\label{thm4.1*} Suppose that assumptions ${\bf A_1}-{\bf A_4}$ hold, then there exists an almost surely Markov family $\mathcal{P}_y$ associated to \eqref{Equ1.1}-\eqref{1.6}.
\end{theorem}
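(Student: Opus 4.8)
The plan is to realise Theorem~\ref{thm4.1*} as an application of the abstract selection principle, Proposition~\ref{pro4.2}, taking $\widetilde{\mathcal H}=\mathbb Y$ and $\Omega=\widetilde\Omega=C([0,\infty);\mathbb X)$. For each $y\in\mathbb Y$ I would let $\mathcal C(y)\subset{\rm Pr}_{\mathbb Y}(\widetilde\Omega)$ denote the set of all probability measures that solve the martingale problem of Definition~\ref{def4.5} with initial law $\delta_y$. The theorem then reduces to verifying the hypotheses of Proposition~\ref{pro4.2}: that each $\mathcal C(y)$ is non-empty, convex and compact, that $y\mapsto\mathcal C(y)$ is measurable, and, crucially, that $\{\mathcal C(y)\}_{y\in\mathbb Y}$ is almost surely pre-Markov in the sense of Definition~\ref{def4.4}.

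Non-emptiness is immediate from Section~3: Theorem~\ref{thm2.1} produces a global martingale weak solution for the initial datum $\delta_y$, and Proposition~\ref{pro4.3}(i) turns it into an element of $\mathcal C(y)$ once the auxiliary variables $\mathcal V_1,\mathcal V_2$ are adjoined. Convexity is elementary, since conditions (M.2)--(M.4) are linear in the law and (M.5) only asks for a supermartingale inequality, a property stable under convex combinations. For compactness I would reuse the tightness machinery of Claims~1--2 and Proposition~\ref{pro3.1}: the uniform bounds \eqref{3.39}, encoded in the defining conditions (M.1) and (M.5), confine $\mathcal C(y)$ to a tight set of laws, while closedness under weak convergence follows by passing to the limit in (M.2)--(M.6) exactly as in Part~4, using lower semicontinuity of the energy for the supermartingale condition. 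Measurability of $y\mapsto\mathcal C(y)$ I would obtain from the measurable dependence of the approximating construction on the initial data together with a standard measurable-selection argument.

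The hard part will be the almost surely pre-Markov property, that is, the disintegration and reconstruction conditions (i)--(ii) of Definition~\ref{def4.4}. For disintegration I would fix $\mathbb P\in\mathcal C(y)$ and a regular time $t$ of the supermartingale $\mathcal G$ of (M.5), form the regular conditional probability $\mathbb P(\cdot\,|\mathcal F_t)$, and show that $\omega\mapsto\mathbb P(\theta_t(\cdot)\,|\mathcal F_t)$ lies in $\mathcal C(\omega(t))$ for $\mathbb P$-a.e.\ $\omega$. The martingale identities (M.2)--(M.4) restrict correctly under conditioning by the properties of stochastic integrals, while the delicate point is that the energy supermartingale (M.5) survives the conditioning only at regular times $t$, which is precisely why Definition~\ref{def4.5} singles out an exceptional Lebesgue-null set. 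For reconstruction I would invoke Proposition~\ref{pro4.1} to glue $\mathbb P$ on $\mathcal F_t$ with a measurable family $Q_\omega$ of solutions started from $\omega(t)$, and verify that $\mathbb P\otimes_tQ$ again satisfies (M.1)--(M.6); the martingale parts concatenate by the defining properties of $\mathbb P\otimes_t Q$, and the supermartingale inequality for $\mathcal G$ is preserved because it decouples across $t$ into the contribution of $\mathbb P$ before $t$ and that of $Q_\omega$ after $t$.

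A recurring technical subtlety throughout is that $\mu$ and $\mathcal K(\psi)$ are only $\mathcal F_t$-adapted random distributions, not genuine processes; this is exactly what the variables $\mathcal V_1=\mathcal V_1(0)+\int_0^t\mu\,ds$ and $\mathcal V_2=\mathcal V_2(0)+\int_0^t\mathcal K\,ds$ repair. Working on $\widetilde\Omega=C([0,\infty);\mathbb X)$ with the canonical process $\varrho=(\varrho^1,\dots,\varrho^5)$, and reading $\mu=\partial_t\varrho^4$, $\mathcal K=\partial_t\varrho^5$ as in (M.6), renders all five coordinates continuous in time, so the disintegration and gluing operations of Proposition~\ref{pro4.1} are legitimate. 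With the pre-Markov property established, Proposition~\ref{pro4.2} yields a measurable map $y\mapsto\mathbb P_y\in\mathcal C(y)$ enjoying the almost surely Markov property, which is the assertion of Theorem~\ref{thm4.1*}.
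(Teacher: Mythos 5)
Your proposal follows essentially the same route as the paper's own proof: the paper likewise defines $\mathcal{C}(y)$ as the set of martingale-problem solutions with initial law $\delta_y$, establishes non-emptiness and convexity from Theorem \ref{thm2.1} and Proposition \ref{pro4.3}, proves compactness by the tightness-plus-limit-identification argument of Parts 2--4 of Section 3, and verifies the disintegration and reconstruction properties via regular conditional probabilities, Proposition \ref{pro4.1}, and the martingale/supermartingale transfer results (Propositions \ref{pro4.4} and \ref{pro4.5}), before concluding with Proposition \ref{pro4.2}. The only minor difference is that the paper obtains Borel measurability of $y\mapsto\mathcal{C}(y)$ as a direct byproduct of the same sequential stability argument used for compactness (if $y_n\to y$ and $\mathcal{P}_n\in\mathcal{C}(y_n)$, a subsequence converges weakly to an element of $\mathcal{C}(y)$), rather than from measurable dependence of the approximating construction, but this does not change the substance of the argument.
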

Denote by $\{\mathcal{C}(y)\}_{y\in \mathbb{Y}}$ be the set of probability measure $\mathcal{P}_y$. From Proposition \ref{pro4.2}, the proof of above theorem boils down to show the family $\{\mathcal{C}(y)\}_{y\in \mathbb{Y}}\subset {\rm Pr}(\widetilde{\Omega})$ is an almost surely pre-Markov family.

\begin{lemma}
For each $y\in \mathbb{Y}$, the set $\mathcal{C}(y)$ is non-empty and convex.
\end{lemma}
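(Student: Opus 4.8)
The plan is to prove the two assertions separately, drawing on the existence theory of Section~3 for non-emptiness and on the linearity of every defining condition in the underlying measure for convexity. For non-emptiness I would fix $y=(u_0,\phi_0,\psi_0,\mathcal{V}_1(0),\mathcal{V}_2(0))\in\mathbb{Y}=\mathbb{Y}_1\times\mathbb{Y}_2$ and regard the $\mathbb{Y}_1$-component $(u_0,\phi_0,\psi_0)\in\mathrm{L}^2\times V^1$ as deterministic initial data. Since the Dirac mass $\delta_{(u_0,\phi_0,\psi_0)}$ trivially lies in $L^p_\omega(\mathrm{L}^2_x\times V^1)$ for every $p>2$, Theorem~\ref{thm2.1} produces a global martingale weak solution $(\mathcal{S},u,\phi,\psi)$ in the sense of Definition~\ref{def2.1} with this initial law. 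Feeding the remaining $\mathbb{Y}_2$-component $(\mathcal{V}_1(0),\mathcal{V}_2(0))\in H^1\times L^2(\Gamma)$ into the construction of Proposition~\ref{pro4.3}~(i), the law $\mathcal{P}=\mathcal{L}(u,\phi,\psi,\mathcal{V}_1,\mathcal{V}_2)$ solves the martingale problem \eqref{Equ1.1}-\eqref{1.6} and is concentrated at $\delta_y$ at time $0$; hence $\mathcal{P}\in\mathcal{C}(y)$ and $\mathcal{C}(y)\neq\emptyset$.

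For convexity, take $\mathcal{P}_1,\mathcal{P}_2\in\mathcal{C}(y)$ and $\lambda\in[0,1]$, and set $\mathcal{P}=\lambda\mathcal{P}_1+(1-\lambda)\mathcal{P}_2$. The initial condition $\mathcal{P}(\varrho_0=y)=1$ and the full-measure conditions (M.1), (M.3), (M.4), (M.6) are stated as ``$\mathcal{P}$-a.s.'' assertions, hence are preserved: an event carrying full $\mathcal{P}_1$- and $\mathcal{P}_2$-measure carries full $\mathcal{P}$-measure. It therefore remains to verify the martingale assertion (M.2) and the supermartingale assertion (M.5), which I would phrase in the linear-in-the-measure form. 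For (M.2), the process $\mathbf{M}_{v_0}^t$ being a square-integrable martingale with the prescribed quadratic variation is equivalent to requiring, for all $s\le t$ and all bounded $\widetilde{\mathcal{B}}_s$-measurable $\Phi$, the two identities
\[
\mathbb{E}^{\mathcal{P}_i}\big[(\mathbf{M}_{v_0}^t-\mathbf{M}_{v_0}^s)\Phi\big]=0,\qquad
\mathbb{E}^{\mathcal{P}_i}\Big[\big((\mathbf{M}_{v_0}^t)^2-(\mathbf{M}_{v_0}^s)^2-\tfrac{1}{2}\textstyle\int_s^t (h(\varrho^1,\nabla\varrho^2),v_0)^2\,dr\big)\Phi\Big]=0,
\]
for $i=1,2$. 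Each expectation is linear in the measure, so the convex combination yields the same identities under $\mathcal{P}$, i.e.\ (M.2) persists; square-integrability follows from $\mathbb{E}^{\mathcal{P}}|\mathbf{M}_{v_0}^t|^2=\lambda\mathbb{E}^{\mathcal{P}_1}|\mathbf{M}_{v_0}^t|^2+(1-\lambda)\mathbb{E}^{\mathcal{P}_2}|\mathbf{M}_{v_0}^t|^2<\infty$.

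For (M.5), the defining inequality is again linear in $\mathcal{P}$, so whenever $s$ is a regular time for both $\mathcal{P}_1$ and $\mathcal{P}_2$ one has, for every $\mathcal{A}\in\widetilde{\mathcal{B}}_s$,
\[
\mathbb{E}^{\mathcal{P}}[1_\mathcal{A}\mathcal{G}_t]=\lambda\mathbb{E}^{\mathcal{P}_1}[1_\mathcal{A}\mathcal{G}_t]+(1-\lambda)\mathbb{E}^{\mathcal{P}_2}[1_\mathcal{A}\mathcal{G}_t]\le\lambda\mathbb{E}^{\mathcal{P}_1}[1_\mathcal{A}\mathcal{G}_s]+(1-\lambda)\mathbb{E}^{\mathcal{P}_2}[1_\mathcal{A}\mathcal{G}_s]=\mathbb{E}^{\mathcal{P}}[1_\mathcal{A}\mathcal{G}_s].
\]
The one point requiring care — and the main, though mild, obstacle — is the bookkeeping of exceptional times: the regular times of $\mathcal{G}$ under $\mathcal{P}$ contain the intersection of those under $\mathcal{P}_1$ and $\mathcal{P}_2$, whose complement is the Lebesgue-null union of two null sets, so $\mathcal{G}$ remains an almost surely $(\widetilde{\mathcal{B}}_t,\mathcal{P})$-supermartingale, and since $0$ is regular for both $\mathcal{P}_1,\mathcal{P}_2$ it is regular for $\mathcal{P}$. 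Collecting these observations gives $\mathcal{P}\in\mathcal{C}(y)$, establishing convexity and completing the proof.
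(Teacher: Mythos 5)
Your proof is correct and follows essentially the same route as the paper: non-emptiness comes from Theorem \ref{thm2.1} combined with Proposition \ref{pro4.3}(i) applied to the Dirac initial law, and convexity comes from the fact that every condition in Definition \ref{def4.5} is affine in the measure $\mathcal{P}$ (the a.s.\ conditions, the martingale/quadratic-variation identities, and the supermartingale inequality). You simply spell out the linearity argument and the bookkeeping of exceptional times that the paper's two-sentence proof leaves implicit.
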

\begin{proof} Due to Theorem \ref{thm2.1} and Proposition \ref{pro4.3}, for each $y\in \mathbb{Y}$, a solution to martingale problem exists making sure the set is non-empty. The convexity follows immediately all properties in Definition \ref{def4.5} involving integration with respect to measure $\mathcal{P}$.
\end{proof}
\begin{lemma} For each $y\in \mathbb{Y}$, the set $\mathcal{C}(y)$ is compact and the map $\mathcal{C}:\mathbb{Y}\rightarrow Comp({\rm Pr}(\widetilde{\Omega}))$ is Borel measurable.
\end{lemma}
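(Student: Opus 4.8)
The plan is to prove compactness through \emph{tightness} together with \emph{closedness}, and then to deduce Borel measurability from a closed-graph argument using the upper semicontinuity of compact-valued maps.

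First, for tightness I would argue that every $\mathcal{P}\in\mathcal{C}(y)$ corresponds, via Proposition \ref{pro4.3}, to a global martingale weak solution, so that the marginals of the canonical process $(\varrho^1,\varrho^2,\varrho^3,\varrho^4,\varrho^5)$ obey the uniform a priori bounds of the type \eqref{3.39}, with the right-hand constant depending only on the initial energy $\mathcal{E}_0$ determined by $y$. Combined with the time-regularity estimates and the Aubin--Lions compact embeddings already exploited in Claims~1 and~2 of Section~3, these bounds show that the laws are uniformly tight on the path space $\widetilde{\Omega}$, uniformly for $y$ in bounded subsets of $\mathbb{Y}$. By Prokhorov's theorem $\mathcal{C}(y)$ is then relatively compact in ${\rm Pr}(\widetilde{\Omega})$.

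Next, for closedness let $\mathcal{P}_n\in\mathcal{C}(y)$ with $\mathcal{P}_n\rightharpoonup\mathcal{P}$. Invoking the Skorokhod representation as in Proposition \ref{pro3.1}, I would pass to a.s.-convergent representatives and verify that each of (M.1)--(M.6) survives the limit. Conditions (M.1), (M.3), (M.4), (M.6) follow by repeating the identification of the nonlinear limits carried out in Part~4 of Section~3; here the distributional quantities $\mu,\mathcal{K}$ are handled only through the integrated variables $\mathcal{V}_1=\varrho^4$ and $\mathcal{V}_2=\varrho^5$, whose $W^{1,2}_t$-regularity is stable under the limit. The martingale property (M.2), together with its quadratic variation, passes using the uniform integrability supplied by the moment bounds, and the energy/supermartingale condition (M.5) passes by the lower semicontinuity of $t\mapsto\mathcal{E}_t$ and Fatou's lemma, exactly as in the derivation of \eqref{3.81}. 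Hence $\mathcal{P}\in\mathcal{C}(y)$, so $\mathcal{C}(y)$ is closed; being also relatively compact, it is compact.

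For measurability, the uniform tightness above places the values $\mathcal{C}(y)$, for $y$ in any bounded set, inside a fixed compact set of ${\rm Pr}(\widetilde{\Omega})$, so it suffices to show the graph $\{(y,\mathcal{P}):\mathcal{P}\in\mathcal{C}(y)\}$ is closed. Taking $y_n\to y$ in $\mathbb{Y}$ and $\mathcal{P}_n\in\mathcal{C}(y_n)$ with $\mathcal{P}_n\rightharpoonup\mathcal{P}$, I would repeat the limit-passing just described; the only new ingredient is that the initial data enter the expressions in (M.2)--(M.6) and the initial energy $\mathcal{E}_0$ continuously in $y$, whence the limit $\mathcal{P}$ solves the martingale problem with data $y$. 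A compact-valued set map with closed graph into a compact metric space is upper semicontinuous, and therefore Borel measurable as a map $\mathbb{Y}\to Comp({\rm Pr}(\widetilde{\Omega}))$ with the Hausdorff metric. The main obstacle is precisely this closedness/closed-graph step, and within it the stability of (M.5) and the correct limit of the boundary terms $B_\Gamma$ and $\partial_t\varrho^5\,\nabla_\tau\varrho^3$: since $\mu,\mathcal{K}$ are controlled only as random distributions, all passages to the limit must be performed at the level of $\varrho^4,\varrho^5$, and ensuring that the quadratic-variation identity in (M.2) and the supermartingale inequality in (M.5) are preserved—rather than merely the weak formulations—is where the care is needed.
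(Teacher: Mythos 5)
Your proposal is correct and follows essentially the same route as the paper: both reduce compactness and Borel measurability to the single sequential stability claim that if $y_n\to y$ in $\mathbb{Y}$ and $\mathcal{P}_n\in\mathcal{C}(y_n)$ converge weakly then the limit lies in $\mathcal{C}(y)$, proved via Proposition \ref{pro4.3} to get representing weak solutions, the uniform energy estimates, tightness on the path space (including the weak $W^{1,2}_t$ topologies for $\varrho^4,\varrho^5$), Prokhorov, and the Skorokhod representation. The only cosmetic difference is in the final identification: you verify (M.1)--(M.6) directly in the limit (martingale and supermartingale stability plus Fatou), whereas the paper re-identifies the limit as a martingale weak solution in the sense of Definition \ref{def2.1} by repeating Parts 3--4 of Section 3 and then converts back through Proposition \ref{pro4.3}; both identifications are standard and interchangeable.
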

\begin{proof} We claim that both properties are direct consequence of the following argument: for the initial data sequence $y_n\in \mathbb{Y}$ which converges to $y$ and $\mathcal{P}_n\in \mathcal{C}(y_n)$, then there exists a subsequence of $\mathcal{P}_n$ which converges to $\mathcal{P}\in \mathcal{C}(y)$ weakly in ${\rm Pr}(\widetilde{\Omega})$.

From Theorem \ref{thm2.1}, there exists a sequence martingale weak solutions $(\mathcal{S}^n, u^n,\phi^n, \psi^n)$ in the sense of Definition \ref{def2.1}. By a same argument as Parts 2,3 in section 3, we also have that the sequence $(u^n,\phi^n, \psi^n, \mu^n, \mathcal{K}(\psi^n))$ has uniform priori estimates and the probability measures $\mathcal{P}^n$ induced by the law of sequence $(u^n,\phi^n, \psi^n, \mathcal{V}_1^n, \mathcal{V}_2^n)$ is tight on path space $Y\times (W^{1,2}_tH^1)_w\times(W^{1,2}_tL^2(\Gamma))_w$, where $Y$ is the space defined in Part 3. The Prokhorov theorem tells us there exist a subsequence $\mathcal{P}^{n_k}$ and a probability measure $\mathcal{P}$ such that $\mathcal{P}^{n_k}$ converges weakly to $\mathcal{P}\in {\rm Pr}(\widetilde{\Omega})$. Next we show that $\mathcal{P}\in \mathcal{C}(y)$.

Again, the Skorokhod representative theorem \ref{thm4.2} gives that there exist a new probability space $(\overline{\Omega}, \overline{\mathcal{F}}, \overline{\mathcal{P}})$,  a new subsequence still denoted by $(\bar{u}^n,\bar{\phi}^n, \bar{\psi}^n, \overline{\mathcal{V}}^n_1, \overline{\mathcal{V}}^n_2, \overline{\mathcal{W}}^n)$ converges to $(\bar{u},\bar{\phi}, \bar{\psi},  \overline{\mathcal{V}}_1, \overline{\mathcal{V}}_2, \overline{\mathcal{W}})$ in $Y\times (W^{1,2}_tH^1)_w\times(W^{1,2}_tL^2(\Gamma))_w$, $\overline{\mathcal{P}}$-a.s. and $\mathcal{L}(\bar{u}^n,\bar{\phi}^n, \bar{\psi}^n, \overline{\mathcal{V}}^n_1, \overline{\mathcal{V}}^n_2)=\mathcal{P}^n$.  Following the same line of Parts 3,4 of section 3, we have that $(\mathcal{S}, \bar{u}, \bar{\phi}, \bar{\psi})$ is a global martingale weak solution to system \eqref{Equ1.1}-\eqref{1.6} in the sense of Definition \ref{def2.1}. Furthermore, we have $\mathcal{P}=\mathcal{L}(\bar{u},\bar{\phi}, \bar{\psi}, \overline{\mathcal{V}}_1, \overline{\mathcal{V}}_2 )$. Finally, Proposition \ref{pro4.3} yields that $\mathcal{P}$ is a solution to the martingale problem associated to system \eqref{Equ1.1}-\eqref{1.6} with initial law $\delta_y$. The fact $\mathcal{P}\in \mathcal{C}(y)$ verified.
\end{proof}

 The following two results are important for next proof of disintegration and reconstruction properties, which were given in \cite[Proposition B.1, Proposition B.4]{F2}.
\begin{proposition}\label{pro4.4} Let $\mathcal{Y}$ and $\mathcal{Q}$ be two real-valued continuous $\mathcal{B}_t$-adapted processes and $s\geq 0$, the followings are equivalent:\\
{\rm (i)} $(\mathcal{Y}_t, \mathcal{B}_t, \mathcal{P})$ is a square integrable martingale with quadratic variation $\mathcal{Q}_t$ for $t\geq s$;\\
{\rm (ii)} there is a $\mathcal{P}$-nullset $\mathcal{N}\in \mathcal{B}_s$ such that for all $\omega\notin\mathcal{N}$, process $(\mathcal{Y}_t, \mathcal{B}_t, \mathcal{P}|^\omega_{\mathcal{B}_t})$ is a square integrable martingale with quadratic variation $\mathcal{Q}_t$ for $t\geq s$ and $\mathbb{E}^\mathcal{P}\left(\mathbb{E}^{\mathcal{P}|^{\cdot}_{\mathcal{B}_t}}[\mathcal{Q}_t]\right)<\infty$.
\end{proposition}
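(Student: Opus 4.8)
The plan is to prove the equivalence through the standard disintegration/reconstruction calculus for regular conditional probability distributions, reducing the martingale property to a \emph{countable} family of increment identities tested against cylinder functions. I write $\mathcal{P}|^\omega_{\mathcal{B}_s}$ for the regular conditional probability distribution of $\mathcal{P}$ given $\mathcal{B}_s$; the two facts I will use repeatedly are the disintegration identity $\mathbb{E}^{\mathcal{P}|^\omega_{\mathcal{B}_s}}[F]=\mathbb{E}^\mathcal{P}[F\mid\mathcal{B}_s](\omega)$ for $\mathcal{P}$-a.e. $\omega$ (all bounded measurable $F$), and the concentration property that $\mathcal{P}|^\omega_{\mathcal{B}_s}$ is carried by paths coinciding with $\omega$ on $[0,s]$, both recorded earlier via the isomorphism $\mathcal{F}^t=\widetilde{\mathcal{B}}^t$. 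Throughout I rely on the elementary reformulation that a continuous adapted pair $(\mathcal{Y}_t,\mathcal{Q}_t)_{t\ge s}$ is a square integrable martingale with quadratic variation under a measure $\mu$ iff $\mathcal{Y}_t,\mathcal{Q}_t\in L^2(\mu)$ and, for every $s\le t_1\le t_2$ and every bounded $\mathcal{B}_{t_1}$-measurable $\Phi$, both $\mathbb{E}^\mu[(\mathcal{Y}_{t_2}-\mathcal{Y}_{t_1})\Phi]=0$ and $\mathbb{E}^\mu[(\mathcal{Y}_{t_2}^2-\mathcal{Q}_{t_2}-\mathcal{Y}_{t_1}^2+\mathcal{Q}_{t_1})\Phi]=0$ hold.

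For the implication (ii)$\Rightarrow$(i) I would fix bounded $\mathcal{B}_{t_1}$-measurable $\Phi$ with $s\le t_1\le t_2$ and integrate the two conditional increment identities (valid for all $\omega\notin\mathcal{N}$) against $\mathcal{P}$. Since $\omega\mapsto\mathbb{E}^{\mathcal{P}|^\omega_{\mathcal{B}_s}}[\,\cdot\,]$ is a version of $\mathbb{E}^\mathcal{P}[\,\cdot\mid\mathcal{B}_s]$, the tower property returns the unconditional identities under $\mathcal{P}$, and the assumed finiteness $\mathbb{E}^\mathcal{P}\big(\mathbb{E}^{\mathcal{P}|^\cdot_{\mathcal{B}_s}}[\mathcal{Q}_t]\big)=\mathbb{E}^\mathcal{P}[\mathcal{Q}_t]<\infty$ together with $\mathcal{Y}_s=\omega(s)$ (deterministic under the conditional law) supplies the square integrability needed to make these expectations meaningful. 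This direction is essentially bookkeeping.

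The substantive direction is (i)$\Rightarrow$(ii). Here I fix $s\le t_1\le t_2$ and bounded $\mathcal{B}_{t_1}$-measurable $\Phi$: because any $A\in\mathcal{B}_s\subset\mathcal{B}_{t_1}$ keeps $\Phi\,1_A$ inside $\mathcal{B}_{t_1}$, the $\mathcal{P}$-martingale property gives $\mathbb{E}^\mathcal{P}[(\mathcal{Y}_{t_2}-\mathcal{Y}_{t_1})\Phi\,1_A]=0$ for every $A\in\mathcal{B}_s$, whence $\mathbb{E}^{\mathcal{P}|^\omega_{\mathcal{B}_s}}[(\mathcal{Y}_{t_2}-\mathcal{Y}_{t_1})\Phi]=0$ for $\mathcal{P}$-a.e.\ $\omega$, with an exceptional null set depending on $(t_1,t_2,\Phi)$. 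The main obstacle is that this is an uncountable family of null sets, so one cannot simply take their union. I would resolve it by a separability reduction: choose a countable dense set $D\subset[s,\infty)$ of times and a countable, measure-determining family $\{g_k\}$ of bounded continuous cylinder functions on $\mathbb{X}$ (available since $\mathbb{X}$ is Polish and $\widetilde{\Omega}$ separable), verify the increment identities only for $t_1,t_2\in D$ and $\Phi=g_k(\varrho_{r_1},\dots,\varrho_{r_m})$ with $r_i\in D$, and set $\mathcal{N}$ to be the union of these countably many null sets.

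For $\omega\notin\mathcal{N}$ the identities then hold along $D$; the path continuity of $\mathcal{Y},\mathcal{Q}$ with dominated convergence extends them to all $s\le t_1\le t_2$, and a monotone-class argument upgrades from the generating family $\{g_k\}$ to every bounded $\mathcal{B}_{t_1}$-measurable $\Phi$. Running the same scheme for $\mathcal{Y}^2-\mathcal{Q}$ yields the quadratic-variation statement, while square integrability of $\mathcal{Y}_t,\mathcal{Q}_t$ under $\mathcal{P}|^\omega_{\mathcal{B}_s}$ holds for a.e.\ $\omega$ by discarding one further null set coming from $\mathbb{E}^\mathcal{P}[\mathcal{Y}_t^2+\mathcal{Q}_t]<\infty$. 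I expect the only delicate point beyond this reduction to be confirming that the chosen countable family genuinely separates laws on the path space and is compatible with the concentration property of $\mathcal{P}|^\omega_{\mathcal{B}_s}$, so that the limiting identities really do characterize the conditional martingale; this is precisely the place where I would lean on the framework of \cite{F2}.
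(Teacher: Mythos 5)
Your overall route is the right one --- and it is the only route available for comparison, since the paper does not prove Proposition \ref{pro4.4} at all but quotes it from \cite[Proposition B.1]{F2}, whose proof is the classical Stroock--Varadhan disintegration argument \cite[Theorem 1.2.10]{stro}. Your direction (i)$\Rightarrow$(ii) reconstructs that argument correctly: the essential idea, reducing the uncountable family of null sets to a countable one (dense times, countable multiplicative family of bounded cylinder functions generating $\mathcal{B}_{t_1}$) and then extending by path continuity plus a monotone class argument, is exactly the standard mechanism. Two minor repairs there: the extension from dense times to all times is not ``dominated convergence'' (there is no dominating function) but uniform integrability, obtained from the conditional $L^2$ bound $\mathbb{E}^{\mathcal{P}|^\omega_{\mathcal{B}_s}}[\mathcal{Y}_t^2]=\mathcal{Y}_s(\omega)^2-\mathcal{Q}_s(\omega)+\mathbb{E}^{\mathcal{P}|^\omega_{\mathcal{B}_s}}[\mathcal{Q}_t]$ together with monotonicity of $\mathcal{Q}$; and $\mathcal{Y}_s$ is not $\omega(s)$ ($\mathcal{Y}$ is a general adapted process, not the canonical one), although your actual point --- that $\mathcal{Y}_s$ is a.s.\ constant under $\mathcal{P}|^\omega_{\mathcal{B}_s}$ --- is what matters. (Also, the subscript $\mathcal{B}_t$ in the paper's statement of (ii) is a typo for $\mathcal{B}_s$, which you silently and correctly fixed.)

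The genuine gap is in the direction you dismiss as ``essentially bookkeeping'', (ii)$\Rightarrow$(i): the claim that the finiteness hypothesis plus determinism of $\mathcal{Y}_s$ under the conditional law ``supplies the square integrability'' is false. Integrating the conditional identity above in $\omega$ gives
\[
\mathbb{E}^{\mathcal{P}}[\mathcal{Y}_t^2]=\mathbb{E}^{\mathcal{P}}[\mathcal{Y}_s^2]-\mathbb{E}^{\mathcal{P}}[\mathcal{Q}_s]
+\mathbb{E}^{\mathcal{P}}\left(\mathbb{E}^{\mathcal{P}|^{\cdot}_{\mathcal{B}_s}}[\mathcal{Q}_t]\right),
\]
and nothing in (ii) controls $\mathbb{E}^{\mathcal{P}}[\mathcal{Y}_s^2]$. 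Concretely, take $\mathcal{Y}_t\equiv Z$ for $t\geq s$ with $Z$ a $\mathcal{B}_s$-measurable random variable having $\mathbb{E}^{\mathcal{P}}[Z^2]=\infty$, and $\mathcal{Q}\equiv 0$: under each $\mathcal{P}|^\omega_{\mathcal{B}_s}$ the process is a.s.\ the constant $Z(\omega)$, hence a square integrable martingale with quadratic variation $0$, and the finiteness hypothesis holds trivially, yet $\mathcal{Y}$ is not even $\mathcal{P}$-integrable, so (i) fails. Thus this implication requires the extra input $\mathcal{Y}_s\in L^2(\mathcal{P})$ (equivalently, this should be added to (ii)); once it is available, your tower-property computation does close the argument. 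The omission is harmless where the paper actually uses this direction --- in the reconstruction lemma the measure is $\mathcal{P}\otimes_t Q$, which coincides with $\mathcal{P}$ on $\widetilde{\mathcal{B}}_t$, and ${\bf M}_{v_0}$ is already a square integrable $\mathcal{P}$-martingale up to time $t$, supplying exactly the missing integrability --- but a self-contained proof must either add the hypothesis or verify it at each point of application.
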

\begin{proposition}\label{pro4.5} Let $\alpha, \beta$ be two real-valued adapted processes such that $\beta$ is non-decreasing and $\mathcal{Y}=\alpha-\beta$ is left lower semi-continuous, the followings are equivalent:\\
{\rm (i)} for $s\geq 0$, $(\mathcal{Y}_t, \mathcal{B}_t, \mathcal{P})$ is an almost surely super-martingale and for all $t\geq s$
$$\mathbb{E}^{\mathcal{P}}(\alpha_t+\beta_t)<\infty;$$
{\rm(ii)} there is a $\mathcal{P}$-nullset $\mathcal{N}\in \mathcal{B}_s$ such that for all $\omega\notin\mathcal{N}$, process $(\mathcal{Y}_t, \mathcal{B}_t, \mathcal{P}|^\omega_{\mathcal{B}_s})$ is an almost surely super-martingale and for all $t\geq s$
$$\mathbb{E}^{\mathcal{P}|^\omega_{\mathcal{B}_s}}(\alpha_t+\beta_t)<\infty,$$
and
$$\mathbb{E}^\mathcal{P}\left(\mathbb{E}^{\mathcal{P}|^{\cdot}_{\mathcal{B}_s}}[\alpha_t+\beta_t]\right)<\infty.$$
\end{proposition}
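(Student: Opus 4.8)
The plan is to prove the equivalence by disintegrating $\mathcal{P}$ over the $\sigma$-field $\mathcal{B}_s$ through its regular conditional probability distribution, the central tool being the disintegration identity recorded earlier,
\[
\mathcal{P}(A\cap B)=\int_A \mathcal{P}|^\omega_{\mathcal{B}_s}(B)\,\mathcal{P}(d\omega),\qquad A\in\mathcal{B}_s,\ B\in\mathcal{B},
\]
together with the standard measurability of $\omega\mapsto \mathcal{P}|^\omega_{\mathcal{B}_s}$ and of $\omega\mapsto \mathbb{E}^{\mathcal{P}|^\omega_{\mathcal{B}_s}}[\,\cdot\,]$ on bounded measurable functions. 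Since the path space is Polish, each $\mathcal{B}_r$ is countably generated; I fix a countable algebra $\mathcal{A}_r$ generating $\mathcal{B}_r$ and a countable set $D\subset[s,\infty)$, dense in $[s,\infty)$ and consisting of regular times. Throughout I exploit the hypothesis $\mathcal{Y}=\alpha-\beta$ with $\beta$ non-decreasing and $\mathcal{Y}$ left lower semi-continuous, which is precisely what upgrades inequalities established along $D$ to inequalities valid for a.e.\ time.

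For the implication (i)$\Rightarrow$(ii), fix regular times $r\le t$ in $D$ and $\mathcal{A}\in\mathcal{A}_r$. Testing the global almost surely supermartingale inequality of (i) against the set $\mathcal{A}\cap\mathcal{C}$ for arbitrary $\mathcal{C}\in\mathcal{B}_s$ and rewriting both sides through the disintegration identity yields
\[
\int_\mathcal{C}\mathbb{E}^{\mathcal{P}|^\omega_{\mathcal{B}_s}}[1_\mathcal{A}\mathcal{Y}_t]\,\mathcal{P}(d\omega)\le\int_\mathcal{C}\mathbb{E}^{\mathcal{P}|^\omega_{\mathcal{B}_s}}[1_\mathcal{A}\mathcal{Y}_r]\,\mathcal{P}(d\omega).
\]
As $\mathcal{C}\in\mathcal{B}_s$ is arbitrary, the integrand comparison holds for $\mathcal{P}$-a.e.\ $\omega$, with an exceptional nullset depending on $(\mathcal{A},r,t)$; taking the union over the countably many admissible triples produces a single $\mathcal{P}$-nullset $\mathcal{N}\in\mathcal{B}_s$ outside of which $\mathbb{E}^{\mathcal{P}|^\omega_{\mathcal{B}_s}}[1_\mathcal{A}\mathcal{Y}_t]\le \mathbb{E}^{\mathcal{P}|^\omega_{\mathcal{B}_s}}[1_\mathcal{A}\mathcal{Y}_r]$ for all such $\mathcal{A},r,t$. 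The left lower semi-continuity of $\mathcal{Y}$, combined with Fatou's lemma under $\mathcal{P}|^\omega_{\mathcal{B}_s}$ and the monotonicity of $\beta$ that controls the time-behaviour, then extends the inequality from $D$ to a.e.\ $r\ge s$ and all $t\ge r$, giving the conditional almost surely supermartingale property; the integrability statements in (ii) follow by disintegrating $\mathbb{E}^{\mathcal{P}}(\alpha_t+\beta_t)<\infty$, which forces the conditional expectations to be finite for $\mathcal{P}$-a.e.\ $\omega$.

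The reverse implication (ii)$\Rightarrow$(i) is the direct one. For a regular time $r\ge s$ of the conditional processes, a later time $t\ge r$ and $\mathcal{A}\in\mathcal{B}_r$, integrating the conditional inequality against $\mathcal{P}$ and invoking the disintegration identity gives
\[
\mathbb{E}^{\mathcal{P}}[1_\mathcal{A}\mathcal{Y}_t]=\int \mathbb{E}^{\mathcal{P}|^\omega_{\mathcal{B}_s}}[1_\mathcal{A}\mathcal{Y}_t]\,\mathcal{P}(d\omega)\le \int \mathbb{E}^{\mathcal{P}|^\omega_{\mathcal{B}_s}}[1_\mathcal{A}\mathcal{Y}_r]\,\mathcal{P}(d\omega)=\mathbb{E}^{\mathcal{P}}[1_\mathcal{A}\mathcal{Y}_r],
\]
which is exactly the almost surely supermartingale property of $\mathcal{Y}$ under $\mathcal{P}$ for times $\ge s$; the joint integrability line $\mathbb{E}^{\mathcal{P}}(\mathbb{E}^{\mathcal{P}|^\cdot_{\mathcal{B}_s}}[\alpha_t+\beta_t])<\infty$ is precisely what legitimizes Fubini and returns $\mathbb{E}^{\mathcal{P}}(\alpha_t+\beta_t)<\infty$.

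I expect the main obstacle to be the bookkeeping of exceptional time-sets. The notion \emph{almost surely supermartingale} already tolerates an exceptional (Lebesgue-null) set of non-regular times, and disintegration introduces, a priori, an $\omega$-dependent family of such sets. The decisive device is the structural hypothesis $\mathcal{Y}=\alpha-\beta$ with $\beta$ non-decreasing and $\mathcal{Y}$ left lower semi-continuous: monotonicity controls $\beta_t$ as $t$ grows, while left lower semi-continuity permits passage to the limit along the countable dense set $D$ under each conditional measure. This is exactly what collapses the uncountable time-indexed families of nullsets into the single $\mathcal{B}_s$-measurable nullset $\mathcal{N}$ demanded by (ii), and it is the only step beyond the routine manipulation of regular conditional probabilities.
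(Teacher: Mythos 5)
You should first be aware that the paper contains no proof of this proposition: it is quoted verbatim from Flandoli--Romito (\cite[Propositions B.1, B.4]{F2}), and the authors invoke it as a black box in Lemmas 4.3 and 4.4. So the only meaningful comparison is with the argument in that source, and your plan does reproduce its skeleton faithfully: disintegration through the regular conditional probability $\mathcal{P}|^\omega_{\mathcal{B}_s}$ via the identity $\mathcal{P}(A\cap B)=\int_A\mathcal{P}|^\omega_{\mathcal{B}_s}(B)\,\mathcal{P}(d\omega)$, countable generating algebras $\mathcal{A}_r$ and a countable dense set $D$ of regular times to collapse the nullsets into a single $\mathcal{N}\in\mathcal{B}_s$, Fatou plus left lower semicontinuity to upgrade from $D$, and the tower property for the three integrability statements (note that the tower property and the Fatou lower bound $\mathcal{Y}_t\geq-\beta_t$ tacitly use nonnegativity of $\alpha$ and $\beta$, which the statement does not say but which holds in the intended application and in \cite{F2}).

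Two steps, however, are asserted where the actual mechanism is missing, and they are precisely the delicate points. (a) Your claim that lower semicontinuity ``extends the inequality from $D$ to a.e.\ $r\geq s$ and all $t\geq r$'' works only in the later time: for $t_n\uparrow t$, $t_n\in D$, Fatou gives $\mathbb{E}[1_\mathcal{A}\mathcal{Y}_t]\leq\liminf_n\mathbb{E}[1_\mathcal{A}\mathcal{Y}_{t_n}]\leq\mathbb{E}[1_\mathcal{A}\mathcal{Y}_r]$. At the \emph{earlier} time it fails both ways: approximating $r\notin D$ by $r_n\uparrow r$ is blocked because $\mathcal{A}\in\mathcal{B}_r$ need not lie in $\mathcal{B}_{r_n}$, and in any case Fatou yields $\mathbb{E}[1_\mathcal{A}\mathcal{Y}_r]\leq\liminf_n\mathbb{E}[1_\mathcal{A}\mathcal{Y}_{r_n}]$, which is the unfavorable direction; approximating by $r_n\downarrow r$ would require upper semicontinuity from the right, which is not assumed. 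The genuine fix, and the content of the auxiliary lemmas in \cite{F2}, is a monotonicity device: along $D$ the map $r\mapsto\mathbb{E}^{\mathcal{P}|^\omega_{\mathcal{B}_s}}[\mathcal{Y}_r]$ (suitably corrected by the nondecreasing $\beta$) is monotone, a monotone function has at most countably many discontinuities, and its continuity points are regular times --- this is how ``a.e.\ $r$'' is produced, not Fatou alone. (b) In (ii)$\Rightarrow$(i) you write ``for a regular time $r\geq s$ of the conditional processes,'' but (ii) only provides an exceptional Lebesgue-null time set $N_\omega$ \emph{depending on} $\omega$; there is no common regular time a priori, so you cannot integrate the conditional inequality at a fixed $r$ directly. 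One needs a Fubini argument on Lebesgue measure $\otimes\,\mathcal{P}$ (with the attendant joint measurability in $(r,\omega)$) to conclude that for a.e.\ $r$ the section $\{\omega: r\notin N_\omega\}$ has full $\mathcal{P}$-measure, after which your integration step is valid. With (a) and (b) supplied, your outline becomes the proof in \cite{F2}.
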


Next, we show the family $\{\mathcal{C}(y)\}_{y\in \mathbb{Y}}$ satisfies the disintegration and reconstruction properties. Once both properties hold, Theorem \ref{thm4.1*} follows from Proposition \ref{pro4.2}.

\begin{lemma}\label{lem4.3*} The disintegration property of Definition \ref{def4.4} holds for the family $\{\mathcal{C}(y)\}_{y\in \mathbb{Y}}$.
\end{lemma}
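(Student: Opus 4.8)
The plan is to fix $y\in\mathbb{Y}$ and $\mathcal{P}\in\mathcal{C}(y)$, so that $\mathcal{P}$ solves the martingale problem of Definition \ref{def4.5}, and to verify that for Lebesgue-a.e.\ $t\ge 0$ the regular conditional probability distribution $\mathcal{P}(\cdot\,|\,\mathcal{F}_t)$ — interpreted through the shift map as an element of ${\rm Pr}(\widetilde{\Omega})$ via \eqref{4.1} — again satisfies (M.1)--(M.6), but now with initial law $\delta_{\omega(t)}$, for $\mathbb{P}$-a.e.\ $\omega$. Concretely this means checking that each of the six defining properties survives the disintegration, and then collecting the countably many exceptional time-sets and $\mathcal{P}$-null sets into a single negligible set $\widetilde{\mathcal{N}}$.

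First I would dispatch the ``soft'' properties. Conditions (M.1), (M.3), (M.4) and (M.6) are all statements of the form ``a fixed $\mathcal{F}$-measurable event has full $\mathcal{P}$-probability'' (a path-space regularity event, or an a.s.\ identity holding for every $t$ and all test functions $v_1, v_2$). Since a regular conditional probability concentrates, outside a $\mathcal{P}$-null set, on any such full-measure event, each of these passes verbatim to $\mathcal{P}(\cdot\,|\,\mathcal{F}_t)$; together with \eqref{4.1} this also pins the shifted trajectory to start at $\omega(t)$. A density argument reduces the uncountable families of test functions to countable dense subsets of $H^1$ and $L^2(\Gamma)$, so that only countably many null sets are discarded.

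The martingale and supermartingale properties are the substantive ones, and here I would invoke the two disintegration criteria recalled above. For (M.2), apply Proposition \ref{pro4.4} to $\mathcal{Y}_t={\bf M}_{v_0}^t$ and $\mathcal{Q}_t=\langle{\bf M}_{v_0}^t\rangle=\frac12\int_0^t(h(\varrho^1,\nabla\varrho^2),v_0)^2\,ds$: since $(\mathcal{Y}_t,\widetilde{\mathcal{B}}_t,\mathcal{P})$ is a square-integrable martingale with this quadratic variation, the equivalence furnishes, for each fixed $v_0$ in a countable dense subset of $\mathrm{H}^1$, a $\mathcal{P}$-null set off which $(\mathcal{Y}_t,\widetilde{\mathcal{B}}_t,\mathcal{P}|^\omega_{\widetilde{\mathcal{B}}_t})$ is again a square-integrable martingale with the same quadratic variation; the required integrability $\mathbb{E}^{\mathcal{P}}(\mathbb{E}^{\mathcal{P}|^{\cdot}_{\widetilde{\mathcal{B}}_t}}[\mathcal{Q}_t])<\infty$ follows from assumption ${\bf A}_1$ and the uniform estimates \eqref{3.39}. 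For (M.5), apply Proposition \ref{pro4.5} with $\mathcal{Y}=\mathcal{G}=\alpha-\beta$, taking $\beta$ to be the non-decreasing dissipation integral and $\alpha$ the remaining energy and stochastic terms of $\mathcal{G}$; the hypotheses that $\beta$ is non-decreasing and $\mathcal{Y}$ is left lower semi-continuous follow from the structure of $\mathcal{E}_t$ and the lower semicontinuity already used to derive \eqref{3.81}, while the finiteness $\mathbb{E}^{\mathcal{P}}(\alpha_t+\beta_t)<\infty$ again follows from \eqref{3.39}. Proposition \ref{pro4.5} then yields a $\mathcal{P}$-null set off which $(\mathcal{G}_t,\widetilde{\mathcal{B}}_t,\mathcal{P}|^\omega_{\widetilde{\mathcal{B}}_t})$ is an almost surely supermartingale.

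The main obstacle is the supermartingale requirement (M.5), and within it the demand that the disintegration time $t$ be chosen so that $0$ is a \emph{regular} time of the conditional supermartingale. I would handle this exactly as in \cite{F2,brei}: the almost surely supermartingale $\mathcal{G}$ has at most a Lebesgue-null set of exceptional times, and Proposition \ref{pro4.5} transfers regularity of $t$ under $\mathcal{P}$ to regularity of $0$ under $\mathcal{P}|^\omega_{\widetilde{\mathcal{B}}_t}$ for $\mathbb{P}$-a.e.\ $\omega$; discarding this null set of bad times, together with the countably many exceptional times arising from the dense family of test functions in (M.2)--(M.4), leaves a co-null set of admissible $t$. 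Finally, taking the union of all the attendant $\mathcal{P}$-null sets produces the single set $\widetilde{\mathcal{N}}$ required by Definition \ref{def4.4}(i), off which $\mathcal{P}(\theta_t(\cdot)\,|\,\mathcal{F}_t)\in\mathcal{C}(\omega(t))$, completing the verification of the disintegration property.
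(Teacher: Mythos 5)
Your overall architecture coincides with the paper's own proof: the ``soft'' properties (M.1), (M.3), (M.4), (M.6) are transferred by conditioning on full-measure path events (the paper makes this explicit via the sets $\overline{\sum}_t,\overline{\sum}^t$ and their analogues $\widetilde{\sum}_t,\widetilde{\sum}^t$), property (M.2) is transferred through Proposition \ref{pro4.4} applied to a countable dense family of test functions, property (M.5) through Proposition \ref{pro4.5}, exceptional times of the almost sure supermartingale are excluded exactly as you describe, and the finitely many null sets are collected at the end. Your integrability checks via ${\bf A}_1$ and \eqref{3.39} and your treatment of regular times are also consistent with the paper.

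There is, however, one concrete error: your application of Proposition \ref{pro4.5} inverts the decomposition of $\mathcal{G}$. You take $\beta$ to be the dissipation integral $\int_0^t\mathcal{E}_t\left(\|\nabla\varrho^1,\nabla\partial_t\varrho^4\|_{L_x^2}^2+\|(\varrho^2,\varrho^3)\|^2_{V^2}+\|\partial_t\varrho^5\|^2_{L^2(\Gamma)}\right)ds$ and $\alpha$ to be ``the remaining energy and stochastic terms.'' But the dissipation enters $\mathcal{G}$ with a \emph{plus} sign, so with your assignment $\alpha-\beta=\mathcal{G}-2\int_0^t\mathcal{E}_t(\cdots)ds\neq\mathcal{G}$: the structural hypothesis $\mathcal{Y}=\alpha-\beta$ of Proposition \ref{pro4.5} fails for the process you care about, and the proposition applied to your pair would yield a conditional supermartingale statement about the wrong process. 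The correct choice, which is the one the paper makes, is the opposite: $\alpha_t=\frac{1}{2}\mathcal{E}_t^2+\int_{0}^{t}\mathcal{E}_t\left(\|\nabla \varrho^1, \nabla \partial_t\varrho^4\|_{L_x^2}^2+\|(\varrho^2, \varrho^3)\|^2_{V^2}+\|\partial_t\varrho^5\|^2_{L^2(\Gamma)}\right)ds$ (current energy plus dissipation, the left lower semi-continuous part) and $\beta_t=\frac{1}{2}\mathcal{E}_0^2+\int_{0}^{t}\mathcal{E}_t\|h(\varrho^1,\nabla\varrho^2)\|^2_{L_2(\mathcal{H};L_x^2)}ds+\int_{0}^{t}(h(\varrho^1,\nabla\varrho^2),\varrho^2)^2ds$; it is the \emph{subtracted} noise terms (plus the constant $\frac{1}{2}\mathcal{E}_0^2$) that constitute the non-decreasing process $\beta$. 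With this swap the identity $\mathcal{G}=\alpha-\beta$ holds, $\beta$ is non-decreasing, $\mathcal{G}$ is left lower semi-continuous, and the rest of your argument goes through verbatim, matching the paper's proof.
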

\begin{proof} The proof is similar to \cite[Lemma 4.4]{F2}, we still give the details for reader's convenience. For $y\in \mathbb{Y}$ and $\mathcal{P}\in \mathcal{C}(y)$. Let $\mathbb{T}_{\mathcal{P}}$ be the set of exceptional times of $\mathcal{P}$ and for $t\notin \mathbb{T}_{\mathcal{P}}$, denote by $\mathcal{P}|^\omega_{\widetilde{\mathcal{B}}_t}$ the regular conditional probability distribution of $\mathcal{P}$ on $\widetilde{\mathcal{B}}_t$. Our goal is to show that there exists a $\mathcal{P}$-nullset $\mathcal{N}\in \widetilde{\mathcal{B}}_t$ such that for all $\omega\notin \mathcal{N}$, $\mathcal{P}|^\omega_{\widetilde{\mathcal{B}}_t}\circ \theta_t\in \mathcal{C}(y)$. That is, we need to verify ${\rm (M.1)-(M.5)}$ hold for $\mathcal{P}|^\omega_{\widetilde{\mathcal{B}}_t}\circ \theta_t$.

Define by the sets
\begin{align*}
&\overline{\sum}_t=\left\{\omega\in \widetilde{\Omega}: \omega|_{[0,t]}\in L^\infty([0,t);\mathbb{Y}_1)\cap L^2([0,t);\mathbb{X}_1)\times W^{1,2}([0,t);\mathbb{Y}_2)\right\},\\
&\overline{\sum}^t=\left\{\omega\in \widetilde{\Omega}: \omega|_{[t, \infty]}\in L_{loc}^\infty([t,\infty);\mathbb{Y}_1)\cap L_{loc}^2([t,\infty);\mathbb{X}_1)\times W_{loc}^{1,2}([t,\infty);\mathbb{Y}_2)\right\}.
\end{align*}
We have by ${\rm (M.1)}$ for $\mathcal{P}$ that
$$1=\mathcal{P}\left(\overline{\sum}_t\cap \overline{\sum}^t\right)=\int_{\overline{\Sigma}_t}\mathcal{P}|^\omega_{\widetilde{\mathcal{B}}_t}\left(\overline{\sum}^t\right)\mathcal{P}(d\omega),$$
following there exists a $\mathcal{P}$-nullset $\mathcal{N}_1\in \widetilde{\mathcal{B}}_t$ such that for all $\omega\notin \mathcal{N}_1 $, $\mathcal{P}|^\omega_{\widetilde{\mathcal{B}}_t}\left(\overline{\sum}^t\right)=1$.

Let $\{v_n\}_{n\geq 1}$ be the dense subset of $D(A_0)$, then we have by ${\rm (M.2)}$ for $\mathcal{P}$ that $({\bf M}_{v_n}^t, \widetilde{\mathcal{B}}_t, \mathcal{P})$ is a square integrable martingale with quadratic variation
$$\langle{\bf M}_{v_n}^t\rangle=\frac{1}{2}\int_{0}^{t}(h(\varrho^1, \nabla\varrho^2), v_n)^2ds.$$
Proposition \ref{pro4.4} gives that there exists a $\mathcal{P}$-nullset $\mathcal{N}_2^n$ such that $\left({\bf M}_{v_n}^{t'}, \widetilde{\mathcal{B}}_{t'}, \mathcal{P}|^\omega_{\widetilde{\mathcal{B}}_{t'}}\right)_{t'\geq t}$ is a square integrable martingale with quadratic variation $\langle{\bf M}_{v_n}^{t'}\rangle_{t'\geq t}$ for all $\omega\notin \mathcal{N}_2^n$. Choosing $\mathcal{N}_2=\bigcup\mathcal{N}_2^n$.

Denote by $\widetilde{\sum}^t, \widetilde{\sum}_t$
$$\widetilde{\sum}_t:=\left\{\omega\in \widetilde{\Omega}; (\varrho^2_t, v_1)\big|_{r=0}^{r=s}+\int_{0}^{s}(\nabla\partial_t\varrho^4, \nabla v_1)- B_2(\varrho^1,v_1,\varrho^2)dr=0, s\leq t\right\},$$
and
$$\widetilde{\sum}^t:=\left\{\omega\in \widetilde{\Omega}; (\varrho^2_t, v_1)\big|_{r=t}^{r=s}+\int_{t}^{s}(\nabla\partial_t\varrho^4, \nabla v_1)- B_2(\varrho^1,v_1,\varrho^2)dr=0, s\geq t\right\}.$$
Then ${\rm (M.3)}$ and ${\rm (M.4)}$ hold for $\mathcal{P}|^\omega_{\widetilde{\mathcal{B}}_t}\circ \theta_t$ a.s. $\omega\notin \mathcal{N}_3\cup\mathcal{N}_4$ by a same argument as above.

Regarding ${\rm (M.5)}$, we set
$$\alpha_t=\frac{1}{2}\mathcal{E}_t^2+\int_{0}^{t}\mathcal{E}_t(\|\nabla \varrho^1, \nabla \partial_t\varrho^4\|_{L_x^2}^2+\|(\varrho^2, \varrho^3)\|^2_{V^2}+\|\partial_t\varrho^5\|^2_{L^2(\Gamma)})ds$$
and
$$\beta_t=\frac{1}{2}\mathcal{E}_0^2+\int_{0}^{t}\mathcal{E}_t\|h(\varrho^1,\nabla\varrho^2)\|^2_{L_2(\mathcal{H};L_x^2)}ds
+\int_{0}^{t}(h(\varrho^1,\nabla\varrho^2),\varrho^2)^2ds.$$
Note that $\alpha_t$ is left lower semi-continuous and $\beta_t$ is non-decreasing, so that $\mathcal{G}_t$ is also left lower semi-continuous. Since we have ${\rm (M.5)}$ for $\mathcal{P}$ and by Proposition \ref{pro4.5}, there exists a $\mathcal{P}$-nullset $\mathcal{N}_5$, for all $\omega\notin \mathcal{N}_5$ the result holds.

By same argument as ${\rm (M.1)}$, there exists a $\mathcal{P}$-nullset $\mathcal{N}_6\in \widetilde{\mathcal{B}}_t$ such that for all $\omega\notin \mathcal{N}_6$, $({\rm M.6})$ holds for $\mathcal{P}|^\omega_{\widetilde{\mathcal{B}}_t}$, a.s.

Finally, letting $\mathcal{N}=\mathcal{N}_1\cup\mathcal{N}_2\cup\mathcal{N}_3\cup\mathcal{N}_4\cup\mathcal{N}_5\cup\mathcal{N}_6$, we complete the proof.
\end{proof}
\begin{lemma} The reconstruction property of Definition \ref{def4.4} holds for the family $\{\mathcal{C}(y)\}_{y\in \mathbb{Y}}$.
\end{lemma}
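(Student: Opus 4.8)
The plan is to verify that the reconstructed measure $\mathcal{P}\otimes_t Q$ inherits each of the properties ${\rm (M.1)}$--${\rm (M.6)}$ of Definition \ref{def4.5}, arguing as in the reconstruction step of \cite{F2}. Fix $y\in\mathbb{Y}$, $\mathcal{P}\in\mathcal{C}(y)$, and a measurable map $\omega\mapsto Q_\omega$ as in the reconstruction clause of Definition \ref{def4.4}, and take $t$ outside the Lebesgue-null exceptional set of $\mathcal{P}$. By Proposition \ref{pro4.1}, $\mathcal{P}\otimes_t Q$ agrees with $\mathcal{P}$ on $\widetilde{\mathcal{B}}_t$ and admits $Q_\omega$ as a regular conditional probability distribution on $\widetilde{\mathcal{B}}_t$. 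Thus every condition restricted to $[0,t]$ is governed by $\mathcal{P}\in\mathcal{C}(y)$, while the hypothesis $Q_\omega\circ\theta_t\in\mathcal{C}(\omega(t))$ forces the corresponding condition on $[t,\infty)$ after the shift. The whole proof consists in gluing the two halves across the single instant $t$ and collecting the resulting null sets.

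The purely pathwise requirements are the most direct. For ${\rm (M.1)}$ we intersect the sets $\overline{\Sigma}_t$ and $\overline{\Sigma}^t$ of Lemma \ref{lem4.3*}; the first has full $\mathcal{P}$-measure and the second has full $Q_\omega$-measure for $\mathcal{P}$-a.e. $\omega$, so integrating against $\mathcal{P}$ gives $\mathcal{P}\otimes_t Q(\overline{\Sigma}_t\cap\overline{\Sigma}^t)=1$. The identities ${\rm (M.3)}$, ${\rm (M.4)}$ and the pointwise relations ${\rm (M.6)}$ are treated identically by intersecting the sets $\widetilde{\Sigma}_t$ and $\widetilde{\Sigma}^t$; since $\varrho^2$ and $\varrho^3$ are continuous at $t$, the two one-sided identities fuse into the global one with no defect at the splicing time.

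The martingale and supermartingale conditions are where the reconstruction propositions are essential. For ${\rm (M.2)}$ we fix a countable dense family $\{v_n\}\subset D(A_0)$; under $\mathcal{P}$ each ${\bf M}_{v_n}^{t'}$ is a square integrable martingale with the prescribed quadratic variation up to time $t$, and under $Q_\omega$ it is such for $t'\geq t$, the required integrability $\mathbb{E}^{\mathcal{P}}\big(\mathbb{E}^{\mathcal{P}|^{\cdot}_{\widetilde{\mathcal{B}}_t}}[\langle{\bf M}_{v_n}^{t'}\rangle]\big)<\infty$ coming from ${\bf A}_1$ and the uniform energy bound \eqref{main1}. The $({\rm ii})\Rightarrow({\rm i})$ direction of Proposition \ref{pro4.4}, applied with $s=t$, then yields a square integrable martingale under $\mathcal{P}\otimes_t Q$ for all $t'\geq 0$; density of $\{v_n\}$ and the density argument of Part 4 of Section 3 extend this to every $v_0\in\mathrm{H}^1$. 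For ${\rm (M.5)}$ we write $\alpha_{t'},\beta_{t'}$ as in Lemma \ref{lem4.3*}, so that $\mathcal{G}=\alpha-\beta$ is left lower semicontinuous with $\beta$ non-decreasing, and apply the $({\rm ii})\Rightarrow({\rm i})$ direction of Proposition \ref{pro4.5}; the requirement that $0$ be a regular time is inherited directly from $\mathcal{P}$, since $\mathcal{P}\otimes_t Q=\mathcal{P}$ on $\widetilde{\mathcal{B}}_t$ and $t>0$.

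The main obstacle is the supermartingale gluing in ${\rm (M.5)}$: one must ensure that no jump of the energy functional is created at the splicing instant and that the semicontinuity and finiteness premises of Proposition \ref{pro4.5} are genuinely available. Choosing $t$ outside the exceptional-time set of $\mathcal{P}$ and outside the null sets of Lemma \ref{lem4.3*} removes the former, while the limiting energy inequality \eqref{3.81} together with the uniform bounds \eqref{main1} supplies the left lower semicontinuity of $\mathcal{G}$ and the integrability of $\alpha_{t'}+\beta_{t'}$ needed to invoke Proposition \ref{pro4.5}. Gathering the $\mathcal{P}$-null sets from the six verifications into one set yields $\mathcal{P}\otimes_t Q\in\mathcal{C}(y)$; combined with Lemma \ref{lem4.3*} this shows $\{\mathcal{C}(y)\}_{y\in\mathbb{Y}}$ is almost surely pre-Markov, and Proposition \ref{pro4.2} then gives Theorem \ref{thm4.1*}.
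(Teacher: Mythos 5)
Your proposal is correct and follows essentially the same route as the paper: the pathwise conditions (M.1), (M.3), (M.4), (M.6) are obtained by integrating the full-measure sets $\overline{\Sigma}^t$, $\widetilde{\Sigma}^t$ against $\mathcal{P}$ via the gluing formula of Proposition \ref{pro4.1}, while the martingale condition (M.2) and the supermartingale condition (M.5) are spliced across time $t$ using the (ii)$\Rightarrow$(i) directions of Propositions \ref{pro4.4} and \ref{pro4.5} together with the fact that $\mathcal{P}\otimes_t Q$ coincides with $\mathcal{P}$ on $\widetilde{\mathcal{B}}_t$. Your additional remarks on the integrability hypotheses of those propositions and on why $0$ remains a regular time are details the paper leaves implicit, but they do not change the argument.
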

\begin{proof} Let $y\in \mathbb{Y}$ and $\mathcal{P}\in \mathcal{C}(y)$. Let $\mathbb{T}_{\mathcal{P}}$ be the set of exceptional times of $\mathcal{P}$. We need to prove that for a $\widetilde{\mathcal{B}}_t$-measurable map $Q_\omega:\omega\longmapsto {\rm Pr}(\Omega^t)$, there exists a $\mathcal{P}$-nullset $\mathcal{N}_Q\in \widetilde{\mathcal{B}}_t$ such that for all $\omega\in \mathcal{N}_Q$, $Q_\omega\circ \theta_t\in \mathcal{C}(\omega(t))$, then we have the probability measure $\mathcal{P}\otimes_tQ$ given by Proposition \ref{pro4.1}, belongs to $\mathcal{C}(y)$. That is, ${\rm (M.1)-(M.5)}$ need to be verified for $\mathcal{P}\otimes_tQ$.

First, we have ${\rm (M.1)}$ holds for $Q_\omega$, therefore $Q_\omega\left(\overline{\sum}^t\right)=1$, which yields
$$\mathcal{P}\otimes_tQ\left(\overline{\sum}_t\cap\overline{\sum}^t\right)=\int_{\overline{\sum}_t}Q_\omega\left(\overline{\sum}^t\right)\mathcal{P}(d\omega)
=\mathcal{P}\left(\overline{\sum}_t\right)=1,$$
where sets $\overline{\sum}_t, \overline{\sum}^t$ are same with that of in Lemma \ref{lem4.3*}.

Since ${\rm (M.2)}$ holds for $Q_\omega$, so $\langle{\bf M}_{v_0}^t\rangle_{t\geq s}$ is a $((\widetilde{\mathcal{B}}_t)_{t\geq s}, Q_\omega)$ square integrable martingale, together with the fact that $Q_\omega$ is a regular conditional probability distribution of $\mathcal{P}\otimes_sQ$ on $\widetilde{\mathcal{B}}_s$, we have $\langle{\bf M}_{v_0}^t\rangle_{t\geq s}$ is a $((\widetilde{\mathcal{B}}_t)_{t\geq s}, \mathcal{P}\otimes_sQ)$ square integrable martingale by Proposition \ref{pro4.4}. Since $\mathcal{P}$ and $\mathcal{P}\otimes_tQ$ coincide on $\widetilde{\mathcal{B}}_t$ and $({\bf M}_{v_0}^t, \widetilde{\mathcal{B}}_t, \mathcal{P})_{t\leq s}$ is a square integrable martingale, we conclude that $({\bf M}_{v_0}^s, \widetilde{\mathcal{B}}_s,  \mathcal{P}\otimes_tQ)_{s\geq 0}$ is a square integrable martingale.

Since $Q_\omega\left(\widetilde{\sum}^t\right)=1$ by $({\rm M.3})$ for $Q_\omega$, we have
$$\mathcal{P}\otimes_tQ\left(\widetilde{\sum}_t\cap \widetilde{\sum}^t\right)=\int_{\widetilde{\sum}_t}Q_\omega\left(\widetilde{\sum}^t\right)d\mathcal{P}(\omega)=\mathcal{P}\left(\widetilde{\sum}_t\right)=1.$$
Also, $({\rm M.4})$ holds for $\mathcal{P}\otimes_tQ$ by a same argument.

Since $({\rm M.5})$ holds for $Q_\omega$, $(\mathcal{G}_t, \widetilde{\mathcal{B}}_t, Q_\omega)_{t\geq s}$ is an almost surely supermartingale. Proposition \ref{pro4.5} gives that $(\mathcal{G}_t, \widetilde{\mathcal{B}}_t, \mathcal{P}\otimes_sQ)_{t\geq s}$ is an almost surely supermartingale. Since $\mathcal{P}$ and $\mathcal{P}\otimes_tQ$ coincide on $\widetilde{\mathcal{B}}_t$ and $(\mathcal{G}_t, \widetilde{\mathcal{B}}_t, \mathcal{P})_{t\leq s}$ is an almost surely supermartingale, we obtain that $(\mathcal{G}_t, \widetilde{\mathcal{B}}_t, \mathcal{P}\otimes_sQ)_{t\geq 0}$ is an almost surely supermartingale.

As the proof of $({\rm M.1})$, we have
\begin{align*}
&\mathcal{P}\otimes_tQ(\partial_r\varrho^4=-\Delta\varrho^2+f(\varrho^2))\nonumber\\
&=\mathcal{P}\otimes_tQ(\partial_r\varrho^4|_{0}^t=-\Delta\varrho^2+f(\varrho^2)|_{0}^t, \partial_r\varrho^4|_{t}^\infty=-\Delta\varrho^2+f(\varrho^2)|_{t}^\infty)\nonumber\\
&=\int_{\left\{\partial_r\varrho^4|_{0}^t=-\Delta\varrho^2+f(\varrho^2)|_{0}^t\right\}}Q_\omega(\partial_r\varrho^4|_{t}^\infty=-\Delta\varrho^2+f(\varrho^2)|_{t}^\infty)d\mathcal{P}(\omega)\nonumber\\
&=\int_{\left\{\partial_r\varrho^4|_{0}^t=-\Delta\varrho^2+f(\varrho^2)|_{0}^t\right\}}d\mathcal{P}(\omega)=1.
\end{align*}
Similarly, we also have $\mathcal{P}\otimes_tQ(\partial_t\varrho^5=A_{\tau}\varrho^3+\partial_n\varrho^2+\varrho^3+g(\varrho^3))=1$.

This completes the proof.
\end{proof}

\section{Appendix}
At the beginning, we introduce some lemmas used frequently for improving the regularity.
\begin{lemma}{\rm\!\!\cite[Theorem 1.1.1]{bergh}}\label{lem4.1} Let $\mathcal{L}: L^{p_1}(0,T)\rightarrow L^{p_2}(\mathcal{D})$ and $L^{q_1}(0,T)\rightarrow L^{q_2}(\mathcal{D})$ be a linear operator with $q_1>p_1$ and $q_2<p_2$. Then, for any $s\in (0,1)$, the operator $\mathcal{L}: L^{r_1}(0,T)\rightarrow L^{r_2}(\mathcal{D})$, where $r_1=\frac{1}{s/p_1+(1-s)/q_1}$, $r_2=\frac{1}{s/p_2+(1-s)/q_2}$.
\end{lemma}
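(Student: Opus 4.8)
This statement is the classical Riesz--Thorin interpolation theorem, quoted here from \cite{bergh}, so the cleanest route is to defer to that reference; for completeness I outline the standard complex-interpolation argument built on the Hadamard three-lines theorem. Write $M_0$ for the operator norm of $\mathcal{L}:L^{q_1}(0,T)\to L^{q_2}(\mathcal{D})$ and $M_1$ for that of $\mathcal{L}:L^{p_1}(0,T)\to L^{p_2}(\mathcal{D})$, and observe that the interpolated exponents $\frac{1}{r_1}=\frac{s}{p_1}+\frac{1-s}{q_1}$, $\frac{1}{r_2}=\frac{s}{p_2}+\frac{1-s}{q_2}$ are exactly the values at $z=s$ of the affine functions $\alpha(z)=\frac{z}{p_1}+\frac{1-z}{q_1}$ and $\beta(z)=\frac{z}{p_2}+\frac{1-z}{q_2}$. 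By density of simple functions in $L^{r_1}$ and by duality, it suffices to prove $|\langle \mathcal{L}f,g\rangle|\le M_0^{1-s}M_1^{s}$ for all simple $f,g$ normalized so that $\|f\|_{L^{r_1}}=1$ and $\|g\|_{L^{r_2'}}=1$, where $r_2'$ is the conjugate exponent of $r_2$.

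First I would build the analytic families. Writing $f=\sum_j|a_j|e^{i\theta_j}\chi_{A_j}$ and $g=\sum_k|b_k|e^{i\phi_k}\chi_{B_k}$, set
$$f_z=\sum_j|a_j|^{r_1\alpha(z)}e^{i\theta_j}\chi_{A_j},\qquad g_z=\sum_k|b_k|^{r_2'(1-\beta(z))}e^{i\phi_k}\chi_{B_k},$$
so that $f_s=f$ and $g_s=g$ (since $\alpha(s)=1/r_1$ and $r_2'(1-\beta(s))=r_2'/r_2'=1$), and
$$F(z):=\langle \mathcal{L}f_z,g_z\rangle=\sum_{j,k}|a_j|^{r_1\alpha(z)}|b_k|^{r_2'(1-\beta(z))}e^{i(\theta_j+\phi_k)}\langle \mathcal{L}\chi_{A_j},\chi_{B_k}\rangle$$
is a finite combination of exponentials in $z$, hence entire, and bounded on the closed strip $\overline{\{0<\mathrm{Re}\,z<1\}}$ because $\mathrm{Re}\,\alpha(z)$ and $\mathrm{Re}\,(1-\beta(z))$ stay bounded there.

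Next I would estimate $F$ on the two boundary lines. On $\mathrm{Re}\,z=0$ one has $\mathrm{Re}\,\alpha(z)=\tfrac1{q_1}$ and $\mathrm{Re}\,(1-\beta(z))=\tfrac1{q_2'}$, so that $\|f_z\|_{L^{q_1}}^{q_1}=\sum_j|a_j|^{r_1}|A_j|=\|f\|_{L^{r_1}}^{r_1}=1$ and likewise $\|g_z\|_{L^{q_2'}}=1$; Hölder's inequality together with the bound $\mathcal{L}:L^{q_1}\to L^{q_2}$ then gives $|F(z)|\le \|\mathcal{L}f_z\|_{L^{q_2}}\|g_z\|_{L^{q_2'}}\le M_0$. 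The identical computation on $\mathrm{Re}\,z=1$, now with $\mathrm{Re}\,\alpha(z)=\tfrac1{p_1}$, $\mathrm{Re}\,(1-\beta(z))=\tfrac1{p_2'}$ and the bound $\mathcal{L}:L^{p_1}\to L^{p_2}$, yields $|F(z)|\le M_1$.

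Finally I would invoke the Hadamard three-lines theorem: $F$ is analytic and bounded on the strip with $|F|\le M_0$ on $\mathrm{Re}\,z=0$ and $|F|\le M_1$ on $\mathrm{Re}\,z=1$, whence $|F(s)|\le M_0^{1-s}M_1^{s}$. Since $F(s)=\langle\mathcal{L}f,g\rangle$, taking the supremum over admissible $g$ and then over simple $f$ gives $\|\mathcal{L}\|_{L^{r_1}(0,T)\to L^{r_2}(\mathcal{D})}\le M_0^{1-s}M_1^{s}<\infty$, which is the claimed boundedness (with the additional information that the operator norm interpolates log-convexly). The only genuinely delicate ingredient is the three-lines theorem itself in the form used, i.e. passing from boundedness on the strip plus the two boundary estimates to the interior bound; everything else is routine bookkeeping with the affine exponents $\alpha,\beta$ and Hölder's inequality, and the hypotheses $q_1>p_1$, $q_2<p_2$ serve only to guarantee that the endpoints are genuinely distinct so that $r_1,r_2$ vary with $s$.
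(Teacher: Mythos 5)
Your proposal is correct and matches the paper's treatment: the paper offers no proof of this lemma at all, quoting it verbatim as Theorem 1.1.1 of Bergh--L\"{o}fstr\"{o}m \cite{bergh}, i.e.\ the Riesz--Thorin interpolation theorem, and your outline is exactly the standard Thorin complex-interpolation proof (analytic families of simple functions, boundary normalizations on $\mathrm{Re}\,z=0$ and $\mathrm{Re}\,z=1$, and the Hadamard three-lines theorem) given in that reference. The exponent bookkeeping with $\alpha(z)$, $\beta(z)$, the verification that $f_s=f$, $g_s=g$, and the duality/density reduction are all accurate, so nothing further is needed.
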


\begin{lemma}{\rm \!\!\cite[Lemmas 4.2, 4.3]{GMA}}\label{lem4.2} For all $\kappa\in (0,1)$, there exists $\alpha>0$ such that
$$\|u\|_{L^2(\Gamma)}^2\leq \kappa\|\nabla u\|_{L_x^2}^2+\kappa^{-\alpha}\|u\|_{L_x^2}^2,$$
as a consequence, we have
$$\|u_\tau\|_{L^2(\Gamma)}^2\leq \kappa\|\nabla u\|_{L_x^2}^2+\kappa^{-\alpha}\|u\|_{L_x^2}^2.$$
\end{lemma}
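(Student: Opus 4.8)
The plan is to establish this as a trace inequality of Ehrling type, producing the tunable small parameter $\kappa$ by combining a \emph{multiplicative} trace estimate with Young's inequality. First I would prove the elementary bound
$$\|u\|_{L^2(\Gamma)}^2\le C_{\mathcal{D}}\left(\|u\|_{L_x^2}\|\nabla u\|_{L_x^2}+\|u\|_{L_x^2}^2\right),$$
which is available because $\mathcal{D}$ is a smooth bounded domain. It follows from the divergence theorem applied to a fixed smooth vector field $\mathbf{F}$ with $\mathbf{F}\cdot n=1$ on $\Gamma$: writing $\int_\Gamma u^2\,dS=\int_\Gamma u^2(\mathbf{F}\cdot n)\,dS=\int_{\mathcal{D}}\mathrm{div}(u^2\mathbf{F})\,dx$ and expanding $\mathrm{div}(u^2\mathbf{F})=2u\,\nabla u\cdot\mathbf{F}+u^2\,\mathrm{div}\,\mathbf{F}$ gives the claim after Cauchy--Schwarz, with $C_{\mathcal{D}}$ depending only on $\|\mathbf{F}\|_\infty$ and $\|\mathrm{div}\,\mathbf{F}\|_\infty$; a density argument extends it from smooth $u$ to $u\in H^1$. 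Alternatively, one may use the trace embedding $H^s(\mathcal{D})\hookrightarrow L^2(\Gamma)$ for $s>\tfrac12$ together with the interpolation inequality $\|u\|_{H^s}\le C\|u\|_{L_x^2}^{1-s}\|u\|_{H^1}^{s}$, which yields the multiplicative bound directly.

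Second, I would apply Young's inequality to the cross term: for any $\kappa\in(0,1)$,
$$C_{\mathcal{D}}\|u\|_{L_x^2}\|\nabla u\|_{L_x^2}\le \kappa\|\nabla u\|_{L_x^2}^2+\frac{C_{\mathcal{D}}^2}{4\kappa}\|u\|_{L_x^2}^2.$$
Folding the residual zeroth-order contribution $C_{\mathcal{D}}\|u\|_{L_x^2}^2$ into the last term and using $\kappa^{-1}\ge1$, one obtains $\|u\|_{L^2(\Gamma)}^2\le \kappa\|\nabla u\|_{L_x^2}^2+C'\kappa^{-1}\|u\|_{L_x^2}^2$, which is the asserted estimate with $\alpha=1$ (the domain constant being suppressed in the statement, as is customary). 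The freedom in $\alpha>0$ is exactly what the interpolation route makes visible: applying the trace from $H^{1-\theta}$ with $\theta<\tfrac12$ and then Young with conjugate exponents $1/(1-\theta)$ and $1/\theta$ gives the estimate with $\alpha=(1-\theta)/\theta$.

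The stated consequence for the tangential component is then immediate. Since $u_\tau$ is the orthogonal projection of $u|_\Gamma$ onto the tangent space of $\Gamma$, we have $|u_\tau|\le|u|$ pointwise on $\Gamma$, hence $\|u_\tau\|_{L^2(\Gamma)}\le\|u\|_{L^2(\Gamma)}$, and the bound just proved applies verbatim.

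I do not expect a genuine obstacle here, as this is a classical estimate (quoted from \cite{GMA}); the only point requiring care is making the dependence on $\kappa$ explicit, namely tracking how the negative power $\alpha$ emerges from Young's inequality and how the domain constant $C_{\mathcal{D}}$ is carried. The use of the divergence theorem relies on the smoothness of $\Gamma$, which is part of the standing hypothesis that $\mathcal{D}$ is a smooth bounded domain, so no extra regularity is needed.
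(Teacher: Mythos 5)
Your proof is correct in substance, and there is nothing in the paper to compare it against: the lemma is imported verbatim from \cite[Lemmas 4.2, 4.3]{GMA} with no proof given, so the only benchmark is the literature, where your argument — the multiplicative trace estimate $\|u\|_{L^2(\Gamma)}^2\le C_{\mathcal{D}}\left(\|u\|_{L_x^2}\|\nabla u\|_{L_x^2}+\|u\|_{L_x^2}^2\right)$ obtained from the divergence theorem with a field satisfying $\mathbf{F}\cdot n=1$ on $\Gamma$, followed by Young's inequality — is precisely the standard proof of this Ehrling-type inequality, and the reduction of the tangential case via the pointwise bound $|u_\tau|\le|u|$ is exactly right. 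One point deserves tightening: your final bound is $\kappa\|\nabla u\|_{L_x^2}^2+C'\kappa^{-1}\|u\|_{L_x^2}^2$, whereas the statement has the bare factor $\kappa^{-\alpha}$, and ``suppressing the domain constant'' is not automatic: $C'\kappa^{-1}\le\kappa^{-\alpha}$ requires $\alpha\ge 1+\ln C'/\ln(1/\kappa)$. This is harmless under the lemma's quantifier order (``for all $\kappa\in(0,1)$ there exists $\alpha>0$''), since $\alpha$ may depend on $\kappa$, and for $\kappa$ ranging in $(0,\kappa_0]$ with $\kappa_0<1$ a single $\alpha$ suffices; but you should state this absorption explicitly, because the paper later treats $\alpha$ (and the analogous $\nu$) as a fixed, $\kappa$-independent exponent — see the $2^\alpha$ in the bound for $I_3$ and the $\tilde{\varepsilon}^{-\nu}$, $\bar{\varepsilon}^{-\nu}$ factors in Part 1. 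Your interpolation variant, which yields $\alpha=(1-\theta)/\theta$ independent of $\kappa$ (again up to a constant to be absorbed the same way), is the cleaner route to that uniform version.
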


\begin{lemma}{\rm \!\!\cite[Lemma 3]{simon}}\label{lem4.3} Let $r^*=\left\{\begin{array}{ll}
\!\!\frac{dr}{d-r}, {\rm if } ~r<d,\\
\!\!  {\rm any ~finite~nonnegetive~ real~ number,~if}~r=d,\\
\!\! \infty, {\rm if } ~r>d,
\end{array}\right.$ where $d=2,3$ is the dimension.  For $1\leq r\leq \infty, 1\leq s\leq \infty$, if $\frac{1}{r}+\frac{1}{s}\leq 1$, and $\frac{1}{r^*}+\frac{1}{s}=\frac{1}{t}$, $f\in H^{1,r}$ and $g\in H^{-1,s}$, then $fg\in H^{-1,t}$, that is,
$$\|fg\|_{H^{-1,t}}\leq \|f\|_{H^{1,r}}\|g\|_{H^{-1,s}}.$$
\end{lemma}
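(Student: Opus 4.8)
The plan is to prove the bound by duality, realizing $H^{-1,t}$ as the dual of $H^{1,t'}$ with $\frac{1}{t}+\frac{1}{t'}=1$, and thereby reducing the product estimate for $fg$ to a multiplication estimate on the test function. First I would fix $\varphi\in H^{1,t'}$ (by density of $C_c^\infty$ it suffices to take $\varphi$ smooth and compactly supported) and rewrite the pairing as $\langle fg,\varphi\rangle=\langle g,f\varphi\rangle$, transferring the factor $f$ onto $\varphi$. Since $g\in H^{-1,s}=(H^{1,s'})'$ with $\frac{1}{s}+\frac{1}{s'}=1$, this yields at once $|\langle fg,\varphi\rangle|\leq \|g\|_{H^{-1,s}}\|f\varphi\|_{H^{1,s'}}$, so the entire statement reduces to the pointwise multiplication inequality $\|f\varphi\|_{H^{1,s'}}\leq C\|f\|_{H^{1,r}}\|\varphi\|_{H^{1,t'}}$.

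The second step establishes this multiplication bound via the Leibniz rule and Hölder's inequality. Writing $\nabla(f\varphi)=\varphi\nabla f+f\nabla\varphi$, I would control each summand in $L^{s'}$ separately, using the Sobolev embeddings $H^{1,r}\hookrightarrow L^{r^*}$ and $H^{1,t'}\hookrightarrow L^{(t')^*}$. The crux is the exponent arithmetic: from $\frac{1}{r^*}=\frac{1}{r}-\frac{1}{d}$ together with the hypothesis $\frac{1}{r^*}+\frac{1}{s}=\frac{1}{t}$, one computes $\frac{1}{r^*}+\frac{1}{t'}=\frac{1}{s'}$ and $\frac{1}{(t')^*}+\frac{1}{r}=\frac{1}{s'}$. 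Hence $f\nabla\varphi\in L^{s'}$ (using $f\in L^{r^*}$ and $\nabla\varphi\in L^{t'}$) and $\varphi\nabla f\in L^{s'}$ (using $\varphi\in L^{(t')^*}$ and $\nabla f\in L^r$), each with the desired norm bound; the zeroth-order term $f\varphi$ lands in the better space $L^{p_0}$ with $\frac{1}{p_0}=\frac{1}{s'}-\frac{1}{d}$, which embeds into $L^{s'}$ on the bounded domain $\mathcal{D}$. Summing the three contributions gives the multiplication inequality, and the compact support of $\varphi$ guarantees $f\varphi\in H^{1,s'}_0$, so the pairing with $g$ is legitimate.

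The final step combines the two estimates and takes the supremum over $\|\varphi\|_{H^{1,t'}}\leq 1$, producing $\|fg\|_{H^{-1,t}}\leq C\|f\|_{H^{1,r}}\|g\|_{H^{-1,s}}$. The hard part will be the borderline exponent bookkeeping: when $r=d$ or $r>d$ (so that $r^*$ is "any finite number" or $\infty$), and symmetrically when $t'\geq d$, the Sobolev conjugate $(t')^*$ degenerates and the clean identities above are no longer exact equalities but only admit inclusions. I would treat these critical cases by selecting a finite surrogate exponent strictly below the critical threshold and absorbing the resulting loss through the continuous embedding $L^{p}\hookrightarrow L^{q}$ for $p\geq q$ on the bounded set $\mathcal{D}$, so that every factor is still placed in a Lebesgue space for which Hölder closes the estimate.
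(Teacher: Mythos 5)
The paper never proves Lemma \ref{lem4.3} — it is quoted verbatim from Simon's paper \cite{simon} — so your argument has to stand on its own, and its core does: the duality reduction $\langle fg,\varphi\rangle=\langle g,f\varphi\rangle$, the resulting multiplication estimate $\|f\varphi\|_{H^{1,s'}}\le C\|f\|_{H^{1,r}}\|\varphi\|_{H^{1,t'}}$, and your exponent identities $\frac{1}{r^*}+\frac{1}{t'}=\frac{1}{s'}$ and $\frac{1}{(t')^*}+\frac{1}{r}=\frac{1}{s'}$ are all correct, so in the regime $r<d$, $t'<d$, $\frac1r+\frac1s<1$ the three Leibniz terms land exactly where you say. (Minor point: this yields a constant $C$ from Sobolev embeddings, not the constant $1$ written in the statement; that is a defect of the transcription, not of your proof.)

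The genuine gap is in your final paragraph. The surrogate-exponent trick does dispose of $r\ge d$ and of $t'>d$, because in those cases the required H\"older/Sobolev inequalities hold with strict slack; but it cannot dispose of the case $t'=d$ occurring together with $\frac1r+\frac1s=1$, which under the stated hypotheses happens exactly when $r<d$ and $s=r'=\frac{r}{r-1}$ (e.g.\ $d=3$, $r=s=2$, $t=\frac32$, $t'=3$). There $s'=r$, so placing $\varphi\nabla f$ in $L^{s'}$ forces $\varphi\in L^\infty$, and $W^{1,d}\not\hookrightarrow L^\infty$, with zero room to trade on the bounded domain since the target exponent already equals $r$ exactly. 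This is not a bookkeeping issue that can be written around: the inequality is actually false in that case. Placing the construction inside $\mathcal D$, take $\varphi_n=n^{-1/3}\min\{\log(1/|x|),\,n\}$, which is bounded in $W^{1,3}$ and equals $n^{2/3}$ on $B_{e^{-n}}$, and the spike $f_n(x)=e^{n/2}(1-e^n|x|)_+$ supported in $B_{e^{-n}}$ with $\|f_n\|_{H^{1,2}}\le C$; then $f_n\varphi_n=n^{2/3}f_n$, so $\|f_n\varphi_n\|_{H^{1}_0}\sim n^{2/3}$, and choosing $g_n\in H^{-1,2}$ of unit norm with $\langle g_n,f_n\varphi_n\rangle=\|f_n\varphi_n\|_{H^1_0}$ (Hahn--Banach) gives $\|f_ng_n\|_{H^{-1,3/2}}\ge\langle g_n,f_n\varphi_n\rangle/\|\varphi_n\|_{W^{1,3}}\gtrsim n^{2/3}$ while $\|f_n\|_{H^{1,2}}\|g_n\|_{H^{-1,2}}$ stays bounded. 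So the lemma is provable by your scheme only after excluding this critical case (e.g.\ requiring $\frac1r+\frac1s<1$, or $t'\ne d$), and no choice of surrogate exponents closes it. This is worth flagging beyond your proof: the paper itself invokes the lemma precisely in this critical configuration ($r=s=2$, $t'=3$, $d=3$) in \eqref{5.42}.
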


\begin{lemma}{\rm \!\!\cite[Lemma A.1]{MS}} \label{4.4*} Let $(\phi,\psi)$ be the solution to the following elliptic boundary problem
\begin{align*}
\left\{\begin{array}{ll}
\!\!-\Delta \phi=l_1, ~{\rm in}~\mathcal{D},\\
\!\!-\Delta_\tau \psi+\partial_n\phi +\psi=l_2, ~{\rm on}~\Gamma.
\end{array}\right.
\end{align*}
If $(l_1,l_2)\in L_x^2\times L^2(\Gamma)$, then, it holds for some constant $C$
$$\|(\phi,\psi)\|_{H^2\times H^2(\Gamma)}\leq C\|(l_1,l_2)\|_{L_x^2\times L^2(\Gamma)}.$$
\end{lemma}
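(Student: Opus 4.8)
The plan is to regard the stated system as a single coupled elliptic problem of Wentzell (dynamic--boundary) type for $\phi$, with $\psi={\rm Tr}_{\mathcal{D}}(\phi)$, and to reach the $H^2\times H^2(\Gamma)$ bound in three stages: a variational $H^1\times H^1(\Gamma)$ estimate, a tangential regularity gain via difference quotients, and recovery of the normal second derivative from the bulk equation.

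First I would pass to the weak formulation on $V^1$. Testing $-\Delta\phi=l_1$ against $\varphi$ with trace $\eta={\rm Tr}_{\mathcal{D}}(\varphi)$, integrating by parts in $\mathcal{D}$, and replacing the boundary flux through $\partial_n\phi=l_2+\Delta_\tau\psi-\psi$ followed by integration by parts on the closed manifold $\Gamma$, one obtains
$$
(\nabla\phi,\nabla\varphi)_{\mathcal{D}}+(\nabla_\tau\psi,\nabla_\tau\eta)_\Gamma+(\psi,\eta)_\Gamma=(l_1,\varphi)_{\mathcal{D}}+(l_2,\eta)_\Gamma .
$$
The associated bilinear form is bounded and, using the trace--Poincar\'e inequality $\|\phi\|_{L_x^2}^2\le C(\|\nabla\phi\|_{L_x^2}^2+\|\psi\|_{L^2(\Gamma)}^2)$, coercive on $V^1$; the Lax--Milgram lemma then yields a unique weak solution with $\|(\phi,\psi)\|_{V^1}\le C\|(l_1,l_2)\|_{L_x^2\times L^2(\Gamma)}$.

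The regularity gain is the heart of the matter. After covering $\Gamma$ by finitely many charts and flattening, I would substitute the tangential difference-quotient test pair $\varphi=-D_{-k}(\zeta^2 D_k\phi)$, $\eta={\rm Tr}_{\mathcal{D}}(\varphi)=-D_{-k}(\zeta^2 D_k\psi)$ (the trace commuting with tangential difference quotients, $\zeta$ a boundary cut-off) into the identity above. Because the coupling term $(\partial_n\phi,\eta)_\Gamma$ was already eliminated, the three leading contributions $\|\nabla(\zeta D_k\phi)\|_{L_x^2}^2$, $\|\nabla_\tau(\zeta D_k\psi)\|_{L^2(\Gamma)}^2$ and $\|\zeta D_k\psi\|_{L^2(\Gamma)}^2$ all carry the correct sign; by summation-by-parts for difference quotients and Young's inequality the right-hand side and the lower-order metric/curvature terms are absorbed. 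Sending $k\to0$ gives tangential control $\nabla_\tau\nabla\phi\in L_x^2$ near $\Gamma$ and $\nabla_\tau^2\psi\in L^2(\Gamma)$, i.e.\ $\psi\in H^2(\Gamma)$. Inserting this into the boundary relation shows $\partial_n\phi=l_2+\Delta_\tau\psi-\psi\in L^2(\Gamma)$; writing the Laplacian in normal--tangential coordinates as $\Delta\phi=\partial_n^2\phi+\varkappa\,\partial_n\phi+\Delta_\tau\phi$ with $\varkappa$ the (bounded) curvature factor, the relation $\partial_n^2\phi=-l_1-\varkappa\,\partial_n\phi-\Delta_\tau\phi$ places $\partial_n^2\phi$ in $L_x^2$. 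Together with the standard interior estimate $\phi\in H^2_{loc}(\mathcal{D})$ this yields $\phi\in H^2(\mathcal{D})$ and the asserted bound.

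I expect the principal difficulty to be closing the difference-quotient estimate uniformly up to $\Gamma$ while keeping the coupling under control: one must check that the trace genuinely commutes with the tangential difference quotients after flattening, track the metric and curvature terms generated by the change of variables, and make sure the normal-derivative contribution is truly removed rather than resurfacing at top order. The delicate balance is that the boundary gain (from $\Delta_\tau\psi$) and the interior gain (from $-\Delta\phi$) must be realized simultaneously in one estimate. Alternatively, one may avoid the explicit computation by verifying that the Wentzell boundary operator satisfies the Lopatinskii--Shapiro complementing condition and appealing to the Agmon--Douglis--Nirenberg regularity theory for the coupled elliptic system.
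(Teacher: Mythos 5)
The paper never proves this lemma: it is imported as a citation to Miranville--Zelik \cite[Lemma A.1]{MS}, so there is no internal proof to compare your argument against; what you have written is a self-contained proof, and in outline it is correct. Your weak formulation is the right one (substituting the boundary equation eliminates the coupling term $(\partial_n\phi,\eta)_\Gamma$ and leaves a symmetric form that is coercive on $V^1$ thanks to the Friedrichs-type inequality with boundary term, so no compatibility condition on $(l_1,l_2)$ is needed). After flattening, tangential difference quotients do commute with the trace, and since the form simultaneously controls $\|\nabla(\zeta D_k\phi)\|_{L_x^2}$, $\|\nabla_\tau(\zeta D_k\psi)\|_{L^2(\Gamma)}$ and $\|\zeta D_k\psi\|_{L^2(\Gamma)}$, the limit $k\to 0$ yields the mixed tangential second derivatives of $\phi$ up to $\Gamma$ and $\psi\in H^2(\Gamma)$ in one stroke; solving the flattened bulk equation for the normal second derivative then finishes $\phi\in H^2$.

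For comparison, the argument standard in the dynamic-boundary-condition literature (and the natural reading of the cited lemma) is a decoupling bootstrap that avoids difference quotients entirely: from the energy estimate, $\phi\in H^1$ with $\Delta\phi=-l_1\in L_x^2$ gives $\partial_n\phi\in H^{-1/2}(\Gamma)$; the surface equation $-\Delta_\tau\psi+\psi=l_2-\partial_n\phi$ and elliptic regularity on the closed manifold $\Gamma$ give $\psi\in H^{3/2}(\Gamma)$; the Dirichlet problem $-\Delta\phi=l_1$, $\phi|_\Gamma=\psi$ gives $\phi\in H^2(\mathcal{D})$; hence $\partial_n\phi\in H^{1/2}(\Gamma)\subset L^2(\Gamma)$, and one more application of surface regularity gives $\psi\in H^2(\Gamma)$, with the estimate tracked at every step. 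Your monolithic variational approach buys self-containedness at the cost of the chart-by-chart bookkeeping you flag; the bootstrap buys brevity by reusing three standard facts. One caution on your proposed shortcut: classical Agmon--Douglis--Nirenberg theory does not apply off the shelf here, since the boundary operator $-\Delta_\tau+\partial_n+1$ has the same order as the interior operator (a Venttsel-type condition), so one would need the extended theory for such boundary value problems; the difference-quotient route or the bootstrap is the safer way to make the proof rigorous.
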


\begin{lemma}{\rm \!\!\cite{Tay}}\label{lem4.4}
Let $\mathcal{D}$ be a sufficient smooth bounded domain and $s\geq 0$. For every $\epsilon>0$, the mollifier operator $\mathcal{J}_{\epsilon}$ maps $H^{s}(\mathcal{D})$ into $H^{s'}(\mathcal{D})$ where $s'>s$ and has the following properties,\\
$(1)$ The collection $\{\mathcal{J}_{\epsilon}\}_{\epsilon >0}$ is uniformly bounded in $H^{s}(\mathcal{D})$ independence of $\epsilon$, i.e. there exists a positive constant C=C(s) such that,
$$\|\mathcal{J}_{\epsilon}f\|_{H^s}\leq C\|f\|_{H^s},~~~f\in H^{s}(\mathcal{D}).$$
$(2)$ For every $\epsilon>0$, if $s\geq 1$  then for $f\in H^{s}(\mathcal{D})$,
\begin{equation*}
\|\mathcal{J}_{\epsilon}f\|_{H^{s^\prime}}\leq \frac{C}{\epsilon^{s'-s}}\|f\|_{H^{s}}.
\end{equation*}
$(3)$ Sequence $\mathcal{J}_{\epsilon}f$ converges to $f$, for $f\in H^{s}(\mathcal{D})$, that is,
\begin{equation*}
\lim_{\epsilon\rightarrow 0}\|\mathcal{J}_{\epsilon}f-f\|_{H^s}=0.
\end{equation*}
\end{lemma}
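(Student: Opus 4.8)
The plan is to realize the mollifier on the bounded domain by composing a Sobolev extension with a standard Friedrichs mollification on $\mathbb{R}^d$, and then to read off all three properties from the Fourier side. First I would fix a radial $\eta\in\mathcal{C}_c^\infty(\mathbb{R}^d)$ with $\int\eta=1$ and set $\eta_\epsilon(x)=\epsilon^{-d}\eta(x/\epsilon)$, so that $\widehat{\eta_\epsilon}(\xi)=\hat\eta(\epsilon\xi)$. Since $\mathcal{D}$ is smooth and bounded, there is a bounded linear extension operator $E\colon H^s(\mathcal{D})\to H^s(\mathbb{R}^d)$, valid simultaneously for the finitely many orders $s$ used in Section 3, with $\|Ef\|_{H^s(\mathbb{R}^d)}\le C\|f\|_{H^s(\mathcal{D})}$. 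I would then define
\[
\mathcal{J}_\epsilon f:=\big(\eta_\epsilon*Ef\big)\big|_{\mathcal{D}},
\]
which is smooth up to $\overline{\mathcal{D}}$ (convolution of a tempered distribution with a $\mathcal{C}_c^\infty$ kernel is $\mathcal{C}^\infty$) and therefore lies in $H^{s'}(\mathcal{D})$ for every $s'>s$, as required.

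For property (1) I would use that convolution against $\eta_\epsilon$ acts on the Fourier side by the multiplier $\hat\eta(\epsilon\xi)$, whose sup-norm is $\hat\eta(0)=1$; by Plancherel this gives $\|\eta_\epsilon*g\|_{H^s(\mathbb{R}^d)}\le\|g\|_{H^s(\mathbb{R}^d)}$, and composing with the bounded extension and the norm-nonincreasing restriction yields $\|\mathcal{J}_\epsilon f\|_{H^s}\le C\|f\|_{H^s}$ uniformly in $\epsilon$. Property (3) is the companion statement: since $\hat\eta(\epsilon\xi)\to1$ pointwise as $\epsilon\to0$ while $|\hat\eta(\epsilon\xi)|\le1$, dominated convergence applied to $\int(1+|\xi|^2)^s|\hat\eta(\epsilon\xi)-1|^2|\widehat{Ef}(\xi)|^2\,d\xi$ forces $\eta_\epsilon*Ef\to Ef$ in $H^s(\mathbb{R}^d)$, and restriction gives $\mathcal{J}_\epsilon f\to f$ in $H^s(\mathcal{D})$.

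The quantitative smoothing estimate (2) is the only step that is not immediate, and it is where I would concentrate the work. For $s'>s\ge1$ the point is the Fourier multiplier bound
\[
\sup_{\xi\in\mathbb{R}^d}(1+|\xi|)^{s'-s}\,|\hat\eta(\epsilon\xi)|\le C\,\epsilon^{-(s'-s)},
\]
which follows because $\hat\eta$ is Schwartz, so $|\hat\eta(\epsilon\xi)|\le C(1+\epsilon|\xi|)^{-(s'-s)}$ and the quotient $(1+|\xi|)^{s'-s}(1+\epsilon|\xi|)^{-(s'-s)}$ is controlled by $\epsilon^{-(s'-s)}$ for $\epsilon\le1$. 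Feeding this into the Plancherel identity against the weight $(1+|\xi|^2)^{s}|\widehat{Ef}|^2$ gives $\|\eta_\epsilon*Ef\|_{H^{s'}(\mathbb{R}^d)}\le C\epsilon^{-(s'-s)}\|Ef\|_{H^s(\mathbb{R}^d)}$, and composing once more with the extension bound produces the claimed inequality on $\mathcal{D}$.

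The main obstacle, and the point demanding care, is keeping the constant $C$ independent of $\epsilon$ in (1) and (2) simultaneously while using a single family $\{\mathcal{J}_\epsilon\}$: the extension operator $E$ must be chosen once and be bounded on all Sobolev orders appearing in the a priori estimates, and the multiplier split in (2) must not conceal any $\epsilon$-dependence in $C$. An equivalent and perhaps cleaner route on a bounded domain is to take $\mathcal{J}_\epsilon=e^{\epsilon^2 A}$ for a non-negative self-adjoint realization $A$ of $-\Delta$ (for instance the Neumann Laplacian $A_1$ already introduced in the Preliminaries), in which case (1)--(3) follow at once from the spectral theorem together with the elementary bounds $\lambda^{(s'-s)/2}e^{-\epsilon^2\lambda}\le C\epsilon^{-(s'-s)}$; I would present the extension--convolution construction as the main line and remark that the semigroup version furnishes the boundary-compatible analogue needed for the boundary mollifier $J_\epsilon$ acting on $H^s(\Gamma)$ via $A_\tau$.
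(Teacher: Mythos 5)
Your construction---a bounded Sobolev extension $\widetilde{E}$, a standard Friedrichs mollification on the whole space, then restriction back to $\mathcal{D}$---is exactly the realization the paper gives in Remark \ref{rem4.1} ($\mathcal{J}_{\epsilon}u=\mathcal{R}\widetilde{\mathcal{J}_{\epsilon}}\widetilde{E}u$, with the properties themselves cited from \cite{Tay}), and your Fourier-multiplier verification of (1)--(3), including the bound $(1+|\xi|)^{s'-s}|\hat\eta(\epsilon\xi)|\leq C\epsilon^{-(s'-s)}$ for the smoothing estimate, is correct. So the proposal is sound and takes essentially the same route as the paper.
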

\begin{remark}\label{rem4.1} The mollifier operator $\mathcal{J}_{\epsilon}$ could be defined as $\mathcal{J}_{\epsilon}u=\mathcal{R}\widetilde{\mathcal{J}_{\epsilon}}\widetilde{E}u$, where $\widetilde{\mathcal{J}_{\epsilon}}$ is a standard Friedrich's mollifier on $\widetilde{\mathcal{D}}$. $\widetilde{\mathcal{D}}$ is the un-boundary extend of smooth domain $\mathcal{D}$, and here $\widetilde{E}$ is an extension operator from $H^s(\mathcal{D})$ into $H^s(\widetilde{\mathcal{D}})$, $\mathcal{R}$ is a restriction operator from $H^s(\widetilde{\mathcal{D}})$ into $H^s(\mathcal{D})$, see \cite[Chapter 5]{Evans}.
\end{remark}

In order to establish the tightness of a family of solutions, the following Aubin-Lions lemma and Skorokhod representative theorem are commonly used. For more backgrounds we recommend Corollary 5 of \cite{Simon} and Theorem 2.4 of \cite{Zabczyk}. The Vitali convergence theorem is applied to identify the limit, see for example, Chapter 3 of \cite{Kallenberg}.

\begin{lemma}{\rm \!\!(The Aubin-Lions lemma)}\label{lem4.5} Suppose that $X_{1}\subset X_{0}\subset X_{2}$ are Banach spaces and $X_{1}$ and $X_{2}$ are reflexive and the embedding of $X_{1}$ into $X_{0}$ is compact.
Then for any $1\leq p<\infty,~ 0<\alpha<1$, the embedding
\begin{equation*}
L^{p}_tX_{1}\cap C^{\alpha}_tX_{2}\hookrightarrow L^{p}_tX_{0},
\end{equation*}
is compact.
\end{lemma}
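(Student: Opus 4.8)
This is the Aubin--Lions--Simon compactness lemma with H\"older time regularity, and the plan is to realize it as a special case of the Kolmogorov--Riesz--Fr\'echet compactness criterion for Banach-space-valued $L^p$ functions; equivalently, I would verify the hypotheses of the compactness theorem of \cite{Simon}. Let $\{u_n\}$ be any sequence bounded in $L^p_tX_1\cap C^\alpha_tX_2$, say $\|u_n\|_{L^p_tX_1}+\|u_n\|_{C^\alpha_tX_2}\le M$; the goal is to extract a subsequence converging strongly in $L^p_tX_0$.

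The first ingredient is the Ehrling (interpolation) inequality: since $X_1\hookrightarrow X_0$ is compact and $X_0\hookrightarrow X_2$ is continuous, for every $\eta>0$ there exists $C_\eta>0$ such that
\[
\|v\|_{X_0}\le \eta\|v\|_{X_1}+C_\eta\|v\|_{X_2},\qquad v\in X_1 .
\]
I would prove this by contradiction: a failing sequence, normalized by $\|v_k\|_{X_0}=1$, would be bounded in $X_1$ hence precompact in $X_0$, while its $X_2$-norm tends to $0$; the $X_0$-limit of a subsequence would then vanish, contradicting $\|v_k\|_{X_0}=1$.

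With this in hand I would check the two conditions of the vector-valued Kolmogorov--Riesz--Fr\'echet criterion for $L^p_tX_0$. \emph{Precompactness of averages:} for every measurable $E\subset(0,T)$, H\"older's inequality gives $\big\|\int_E u_n\,dt\big\|_{X_1}\le |E|^{1-1/p}\|u_n\|_{L^p_tX_1}\le C$, so $\{\int_E u_n\,dt\}_n$ is bounded in $X_1$ and therefore precompact in $X_0$ by the compact embedding. \emph{Uniform continuity of time translations:} writing $(\tau_h u)(t)=u(t+h)$ and applying the Ehrling inequality pointwise, then integrating in $t$,
\[
\|\tau_h u_n-u_n\|_{L^p_tX_0}\le \eta\,\|\tau_h u_n-u_n\|_{L^p_tX_1}+C_\eta\,\|\tau_h u_n-u_n\|_{L^p_tX_2}\le 2M\eta+C_\eta\,T^{1/p}M\,h^\alpha ,
\]
where the last term uses the H\"older bound $\|u_n(t+h)-u_n(t)\|_{X_2}\le M h^\alpha$. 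Letting $h\to0$ and then $\eta\to0$ yields $\lim_{h\to0}\sup_n\|\tau_h u_n-u_n\|_{L^p_tX_0}=0$. These two facts, via the Kolmogorov--Riesz--Fr\'echet theorem (Corollary 5 of \cite{Simon}), show that $\{u_n\}$ is relatively compact in $L^p_tX_0$, so a subsequence converges strongly there, which is the asserted compactness.

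The main obstacle is the second condition, the uniform equicontinuity of the time translations in $L^p_tX_0$: this is precisely where the merely integrable (and not uniformly bounded) $X_1$-regularity must be traded against the uniform H\"older modulus in $X_2$, and the Ehrling inequality is what makes this trade quantitative --- the $X_1$-difference is controlled only in size $2M$ but is multiplied by the free small parameter $\eta$, while the $X_2$-difference carries the decaying factor $h^\alpha$. I would also note that reflexivity of $X_1$ and $X_2$ plays no role in the compactness argument itself; it is listed because it guarantees the weak subsequential limits in $L^p_tX_1$ (and in the $X_2$-valued spaces) that are used when this lemma is applied to identify limits in Section~3.
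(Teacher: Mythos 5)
Your argument is correct, but there is no paper proof to compare it with: the paper states Lemma~\ref{lem4.5} without proof, quoting it as a known compactness result and pointing the reader to Corollary~5 of \cite{Simon}. What you have produced is, in effect, a correct self-contained reconstruction of the standard proof behind that citation. Both of your ingredients are sound: the Ehrling inequality follows from your normalization/contradiction argument because the inclusion $X_0\subset X_2$ is continuous and injective, so the $X_0$-limit of the extracted subsequence is forced to vanish, contradicting $\|v_k\|_{X_0}=1$; and the two hypotheses of the vector-valued Kolmogorov--Riesz--Fr\'echet criterion are verified exactly as they must be --- boundedness of the averages $\int_E u_n\,dt$ in $X_1$ together with compactness of $X_1\hookrightarrow X_0$ gives precompactness of averages in $X_0$, while the estimate $\|\tau_h u_n-u_n\|_{L^p((0,T-h);X_0)}\le 2M\eta+C_\eta T^{1/p}M h^\alpha$ (let $h\to 0$, then $\eta\to 0$) gives the uniform translation condition, with the harmless caveat that translation norms are taken over $(0,T-h)$. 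Your closing remark is also accurate: reflexivity of $X_1$ and $X_2$ is never used in the compactness argument itself --- Simon's criteria require none --- and matters only for extracting weak subsequential limits when the lemma is applied in the compactness part of the existence proof. The one available shortcut is to invoke \cite{Simon} at the level of its later sufficient-condition theorem (boundedness in $L^p_tX_1$ plus uniform smallness of translations measured in $X_2$ already yields relative compactness in $L^p_tX_0$, the Ehrling step being internal to Simon's proof); that is essentially what the paper does by citation, and your version simply inlines that proof, correctly.
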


\begin{theorem}{\rm \!\!(The Vitali convergence theorem)}\label{thm4.1} Let $p\geq 1$, $\{X_n\}_{n\geq 1}\in L^p$ and $X_n\rightarrow X$ in probability. Then, the following are equivalent:\\
{\rm (1)} $X_n\rightarrow X$ in $L^p$;\\
{\rm (2)} the sequence $|X_n|^p$ is uniformly integrable;\\
{\rm (3)} $\mathbb{E}|X_n|^p\rightarrow \mathbb{E}|X|^p$.
\end{theorem}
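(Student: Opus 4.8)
The plan is to prove the three equivalences through the cycle $(1)\Rightarrow(3)\Rightarrow(2)\Rightarrow(1)$, working throughout with the nonnegative variables $Y_n:=|X_n|^p$ and $Y:=|X|^p$. Since $X_n\to X$ in probability and $t\mapsto|t|^p$ is continuous, the continuous mapping theorem gives $Y_n\to Y$ in probability; moreover every subsequence of $\{Y_n\}$ has a further subsequence converging to $Y$ almost surely, which is the device that lets us invoke the bounded and dominated convergence theorems even though convergence holds only in probability. The implication $(1)\Rightarrow(3)$ is immediate: the reverse triangle inequality in $L^p$ yields $\big|\,\|X_n\|_{L^p}-\|X\|_{L^p}\,\big|\le\|X_n-X\|_{L^p}\to0$, whence $\mathbb{E}|X_n|^p\to\mathbb{E}|X|^p$.

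For $(3)\Rightarrow(2)$ I would argue by truncation. Fix $K>0$. The truncations $Y_n\wedge K$ are bounded by $K$ and converge to $Y\wedge K$ in probability, so the bounded convergence theorem (applied along an arbitrary subsequence and its a.s.\ convergent refinement) gives $\mathbb{E}[Y_n\wedge K]\to\mathbb{E}[Y\wedge K]$. Combining this with the hypothesis $\mathbb{E}[Y_n]\to\mathbb{E}[Y]$ and writing $\mathbb{E}[(Y_n-K)^+]=\mathbb{E}[Y_n]-\mathbb{E}[Y_n\wedge K]$, one obtains $\mathbb{E}[(Y_n-K)^+]\to\mathbb{E}[(Y-K)^+]$ for each fixed $K$. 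Now use the elementary pointwise bound $Y_n\mathbf{1}_{\{Y_n>2K\}}\le 2(Y_n-K)^+$ together with the fact that, since $Y\in L^1$, dominated convergence yields $\mathbb{E}[(Y-K)^+]\to0$ as $K\to\infty$. Given $\varepsilon>0$, choosing $K$ with $\mathbb{E}[(Y-K)^+]<\varepsilon/4$ makes $\mathbb{E}[(Y_n-K)^+]<\varepsilon/2$ for all large $n$, hence $\sup_{n\ge N}\mathbb{E}[Y_n\mathbf{1}_{\{Y_n>2K\}}]<\varepsilon$; the finitely many remaining indices are integrable and absorbed by enlarging $K$. This is precisely the uniform integrability of $\{|X_n|^p\}$.

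For $(2)\Rightarrow(1)$ I would first check $X\in L^p$: along an a.s.\ convergent subsequence Fatou's lemma gives $\mathbb{E}|X|^p\le\liminf_n\mathbb{E}|X_n|^p\le\sup_n\mathbb{E}|X_n|^p<\infty$, the last bound holding because a uniformly integrable family is bounded in $L^1$. Consequently $\{|X_n-X|^p\}$ is uniformly integrable, being dominated by $2^{p-1}(|X_n|^p+|X|^p)$, a sum of a uniformly integrable family and a single integrable function. Setting $Z_n:=|X_n-X|^p\ge0$, we have $Z_n\to0$ in probability, and it remains to prove the lemma that a uniformly integrable sequence tending to $0$ in probability tends to $0$ in $L^1$. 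Splitting $\mathbb{E}[Z_n]=\mathbb{E}[Z_n\wedge K]+\mathbb{E}[(Z_n-K)^+]$, the second term is at most $\sup_m\mathbb{E}[Z_m\mathbf{1}_{\{Z_m>K\}}]$, which tends to $0$ as $K\to\infty$ by uniform integrability, while for each fixed $K$ the first term tends to $0$ by bounded convergence; hence $\mathbb{E}|X_n-X|^p\to0$, which is $(1)$.

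The main obstacle—and the only genuinely delicate point—is the bookkeeping in $(3)\Rightarrow(2)$: one must convert convergence of first moments into control of the tails $\mathbb{E}[Y_n\mathbf{1}_{\{Y_n>K\}}]$ that is uniform in $n$, which is where the truncation identity $\mathbb{E}[(Y_n-K)^+]=\mathbb{E}[Y_n]-\mathbb{E}[Y_n\wedge K]$ and the passage to a.s.\ convergent subsequences (needed to license bounded convergence under mere convergence in probability) do the real work. Everything else is routine once the subsequence principle is in place.
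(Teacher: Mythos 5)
Your proof is correct, but note that the paper itself offers no argument to compare against: Theorem \ref{thm4.1} is quoted in the Appendix as a classical fact, with a pointer to Chapter 3 of \cite{Kallenberg}, and is never proved. So the only question is whether your blind proof stands on its own, and it does. The cycle $(1)\Rightarrow(3)\Rightarrow(2)\Rightarrow(1)$ is the standard route; the subsequence principle (every subsequence has a further a.s.\ convergent subsequence) correctly licenses bounded convergence and Fatou under mere convergence in probability; the truncation identity $\mathbb{E}[(Y_n-K)^+]=\mathbb{E}[Y_n]-\mathbb{E}[Y_n\wedge K]$ and the bound $Y_n\mathbf{1}_{\{Y_n>2K\}}\le 2(Y_n-K)^+$ (valid since $Y_n>2K$ forces $Y_n-K>Y_n/2$) do exactly what you claim in $(3)\Rightarrow(2)$; and the domination $|X_n-X|^p\le 2^{p-1}\bigl(|X_n|^p+|X|^p\bigr)$ together with the ``uniformly integrable $+$ convergence to $0$ in probability $\Rightarrow$ $L^1$ convergence'' lemma settles $(2)\Rightarrow(1)$.

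One small point worth making explicit: in $(3)\Rightarrow(2)$ you invoke $Y=|X|^p\in L^1$ to get $\mathbb{E}[(Y-K)^+]\to 0$, which presumes that the limit in statement $(3)$ is finite. This is the intended reading of the theorem (and is how it is used in the paper, where the limits are known a priori to lie in $L^p$), but since the hypotheses only place $X_n$, not $X$, in $L^p$, you should either state the convention that $(3)$ means convergence to a finite value, or observe that the equivalence is vacuous otherwise (if $\mathbb{E}|X|^p=\infty$ then $(1)$ and $(2)$ both fail, and $(3)$ read literally forces $\mathbb{E}|X_n|^p\to\infty$, so UI fails too). With that remark added, the proof is complete.
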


\begin{theorem}{\rm \!\!(The Skorokhod representative theorem)}\label{thm4.2} Let $X$ be a topological space. If the set of probability measures $\{\nu_n\}_{n\geq 1}$ on $\mathcal{B}(X)$ is tight, then there exist a probability space $(\Omega, \mathcal{F}, \mathbb{P})$ and a sequence of random variables $u_n, u$ such that theirs laws are $\nu_n$, $\nu$ and $u_n\rightarrow u$, $\mathbb{P}$-a.s. as $n\rightarrow \infty$.
\end{theorem}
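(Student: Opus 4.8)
The plan is to prove the classical Skorokhod representation in the setting in which Theorem \ref{thm4.2} is actually invoked, namely a separable metric space $(X,\varrho)$ (the product path space $Y$ of Section 3, whose factors are separable and whose weak and weak-$*$ topologies are metrizable on the relevant bounded sets). First I would clarify the r\^ole of the limit measure: tightness of $\{\nu_n\}$ yields, by Prokhorov's theorem, relative compactness in the topology of weak convergence, so after passing to a subsequence (not relabelled) one has $\nu_n\Rightarrow\nu$ for some Radon probability measure $\nu$, and it is this weak limit that serves as the law of $u$. I then fix, once and for all, the single probability space $(\Omega,\mathcal{F},\mathbb{P})=([0,1],\mathcal{B}([0,1]),\mathrm{Leb})$ together with the identity map $\xi(\omega)=\omega$ as a uniform variable. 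All representatives $u_n,u$ will be built as measurable functions of this one $\xi$, which is exactly what upgrades the convergence from distributional to $\mathbb{P}$-almost sure.

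The heart of the construction is a nested family of countable Borel partitions. For each level $m\ge 1$ I would cover $X$ by countably many balls of radius $<2^{-m}$ (possible by separability), and for each centre choose the radius among the uncountably many available so that the ball has $\nu$-negligible boundary; disjointifying and refining across levels gives partitions $\{B^m_j\}_j$ with $\mathrm{diam}\,B^m_j<2^{-m}$, with $\nu(\partial B^m_j)=0$, and with $B^{m+1}_k\subset B^m_j$ whenever they meet. Fixing a representative $x^m_j\in B^m_j$, I carve $[0,1]$ into consecutive subintervals of lengths $\nu(B^m_j)$ and, compatibly, into subintervals of lengths $\nu_n(B^m_j)$, nested so that children intervals sit inside parent intervals, and define the level-$m$ approximants $u^{(m)}$ and $u^{(m)}_n$ to equal $x^m_j$ on the corresponding interval. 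Then $u^{(m)}$ has law $\sum_j\nu(B^m_j)\delta_{x^m_j}$, $u^{(m)}_n$ has law $\sum_j\nu_n(B^m_j)\delta_{x^m_j}$, and the nesting makes the assignments consistent from one level to the next.

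The passage to the limit proceeds in two stages. Since $\mathrm{diam}\,B^m_j<2^{-m}$ and the cells are nested, $\bigl(u^{(m)}(\omega)\bigr)_m$ is Cauchy in $(X,\varrho)$ for every $\omega$ (consecutive values lie in a common parent cell), so it converges to some $u(\omega)$ with $\varrho\bigl(u^{(m)},u\bigr)\le C2^{-m}$; a standard approximation argument shows $u$ has law $\nu$, and likewise $u^{(m)}_n\to u_n$ with law $\nu_n$. To obtain $u_n\to u$ $\mathbb{P}$-a.s.\ I exploit the portmanteau theorem: because $\nu(\partial B^m_j)=0$, weak convergence gives $\nu_n(B^m_j)\to\nu(B^m_j)$ for each fixed $m,j$, so the interval endpoints (partial sums of cell masses) induced by $\nu_n$ converge to those induced by $\nu$. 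Hence for a fixed $m$, off an exceptional set of measure at most $\sum_j|\nu_n(B^m_j)-\nu(B^m_j)|\to0$ one has $u^{(m)}_n(\omega)=u^{(m)}(\omega)$, whence on the good set $\varrho(u_n,u)\le\varrho(u_n,u^{(m)}_n)+0+\varrho(u^{(m)},u)\le C2^{-m}$.

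The main obstacle is precisely this last diagonalisation. The level-$m$ coupling only controls the pair $(u_n,u)$ up to the cell-mass mismatch $\sum_j|\nu_n(B^m_j)-\nu(B^m_j)|$, and this tends to $0$ in $n$ only for $m$ fixed, so the bound above yields convergence in probability but not immediately almost surely. One must therefore interleave the refinement index $m$ and the sequence index $n$, choosing $m=m(n)\to\infty$ slowly enough that $\sum_n\sum_j|\nu_n(B^{m(n)}_j)-\nu(B^{m(n)}_j)|<\infty$, so that the Borel--Cantelli lemma removes the exceptional sets along the diagonal and forces $\varrho(u_n,u)\to0$ $\mathbb{P}$-a.s. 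This summability step is the only point where the almost-sure (rather than distributional) convergence is genuinely earned. The general topological wording of the statement is harmless for the paper, since every space to which Theorem \ref{thm4.2} is applied is separable metric and the construction applies verbatim; for the literal generality one reduces to this case as in \cite{Zabczyk}.
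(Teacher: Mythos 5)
The paper offers no proof of this statement at all: Theorem \ref{thm4.2} is quoted in the Appendix as a known result, with the reference to Theorem 2.4 of \cite{Zabczyk} given in the surrounding text, so your proposal can only be compared with the classical textbook proof. Its architecture you reproduce faithfully and correctly: the Prokhorov extraction of the weak limit $\nu$ (repairing the paper's sloppy statement, in which $\nu$ appears undefined), the nested countable partitions $\{B^m_j\}$ into $\nu$-continuity sets of diameter $<2^{-m}$, the carving of $[0,1]$ into intervals of the corresponding cell masses, the two-level approximants $u^{(m)}_n,u^{(m)}$, and the identification of the laws are all the standard steps, and your observation that the a.s.\ convergence is the only point where something must be ``earned'' is exactly right.

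Unfortunately, the mechanism you propose for earning it is flawed. You claim one can choose $m(n)\to\infty$ so slowly that $\sum_n\delta_{n,m(n)}<\infty$, where $\delta_{n,m}=\sum_j|\nu_n(B^m_j)-\nu(B^m_j)|$; but no such choice need exist, because for fixed $m$ one only knows $\delta_{n,m}\to0$ with no rate, while refining the partition makes $\delta_{n,m}$ larger. Concretely, take $\nu=\delta_0$ and $\nu_n=(1-\tfrac{1}{\log n})\delta_0+\tfrac{1}{\log n}\delta_1$ on $\mathbb{R}$: then $\nu_n\Rightarrow\nu$, yet $\delta_{n,m}=\tfrac{2}{\log n}$ for every level $m\geq1$ (any partition of mesh $<2^{-m}$ separates $0$ from $1$), so $\sum_n\delta_{n,m(n)}=\infty$ for \emph{every} admissible $m(n)$. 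Thus your Borel--Cantelli diagonal yields only convergence in probability for the full sequence, equivalently a.s.\ convergence along a further subsequence, which is weaker than the stated conclusion (though sufficient for the paper's applications, which thin to subsequences anyway). The classical repair needs no summability: fix $\omega$ outside the countable set of endpoints $F_j=\sum_{i\leq j}\nu(B^m_i)$ over all levels; for each fixed $m$, the finitely many relevant endpoints $F^n_j=\sum_{i\leq j}\nu_n(B^m_i)$ converge to $F_j$ by the portmanteau theorem, so for all large $n$ the point $\omega$ lies in the interior of the same level-$m$ interval for $\nu_n$ as for $\nu$, whence $u^{(m)}_n(\omega)=u^{(m)}(\omega)$ eventually and $\limsup_n\varrho(u_n(\omega),u(\omega))\leq C2^{-m}$ for every $m$ --- full-sequence a.s.\ convergence by a deterministic, positional argument rather than a measure bound (in the example above, the defecting mass of size $\tfrac{1}{\log n}$ sits in a shrinking interval near $0$, so each fixed $\omega>0$ escapes it eventually even though the measures are not summable). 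Two secondary inaccuracies: with countably many consecutively laid intervals, the mismatch set is controlled by the cumulative shifts $2\sum_j|F_j-F^n_j|$, not by $\delta_{n,m}$ as you assert (use finitely many cells of total $\nu$-mass $>1-2^{-m}$ plus a remainder cell, or a maximal-coupling arrangement); and the Cauchy limits $u(\omega),u_n(\omega)$ require completeness of $X$, so one should work in the completion or note that the path spaces of Section 3 are Polish or compact metrizable --- and strictly speaking the non-metrizable weak topologies in $Y_1,Y_2$ call for Jakubowski's variant of the theorem rather than the metric one.
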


We need the following result taking from \cite[Lemma 2.1]{ANR} for passing the limit of the sequence of stochastic integrals,
\begin{lemma}\label{lem4.6} Assume that $G_\varepsilon$ is a sequence of $X$-valued $\mathcal{F}_{t}^\varepsilon$ predictable process such that
$$G_\varepsilon\rightarrow G,~ {\rm in~ probability~ in}~L_t^2L_2(\mathcal{H};X),$$
and the cylindrical Wiener process sequence $\mathcal{W}_\varepsilon$ satisfies
$$\mathcal{W}_\varepsilon\rightarrow \mathcal{W},~ {\rm in ~probability~ in}~C_t\mathcal{H}_0,$$
then,
$$\int_{0}^{t}G_\varepsilon d\mathcal{W}_\varepsilon\rightarrow \int_{0}^{t}G d\mathcal{W}, ~ {\rm in ~probability~ in}~ L_t^2X.$$
\end{lemma}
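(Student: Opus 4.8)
The plan is to deduce convergence in probability from the subsequence principle: in a metrisable space a sequence converges in probability iff every subsequence has a further subsequence converging a.s. Thus I fix an arbitrary subsequence and, using the two hypotheses, extract a further subsequence (still written $\varepsilon$) along which $G_\varepsilon\to G$ a.s. in $L_t^2L_2(\mathcal{H};X)$ and $\mathcal{W}_\varepsilon\to\mathcal{W}$ a.s. in $C_t\mathcal{H}_0$; it then suffices to prove $\int_0^\cdot G_\varepsilon\,d\mathcal{W}_\varepsilon\to\int_0^\cdot G\,d\mathcal{W}$ in probability along this subsequence. First I would localise to obtain a uniform $L^2$ bound. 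Since $\int_0^T\norm{G_\varepsilon}_{L_2(\mathcal{H};X)}^2dt\to\int_0^T\norm{G}_{L_2(\mathcal{H};X)}^2dt<\infty$ a.s., these real random variables are tight, so for any $\kappa>0$ one chooses $M$ with $\sup_\varepsilon\mathbb{P}(\int_0^T\norm{G_\varepsilon}_{L_2}^2dt>M)\le\kappa$ and similarly for $G$. With the stopping times $\tau_\varepsilon=\inf\{t:\int_0^t\norm{G_\varepsilon}_{L_2}^2ds\ge M\}\wedge T$ (and $\tau$ for $G$), on the event $\{\tau_\varepsilon=\tau=T\}$, of probability $\ge 1-2\kappa$, the integrals equal those of the truncated integrands $G_\varepsilon^M:=\mathbf{1}_{[0,\tau_\varepsilon]}G_\varepsilon$ and $G^M:=\mathbf{1}_{[0,\tau]}G$, which obey $\int_0^T\norm{\cdot}_{L_2}^2\le M$. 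The uniform bound and a.s. convergence then upgrade, via the Vitali theorem (Theorem \ref{thm4.1}), to $G_\varepsilon^M\to G^M$ in $L^2(\Omega;L_t^2L_2(\mathcal{H};X))$.

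Next I would address the cylindrical nature of the noise by projecting onto finitely many modes. Writing $\mathcal{W}=\sum_{k\ge1}e_k\beta_k$ and letting $\Pi_K$ be the orthogonal projection onto $\mathrm{span}\{e_1,\dots,e_K\}$, the Burkholder-Davis-Gundy inequality and the bound $M$ give
$$\mathbb{E}\norm{\int_0^\cdot G_\varepsilon^M\,d((I-\Pi_K)\mathcal{W}_\varepsilon)}_{L_t^\infty X}^2\lesssim\mathbb{E}\int_0^T\sum_{k>K}\norm{G_\varepsilon^Me_k}_X^2\,dt,$$
and the right side, being bounded by $2\,\mathbb{E}\int_0^T\norm{(I-\Pi_K)G^M}_{L_2}^2dt+2\norm{G_\varepsilon^M-G^M}_{L^2(\Omega;L_t^2L_2)}^2$, is small uniformly in $\varepsilon$ once $K$ is large and $\varepsilon$ small; the same holds for the limit. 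Hence the tails are negligible in probability, and it remains to compare the finite-mode integrals $\int_0^\cdot G_\varepsilon^M\,d(\Pi_K\mathcal{W}_\varepsilon)$ and $\int_0^\cdot G^M\,d(\Pi_K\mathcal{W})$. Here the increments of $\Pi_K\mathcal{W}$ are genuinely $\mathcal{H}$-valued, and convergence in $C_t\mathcal{H}_0$ forces $\beta_k^\varepsilon\to\beta_k$ in $C_t\mathbb{R}$ for each fixed $k$, so $\Pi_K\mathcal{W}_\varepsilon\to\Pi_K\mathcal{W}$ in $C_t\mathcal{H}$ a.s.

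To pass the limit in the finite-mode integral I would regularise in time by the backward average $(T_hG)(t)=h^{-1}\int_{(t-h)^+}^tG(s)\,ds$, which preserves adaptedness to whichever filtration $G$ carries, produces a time-continuous integrand, and is an $L^2$-contraction with $T_hG\to G$ in $L_t^2L_2$ as $h\to0$. Using the decomposition
\begin{align*}
\int_0^\cdot G_\varepsilon^M\,d(\Pi_K\mathcal{W}_\varepsilon)-\int_0^\cdot G^M\,d(\Pi_K\mathcal{W})
&=\int_0^\cdot (G_\varepsilon^M-T_hG_\varepsilon^M)\,d(\Pi_K\mathcal{W}_\varepsilon)\\
&\quad+\Big(\int_0^\cdot T_hG_\varepsilon^M\,d(\Pi_K\mathcal{W}_\varepsilon)-\int_0^\cdot T_hG^M\,d(\Pi_K\mathcal{W})\Big)\\
&\quad+\int_0^\cdot (T_hG^M-G^M)\,d(\Pi_K\mathcal{W}),
\end{align*}
the first and third terms are controlled in $L^2(\Omega;L_t^\infty X)$ by the Itô isometry/BDG and the $L^2$-smallness $\norm{G^M-T_hG^M}\le\kappa$ together with $\norm{G_\varepsilon^M-G^M}\to0$ (uniform contraction of $T_h$), while for the middle term, at fixed $h,K$, the integrand $T_hG_\varepsilon^M\to T_hG^M$ a.s. in $L_t^2L_2$ and is time-continuous and adapted, so its Itô integral is the $L^2$-limit of left-point Riemann sums $\sum_j (T_hG)(t_j)\big(\Pi_K\mathcal{W}(t_{j+1}\wedge\cdot)-\Pi_K\mathcal{W}(t_j\wedge\cdot)\big)$; these sums are continuous functionals of the finite-dimensional driving path, whence $\Pi_K\mathcal{W}_\varepsilon\to\Pi_K\mathcal{W}$ in $C_t\mathcal{H}$ forces the middle term to $0$ after refining the grid.

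The main obstacle is precisely this interplay of three limits in the middle term — $\varepsilon\to0$, grid refinement, then $h\to0$ and $K\to\infty$ — which must be ordered so that the error estimates remain uniform; the key structural difficulty it resolves is that $G_\varepsilon$ is predictable only for the $\varepsilon$-dependent filtration $\mathcal{F}_t^\varepsilon$, so one cannot approximate the limit and the prelimit by a single fixed simple process, and the choice of the adapted, path-continuous regulariser $T_h$ (rather than an $\mathcal{F}_t$-adapted step approximation of $G$) is what reconciles the mismatch while keeping every integral a legitimate stochastic integral. Beyond this, everything reduces to the Itô isometry, the Burkholder-Davis-Gundy inequality, and the Vitali theorem already recorded in the appendix.
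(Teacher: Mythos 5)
The paper itself does not prove Lemma \ref{lem4.6}: it is imported verbatim from \cite[Lemma 2.1]{ANR} (which in turn goes back to Bensoussan), so there is no in-paper argument to compare against. Your proof is, in substance, the classical proof from that literature: the subsequence principle, localization by stopping times, projection of the cylindrical noise onto finitely many modes, time-regularization by $T_h$ to obtain a continuous integrand adapted to whatever filtration the integrand carries, and left-point Riemann sums that are continuous functionals of the finite-dimensional driving path, with all errors controlled by the It\^{o} isometry/Burkholder-Davis-Gundy and closed in the order $\kappa\mapsto M$, then $K$, then $h$, then the grid, then $\varepsilon$. Your device for the filtration mismatch --- regularizing each $G_\varepsilon$ by the adaptedness-preserving $T_h$ rather than approximating prelimit and limit by one fixed simple process --- is exactly the right one.

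One load-bearing step is asserted rather than proved, and as literally worded it is not automatic: that the truncated integrands $G_\varepsilon^M=\mathbf{1}_{[0,\tau_\varepsilon]}G_\varepsilon$ converge to $G^M$ so that Vitali applies. Almost sure convergence of $G_\varepsilon$ does not by itself give convergence of the stopping times $\tau_\varepsilon$, and estimating crudely on the event where the truncations differ yields an $L^2(\Omega)$ discrepancy of order $M\kappa$ with $M=M(\kappa)$ possibly unbounded as $\kappa\to 0$ (no moments of $\int_0^T\|G\|^2_{L_2(\mathcal{H},X)}dt$ are assumed), so that route does not close. The claim is nevertheless true, by monotonicity of $A_\varepsilon(t):=\int_0^t\|G_\varepsilon\|_{L_2(\mathcal{H},X)}^2ds$: one has $A_\varepsilon\rightarrow A$ uniformly in $t$ a.s., the symmetric-difference mass satisfies $\int_{\tau\wedge\tau_\varepsilon}^{\tau\vee\tau_\varepsilon}\!\|\cdot\|^2ds\leq |A_\varepsilon(\tau_\varepsilon)-A_\varepsilon(\tau)|+o(1)$, and since the running integrals stay $\leq M$ up to their hitting times while $A_\varepsilon(\tau)\rightarrow A(\tau)=M$ whenever $\tau<T$ (with the boundary cases $\tau=T$ or $\tau_\varepsilon=T$ checked separately), this mass tends to $0$ a.s.; together with the uniform bound $4M$, Vitali then delivers your $L^2(\Omega;L_t^2L_2(\mathcal{H},X))$ convergence. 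Alternatively, you can bypass the truncation/Vitali machinery altogether with the standard Lenglart-type localization estimate $\mathbb{P}\left(\sup_{t\leq T}\left\|\int_0^t F\,d\mathcal{W}\right\|_X>\delta\right)\leq R\delta^{-2}+\mathbb{P}\left(\int_0^T\|F\|^2_{L_2(\mathcal{H},X)}dt>R\right)$, which keeps every smallness statement in probability and is how the cited source proceeds. With either patch supplied, your argument is complete and coincides with the standard proof that the paper invokes by citation.
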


\section{ Auxiliary results}
In this section, we provide two well-posedness results of the stochastic Stokes equation with the Navier boundary condition, and the Cahn-Hilliard equation with the dynamic boundary condition.

The specific form of the stochastic Stokes equation with additive noise reads as
\begin{eqnarray}\label{Equ1.1*}
\left\{\begin{array}{ll}
\!\!d u-{\rm div}(2D(u))dt+\nabla p dt=h_0 dt+hd\mathcal{W},\\
\!\! \nabla\cdot u=0,
\end{array}\right.
\end{eqnarray}
with the boundary conditions
\begin{align}\label{5.2}
u\cdot n=0,~ 2(D(u)\cdot n)_{\tau}+ u_{\tau}=\ell,
\end{align}
and the initial data $u|_{t=0}=u_0$.
\begin{proposition}\label{pro5.1} Assume that $u_0\in L_\omega^p\mathrm{L}^2_x$ is $\mathcal{F}_0$ measurable random variable, operator $h\in L_\omega^pL_t^2L_2(\mathcal{H};L_x^2)$, processes $h_0\in L_\omega^pL_t^2L_x^2$ and $\ell\in L_\omega^pL^2_tL^\frac{4}{3}(\Gamma)$, then system (\ref{Equ1.1*})-(\ref{5.2}) has a unique global weak solution $u$ such that
$$u\in L^p_\omega( C_t\mathrm{L}^2_x\cap L^2_t\mathrm{H}^1\cap W_t^{1,2}(\mathrm{H}^1)'),$$
satisfying
$$(u,v)+\int_{0}^{t}a_0(u,v)ds=(u_0, v)+\int_{0}^{t}(h,v)d\mathcal{W}+\int_{0}^{t}(\ell,v)_\Gamma ds+\int_{0}^{t}(h_0,v)ds,$$
for all $t\geq 0$, $p\geq 2$ and $v\in \mathrm{H}^1$.
\end{proposition}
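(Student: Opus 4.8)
The plan is to recast \eqref{Equ1.1*}--\eqref{5.2} as an abstract linear stochastic evolution equation in the Gelfand triple $\mathrm{H}^1\hookrightarrow \mathrm{L}_x^2\hookrightarrow (\mathrm{H}^1)'$ and to solve it by a Galerkin scheme built on the eigenfunctions of the Stokes operator $A_0$. First I would absorb the Navier boundary datum $\ell$ into a forcing term living in $(\mathrm{H}^1)'$. Using the trace embedding $\mathrm{H}^1\hookrightarrow H^{\frac12}(\Gamma)$ together with $H^{\frac12}(\Gamma)\hookrightarrow L^4(\Gamma)$, one gets $|(\ell,v)_\Gamma|\le \|\ell\|_{L^{4/3}(\Gamma)}\|v\|_{L^4(\Gamma)}\le C\|\ell\|_{L^{4/3}(\Gamma)}\|v\|_{\mathrm{H}^1}$, so that the functional $v\mapsto (h_0,v)+(\ell,v)_\Gamma$ defines $\mathcal{F}\in L_\omega^pL_t^2(\mathrm{H}^1)'$. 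The problem then reads $du+A_0u\,dt=\mathcal{F}\,dt+h\,d\mathcal{W}$ with $\langle A_0u,v\rangle=a_0(u,v)$, where $a_0$ is coercive, continuous, symmetric and equivalent to the $\mathrm{H}^1$-norm as recorded in Section 2.

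Since $A_0$ is positive and self-adjoint with compact inverse, I would take the orthonormal basis $\{e_k\}$ of $\mathrm{L}_x^2$ of its eigenfunctions (orthogonal for $a_0$) and project onto $\mathrm{H}_n=\mathrm{span}\{e_1,\dots,e_n\}$ via the orthogonal projection $P_n$. The Galerkin equation $du_n+A_0u_n\,dt=P_n\mathcal{F}\,dt+P_nh\,d\mathcal{W}$ with $u_n(0)=P_nu_0$ is a linear SDE with adapted coefficients in $\mathbb{R}^n$ and hence admits a unique continuous adapted solution. The core of the argument is the a priori estimate: applying It\^{o}'s formula to $\|u_n\|_{\mathrm{L}_x^2}^{2p}$, using the coercivity $a_0(u_n,u_n)\ge c\|u_n\|_{\mathrm{H}^1}^2$, Young's inequality on $\langle P_n\mathcal{F},u_n\rangle$, the Burkholder--Davis--Gundy inequality on the martingale term, the contraction bound $\|P_nh\|_{L_2(\mathcal{H};\mathrm{L}_x^2)}\le\|h\|_{L_2(\mathcal{H};\mathrm{L}_x^2)}$, and Gronwall's inequality, yields $\mathbb{E}\|u_n\|_{L_t^\infty\mathrm{L}_x^2}^{2p}+\mathbb{E}\|u_n\|_{L_t^2\mathrm{H}^1}^{2p}\le C$ uniformly in $n$, with $C$ depending only on the data norms of $u_0,h_0,\ell,h$. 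Reading $\partial_tu_n+A_0u_n=P_n\mathcal{F}$ off the drift part then bounds $u_n-\int_0^{\cdot}P_nh\,d\mathcal{W}$ in $L_\omega^pW_t^{1,2}(\mathrm{H}^1)'$.

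Because the equation is \emph{linear} in $u$, no stochastic compactness is needed: by the Banach--Alaoglu theorem I extract a subsequence with $u_n\rightharpoonup u$ in $L_\omega^pL_t^2\mathrm{H}^1$ (and weak-$*$ in $L_t^\infty\mathrm{L}_x^2$), while $P_nh\to h$ in $L_\omega^pL_t^2L_2(\mathcal{H};\mathrm{L}_x^2)$ forces $\int_0^t P_nh\,d\mathcal{W}\to\int_0^t h\,d\mathcal{W}$. Passing to the limit term by term in the linear weak formulation identifies $u$ as a weak solution. Uniqueness follows by testing the difference $w=u^1-u^2$ of two solutions, which satisfies the homogeneous problem with $w(0)=0$: It\^{o}'s formula for $\|w\|_{\mathrm{L}_x^2}^2$ together with coercivity gives $\|w(t)\|_{\mathrm{L}_x^2}^2+2\int_0^t a_0(w,w)\,ds=0$, whence $w\equiv0$. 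Finally, the continuity $u\in C_t\mathrm{L}_x^2$ and the stated very weak identity follow from the standard It\^{o} formula in the Gelfand triple, since $-A_0u+\mathcal{F}\in L_t^2(\mathrm{H}^1)'$ and $h\in L_t^2L_2(\mathcal{H};\mathrm{L}_x^2)$.

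The main obstacle I anticipate is not the abstract solvability, which is classical once $A_0$ and $a_0$ are in place, but the correct variational treatment of the Navier boundary condition: one must verify that the boundary contribution $(\ell,v)_\Gamma$ is genuinely controlled in $(\mathrm{H}^1)'$ --- this is precisely where the hypothesis $\ell\in L_t^2L^{4/3}(\Gamma)$ is dictated by the trace--Sobolev chain $\mathrm{H}^1\hookrightarrow H^{\frac12}(\Gamma)\hookrightarrow L^4(\Gamma)$ --- and that the coercivity of $a_0$, which incorporates the tangential boundary term $(u_\tau,v_\tau)_\Gamma$, survives the Galerkin truncation so that the a priori estimates remain uniform in $n$.
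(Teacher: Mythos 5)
Your proposal is correct and follows essentially the same route as the paper: a Galerkin scheme built on the eigenfunctions of the Stokes operator $A_0$, the trace--Sobolev chain $\mathrm{H}^1\hookrightarrow H^{\frac12}(\Gamma)\hookrightarrow L^4(\Gamma)$ to control $(\ell,v)_\Gamma$, It\^{o}'s formula plus Burkholder--Davis--Gundy and Gronwall for the uniform $p$-th moment bounds, a limit passage that exploits the linearity of the equation and the additive noise, and uniqueness by the energy identity for the difference of two solutions. The only difference is cosmetic: you pass to the limit by weak compactness (Banach--Alaoglu) on the fixed stochastic basis with $P_nh\to h$ handling the stochastic integral, while the paper nominally invokes a ``stochastic compactness argument'' before conceding that linearity makes the identification direct --- your formulation is in fact the cleaner way to obtain the solution on the prescribed basis.
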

\begin{proof} The proof could be implemented by the classical Galerkin approximate scheme, stochastic compactness argument and identifying the limit. Define by $\mathbb{H}_n={\rm span}\{e_1, \cdots, e_n\}$ the finite-dimensional space where the sequence $\{e_k\}_{k\geq 1}$ is the orthonormal basis for $\mathrm{L}_x^2$ of the Stokes operator $A_0$. Define
 $$u_n=P_nu=\sum\limits_{k=1}^{n}(u,e_{k})e_{k},~~~\mbox{for all} ~u\in \mathrm{L}_{x}^2,$$
where the operator $P_n: \mathrm{L}_x^2\rightarrow \mathbb{H}_n$.
Then, there exists a unique approximate solutions sequence to the following finite-dimensional equation
$$(u_n,v)+\int_{0}^{t}a_0(u_n,v)ds=(u_0, v)+\int_{0}^{t}(h,v)d\mathcal{W}+\int_{0}^{t}(\ell,v)_\Gamma ds+\int_{0}^{t}(h_0,v)ds,$$
for any $v\in \mathbb{H}_n$. Using It\^{o}'s formula to the function $\frac{1}{2}\|u_n\|_{\mathrm{L}_x^2}^2$, we have
\begin{align}\label{5.3*}
\begin{split}
\frac{1}{2}\|u_n\|_{\mathrm{L}_x^2}^2+\int_{0}^{t}a_0(u_n,u_n)ds&=\frac{1}{2}\|u_0^n\|_{\mathrm{L}_x^2}^2+\int_{0}^{t}(h,u_n)d\mathcal{W}+\int_{0}^{t}(\ell,u_n)_\Gamma ds\\
&\quad+\frac{1}{2}\int_{0}^{t}\|Ph\|^2_{L_2(\mathcal{H},\mathrm{L}_x^2)}ds+\int_{0}^{t}(h_0,u_n)ds.
\end{split}
\end{align}
The Burkholder-Davis-Gundy inequality yields
\begin{align*}
\begin{split}
\mathbb{E}\left(\sup_{t\in [0,T]}\left|\int_{0}^{t}(h,u_n)d\mathcal{W}\right|\right)^p&\leq \mathbb{E}\left(\int_{0}^{T}(h,u_n)^2dt\right)^\frac{p}{2}\\
&\leq \mathbb{E}\left(\sup_{t\in [0,T]}\|u_n\|_{\mathrm{L}_x^2}^{2p}\right)+\mathbb{E}\left(\int_{0}^{T}\|h\|^2_{L_2(\mathcal{H},L_x^2)}dt\right)^\frac{p}{2}.
\end{split}
\end{align*}
Using the embedding $H^1(\mathcal{D})\hookrightarrow H^\frac{1}{2}(\Gamma)\hookrightarrow L^4_x(\Gamma)$ and H\"{o}lder's inequality, we get
$$\int_{0}^{t}(\ell,u_n)_\Gamma ds\leq \int_{0}^{t}\|\ell\|_{L^\frac{4}{3}(\Gamma)}\|u_n\|_{\mathrm{L}_x^4(\Gamma)}ds\leq \|\ell\|_{L_t^2L^\frac{4}{3}(\Gamma)}\|u_n\|_{L_t^2\mathrm{H}^1}.$$
The last term in \eqref{5.3*} could be easily controlled by using H\"{o}lder's inequality. Taking supremum about $t$ in \eqref{5.3*}, power $p$ and then expectation, using above estimates, and conditions on $l,h$, we have
$$ \mathbb{E}\left(\sup_{t\in [0,T]}\|u_n\|_{\mathrm{L}_x^2}^{2p}\right)+\mathbb{E}\left(\int_{0}^{T}a_0(u_n,u_n)dt\right)^p\leq C,$$
where constant $C$ is independent of $n$. Using above estimate, we could obtain that $u_n\in L^p_\omega W_t^{1,2}(H^1)'$.
Then, we could pass the limit using a stochastic compactness argument. Note that system (\ref{Equ1.1*}) is a linear stochastic evolution equation driven by the additive noise. Therefore, the procedure of identifying the limit could be achieved directly by passing the limit in approximate equation. The specific procedure is even simpler than the argument of  Part 4 in section 3. Here, we omit the details.

 The regularity of solution $u\in L^p_\omega( L^\infty_t\mathrm{L}^2_x\cap L^2_t\mathrm{H}^1)$ follows from the lower-semicontinuous of norm. Then, from equation itself and the regularity, we could further obtain that $u\in L^p_\omega W_t^{1,2}(H^1)'$. Applying \cite[Lemma 1.2, Chapter 3]{tem}, we have the time regularity of $u$ is continuous.

 The uniqueness could be achieved easily due to the linear structure of the system. We complete the proof.
\end{proof}

We proceed to introduce the well-posedness result of the Cahn-Hilliard equation with the dynamic boundary condition which reads as
\begin{align}\label{5.3}
\left\{\begin{array}{ll}
\!\!\partial_t\phi -\Delta\bar{\mu}=\ell_1,\\
\!\!\partial_n \bar{\mu}=0,
\end{array}\right.
\end{align}
with
$$\partial_t\psi+\overline{\mathcal{K}}(\psi)=\ell_3, ~{\rm on} ~\Gamma\times (0,\infty),$$
where $\bar{\mu}:=-\Delta \phi+\delta \partial_t\phi+\ell_2$ for any $\delta\in (0,1)$ and $\overline{\mathcal{K}}(\psi):=A_\tau\psi+\partial_n\phi+\psi$. We focus on the initial data $(\phi,\psi)|_{t=0}=(\phi_0,\psi_0)$.
\begin{proposition}{\rm\!\!\cite[Theorem 4.2]{GMA}} \label{pro5.2} Suppose that $\ell_1\in L_t^2 (H^1)'$ with $\langle \ell_1\rangle=0$, and $\ell_2\in L_t^2L_x^2$, $\ell_3\in L_t^2L^2(\Gamma)$. Moreover, the initial data $(\phi_0,\psi_0)\in V^1$, then system (\ref{5.3}) has a unique global solution $(\phi,\psi)$ such that
$$(\phi,\psi)\in L^\infty_t V^1\cap L^2_tV^2\cap W_t^{1,2}(L_x^2\times L^2(\Gamma)).$$
Moreover, the mass is conserved, i.e., $\langle\phi(t)\rangle=\langle\phi_0\rangle$, and the following estimate holds:
\begin{align}\label{5.4}
&\|(\phi_{\varepsilon,\delta},\psi_{\varepsilon,\delta})\|_{L_t^\infty V^1}^2
+\|\partial_t(\phi_{\varepsilon,\delta},\psi_{\varepsilon,\delta})\|_{L_t^2((H^1)'\times L^2(\Gamma))}^2+\delta\|\partial_t\phi_{\varepsilon,\delta}\|_{L^2_tL_x^2}^2+\|(\phi_{\varepsilon,\delta},\psi_{\varepsilon,\delta})\|_{L_t^2V^2}\nonumber\\
&\quad+\|\bar{\mu}\|_{L_t^2L_x^2}^2+\|\overline{\mathcal{K}}(\psi)\|_{L^2_tL^2_x}^2\nonumber\\
&\leq C\left(\|(\phi_0,\psi_0)\|_{V^1}^2+\|\ell_1\|_{L^2_t(H^1)'}^2+\|\ell_2\|_{L_t^2L_x^2}^2+\|\ell_3\|_{L_t^2L^2(\Gamma)}^2\right).
\end{align}
\end{proposition}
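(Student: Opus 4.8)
The plan is to construct the solution by a Faedo--Galerkin scheme, derive a priori estimates from an energy identity whose closure hinges on a boundary cancellation, upgrade the bulk--boundary regularity by the elliptic estimate of Lemma \ref{4.4*}, and then pass to the limit; uniqueness will be immediate from linearity. First I would fix a Galerkin basis $\{(e_k,\mathrm{Tr}_{\mathcal{D}}e_k)\}_{k\ge 1}$ built from the eigenfunctions of the coupled operator $(\phi,\psi)\mapsto(-\Delta\phi,\,A_\tau\psi+\partial_n\phi+\psi)$ associated with the symmetric coercive form $(\nabla\phi,\nabla\tilde\phi)+(\nabla_\tau\psi,\nabla_\tau\tilde\psi)_\Gamma+(\psi,\tilde\psi)_\Gamma$ on $V^1$. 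By Lemma \ref{4.4*} this operator has compact resolvent on $V^0$, so a complete orthonormal system of such pairs exists and, crucially, each basis element already satisfies the trace relation $\psi_k=\mathrm{Tr}_{\mathcal{D}}(\phi_k)$. Projecting the weak formulation onto the span of the first $n$ modes yields a linear ODE system with a unique global solution $(\phi_n,\psi_n)$.

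For the a priori estimates I would test the $\phi$-equation with $\bar\mu_n$ and the $\psi$-equation with $\overline{\mathcal{K}}(\psi_n)$, then substitute $\bar\mu_n=-\Delta\phi_n+\delta\partial_t\phi_n+\ell_2$ and $\overline{\mathcal{K}}(\psi_n)=A_\tau\psi_n+\partial_n\phi_n+\psi_n$. Integrating by parts in $(\partial_t\phi_n,-\Delta\phi_n)$ produces the boundary term $-(\partial_t\psi_n,\partial_n\phi_n)_\Gamma$ (using $\partial_t\psi_n=\mathrm{Tr}(\partial_t\phi_n)$), while expanding $(\partial_t\psi_n,\overline{\mathcal{K}}(\psi_n))_\Gamma$ produces exactly $+(\partial_t\psi_n,\partial_n\phi_n)_\Gamma$; the two cancel, which is the point that closes the estimate. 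One is then left with
\begin{align*}
&\tfrac12\tfrac{d}{dt}\Big(\|\nabla\phi_n\|_{L^2_x}^2+\|\nabla_\tau\psi_n\|_{L^2(\Gamma)}^2+\|\psi_n\|_{L^2(\Gamma)}^2\Big)+\delta\|\partial_t\phi_n\|_{L^2_x}^2+\|\nabla\bar\mu_n\|_{L^2_x}^2+\|\overline{\mathcal{K}}(\psi_n)\|_{L^2(\Gamma)}^2\\
&\quad=(\ell_1,\bar\mu_n)+(\ell_3,\overline{\mathcal{K}}(\psi_n))_\Gamma-(\partial_t\phi_n,\ell_2),
\end{align*}
and the right-hand side is absorbed by Young's inequality with the Poincar\'e inequality, using $\langle\ell_1\rangle=0$ to replace $\bar\mu_n$ by $\bar\mu_n-\langle\bar\mu_n\rangle$ and control $(\ell_1,\bar\mu_n)$ through $\|\nabla\bar\mu_n\|_{L^2_x}$. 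A Gronwall argument then yields the left-hand-side bounds of \eqref{5.4}.

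To reach the full conclusion I would first establish mass conservation: integrating the $\phi$-equation over $\mathcal{D}$ and using $\partial_n\bar\mu=0$ with $\langle\ell_1\rangle=0$ gives $\tfrac{d}{dt}\langle\phi_n\rangle=0$, hence $\langle\phi_n(t)\rangle=\langle\phi_0\rangle$. Integrating the definition of $\bar\mu_n$ then recovers the mean via $\langle\bar\mu_n\rangle=|\mathcal{D}|^{-1}\big(-\int_\Gamma\overline{\mathcal{K}}(\psi_n)\,dS+\int_\Gamma\psi_n\,dS+\int_{\mathcal{D}}\ell_2\,dx\big)$, where the term $\delta\int_{\mathcal{D}}\partial_t\phi_n$ vanishes by mass conservation; combined with $\|\nabla\bar\mu_n\|_{L^2_x}$ this gives $\bar\mu_n\in L^2_tH^1$ and $\overline{\mathcal{K}}(\psi_n)\in L^2_tL^2(\Gamma)$ with uniform bounds. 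Viewing $(\phi_n,\psi_n)$ as the solution of $-\Delta\phi_n=\bar\mu_n-\delta\partial_t\phi_n-\ell_2$, $A_\tau\psi_n+\partial_n\phi_n+\psi_n=\overline{\mathcal{K}}(\psi_n)$, the estimate of Lemma \ref{4.4*} upgrades the bound to $L^2_tV^2$, and the equations then bound $\partial_t\phi_n$ in $L^2_t(H^1)'$ and $\partial_t\psi_n$ in $L^2_tL^2(\Gamma)$. Passing $n\to\infty$ is routine from the uniform bounds, Aubin--Lions compactness (Lemma \ref{lem4.5}) and linearity, and uniqueness follows by running the same energy identity on the difference of two solutions with vanishing data.

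The main obstacle is the genuine bulk--boundary coupling through $\partial_n\phi$ in $\overline{\mathcal{K}}(\psi)$ together with the constraint $\psi=\mathrm{Tr}_{\mathcal{D}}(\phi)$: the entire estimate rests on the cancellation of $(\partial_t\psi,\partial_n\phi)_\Gamma$, which demands that $\bar\mu_n$ and $\overline{\mathcal{K}}(\psi_n)$ be admissible test functions at the finite-dimensional level. This is exactly why I would insist on the coupled eigenbasis, whose elements already satisfy the trace relation; note that for the fully nonlinear system this device fails and one would need $\mathrm{Tr}_{\mathcal{D}}\mu=\mathcal{K}(\psi)$ at each finite level, precisely the incompatibility flagged in Remark \ref{rem3.3}. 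The secondary delicate point is the recovery of $\langle\bar\mu_n\rangle$ from mass conservation, needed to promote $\nabla\bar\mu_n\in L^2_x$ to $\bar\mu_n\in H^1$ and thereby feed the elliptic regularity argument.
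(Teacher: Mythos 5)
Your proposal reconstructs a proof of a statement the paper itself never proves: Proposition \ref{pro5.2} is quoted directly from \cite[Theorem 4.2]{GMA}, so what must be judged is whether your Galerkin argument would actually work. It has a genuine gap, and it is exactly the obstruction of Remark \ref{rem3.3} — which you attribute to the nonlinearity, but which in fact comes from the viscous term $\delta\partial_t\phi$ and the forcing $\ell_2$, both present in the linear system \eqref{5.3}. With a single constrained basis $\{(e_k,\mathrm{Tr}_{\mathcal{D}}e_k)\}$ and one set of coefficients $a_k(t)$, the only consistent projection is the \emph{combined} one, in which bulk and boundary equations are tested by a compatible pair $(v,\mathrm{Tr}_{\mathcal{D}}v)$ with $v$ in the span: imposing the two equations separately against $e_j$ and $\mathrm{Tr}_{\mathcal{D}}e_j$ gives $2n$ scalar equations for $n$ unknowns and is over-determined. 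Testing ``the $\phi$-equation with $\bar\mu_n$ and the $\psi$-equation with $\overline{\mathcal{K}}(\psi_n)$'' therefore requires the pair $(\bar\mu_n,\overline{\mathcal{K}}(\psi_n))$ itself to be trace-compatible, i.e. $\mathrm{Tr}_{\mathcal{D}}\bar\mu_n=\overline{\mathcal{K}}(\psi_n)$. Your eigenbasis does make the pair $(-\Delta\phi_n,\,A_\tau\psi_n+\partial_n\phi_n+\psi_n)$ compatible — a nice observation, and it would suffice if $\bar\mu_n$ were just $-\Delta\phi_n$ — but $\bar\mu_n=-\Delta\phi_n+\delta\partial_t\phi_n+\ell_2$, so the mismatch equals $\delta\,\partial_t\psi_n+\mathrm{Tr}_{\mathcal{D}}\ell_2$, which does not vanish; worse, $\mathrm{Tr}_{\mathcal{D}}\ell_2$ is not even defined (let alone uniformly bounded) for $\ell_2\in L_t^2L_x^2$. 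Hence the cancellation of $(\partial_t\psi_n,\partial_n\phi_n)_\Gamma$, on which your whole estimate rests, cannot be executed at the finite-dimensional level.

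There is a second, independent gap: even granting the estimates, the combined formulation is strictly weaker than \eqref{5.3}. Passing to the limit in it one only obtains, for every admissible $v$,
\[
\langle \partial_n\bar\mu,\mathrm{Tr}_{\mathcal{D}}v\rangle_\Gamma+(\partial_t\psi+\overline{\mathcal{K}}(\psi)-\ell_3,\mathrm{Tr}_{\mathcal{D}}v)_\Gamma=0,
\]
i.e. $\partial_n\bar\mu=-(\partial_t\psi+\overline{\mathcal{K}}(\psi)-\ell_3)$ on $\Gamma$: a Wentzell-type problem in which bulk and surface exchange mass, not the two separate conditions $\partial_n\bar\mu=0$ and $\partial_t\psi+\overline{\mathcal{K}}(\psi)=\ell_3$; in particular the claimed conservation $\langle\phi(t)\rangle=\langle\phi_0\rangle$ is lost. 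The viable route — consistent with the shape of \eqref{5.4} and with how the paper itself invokes the estimate in \eqref{3.33} — is to eliminate $\bar\mu$ rather than test with it: since $\langle\ell_1\rangle=0$ forces $\langle\partial_t\phi\rangle=0$, the bulk equation together with $\partial_n\bar\mu=0$ is equivalent to $\bar\mu-\langle\bar\mu\rangle=A_2^{-1}(\ell_1-\partial_t\phi)$, which turns \eqref{5.3} into the pseudo-parabolic equation $A_2^{-1}\partial_t\phi+\delta\partial_t\phi-\Delta\phi+\ell_2-\langle\bar\mu\rangle=A_2^{-1}\ell_1$ coupled to the boundary equation. Its energy estimate is obtained by testing with $(\partial_t\phi_n,\partial_t\psi_n)$, a pair that is automatically trace-compatible and for which the $(\partial_n\phi_n,\partial_t\psi_n)_\Gamma$ cancellation survives, and the Neumann condition for $\bar\mu$ is then built into the formulation instead of being (unrecoverably) hoped for in the limit. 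The remaining ingredients of your outline — recovery of $\langle\bar\mu\rangle$, elliptic regularity via Lemma \ref{4.4*}, Aubin--Lions, uniqueness by linearity — are sound once the scheme is set up this way.
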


\smallskip
\section*{Acknowledgments}
H. Wang is supported by the National Natural Science Foundation of China (Grant No.11901066), the Natural Science Foundation of Chongqing (Grant No.cstc2019jcyj-msxmX0167) and projects No.2019CDXYST0015 and No.2020CDJQY-A040 supported by the Fundamental Research Funds for the Central Universities.

\bigskip

\end{document}